\numberwithin{equation}{section}
\newtheorem{prop}{Proposition}
\newtheorem{lemma}[prop]{Lemma}
\newtheorem{thm}[prop]{Theorem}
\newtheorem{cor}[prop]{Corollary}
\numberwithin{prop}{section}
\theoremstyle{definition}
\newtheorem{defn}[prop]{Definition}
\newtheorem{rmk}[prop]{Remark}
\newcommand{\gL}{\Lambda}
\newcommand{\N}{\nabla}
\DeclareMathOperator{\Rm}{Rm}
\DeclareMathOperator{\inj}{inj}
\DeclareMathOperator{\tr}{tr}
\DeclareMathOperator{\diam}{diam}
\DeclareMathOperator{\End}{End}
\newcommand{\ttau}{{\tilde{\tau}}}
\newcommand{\tnabla}{{\tilde{\nabla}}}
\newcommand{\lot}{{\rm L.O.T.}}
\newcommand{\rG}{{\rm G}}
\newcommand{\SU}{{\rm SU}}
\DeclareMathOperator\vol{vol}
\newcommand{\qandq}{\quad\text{and}\quad}
\newcommand{\qwhereq}{\quad\text{where}\quad}
\newcommand{\qforq}{\quad\text{for}\quad}
\newcommand{\sX}{\mathscr{X}}
\def\bR{\mathbb R}
\newcommand{\fo}{{\mathfrak o}}
\newcommand{\fg}{{\mathfrak g}}
\newcommand{\fs}{{\mathfrak s}}
\newcommand{\GL}{\mathrm{GL}}
\def\fso{\mathfrak{so}}
\newcommand{\sym}{\mathrm{sym}}
\newcommand{\bS}{\mathbb{S}}
\newcommand{\Ric}{{\rm Ric}}
\def\cL{\mathcal{L}}
\DeclareMathOperator\curl{curl}
\DeclareMathOperator\Div{div}
\newcommand{\tT}{\widetilde{T}}
\newcommand{\tvarphi}{\widetilde{\varphi}}
\newcommand{\tpsi}{\widetilde{\psi}}
\newcommand{\tg}{\widetilde{g}}
\begin{document}

\title[A parabolic flow for the large volume heterotic $\rG_2$ system]{A parabolic flow for the large volume \\heterotic $\rG_2$ system}

\author{Mario Garcia-Fernandez}
\address{Instituto de Ciencias Matem\'aticas (CSIC-UAM-UC3M-UCM)\\ Nicol\'as Cabrera 13--15, Cantoblanco\\ 28049 Madrid, Spain}
\email{\href{mario.garcia@icmat.es}{mario.garcia@icmat.es}}

\author{Andres J. Moreno}
 \address{Institute of Mathematics, Statistics and Scientific Computing (IMECC), University of Campinas (Unicamp), 13083-859 Campinas-SP, Brazil.}
 \email{\href{mailto:}{amoreno@unicamp.br}}

\author{Alec Payne}
\address{North Carolina State University, 2311 Stinson Drive, Raleigh, NC 27607, USA}
\email{\href{mailto:ajpayne4@ncsu.edu}{ajpayne4@ncsu.edu}}

 \author{Jeffrey Streets}
 \address{Rowland Hall\\
          University of California, Irvine\\
          Irvine, CA 92617, USA}
 \email{\href{mailto:jstreets@uci.edu}{jstreets@uci.edu}}

\date{\today}

\thanks{MGF and AM were partially supported by the Spanish Ministry of Science and Innovation, through the `Severo Ochoa Programme for Centres of Excellence in R\&D' (CEX2023-001347-S), and under grants PID2022-141387NB-C22 and CNS2022-135784. JS was supported by the NSF via DMS-2342135. AM has also been funded by the São Paulo Research Foundation (Fapesp)  [2021/08026-5] and [2023/13780-6].}

\begin{abstract}
    We introduce a geometric flow of conformally coclosed $\rG_2$-structures, whose fixed points are large volume solutions of the heterotic $\rG_2$ system, with vanishing scalar torsion class $\tau_0 = 0$. After conformal rescaling, it becomes a flow of coclosed $\rG_2$-structures, related to Grigorian's modified $\rG_2$ coflow, which is coupled to a flow for a \emph{dilaton function}. Our main results establish fundamental short-time existence and Shi-type smoothing properties of this flow, as well as a classification of its fixed points. By a classical rigidity result in the string theory literature, the fixed points on a compact manifold correspond to torsion-free $\rG_2$-structures, that is, to metrics with holonomy contained in $\rG_2$. Thus, we establish in the affirmative a folklore question in the special holonomy community, about the existence of a well-posed flow for coclosed $\rG_2$-structures with fixed points given by torsion-free $\rG_2$-structures. The flow also satisfies a monotonicity formula for the \emph{$\rG_2$-dilaton functional} (\emph{volume scale} in string theory), which allows us to strengthen the rigidity result with an alternative proof.  The monotonicity of the $\rG_2$-dilaton functional, combined with the Shi-type estimates, leads to a general result on the convergence of nonsingular solutions. A dimension reduction analysis reveals an interesting link with natural flows for $\SU(3)$-structures, previously introduced in the literature.

\end{abstract}

\maketitle

\section{Introduction}

To date, several geometric flows have been proposed to construct torsion-free $\rG_2$-structures. Bryant proposed the Laplacian flow for closed $\rG_2$-structures~\cite{Bryant2006, bryant2011laplacian}, which arises naturally as the gradient flow for the Hitchin volume functional \cite{hitchin2000geometry}. Later, Lotay--Wei \cite{LotayWeiLaplacianFlow} showed the dynamic stability of this flow. The Laplacian coflow for coclosed $\rG_2$-structures was proposed by Karigiannis, McKay, and Tsui \cite{karigiannis2012soliton}. While formally appealing, this flow lacks a general short-time existence theory. In light of this, Grigorian introduced a modified Laplacian coflow which enjoys a short-time existence theory, although there is no characterization of the fixed points.  Many results on these and other flows of $\rG_2$-structures have appeared in recent years (e.g., see  \cite{bedulli2020G2,GaoChenShi,collins2022spinorflowsfluxi,dwivedi2021gradient,DwivediGianniotisKarigiannis, fino2023twisted,FinoFowdar,KarigiannisFlowsI,LotayStein}).

In this work, we propose a novel geometric flow of $\rG_2$-structures inspired by recent progress on generalized Ricci solitons and by work in the string theory literature by Ashmore, Minasian, and Proto \cite{Ashmore:2023ift}.  A classic result of Friedrich--Ivanov \cite{friedrich2003parallel_string_theory} shows that a $\rG_2$-structure admits a compatible connection with skew-symmetric torsion if and only if the torsion component $\tau_2 = 0$, whence the $\rG_2$-structure is called integrable.  The resulting connection $\N^+$ is in fact unique with torsion
\begin{align*}
    H_\varphi = \frac{1}{6}\tau_0\varphi - \tau_1\lrcorner\psi - \tau_3.
\end{align*}
If the characteristic torsion $H_{\varphi}$ is \emph{closed}, then the connection with opposite torsion $\N^-$ is a $\rG_2$-instanton and, by recent work of Ivanov-Stanchev \cite{ivanov2025}, the pair $(g_{\varphi}, H_{\varphi})$ defines a generalized Ricci soliton, and is a solution to the heterotic $\rG_2$ system, in the sense of \cite{SGFLS}.

In light of this, it is natural to deform the four-form $\psi$ by $dH_{\varphi}$ within the class of coclosed $\rG_2$-structures to construct solutions to the heterotic $\rG_2$ system. To obtain a meaningful flow, however, this initial proposal must be modified in two significant ways. First, the operator $dH_{\varphi}$ fails to be elliptic in much the same way as the operator $d d^\star_{\varphi} \psi$, thus we require a modification similar to Grigorian \cite{Grigorian2013}.  Furthermore, we flow within the class of \emph{conformally} coclosed $\rG_2$-structures. This is natural from the point of view of string theory, and thus we couple the flow of the $\rG_2$-structure to a parabolic flow of conformal factors. To wit, given real parameters $\gamma,\sigma \in \bR$, we say (cf.\ Definition \ref{d:flowdef}) that a one-parameter family of $\rG_2$-structures $\varphi_t \in \Omega^3_+$ and smooth dilaton functions $\phi_t \in C^\infty(M,\bR)$ satisfy the \emph{heterotic $\rG_2$ flow} if
\begin{gather}\label{eq:heteroticG2flowintro}
\begin{split}
    \frac{\partial}{\partial t}\left(e^{-4\phi}\psi\right)=&-dH_{\varphi}-\frac{7}{3}d\left(\tau_0\varphi \right),\\
    \frac{\partial}{\partial t}\phi= &\ e^{4\phi} \left(\Delta \phi - \gamma |d\phi|^2 + \frac{1}{4}|\tau_3|^2 - \sigma \tau_0^2 \right).
\end{split}
\end{gather}
A key feature of this flow is that it preserves the conformally coclosed condition $d \left(e^{-4\phi}\psi\right) = 0$. Notice that in particular, a flow line with conformally coclosed initial data will remain integrable along the flow, i.e.\ $\tau_2 = 0$ will hold for all time.

For specific values of $\sigma,\gamma > 0$ (see \eqref{eq:sigmagammahet}), Proposition \ref{prop:weakG2} gives a complete characterization of the fixed points of \eqref{eq:heteroticG2flowintro}: they correspond to solutions of the coupled equations
\begin{equation}
\label{eq:heteroticG2intro}
\begin{split}
    d (e^{-4\phi}\psi) & = 0,\\
    dH_\varphi & = 0,\\
     \mathcal{S}^+_{\varphi} := R_g-\frac{1}{2}|H_\varphi|^2 - 8 \Delta \phi - \left|d\phi\right|^2 & = 0,
\end{split}
\end{equation}
where $g = g_\varphi$ is the Riemannian metric determined by $\varphi$. This system of equations corresponds formally to the heterotic $\rG_2$-system in the \emph{large volume limit} $\alpha' \to 0$ \cite{de_la_Ossa_2016_instanton_bundles} (cf.\ \cite{Clarke2016,Clarke2022,SGFLS}), once we use the classical ansatz given by the identification of the spin connection with skew-torsion $-H_\varphi$ and the gauge $\rG_2$-instanton connection (also known as \emph{standard embedding solutions} \cite{CHSW}). Solutions of \eqref{eq:heteroticG2intro} determine \emph{gradient generalized Ricci solitons}, i.e.\ $(g_\varphi,H_\varphi, \phi)$ solve
\begin{align*}
    \Ric_g-\frac{1}{4}H^2_{\varphi}+2\cL_{\nabla \phi}g & =0,\\
    d^{\star}H_\varphi + 4 \nabla \phi \lrcorner H_{\varphi} & = 0.
\end{align*}
Due to a classical rigidity result in the string theory literature by Nu\~nez and Maldacena \cite{Maldacena:2000mw}, the further vanishing of the scalar curvature $\mathcal{S}^+_{\varphi}$ renders solutions of these equations on a compact manifold rigid, i.e.\ $H_{\varphi} = 0$ and $\phi$ is constant, and hence correspond to torsion-free $\rG_2$-structures.  Our sharp characterization of the fixed points for the flow \eqref{eq:heteroticG2flowintro}, in Proposition \ref{prop:weakG2}, contrasts with the lack of such a characterization for the flows recently proposed in~\cite{FinoFowdar,karigiannis2025flows}.

In fact, we may characterize the fixed points of ~\eqref{eq:heteroticG2flowintro} for a large range of $\sigma, \gamma < 0$ (see Proposition \ref{prop:weakG2functional}). For $\sigma, \gamma \ll 0$, the flow acquires nicer qualitative properties. In this case, the \emph{$\rG_2$-dilaton functional}, defined by
\begin{gather}\label{eq:dilatonfintro}
\mathcal{M}= \int_M e^{-4\phi} \operatorname{Vol}_\varphi,
\end{gather}
is monotone along the flow \eqref{eq:heteroticG2flowintro} and is constant if and only if the $\rG_2$-structure is torsion-free and the dilaton is constant (see Theorem \ref{t:MonotonicityIntro}). The functional ~\eqref{eq:dilatonfintro} is inspired by the variational approach to the six-dimensional heterotic system proposed in \cite{garciafern2018canonical}, better known as the Hull-Strominger system, and provides an analogue in seven dimensions of the so-called \emph{dilaton functional} for $\operatorname{SU}(3)$-structures. It has a physical interpretation as a \emph{volume scale} and has been previously considered in the string theory literature \cite{deIaOssa:2019cci}. 

To state our main results, we consider a more general flow than ~\eqref{eq:heteroticG2flowintro}, the \emph{generic heterotic $G_2$ flow}, which includes an additional parameter $C \in \mathbb{R}$:
\begin{gather}\label{eq: mod_anomaly_flowCbetaintro}
\begin{split}
    \frac{\partial}{\partial t}\left(e^{-4\phi}\psi\right)=&-dH_{\varphi}+\frac{7}{4}C d\left(\tau_0\varphi \right),\\
    \frac{\partial}{\partial t}\phi=&\ e^{4\phi} \left(\Delta_g \phi - \gamma |d\phi|^2 + \frac{1}{4}|\tau_3|^2 - \sigma \tau_0^2 \right).
\end{split}
\end{gather}

The generic heterotic $\rG_2$ flow~\eqref{eq: mod_anomaly_flowCbetaintro} becomes the heterotic $\rG_2$ flow \eqref{eq:heteroticG2flowintro} when setting $C = -4/3$. We will see that the choice of $C = -4/3$ is natural from an analytic perspective (Theorem \ref{t:maththm2intro}), but it is convenient to use the parameter $C$ in some of our results.

Our first main result establishes the general well-posedness for the generic heterotic $\rG_2$ flow for a large open set of parameters, on compact manifolds with conformally coclosed initial data. The well-posedness holds for the heterotic $\rG_2$ flow ~\eqref{eq:heteroticG2flowintro} in particular. 

\begin{thm}[cf.\ Theorem \ref{thm:STE}]\label{t:mainthm1}
Let $M$ be compact, and let $(\varphi_0,\phi_0) \in \Omega^3_+ \times C^\infty(M,\bR)$ such that $d(e^{-4\phi_0} \psi_0) = 0$. Then, for any $C<-\frac13$ and $\gamma,\sigma\in \bR$, there exists $\epsilon>0$, depending on $(\varphi_0, \phi_0)$, and a unique solution $(\varphi_t,\phi_t)$ of the generic heterotic $\rG_2$ flow \eqref{eq: mod_anomaly_flowCbetaintro} defined for $t \in [0,\epsilon)$, with initial condition $(\varphi_0, \phi_0)$, such that
$$
d(e^{-4\phi_t} \psi_t) = 0
$$ 
for all $t\in [0,\epsilon)$.
\end{thm}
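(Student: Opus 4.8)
The plan is to prove short-time existence by reducing the flow to a strictly parabolic system via DeTurck's trick, exploiting the conformally coclosed condition to rewrite the evolution of $\psi$ as an evolution of $\varphi$ with an elliptic principal symbol once gauge-fixed. First I would set up coordinates on the space of data: writing $\psi_t = e^{4\phi_t}\eta_t$ with $\eta_t$ coclosed (so $d\eta_t=0$ by hypothesis), and noting that a coclosed positive $\varphi$ and its Hodge dual $\psi$ determine one another, I would translate the first equation of \eqref{eq: mod_anomaly_flowCbetaintro} into an evolution equation $\frac{\partial}{\partial t}\varphi = P(\varphi,\phi)$, where $P$ involves the linearization of the $\varphi \mapsto \psi$ map (the operator sending $\chi \in \Omega^3 \mapsto$ its image under the $\rG_2$-representation-theoretic identification $\Omega^3 \cong \Omega^3$, which is an isomorphism on the relevant components for a positive form). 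The right-hand side $-dH_\varphi + \frac74 C\, d(\tau_0\varphi)$ must be examined for its principal part: $H_\varphi = \frac16\tau_0\varphi - \tau_1\lrcorner\psi - \tau_3$ is first-order in $\varphi$, so $dH_\varphi$ is second-order, and I would compute its symbol in terms of $\xi \in T^*M$ acting on a symmetric $2$-tensor $h$ (the variation of the metric). The expectation, paralleling Grigorian's analysis of the modified coflow, is that the symbol is $-\Delta$ up to lower order terms on the "good" components, but degenerate along the directions corresponding to diffeomorphisms — exactly the $d d^\star$ degeneracy flagged in the introduction.

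Next I would introduce the DeTurck modification: add to the right-hand side a Lie derivative term $\cL_{W(\varphi,\varphi_0)} \varphi$ where $W$ is a vector field built from the difference of the Levi-Civita connections of $g_\varphi$ and the fixed background $g_{\varphi_0}$ (the usual $\rG_2$ DeTurck vector field, e.g. $W^k = g^{ij}(\Gamma^k_{ij} - \bar\Gamma^k_{ij})$). I would verify that this cancels the degenerate directions in the symbol, rendering the modified system $\frac{\partial}{\partial t}\varphi = \widetilde P(\varphi,\phi)$ strictly parabolic in $\varphi$; the second equation for $\phi$ is already manifestly a (nonlinear but non-degenerate, since $e^{4\phi}>0$) parabolic equation with leading term $e^{4\phi}\Delta_g\phi$, and it couples to $\varphi$ only through lower-order terms and the metric coefficients, so the coupled system is parabolic. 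Here the condition $C < -\tfrac13$ should enter: the coefficient of the $d(\tau_0\varphi)$ term controls the contribution of the $\tau_0$-part of the torsion to the symbol, and for $C < -\tfrac13$ this contribution has the correct sign so that the full principal symbol (on the trace part, where $\tau_0$ lives) remains negative-definite; I would track this sign through the symbol computation carefully, as it is the one place the hypothesis is used and likely the subtlest bookkeeping. With strict parabolicity in hand, standard quasilinear parabolic theory (Eells--Sampson / Hamilton, or the Banach-space inverse function theorem version) gives a unique smooth solution $(\tilde\varphi_t, \phi_t)$ of the DeTurck-modified system on some $[0,\epsilon)$.

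Then I would undo the DeTurck gauge: solve the ODE $\frac{\partial}{\partial t}\Phi_t = -W_t \circ \Phi_t$, $\Phi_0 = \mathrm{id}$, for the family of diffeomorphisms, and set $\varphi_t = \Phi_t^* \tilde\varphi_t$, checking that $(\varphi_t,\phi_t)$ — with $\phi_t$ also pulled back appropriately, or rather noting the $\phi$-equation is diffeomorphism-covariant — solves the original flow \eqref{eq: mod_anomaly_flowCbetaintro}. Two structural facts then need verification. First, preservation of the conformally coclosed condition $d(e^{-4\phi_t}\psi_t) = 0$: differentiate $d(e^{-4\phi_t}\psi_t)$ in $t$, use the first evolution equation to get $\frac{\partial}{\partial t}d(e^{-4\phi}\psi) = -d(dH_\varphi) + \frac74 C\, d(d(\tau_0\varphi)) = 0$ since $d^2 = 0$ — so the quantity is constant in $t$ and vanishes identically given the initial hypothesis; this is immediate and explains why the flow was written in the form "$\partial_t(e^{-4\phi}\psi) = d(\text{stuff})$". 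Second, uniqueness of solutions to the \emph{original} (non-gauge-fixed) flow: this is the standard argument that any solution of the original flow, after applying the harmonic-map-heat-flow gauge, produces a solution of the DeTurck system, and uniqueness there (plus uniqueness of the gauge ODE) forces agreement. I expect the main obstacle to be the symbol computation establishing strict parabolicity after DeTurck modification — specifically, correctly identifying how $-dH_\varphi$ and $\frac74 C\, d(\tau_0\varphi)$ decompose under the $\rG_2$-irreducible splitting of $\Omega^3$, confirming the DeTurck term kills precisely the degenerate $\Omega^3_7$-type directions, and pinning down the role of $C < -\tfrac13$ in keeping the $\Omega^3_1$ (trace/$\tau_0$) direction parabolic; everything downstream (pulling back, preservation of the constraint, uniqueness) is then routine.
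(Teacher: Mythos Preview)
Your broad strategy (gauge-fix, get ellipticity of the symbol, apply parabolic theory, undo the gauge) matches the paper's, but two concrete points are wrong and would make the argument fail as written.

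First, the standard DeTurck vector field $W^k = g^{ij}(\Gamma^k_{ij}-\bar\Gamma^k_{ij})$ does not cure the degeneracy you identify. That vector field depends, to leading order, only on the \emph{symmetric} variation $h$ of the metric (equivalently the $\Omega^3_{1+27}$ part of $\dot\varphi$), so $\mathcal{L}_W\varphi$ contributes nothing second-order in the $\Omega^3_7$ variation $X$. The principal part of the $\Omega^3_7$ evolution remains proportional to $(1+3C)\,d(\Div X)$, whose symbol $(1+3C)\langle X,\xi\rangle\,\xi$ is rank one and hence never positive definite. The paper instead first conformally rescales to a genuinely coclosed $\tilde\psi$ and then adds a Lie derivative along a \emph{non-standard} vector field $Z = e^{2\phi}\bigl(-2\,\mathrm{curl}_{\tilde\varphi}X + \tfrac34\tilde\nabla(\tr h) - \tilde\nabla f\bigr)$; the $\mathrm{curl}_{\tilde\varphi}X$ term is what produces a $\Delta X$ in the $7$-component. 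After this, the $7$-symbol becomes $|\xi|^2 X - (C+\tfrac43)\langle X,\xi\rangle\,\xi$, which is positive precisely for $C < -\tfrac13$. So you have both the location of the $C$-condition wrong (it governs $\Omega^3_7$, not $\Omega^3_1$) and the DeTurck vector field wrong.

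Second, even with the correct gauge-fixing, the paper does \emph{not} obtain strict parabolicity on all of $\Omega^4$: its symbol computation explicitly uses that the variation $\chi$ of $\tilde\psi$ is closed (via the relation $\pi_7(d\chi)=0$, which ties $\mathrm{curl}_{\tilde\varphi}X$ to $\Div h$ and $\nabla\tr h$). Thus the linearized operator is only elliptic on $\ker\sigma(d)$, and the argument runs through Hamilton's Nash--Moser theorem with integrability condition $L(\phi,\chi)=d\chi$, not through standard quasilinear parabolic theory. Your claim of strict parabolicity on all of $\Omega^3_+$ would need independent justification, and the calculation above suggests it is not available. Your observations on preservation of $d(e^{-4\phi}\psi)=0$ and the uniqueness argument via pulling back by the gauge diffeomorphism are correct once the existence step is in place.
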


\noindent We note that our flow can be equivalently expressed as a coupled flow of coclosed $\rG_2$-structures and dilaton (see Lemma \ref{lem:confCbeta}). By our characterization of the fixed points in Proposition \ref{prop:weakG2} and Proposition \ref{prop:weakG2functional}, we have thus established in the affirmative a folklore question in the special holonomy community, about the existence of a well-posed flow for coclosed $\rG_2$-structures with meaningful fixed points and a well-behaved short-time existence theory. 

We now state our monotonicity result for the $\rG_2$ dilaton functional along generic heterotic $\rG_2$ flow. By choosing $\gamma, \sigma$ sufficiently negative, e.g.\ $\gamma = \sigma = -2$, the $\rG_2$ dilaton functional is monotone along the heterotic $\rG_2$ flow ~\eqref{eq:heteroticG2flowintro}.

\begin{thm}[cf.\ Theorem \ref{thm: monotonicity funct}]\label{t:MonotonicityIntro}
      Let $(\varphi_t, \phi_t)$ be a solution to the generic heterotic $\rG_2$ flow ~\eqref{eq: mod_anomaly_flowCbetaintro} on a compact manifold $M$ with initial condition $(\varphi_0, \phi_0)$ satisfying $d(e^{-4\phi_0} \psi_0)=0$. Then, for all $t$,
$$
\frac{\partial}{\partial t}\mathcal{M}(\varphi_t,\phi_t) =\int_M \left(-(3\gamma + 3) |d\phi|^2 + |\tau_3|^2 + 3\left(\frac{7(21 C - 2)}{144} - \sigma\right)\tau_0^2 \right)\operatorname{Vol}_\varphi .
$$
Provided that 
$$
\gamma < -1, \qquad \sigma < 
 \frac{7(21 C - 2)}{144},
$$
one has $\frac{\partial}{\partial t}\mathcal{M}(\varphi_t,\phi_t) \geqslant 0$, with equality only if $\varphi_t$ is torsion-free and $\phi_t$ is constant.
\end{thm}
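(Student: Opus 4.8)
The plan is to compute $\frac{\partial}{\partial t}\mathcal{M}$ directly from the evolution equations \eqref{eq: mod_anomaly_flowCbetaintro}, relying on the fact that the conformally coclosed condition $d(e^{-4\phi_t}\psi_t) = 0$ is preserved. Write $\mathcal{M} = \int_M e^{-4\phi}\Vol_\varphi$. The time derivative splits into a term coming from the variation of the dilaton weight $e^{-4\phi}$ and a term coming from the variation of the volume form $\Vol_\varphi$, which in turn is governed by $\frac{\partial}{\partial t}\varphi$. First I would recall the standard identity relating $\frac{\partial}{\partial t}\Vol_\varphi$ to the $\tau_0\varphi$ and $\tau_1\lrcorner\psi$ components of $\frac{\partial}{\partial t}\varphi$ (only the part of the variation proportional to $\varphi$, i.e.\ the trace part, contributes to the volume); since the flow is naturally written for $e^{-4\phi}\psi$ rather than $\varphi$ directly, I would first extract $\frac{\partial}{\partial t}\varphi$ from $\frac{\partial}{\partial t}(e^{-4\phi}\psi) = -dH_\varphi + \frac{7}{4}C\,d(\tau_0\varphi)$ using the relation $\frac{\partial}{\partial t}\psi = \ast_\varphi \frac{\partial}{\partial t}\varphi + (\text{terms from } \frac{\partial}{\partial t}\ast_\varphi)$ together with the $\frac{\partial}{\partial t}\phi$ equation. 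This is the bookkeeping-heavy step: one must carefully track which $\rG_2$-irreducible components of the various three- and four-forms survive after pairing and integrating.

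The key structural input is that $dH_\varphi$ and $d(\tau_0\varphi)$ are \emph{exact}, so when we pair $\frac{\partial}{\partial t}(e^{-4\phi}\psi)$ against the closed form $e^{-4\phi}\psi$ (or its dual $\varphi$) and integrate by parts using $d(e^{-4\phi}\psi) = 0$, many terms drop. Concretely, I expect $\int_M \langle -dH_\varphi, \cdot\rangle$ to produce, after integration by parts, a term like $\int_M |H_\varphi|^2$-type expressions evaluated through the torsion decomposition $H_\varphi = \frac16\tau_0\varphi - \tau_3$ (recall $\tau_1 = 0$ and $\tau_2 = 0$ on conformally coclosed integrable structures), giving contributions proportional to $\tau_0^2$ and $|\tau_3|^2$. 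The $\frac{\partial}{\partial t}\phi$ equation then contributes $\int_M e^{-4\phi}\cdot(-4)\cdot e^{4\phi}(\Delta\phi - \gamma|d\phi|^2 + \frac14|\tau_3|^2 - \sigma\tau_0^2)\Vol_\varphi$; the $\Delta\phi$ term integrates to zero against the constant weight, and integrating $|d\phi|^2$ against $e^{-4\phi}\Vol_\varphi$ — or rather the cross terms between the dilaton variation and the volume variation — is what generates the $-(3\gamma+3)|d\phi|^2$ coefficient after combining. I would organize the computation so that all $|d\phi|^2$, $|\tau_3|^2$, and $\tau_0^2$ contributions are collected separately, matching the stated coefficients $-(3\gamma+3)$, $+1$, and $3\big(\tfrac{7(21C-2)}{144} - \sigma\big)$.

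Once the formula for $\frac{\partial}{\partial t}\mathcal{M}$ is established, the sign statement is immediate: under $\gamma < -1$ we have $-(3\gamma+3) > 0$, under $\sigma < \tfrac{7(21C-2)}{144}$ the last coefficient is positive, and $|\tau_3|^2 \geq 0$ always, so the integrand is nonnegative and $\frac{\partial}{\partial t}\mathcal{M} \geq 0$. For the equality case: $\frac{\partial}{\partial t}\mathcal{M} = 0$ forces $d\phi \equiv 0$ (so $\phi$ is constant), $\tau_3 \equiv 0$, and $\tau_0 \equiv 0$ pointwise; since the flow line is integrable ($\tau_2 = 0$) and conformally coclosed with constant $\phi$ forces $\tau_1 = 0$ as well, all four torsion classes vanish, i.e.\ $\varphi_t$ is torsion-free.

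The main obstacle I anticipate is the first step: correctly passing from the $e^{-4\phi}\psi$ formulation to $\frac{\partial}{\partial t}\varphi$ and isolating the trace (i.e.\ $\tau_0$) part that feeds the volume form, while simultaneously handling the $\phi$-variation — the two are entangled precisely because the flow is posed for the conformally weighted four-form. Getting the numerical coefficient $\tfrac{7(21C-2)}{144}$ exactly right requires care with the normalizations in $H_\varphi = \frac16\tau_0\varphi - \tau_1\lrcorner\psi - \tau_3$, the constants relating $d\varphi$, $d\psi$ to the torsion classes, and the factor $\frac74 C$ in the flow; I would double-check this against the $C = -4/3$ specialization, where the coefficient should reduce to $\tfrac{7(21(-4/3)-2)}{144} = \tfrac{7(-30)}{144} = -\tfrac{35}{24}$, consistent with the requirement $\sigma < -\tfrac{35}{24}$ being implied by the choice $\sigma = -2$ mentioned before the theorem.
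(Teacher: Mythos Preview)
There is a factual error that would derail your computation: you claim $\tau_1 = 0$ for conformally coclosed integrable structures. This is wrong. The condition $d(e^{-4\phi}\psi) = 0$ gives $\tau_2 = 0$ and $\tau_1 = d\phi$, not $\tau_1 = 0$ (expand \eqref{eq:d:varpsi} with the conformal weight). The term $-\tau_1\lrcorner\psi = -d\phi\lrcorner\psi$ in $H_\varphi$ is precisely what couples the dilaton gradient to the torsion; omitting it makes the $|d\phi|^2$ bookkeeping impossible to close, since the $36|d\phi|^2$ contribution to $|d\varphi|^2$ and the $\nabla\phi\lrcorner\psi$ piece of $H_\varphi$ are both lost. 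Your equality-case argument is also affected: the correct logic is that $\tau_1 = d\phi$ throughout, so $d\phi = 0$ \emph{is} $\tau_1 = 0$ directly, not that one forces the other as a separate step.

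The paper also takes a different route than your proposed direct trace computation. It first establishes a general variation formula (Lemma \ref{lem:variation}) for $\mathcal{M}$ along the set $\mathcal{C}_\tau$ of pairs with fixed balanced class: if $\delta(e^{-4\phi}\psi) = dh$ then
\[
\delta\mathcal{M} \;=\; 3\int_M \dot\phi\, e^{-4\phi}\Vol_\varphi \;+\; \tfrac{1}{4}\int_M h\wedge d\varphi,
\]
derived via Hitchin's operator $J_7$ relating $\dot\varphi$ and $\dot\psi$. It then pulls the flow back by the diffeomorphisms generated by $\nabla e^{4\phi}$ (harmless since $\mathcal{M}$ is diffeomorphism-invariant); after this gauge-fixing the three-form $h$ becomes $\star_\varphi d\varphi + \tfrac{7}{4}(C - \tfrac{2}{3})\tau_0\varphi$, using Lemma \ref{lem:H*dvarphi}. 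The integral $\tfrac{1}{4}\int h\wedge d\varphi$ then produces $\tfrac{1}{4}\int |d\varphi|^2\Vol_\varphi$ plus a $\tau_0^2$ correction, and the identity $|d\varphi|^2 = 7\tau_0^2 + 36|d\phi|^2 + |\tau_3|^2$ finishes the computation in a few lines. Your direct approach, once $\tau_1 = d\phi$ is restored, would instead require tracing the full metric evolution from Proposition \ref{p:evolutiongvarphiCbeta} and eliminating the resulting $R_g$ via Lemma \ref{lm: Ricci and scalar curv}; this can be made to work but is heavier.
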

The monotonicity of the $\rG_2$ dilaton functional provides some conceptual justification for the usage of the Grigorian-type modification term $\frac{7}{4}C d(\tau_0 \varphi)$, justified in previous work only on analytic grounds. We note that Theorem \ref{t:MonotonicityIntro} gives a characterization of the fixed points of the generic heterotic $\rG_2$ flow under the assumptions on $\gamma, \sigma, C$ (see Proposition~\ref{prop:weakG2functional}). 

The next main result establishes smoothing estimates for the heterotic $\rG_2$ flow \eqref{eq:heteroticG2flowintro} under natural bounds. We should emphasize that these estimates are only valid for the flow \eqref{eq: mod_anomaly_flowCbeta} with the specific value of the parameter $C = -4/3$, which motivates our definition of the heterotic $\rG_2$ flow \eqref{eq:heteroticG2flowintro}. The choice of $C$ gets rid of a problematic second-order term in the evolution of the torsion (see Proposition \ref{proposition evolution nabla^k T} and ~\eqref{eqn:evolution T anomaly 2}).

\begin{thm}[cf.\ Theorem \ref{t:maththm2bulk}] \label{t:maththm2intro}
    Let $(\varphi_t, \phi_t)$ be a solution to the heterotic $\rG_2$ flow ~\eqref{eq:heteroticG2flowintro} with initial condition $(\varphi_0, \phi_0)$ satisfying $d(e^{-4\phi_0} \psi_0)=0$. Let $B_r(p)$ be a ball of radius $r$ around $p \in M$ with respect to the metric $g_{\varphi_0}$. Suppose that there exist $t_0, \Lambda>0$ such that
\begin{align*}
        \left|\Rm\right| + |T|^2 + |\nabla T| + |\phi| < \Lambda
\end{align*}
on $B_r(p) \times [0, t_0]$.  Then, for each $k \geq 0$, there is a constant $C(k, t_0, \Lambda,\gamma,\sigma, r)$ such that
\begin{align*}
    \left|\nabla^{k+2}\phi\right| + \left| \nabla^k \Rm\right| + \left|\nabla^{k+1} T\right| < C(k, t_0, \Lambda, \gamma,\sigma, r)
\end{align*}
on $B_{r/2}(p) \times [t_0/2, t_0]$.
\end{thm}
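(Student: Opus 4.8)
The plan is to follow the classical Bernstein--Bando--Shi strategy, adapted to the coupled system for $(\varphi_t,\phi_t)$. First I would localize: fix a cutoff function $\eta$ supported in $B_r(p)$, equal to $1$ on $B_{r/2}(p)$, with the standard bounds $|\nabla \eta|^2 \lesssim r^{-2}\eta$ and $|\nabla^2 \eta| \lesssim r^{-2}$ with respect to $g_{\varphi_0}$; using the hypothesis $|\Rm| + |T|^2 + |\nabla T| + |\phi| < \Lambda$ on $B_r(p) \times [0,t_0]$, the metrics $g_{\varphi_t}$ are all uniformly equivalent to $g_{\varphi_0}$ on the parabolic cylinder (the torsion bound controls $\partial_t g$ pointwise, since the evolution of $g_\varphi$ is an algebraic expression in $\nabla \varphi, \partial_t \varphi$, hence in $T$ and $\nabla T$), so the cutoff estimates and the ball geometry are preserved up to a controlled constant. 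Then I would invoke the evolution equations from Proposition \ref{proposition evolution nabla^k T} and the accompanying computation ~\eqref{eqn:evolution T anomaly 2}: for $C = -4/3$, the troublesome second-order term in the evolution of $T$ cancels, and one obtains a reaction-diffusion inequality of the schematic form
\begin{align*}
\left(\partial_t - e^{4\phi}\Delta\right)|\nabla^k T|^2 &\leq -2e^{4\phi}|\nabla^{k+1} T|^2 + e^{4\phi}\, P_k\big(\Rm, \nabla\Rm, \dots, \nabla^{k-1}\Rm,\ T,\dots,\nabla^k T,\ \nabla\phi, \dots, \nabla^{k+2}\phi\big),
\end{align*}
where $P_k$ is a universal polynomial, together with an analogous inequality for $|\nabla^k \Rm|^2$ (the curvature evolves by a Ricci-type equation with lower-order corrections coming from $T$ and $\phi$), and for $|\nabla^{k+2}\phi|^2$ coming from differentiating the dilaton equation in ~\eqref{eq:heteroticG2flowintro}.

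Next I would run the induction on $k$. The base case $k = 0$ is the hypothesis (together with a short argument bounding $|\nabla^2 \phi|$ from the $\phi$-equation, the $|\Delta\phi|$ part being algebraically controlled once we also bound $|\nabla\phi|$ — this interior gradient bound for $\phi$ follows from a standard Bernstein argument applied to $e^{4\phi}\Delta$, using the structure of the dilaton equation and the given $C^0$ bound on $\phi$ and on $|\tau_3|^2, \tau_0^2 \lesssim |T|^2$). For the inductive step, assume the desired bounds through order $k-1$; then all terms in $P_k$ except those involving $\nabla^{k+1}\phi, \nabla^{k+2}\phi, \nabla^k T, \nabla^k \Rm$ are already bounded. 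Following Shi, I would form the combined quantity
\begin{align*}
F = t\Big( |\nabla^k \Rm|^2 + |\nabla^{k+1} T|^2 + |\nabla^{k+2}\phi|^2 \Big) + A\Big( |\nabla^{k-1}\Rm|^2 + |\nabla^k T|^2 + |\nabla^{k+1}\phi|^2 \Big),
\end{align*}
multiplied by an appropriate power of the cutoff $\eta$, for a large constant $A$ depending on $\Lambda$ and the lower-order bounds. Computing $(\partial_t - e^{4\phi}\Delta)$ of $\eta^{2N} F$ and applying the maximum principle at an interior spacetime maximum, the good negative gradient terms $-e^{4\phi}|\nabla^{k+1}\Rm|^2$ etc.\ absorb the cross terms generated by $\Delta\eta$ and by the products in $P_k$ (Young's inequality, exactly as in the Riemannian Ricci flow case), and the factor $e^{4\phi}$ is two-sided bounded by the $C^0$ bound on $\phi$; one concludes $\eta^{2N} F \leq C(k,t_0,\Lambda,\gamma,\sigma,r)$ on $[0,t_0]$, which at time $t_0/2$ on $B_{r/2}(p)$ gives the stated estimate (and in fact on all of $[t_0/2,t_0]$ by running the argument with $t$ replaced by $t - t_0/4$, or simply by noting the bound holds on the whole interval once $t$ is bounded below).

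The main obstacle I anticipate is bookkeeping the precise structure of the reaction terms $P_k$ so that the cancellation of the bad second-order term at $C=-4/3$ is genuinely used and no hidden $\nabla^{k+1}T$-times-$\nabla^{k+1}T$ or $\nabla^{k+2}\phi$-times-$\nabla^{k+2}\phi$ term survives with the wrong sign; this is exactly the point flagged in the text after Proposition \ref{proposition evolution nabla^k T}. A secondary subtlety is the coupling: the $\nabla^{k+2}\phi$ evolution feeds into the $\nabla^{k+1}T$ and $\nabla^k\Rm$ equations and vice versa, so the three inequalities must be combined \emph{simultaneously} (rather than solved one at a time) into the single quantity $F$ above, with $A$ chosen large enough to dominate all the cross-couplings at once. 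Everything else — the cutoff manipulations, the uniform equivalence of metrics, and the maximum principle application — is routine and parallels Shi's original argument and its adaptation to flows of $\rG_2$-structures (cf.\ the references on the Laplacian flow in the introduction).
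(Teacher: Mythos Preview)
Your overall Shi-type strategy is sound, but you have missed the key structural observation that the paper exploits, and this creates a genuine gap in your metric-equivalence step. The conformally coclosed condition $d(e^{-4\phi}\psi)=0$ forces $\tau_1 = d\phi$, so $d\phi\lrcorner\varphi$ is literally the $7$-component of $T$; hence $\nabla\phi = T\ast\varphi$ and more generally $\nabla^k\phi = P(k)$ is an algebraic expression in $T,\nabla T,\dots,\nabla^{k-1}T$ (and $\varphi,\psi$). The paper uses this at the outset (equations \eqref{eqn:nablaDil1}--\eqref{eqn:DerivativesDilaton}) to eliminate $\phi$-derivatives entirely from the analysis: the system collapses from three coupled quantities $(\phi,T,\Rm)$ to just two $(T,\Rm)$, and the hypothesis $|T|^2+|\nabla T|<\Lambda$ already bounds $|\nabla\phi|$ and $|\nabla^2\phi|$ pointwise. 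Your claim that ``the torsion bound controls $\partial_t g$ pointwise'' is not justified without this: the evolution of $g$ in Proposition~\ref{p:evolutiongvarphiCbeta} contains $-4\cL_{d\phi}g = -8\nabla^2\phi$, so $|\partial_t g|$ is \emph{not} algebraic in $T,\nabla T$ alone unless you invoke $\nabla^2\phi=\nabla T\ast\varphi+T^2\ast\psi$. Your proposed workaround---a preliminary Bernstein argument for $|\nabla\phi|,|\nabla^2\phi|$ from the scalar dilaton equation---is circular, since that Bernstein argument itself needs the time-dependent Laplacian, hence metric equivalence, hence $|\nabla^2\phi|$.

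Beyond this, the paper's execution differs in that it follows G.~Chen's framework with the \emph{product} quantity $Q=(|\Rm|^2+|T|^4+|\nabla T|^2+\mu)(|\nabla\Rm|^2+|\nabla^2 T|^2)$ and a barrier $W=\nu/\chi^2 + 1/(C_1 t)+\sqrt{C_2/C_1}$, deriving $(\partial_t-\Delta)Q\le -C_1Q^2+C_2$ and comparing $Q$ against $W$, rather than your additive form $t(\cdot)+A(\cdot)$ multiplied by $\eta^{2N}$. Both variants are standard and essentially equivalent once the dilaton is absorbed; your additive version would also close. The substantive point is the reduction via $\tau_1=d\phi$: once you use it, your ``secondary subtlety'' about three-way coupling disappears, the base case is immediate, and the induction matches the paper's.
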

As a consequence of the analytic theory we develop, together with the monotonicity of the $\rG_2$-dilaton functional, we obtain a long-time existence and convergence result for certain nonsingular solutions:
\begin{cor} [cf. Corollary \ref{c:nonsingularbulk}] \label{c:nonsingularintro} 
    Let $(\varphi_t, \phi_t)$ be a solution to the heterotic $\rG_2$ flow ~\eqref{eq:heteroticG2flowintro} on a compact manifold $M$ with initial condition $(\varphi_0, \phi_0)$ satisfying $d(e^{-4\phi_0} \psi_0)=0$. Assume furthermore that there exists a constant $\Lambda > 0$ such that for all $t > 0$ for which the flow exists,
    \begin{align*}
        \max \{ \inj^{-1}_{g}, \diam(g), \left|\Rm\right|, |T|^2, |\nabla T|, |\phi| \} < \Lambda.
    \end{align*}
    Then the flow exists on $[0,\infty)$, and for any sequence $\{t_k\} \to \infty$ there exists a subsequence such that $(\varphi_{t_{k_j}}, \phi_{t_{k_j}})$ converges to $(\varphi_{\infty}, \phi_{\infty})$ where $\varphi_{\infty}$ is a torsion-free $\rG_2$-structure and $\phi_{\infty}$ is constant.
\end{cor}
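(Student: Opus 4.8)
The plan is to combine the long-time existence criterion coming from the Shi-type estimates (Theorem~\ref{t:maththm2intro}) with the monotonicity of the $\rG_2$-dilaton functional (Theorem~\ref{t:MonotonicityIntro}) and a standard Cheeger--Gromov compactness argument. First, I would establish long-time existence: by Theorem~\ref{t:mainthm1} the flow exists on some maximal interval $[0,T^*)$, and the conformally coclosed condition $d(e^{-4\phi_t}\psi_t) = 0$ persists. On any such interval the uniform bound on $|\Rm| + |T|^2 + |\nabla T| + |\phi|$ feeds into Theorem~\ref{t:maththm2intro} (applied with, say, $r$ a fixed multiple of the injectivity radius lower bound, which is uniformly controlled by hypothesis), yielding uniform bounds on $|\nabla^k \Rm|$, $|\nabla^{k+1} T|$, and $|\nabla^{k+2}\phi|$ on later time slices for every $k$. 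Together with the lower injectivity radius bound and the fact that the metrics $g_{\varphi_t}$ are uniformly equivalent (since $|T|$ and $|\phi|$ are bounded and the evolution of $g_\varphi$ is governed by the torsion and dilaton, so $\partial_t g_\varphi$ is bounded), these bounds prevent singularity formation. The standard argument---if $T^* < \infty$ then some derivative of curvature must blow up---then forces $T^* = \infty$.

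Second, I would extract a convergent subsequence. Given $\{t_k\} \to \infty$, the uniform $C^\infty$ bounds on $(g_{\varphi_{t_k}}, \varphi_{t_k}, \phi_{t_k})$ together with $\inj_{g} \geq \Lambda^{-1}$ and $\diam(g) \leq \Lambda$ allow an application of the Cheeger--Gromov--Arzel\`a--Ascoli compactness theorem: after passing to a subsequence and pulling back by diffeomorphisms, $(\varphi_{t_{k_j}}, \phi_{t_{k_j}})$ converges in $C^\infty$ to a limit $(\varphi_\infty, \phi_\infty)$ on $M$ (the uniform diameter bound and compactness of $M$ mean no noncompact limit or collapsing can occur, so the limit is a genuine $\rG_2$-structure on a manifold diffeomorphic to $M$). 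It remains to show the limit is a fixed point. For this I would use the monotonicity formula: $\mathcal{M}(\varphi_t, \phi_t)$ is monotone nondecreasing and, since the metrics are uniformly controlled and $M$ is compact, $\mathcal{M}$ is uniformly bounded above, hence $\mathcal{M}(\varphi_t,\phi_t) \to \mathcal{M}_\infty$ as $t \to \infty$. Consequently
\[
\int_0^\infty \left( -(3\gamma+3)|d\phi_t|^2 + |\tau_{3,t}|^2 + 3\left(\tfrac{7(21C-2)}{144} - \sigma\right)\tau_{0,t}^2 \right) \operatorname{Vol}_{\varphi_t}\, dt < \infty.
\]
Since (for $C = -4/3$ and the chosen $\gamma = \sigma = -2$) the integrand is a nonnegative quantity bounded below by $c(|d\phi_t|^2 + |\tau_{3,t}|^2 + \tau_{0,t}^2)$, and since the time-derivative of this integrand is itself uniformly bounded (again by the Shi-type estimates, which control all derivatives of the torsion and of $\phi$), the integrand must tend to zero uniformly in space as $t \to \infty$. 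In the limit this gives $d\phi_\infty = 0$, $\tau_{3,\infty} = 0$, $\tau_{0,\infty} = 0$; combined with $\tau_2 = 0$ (preserved along the flow) and $\tau_1$ controlled by $d\phi$ via the conformally coclosed condition, all torsion classes of $\varphi_\infty$ vanish, so $\varphi_\infty$ is torsion-free and $\phi_\infty$ is constant.

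The main obstacle I anticipate is the justification, at the level of the limit, that all torsion classes vanish---in particular checking that $\tau_1 = 0$ follows from $d\phi_\infty = 0$ under the conformally coclosed normalization, and carefully tracking which combination of $(\gamma, \sigma, C)$ makes the monotonicity integrand coercive in \emph{all} the relevant torsion components (this is exactly the role of the hypothesis $\gamma < -1$, $\sigma < \tfrac{7(21C-2)}{144}$, instantiated at $C = -4/3$). A secondary technical point is confirming that the hypotheses of Theorem~\ref{t:maththm2intro} can be applied with a \emph{fixed} radius $r$ uniformly in $t_0$: since $\inj_g \geq \Lambda^{-1}$ for all time, one may take $r = r(\Lambda)$ fixed and apply the theorem on successive unit time intervals $[j, j+1]$, absorbing the dependence on $t_0$ into a constant depending only on $\Lambda$, $\gamma$, $\sigma$, and $k$; the uniform metric equivalence guarantees the balls $B_r(p)$ remain of comparable size with respect to all $g_{\varphi_t}$. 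Once these points are handled, the convergence argument is routine.
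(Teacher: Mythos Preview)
Your proposal is correct and follows essentially the same approach as the paper's (deliberately brief) proof: Shi-type estimates plus short-time existence for long-time existence, Cheeger--Gromov/Hamilton compactness for subsequential convergence, and the monotonicity of $\mathcal{M}$ to identify the limit as a fixed point. You supply more detail than the paper's sketch (in particular the argument that the bounded, monotone $\mathcal{M}$ forces the integrand in Theorem~\ref{t:MonotonicityIntro} to vanish in the limit, and the observation that $\tau_1 = d\phi$ along conformally coclosed flow lines so $d\phi_\infty = 0$ kills the last torsion component), but the strategy is the same.
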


In Section \ref{sec:dimred}, we analyze a circle symmetry reduction of the generic heterotic $\rG_2$ flow \eqref{eq: mod_anomaly_flowCbetaintro}, finding an interesting relation with the \emph{anomaly flow} of Phong, Picard, Zhang \cite{PPZ1,PPZ2} for conformally coclosed integrable $\operatorname{SU}(3)$-structures and the \emph{Type IIA flow} \cite{IIAflow}.  The nomenclature for these flows arises from the relationship to the \emph{Green-Schwarz anomaly cancellation mechanism} in string theory.  For instance, a circle-invariant family of $\rG_2$-structures
\begin{equation*}
\varphi = \omega\wedge\theta+\rho_+
\end{equation*}
satisfies the generic heterotic $\rG_2$ flow \eqref{eq: mod_anomaly_flowCbetaintro} if and only if the corresponding $\operatorname{SU}(3)$-structure $(\omega,\rho_+,\rho_-)$ and the six-dimensional dilaton $\phi$ satisfy the following evolution equations 
\begin{align}\nonumber
\partial_t \left(e^{-4\phi}\frac{\omega^2}{2}\right)=& \ dd^c\omega + d \hat{N} + 2Cd\left(\pi_0\rho_+\right),\\\label{eq:6dflowintro}
        \partial_t\left(e^{-4\phi}\rho_- \right)=& \ d\left(\omega\lrcorner d\rho_+\right)+2\left(C-\frac{2}{3}\right)d(\pi_0\omega)-8d(\nabla\phi\lrcorner\rho_-).\\ \nonumber
        \partial_t\phi=& \ e^{4\phi}\left(\Delta\phi+(4-\gamma)|d\phi|^2+\frac14|H_\omega|^2+\frac14|\pi_2|^2-\left(\frac{28}{63}+\sigma\right)\pi_0^2\right),
\end{align}
where $H_\omega = - d^c\omega - \hat{N}$, for $d^c\omega=-Jd\omega$ and $\hat{N}$ the totally skew-symmetric part of the Nijenhuis tensor, and the tensors $\pi_i$ are components of the Chiossi--Salamon decomposition of torsion \cite{chiossi2002}.  For simplicity, in this introduction we have stated the formula \eqref{eq:6dflowintro} in the case of a fixed trivial connection $\theta$; a general formula, including the variation of $\theta$, can be found in Proposition \ref{prop:S1reducedflow}. The previous coupled system of evolution equations provides a formal \emph{interpolation} between the aforementioned flows of $\operatorname{SU}(3)$-structures (see Remark \ref{rem:interpolation}). This suggests natural modifications of the anomaly flow and the Type IIA flow, where the dilaton has an independent evolution equation, which may prevent the formation of finite-time singularities (cf. \cite[Section 5.4]{Picard2024}). We also speculate on its salient implications in the context of $T$-duality and non-K\"ahler mirror symmetry (cf.\ \cite{Fei:2023}).

\medskip
\noindent\textbf{Acknowledgments:} The authors wish to thank  Mark Haskins, Jason Lotay, Carl Tipler, and Luigi Vezzoni for helpful conversations.  After the main results of this work were completed, we became aware of the overlapping work of Karigiannis, Picard, and Suan \cite{karigiannis2025flows}.

\section{Preliminaries in \texorpdfstring{$\rG_2$}{G2}-geometry}
\subsection{Linear \texorpdfstring{$\rG_2$}{G2}-geometry}

In this section, we recall some facts on $\rG_2$-structures and their torsion, mainly from \cite{Bryant2006, KarigiannisFlowsI}.  Let $M$ be an oriented, spin manifold of dimension seven. A  differential $3$-form $\varphi$ on $M$ is called a \emph{$\rG_2$-structure}, if for each $x\in M$ there is an invertible map  $u:T_xM\rightarrow \bR^7$, such that $u^*\varphi_0=\varphi_x$ where
\begin{equation*}
\varphi_0=e^{127}+e^{347}+e^{567}+e^{135}-e^{146}-e^{236}-e^{245},
\end{equation*}
with $e^{ijk}:=e^i\wedge e^j \wedge e^k$ and $\{e^1,...,e^7\}$ is the dual basis of the standard basis of $\bR^7$. The space of $\rG_2$-structures will be denoted by $\Omega^3_+$. The presence of a $\rG_2$-structure uniquely determines a Riemannian metric $g=g_\varphi$ and a volume form $\vol=\vol_\varphi = \vol_{g_{\varphi}}$ on $M$, by 
\begin{equation*}
    (X\lrcorner \varphi)\wedge (Y\lrcorner \varphi)\wedge\varphi=6g(X,Y)\vol, \qforq X,Y\in \sX(M).
\end{equation*}
Hence $\varphi$ induces the Levi-Civita connection $\nabla$, a Hodge star operator $\star_\varphi=\star$, and we denote the Hodge dual $4$-form $\psi=\star\varphi\in \Omega^4$. 

To fix conventions we record certain algebraic identities for a $\rG_2$-structure \cite{Bryant,KarigiannisFlowsI}:
\begin{align}
\label{eq: varphi1 varphi1}
    \varphi_{abc}\varphi_{ijc}=g_{ai}g_{bj}-g_{aj}g_{bi}+\psi_{abij},\\
    \varphi_{abc}\varphi_{abc}=42, \quad 
    \varphi_{abc}\varphi_{ibc}=6g_{ai}. \label{eq: varphi2 varphi2}
\end{align}
\begin{align}
\psi_{abcd}\psi_{abmn}
    &=  4g_{cm}g_{dn}-4g_{cn}g_{dm}+2\psi_{cdmn},\label{eq: psi2 psi2}\\
    \psi_{abcd}\psi_{abcd}
    &=168, \quad 
    \psi_{abcd}\psi_{mbcd}
    = 24g_{am}\label{eq: psi3 psi3}.
\end{align}
\begin{align}
    \varphi_{ipq}\psi_{ijkl}
    &=  g_{pj}\varphi_{qkl}-g_{jq}\varphi_{pkl}+g_{pk}\varphi_{jql}  -g_{kq}\varphi_{jpl}+g_{pl}\varphi_{jkq}-g_{lq}\varphi_{jkp}.
    \label{eq: varphi1 psi1}
\end{align}

The action of $\rG_2$ on the space of differential forms $\Omega^k$ determines irreducible representations of $\rG_2$, e.g.
\begin{equation*}
    \Omega^4=\Omega^4_1\oplus \Omega_7^4\oplus \Omega^4_{27}, \qandq \Omega^5=\Omega^5_7\oplus \Omega^5_{14},
    \end{equation*}
where 
\begin{align*}
    \Omega^5_7=&\ \{\beta\in \Omega^5: \quad \star(\varphi\wedge\star\beta)=2\beta\}=\{X^\sharp \wedge \psi: \quad X\in\sX(M)\},\\
    \Omega^5_{14}=&\ \{\beta\in \Omega^5: \quad \star(\varphi\wedge\star\beta)=-\beta\}=\{\beta\in \Omega^5: \quad \psi\wedge\star\beta=0\},\\
    \Omega^4_1=&\ \{f\psi\in \Omega^4: \quad f\in C^\infty(M,\bR)\},\\
    \Omega^4_7=&\ \{X^\sharp\wedge\varphi\in\Omega^4: \quad X\in\sX(M)\},\\
    \Omega^4_{27}=&\ \{\gamma\in\Omega^4: \quad \varphi\wedge \star\gamma=0 \qandq \psi\wedge \star\gamma=0\}.
\end{align*}
The respective decompositions of $\Omega^2$ and $\Omega^3$ are given by the Hodge star isomorphism. 

The $\GL(\bR^7)$-infinitesimal action on $\Omega^k$ gives an alternative description of the space of three and four-forms: the maps $\cdot\diamond\varphi:\End(TM)\rightarrow\Omega^3$ and $ \cdot\diamond\psi:\End(TM)\rightarrow\Omega^4$ defined by
\begin{align}\label{eq: gl-action}
    A\diamond \varphi= \left.\frac{d}{dt} \right|_{t=0}(e^{tA})^*\varphi, \qandq A\diamond \psi= \left.\frac{d}{dt} \right|_{t=0}(e^{tA})^*\psi.
\end{align}
Using local coordinates $\{x_1,...,x_7\}$ on $M$, a $k$-form $\alpha$ can be written as
\begin{equation*}
    \alpha=\frac{1}{k!}\alpha_{i_1,...,i_k}dx^{i_1\cdots i_k}.
\end{equation*}
In coordinates the maps of \eqref{eq: gl-action} are given by
\begin{equation}\label{eq: diamond operator}
\begin{split}
(A\diamond\varphi)_{ijk}=&\ A_i^l\varphi_{ljk}+A_j^l\varphi_{ilk}+A_k^l\varphi_{ijl} \quad \text{where} \quad A_i^l=A_{im}g^{lm}\\
(A\diamond\psi)_{ijkl}=&\ A_i^m\psi_{mjkl}+A_j^m\psi_{imkl}+A_k^m\psi_{ijml}+A_l^m\psi_{ijkm}.
\end{split}
\end{equation}
Since the stabilizer of $\varphi$ and $\psi$ are $\rG_2$ and $\rG_2\times \mathbb{Z}_2$, respectively, then 
$$\ker(\cdot\diamond\varphi)=\ker(\cdot\diamond\psi)=\fg_2=\mathrm{Lie}(\rG_2)\simeq \Omega^2_{14}.
$$
The above together with the $\rG_2$-equivariance of \eqref{eq: gl-action} and Schur's lemma give the identifications 
\begin{equation*}
     \bR\cdot g\simeq\Omega^k_1, \quad \Omega^2_7\simeq\Omega^k_7, \quad S^2_0(TM)\simeq\Omega^k_{27}, \qforq k=3,4,
\end{equation*}
where $S^2_0(TM)$ denotes traceless symmetric $2$-tensors. The next lemma collects some properties of \eqref{eq: gl-action}:

\begin{lemma}\cite[Eq. (2.20), Cor. 2.30, 2.38]{DwivediGianniotisKarigiannis}
    Let $A\in \End(TM)$ and $\varphi$ a $\rG_2$-structure on $M$. Denote the $\rG_2$-decomposition of $A=A_1+A_{27}+A_7+A_{14}\in \bR\cdot g\oplus S^2_0(TM)\oplus \Omega_7^2\oplus\Omega^2_{14}$, then:
    \begin{enumerate}
        \item If $A_7=X\lrcorner \varphi$ for $X\in\sX(M)$ then 
        $$
        (X\lrcorner \varphi)\diamond\varphi=3X\lrcorner\psi \qandq (X\lrcorner\varphi)\diamond\psi=-3X\wedge\varphi.
        $$
        \item $A_{14}\diamond\varphi=0$ and $A_{14}\diamond\psi=0$.
        \item $A_1\diamond \varphi=\frac{3}{7}\tr(A)\varphi$ and $A_1\diamond\psi=\frac{4}{7}\tr(A)\psi$.
        \item $\star(A\diamond\varphi)=\left(\frac34 A_1-A_{27}+A_7\right)\diamond\psi$.
    \end{enumerate}
\end{lemma}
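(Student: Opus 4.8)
The plan is to verify the four identities by decomposing $A = A_1 + A_{27} + A_7 + A_{14}$ into its $\rG_2$-irreducible pieces and analyzing $\diamond$ on each, using the coordinate formulas \eqref{eq: diamond operator}, the contraction identities \eqref{eq: varphi1 varphi1}--\eqref{eq: varphi1 psi1}, and the $\rG_2$-equivariance of $\diamond$.

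Parts (2) and (3) are essentially immediate. For (2), since $\Omega^2_{14} \cong \fg_2 = \ker(\cdot\diamond\varphi) = \ker(\cdot\diamond\psi)$ (as recorded above, because $e^{tA_{14}} \in \rG_2$ fixes both $\varphi$ and $\psi$), differentiating at $t=0$ gives $A_{14}\diamond\varphi = A_{14}\diamond\psi = 0$. For (3), write $A_1 = \lambda\,\Id$, so that $A_{ij} = \lambda g_{ij}$ and, since $A_{27}, A_7, A_{14}$ are trace-free, $\tr A = \tr A_1 = 7\lambda$; inserting $A_{ij} = \lambda g_{ij}$ into \eqref{eq: diamond operator} gives $(A_1\diamond\varphi)_{ijk} = 3\lambda\varphi_{ijk}$ and $(A_1\diamond\psi)_{ijkl} = 4\lambda\psi_{ijkl}$, which are the asserted multiples of $\tr A$.

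For (1), put $A_{ij} = (X\lrcorner\varphi)_{ij} = X^m\varphi_{mij}$, so that $A_i{}^l = X^m\varphi_{mi}{}^l$. Substituting into the first line of \eqref{eq: diamond operator} and contracting each resulting product $\varphi\varphi$ by means of \eqref{eq: varphi1 varphi1}, one finds that the metric terms cancel in pairs while the three $\psi$-terms each reduce to $X^n\psi_{nijk}$ after using the skew-symmetry of $\psi$, so $(X\lrcorner\varphi)\diamond\varphi = 3\,X\lrcorner\psi$. The identity $(X\lrcorner\varphi)\diamond\psi = -3\,X\wedge\varphi$ is the analogous computation: substitute into the second line of \eqref{eq: diamond operator} and contract the products $\varphi\psi$ using \eqref{eq: varphi1 psi1}; again the metric terms reorganize and cancel, and the surviving $\varphi$-terms assemble into $-3(X\wedge\varphi)_{ijkl}$.

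Part (4) then follows by linearity from (1)--(3) together with one further computation. Both $A\mapsto\star(A\diamond\varphi)$ and $A\mapsto A\diamond\psi$ are $\rG_2$-equivariant linear maps $\End(TM)\to\Omega^4$ that annihilate $\fg_2 \cong \Omega^2_{14}$ by (2), hence factor through $\bR g \oplus S^2_0(TM) \oplus \Omega^2_7$; by Schur's lemma each acts as a scalar on the three irreducible summands, so it suffices to match $\star(A\diamond\varphi)$ with a multiple of $A\diamond\psi$ on one representative of each type. On $\bR g$ this is $\star(3\varphi) = 3\psi$ versus $\Id\diamond\psi = 4\psi$, giving the coefficient $\tfrac34$; on $\Omega^2_7$ it reduces by (1) to the standard Hodge relation $\star(X\lrcorner\psi) = -X\wedge\varphi$ (verifiable from the model $\varphi_0$ and its dual $\psi_0$, or from \eqref{eq: varphi1 psi1}), giving coefficient $1$; and on $\Omega^2_{14}$ both sides vanish. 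The remaining case $A_{27} = h \in S^2_0(TM)$ is the only genuinely new input and the main bookkeeping obstacle: one must show $\star(h\diamond\varphi) = -\,h\diamond\psi$, either by a direct index computation contracting \eqref{eq: psi2 psi2} and \eqref{eq: varphi1 psi1}, or simply by evaluating both sides for one fixed traceless symmetric $h$ --- e.g.\ $h = e^1\!\otimes e^1 - e^2\!\otimes e^2$ --- against the explicit $\varphi_0$ and $\psi_0$. Combining the four cases yields $\star(A\diamond\varphi) = \big(\tfrac34 A_1 - A_{27} + A_7\big)\diamond\psi$.
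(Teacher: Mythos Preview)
The paper does not give its own proof of this lemma; it simply quotes the result from \cite[Eq.~(2.20), Cor.~2.30, 2.38]{DwivediGianniotisKarigiannis} and moves on. So there is nothing to compare against directly.

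Your argument is correct and is essentially the standard one. Parts (2) and (3) are immediate as you say. Part (1) is the right computation: inserting $A_i{}^l = X^m\varphi_{mi}{}^l$ into \eqref{eq: diamond operator} and reducing each $\varphi\varphi$ or $\varphi\psi$ product via \eqref{eq: varphi1 varphi1} and \eqref{eq: varphi1 psi1} gives exactly the claimed forms, with the metric terms cancelling pairwise and the remaining $\psi$- (resp.\ $\varphi$-) terms adding up to $3X\lrcorner\psi$ (resp.\ $-3X\wedge\varphi$). Your treatment of (4) via Schur's lemma is clean: both $A\mapsto\star(A\diamond\varphi)$ and $A\mapsto A\diamond\psi$ are $\rG_2$-equivariant maps $\End(TM)\to\Omega^4$, so they act by scalars on each irreducible summand; the scalars on $\bR g$ and $\Omega^2_7$ are fixed by (1) and (3) together with $\star(X\lrcorner\psi)=-X^\flat\wedge\varphi$, and the vanishing on $\Omega^2_{14}$ is (2). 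The only genuine work left is the $S^2_0$ coefficient, and checking one explicit traceless symmetric $h$ against $\varphi_0,\psi_0$ is a legitimate (and quick) way to pin it down. You could note that the direct index verification of $\star(h\diamond\varphi)=-h\diamond\psi$ for symmetric trace-free $h$ is exactly what is done in the reference, so there is no circularity in appealing to that computation.
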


For later use, we consider the operator 
\begin{equation}\label{eq: map P}
  P: A=(A_{ij})\in \End(TM)\mapsto  A\lrcorner\psi=(A_{ab}\psi_{abij})\in \fso(TM).
\end{equation}
In local coordinates, for $\beta=\frac{1}{2}\beta_{ij}dx^{ij}$ and using the relation
$$
  \star(\alpha\wedge\gamma)=(-1)^k\alpha\lrcorner \star\gamma \qforq \alpha\in\Omega^1 \qandq \gamma\in \Omega^k,
$$
we obtain 
$$
\star(\beta\wedge\varphi)=\frac14\beta_{ij}\psi_{ijkl}dx^{kl}=\frac{1}{2}P\beta.
$$
In particular, 
\begin{align*}
    \beta\in\Omega^2_7 \quad \Leftrightarrow \quad P\beta=4\beta \qandq 
    \beta\in\Omega^2_{14} \quad \Leftrightarrow \quad P\beta=-2\beta.
\end{align*}
With respect to the induced metric on $\End(TM)$ (i.e.\ $\langle A,B\rangle=A_{ij}B_{ij}$), by \eqref{eq: psi2 psi2} we have
 \begin{equation}\label{eq: P norm}
     \langle PA,PB\rangle=4\langle A,B\rangle-4\langle A,B^t\rangle+2\langle A,PB\rangle.
\end{equation}

\subsection{Torsion of \texorpdfstring{$\rG_2$}{G2}-structures}

The first order differential invariant of a $\rG_2$-structure is given by $\nabla\varphi$, which is equivalently described by the so-called \emph{full torsion tensor} $T \in \End(TM)$ of $\varphi$, defined by
 \begin{equation}\label{eq: T=nabla_varphi}
 T_{ab}=\frac{1}{24}\nabla_a\varphi_{ijk}\psi_{bijk}, \quad (\text{equivalently}, \quad \nabla_a\varphi_{ijk}=T_{ab}\psi_{bijk}),
 \end{equation}
 and similarly
 \begin{align}\label{eq: nabla psi}
     \nabla_i\psi_{jklm}=-T_{ij}\varphi_{klm}+T_{ik}\varphi_{jlm}-T_{il}\varphi_{jkm}+T_{im}\varphi_{jkl}.
 \end{align}
 According to the $\rG_2$-irreducible decomposition $\End(TM)$, the tensor $T$ admits a decomposition into torsion components
\begin{equation}\label{eq: torsion components}
    T=T_1+T_{27}+T_7+T_{14},
\end{equation}
where $T_1+T_{27}\in \bR\cdot g\oplus S^2_0(TM)=S^2(TM)$ and $T_7+T_{14}\in  \bR^7\oplus \fg_2=\fs\fo(7)$ are the symmetric and skew-symmetric part of $T$, respectively. 

Similarly, the irreducible components of $d\varphi\in \Omega^4$ and $d\psi\in \Omega^5$ are determined by the (uniquely determined) \emph{torsion forms} $\tau_k\in \Omega^k$ (for $k=0,1,2,3$) as
\begin{align}
    \label{eq:d:varphi}
        d\varphi&=
        \tau_0\psi+3\tau_1\wedge \varphi+\star\tau_3,
        \\\label{eq:d:varpsi}
        d\psi&= 4\tau_1\wedge \psi+\tau_2\wedge \varphi.
\end{align}
The torsion tensor $T$ can be written explicitly in terms of the torsion forms $\tau_k$.
\begin{equation}\label{eq: full torsion tensor}
    T=\frac{\tau_0}{4}g-\tau_{27}-\tau_1\lrcorner\varphi-\frac{1}{2}\tau_2,
\end{equation} 
where $\tau_3=\tau_{27}\diamond\varphi$.

Using now the operator $P$ in \eqref{eq: map P}, we have
$PT=-4\tau_1\lrcorner\varphi+\tau_2$ and also that:
 \begin{gather}
 \begin{split}\label{eq:trT}
     \tau_0=&\ \frac{4}{7}\tr T,\\
     \tau_{27}=&\ -\frac{1}{2}(T+T^t)+\frac{1}{7}(\tr T)g,\\
     \tau_1\lrcorner\varphi=&\ -\frac{1}{6}(T-T^t)-\frac{1}{6}PT,\\ 
     \tau_2=&\ -\frac{2}{3}(T-T^t)+\frac{1}{3}PT.
 \end{split}
 \end{gather}
 With respect to the induced metric on $\End(TM)$, by \eqref{eq: psi2 psi2} and \eqref{eq: P norm} we have
\begin{align} \nonumber
    \tau_0^2=&\ \frac{16}{49}(\tr T)^2\\ \nonumber
     |\tau_{27}|^2=&\ \frac{1}{2}|T|^2+\frac12\langle T,T^t\rangle-\frac{1}{7}(\tr T)^2\\ \label{eq: norm torsion forms}
     |\tau_1\lrcorner\varphi|^2=&\ \frac{1}{6}|T|^2-\frac{1}{6}\langle T,T^t\rangle+\frac{1}{6}\langle T,PT\rangle\\ \nonumber
     |\tau_2|^2=&\ \frac{4}{3}|T|^2-\frac{4}{3}\langle T,T^t\rangle-\frac{2}{3}\langle T,PT\rangle. \nonumber
\end{align}
Here, the left-hand-side denotes the norm on $\End(TM)$ (e.g.\ $|\tau_1\lrcorner\varphi|^2=(\tau_1\lrcorner\varphi)_{ij}(\tau_1\lrcorner\varphi)_{ij}$). Finally, using \eqref{eq: varphi1 varphi1} and \eqref{eq: varphi2 varphi2}, we obtain
\begin{equation*}
    |\tau_1\lrcorner \varphi|^2=6|\tau_1|^2 \qandq |\tau_{27}|^2=\frac12|\tau_3|^2,
\end{equation*}
where the right-hand-side denotes the induced norm on $\Omega^k$, i.e.\
\begin{equation}\label{eq:normkform}
|\gamma|^2=\frac{1}{k!}\gamma_{i_1...i_k}\gamma_{i_1...i_k} \qforq \gamma\in \Omega^k.
\end{equation}

The second order differential invariants of a $\rG_2$-structure are given by $\nabla\nabla\varphi\in\Omega^1\otimes\Omega^1(M,\Omega^2_7)$, which is equivalent to $\nabla T\in\Omega^1(M,\End(TM))$. Similar to the decomposition of $T$ into $\rG_2$-irreducible components \eqref{eq: torsion components}, the decomposition of $\nabla T$ can be studied using representation theory \cite[$\S$ 4.2]{DwivediGianniotisKarigiannis}. However, using the Ricci identity for $\nabla_i\nabla_j\varphi$, together with \eqref{eq: T=nabla_varphi} and \eqref{eq: nabla psi} gives the so-called \emph{$\rG_2$-Bianchi identity}~\cite[Theorem 4.2]{KarigiannisFlowsI}:
\begin{equation}\label{eq: G2Bianchi_ident}
    \nabla_i T_{jk}-\nabla_j T_{ik}=\left(\frac12 R_{ijmn}- T_{im}T_{jn}\right)\varphi_{mnk}.
\end{equation}
Define the tensors $$
\curl T_{ab}=\nabla_pT_{aq}\varphi_{pqb}, \quad (T\lrcorner \varphi)_a=T_{bc}\varphi_{abc} \qandq (T_\sym)_{ab}=\frac12\left(T_{ab}+T_{ba}\right).
$$

As a first consequence of \eqref{eq: G2Bianchi_ident}, we obtain expressions for the Ricci tensor and the scalar curvature of $\varphi$:

\begin{lemma}\cite[Prop 4.15]{KarigiannisFlowsI}\label{lm: Ricci and scalar curv}
    Given a $\rG_2$-structure $\varphi$ on $M$, the Ricci and the scalar curvature are given by\begin{align*}
        \Ric_{ij}=&\ \left(\nabla_iT_{mn}-\nabla_mT_{in}\right)\varphi_{jmn}+\tr(T)T_{ij}-T_{im}T_{mj}+T_{im}T_{np}\psi_{mnpj},\\
        R=&\ 2\nabla_iT_{mn}\varphi_{imn}+(\tr T)^2-T_{im}T_{mi}+T_{im}T_{np}\psi_{mnpi}.
    \end{align*}
    Equivalently,
    \begin{align*} 
        \Ric=&-(\curl T)_\sym+\frac12\cL_{T\lrcorner\varphi}g+(\tr T)T_\sym-T^2_\sym,\\ 
        R=&-2d^{\star}(T\lrcorner\varphi)+\langle T,PT\rangle+(\tr T)^2-\langle T,T^t\rangle.
    \end{align*}
    Further using \eqref{eq: norm torsion forms}, the previous formula for $R$ becomes (cf.\ \cite[(4.28)]{Bryant2006})
    \begin{equation*}
        R=12d^{\star}\tau_1+\frac{21}{8}\tau_0^2+30|\tau_1|^2-\frac12|\tau_2|^2-\frac12|\tau_3|^2.
    \end{equation*}
\end{lemma}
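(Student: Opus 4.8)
The plan is to read off all three displayed formulas from the $\rG_2$-Bianchi identity \eqref{eq: G2Bianchi_ident}, which already packages the Riemann tensor in terms of $\nabla T$ and quadratic torsion: the Ricci formula is one contraction of it, the scalar formula is a further trace, and the remaining assertions are rewritings using the algebraic identities \eqref{eq: varphi1 varphi1}--\eqref{eq: varphi1 psi1}, the full torsion tensor formula \eqref{eq: full torsion tensor}, and the norm identities \eqref{eq: norm torsion forms}.

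\textbf{Ricci tensor.} I would contract \eqref{eq: G2Bianchi_ident} with $\varphi_{jkl}$, summing over $j$ and $k$. On the right-hand side, $\varphi_{mnk}\varphi_{jkl}$ is resolved by \eqref{eq: varphi1 varphi1}, producing a $g\ast g$ part, which becomes $\Ric$ after using the symmetries of $\Rm$, and a leftover curvature contraction $R_{ijmn}\psi_{mnjl}$. On the left-hand side, the Leibniz rule together with $\nabla_a\varphi_{ijk}=T_{ab}\psi_{bijk}$ from \eqref{eq: T=nabla_varphi} turns $\nabla_iT_{jk}\varphi_{jkl}$ into $\nabla_i(T\lrcorner\varphi)_l$ minus a $T\ast T\ast\psi$ term, while $\nabla_jT_{ik}\varphi_{jkl}$ is exactly $(\curl T)_{il}$. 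Collecting terms expresses $\Ric$ through $\nabla(T\lrcorner\varphi)$, $\curl T$, quadratic-torsion terms, and the curvature contraction $R_{ijmn}\psi_{mnjl}$. The key point is that $R_{ijmn}\psi_{mnjl}=0$: by the pair symmetry $R_{ijmn}=R_{mnij}$ and the total antisymmetry of $\psi_{mnjl}$ in $m,n,j$, this contraction only sees the component of $R_{mnij}$ antisymmetric in the three slots carrying $m,n,j$; but by pair symmetry that antisymmetrization is precisely the cyclic sum of $\Rm$ over its last three indices, which vanishes by the first Bianchi identity. With this term gone, I would re-expand $\nabla_i(T\lrcorner\varphi)_l$ back into $(\nabla_iT_{mn})\varphi_{lmn}$ minus a $\psi$-term and collapse the various $T\ast T\ast\psi$ terms via \eqref{eq: varphi1 varphi1}--\eqref{eq: varphi1 psi1} to reach the stated index formula. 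Since $\Ric$ is symmetric one may symmetrize the right-hand side, and then $\nabla_{(i}(T\lrcorner\varphi)_{j)}=\tfrac12\cL_{T\lrcorner\varphi}g$ and $(\curl T)_{(ij)}=(\curl T)_\sym$, which gives the tensorial reformulation.

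\textbf{Scalar curvature.} Tracing the index formula for $\Ric$: the term $(\nabla_iT_{mn}-\nabla_mT_{in})\varphi_{imn}$ collapses to $2\nabla_iT_{mn}\varphi_{imn}$ by antisymmetry of $\varphi$, giving the first expression for $R$. For the tensorial version I would apply the Leibniz rule once more, $2\nabla_iT_{mn}\varphi_{imn}=-2d^\star(T\lrcorner\varphi)+2\langle T,PT\rangle$ using \eqref{eq: T=nabla_varphi} and the definition \eqref{eq: map P} of $P$, and note the surviving $T\ast T\ast\psi$ term equals $-\langle T,PT\rangle$; combining yields $R=-2d^\star(T\lrcorner\varphi)+\langle T,PT\rangle+(\tr T)^2-\langle T,T^t\rangle$. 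For the final formula in the torsion forms, contracting \eqref{eq: full torsion tensor} with $\varphi$ kills the symmetric parts $\tfrac{\tau_0}4 g$ and $\tau_{27}$; using $\varphi_{mbc}\varphi_{abc}=6g_{ma}$ and the vanishing $(\tau_2)_{bc}\varphi_{abc}=0$ for $\tau_2\in\Omega^2_{14}$ gives $T\lrcorner\varphi=-6\tau_1$, hence $-2d^\star(T\lrcorner\varphi)=12d^\star\tau_1$. I would then convert $\langle T,PT\rangle$, $(\tr T)^2$, $\langle T,T^t\rangle$ into $\tau_0^2,|\tau_1|^2,|\tau_2|^2,|\tau_3|^2$ by solving the linear system \eqref{eq: norm torsion forms} together with $|\tau_1\lrcorner\varphi|^2=6|\tau_1|^2$ and $|\tau_{27}|^2=\tfrac12|\tau_3|^2$, arriving at the coefficients $\tfrac{21}8\tau_0^2$ (from $\tfrac67(\tr T)^2=\tfrac67\cdot\tfrac{49}{16}\tau_0^2$), $30|\tau_1|^2$, $-\tfrac12|\tau_2|^2$, $-\tfrac12|\tau_3|^2$.

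\textbf{Main obstacle.} The conceptual crux is the vanishing $R_{ijmn}\psi_{mnjl}=0$; once that is isolated, the remainder is bookkeeping, but bookkeeping with genuine pitfalls: the signs and index placements in the $T\ast T\ast\psi$ contractions, and, in the last step, the mismatch between the $\End(TM)$-norms used in \eqref{eq: norm torsion forms} and the form norms \eqref{eq:normkform} appearing in the final scalar-curvature formula, which differ by the combinatorial factor $1/k!$. For this reason we ultimately just invoke \cite{KarigiannisFlowsI, Bryant2006}.
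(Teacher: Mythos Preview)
Your proposal is correct and follows exactly the route that underlies the cited reference: the paper itself gives no proof of this lemma, simply attributing it to \cite[Prop.~4.15]{KarigiannisFlowsI} and \cite[(4.28)]{Bryant2006}, and your sketch via the $\rG_2$-Bianchi identity \eqref{eq: G2Bianchi_ident}, the vanishing $R_{ijmn}\psi_{mnjl}=0$ from the first Bianchi identity, and the norm conversions \eqref{eq: norm torsion forms} is precisely Karigiannis's argument. Your flagged subtlety about the $\End(TM)$ versus $k$-form norm conventions is real and accounts for the $\tau_2$ coefficient; otherwise there is nothing to add.
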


\subsection{Integrable \texorpdfstring{$\rG_2$}{G2}-structures}\label{sec: Ivanov-Stanchev}

A $\rG_2$-structure is called \emph{integrable} when $\tau_2=0$. They play a distinguished role, as they admit compatible connections with totally skew-symmetric torsion.  
\begin{prop}[\cite{friedrich2003parallel_string_theory}]
\label{prop:FIG2}
    Let $(M^7,\varphi)$ be a 7-manifold endowed with a $\rG_2$-structure $\varphi$. Then, there exists an affine connection $\nabla^+$ with totally skew-symmetric torsion preserving $\varphi$, that is, $\nabla^+ \varphi = 0$, if and only if $\tau_2=0$. In this case, we say $\varphi$ is integrable, and the connection $\nabla^+$ is unique and given by
\begin{equation}
\label{eq: unique conn and tor}
    \nabla^+ = \nabla + \frac{1}{2}g^{-1}H_\varphi, \qquad 
    H_\varphi = \frac{1}{6}\tau_0\varphi - \tau_1\lrcorner\psi - \tau_3,
\end{equation}
where $\nabla$ is the Levi-Civita connection of $g = g_\varphi$ and $H_\varphi \in \Omega^3$ is the torsion of $\nabla$. The connection $\nabla^+$ is called the \emph{characteristic connection} of $\varphi$. 
\end{prop}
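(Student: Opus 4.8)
The statement to prove is Proposition \ref{prop:FIG2}, the Friedrich--Ivanov characterization of $\rG_2$-structures admitting a compatible connection with totally skew-symmetric torsion, together with uniqueness and the explicit formula for the characteristic connection. The plan is to work with the ansatz $\nabla^+ = \nabla + \tfrac12 g^{-1}H$ for an a priori arbitrary $3$-form $H \in \Omega^3$, since any metric connection with totally skew torsion has this shape, and to translate the condition $\nabla^+\varphi = 0$ into a pointwise algebraic identity relating $H$ to the full torsion tensor $T$. Concretely, using $\nabla_a\varphi_{ijk} = T_{ab}\psi_{bijk}$ from \eqref{eq: T=nabla_varphi}, the equation $\nabla^+_a\varphi_{ijk} = 0$ becomes
\begin{equation*}
T_{ab}\psi_{bijk} = \tfrac12\left( H_{ai}{}^{b}\varphi_{bjk} + H_{aj}{}^{b}\varphi_{ibk} + H_{ak}{}^{b}\varphi_{ijb}\right),
\end{equation*}
i.e.\ $T\lrcorner\psi$ acting appropriately must coincide with a $\diamond$-type contraction of $H$ against $\varphi$. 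The right-hand side, for each fixed $a$, is exactly $(h_a \diamond \varphi)_{ijk}$ where $h_a \in \fso(TM)$ is the skew endomorphism $H_{a\bullet\bullet}$; the left-hand side is $(P^{-1}$-flavored contraction of$)$ $T$. So the content is: solve for $H$ in terms of $T$, and determine when a solution exists.

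First I would fix a point and decompose everything into $\rG_2$-irreducible pieces. Writing $T = T_1 + T_{27} + T_7 + T_{14}$ as in \eqref{eq: torsion components} and using Lemma 2.5 (the properties of $\diamond$) together with the operator $P$ of \eqref{eq: map P}, one sees that the map $\beta \mapsto \beta\diamond\varphi$ on $2$-forms has image only in $\Omega^3_1 \oplus \Omega^3_7$ (since $\Omega^2_{14} = \fg_2$ is killed), while $\varphi$-contractions of symmetric tensors land in $\Omega^3_1 \oplus \Omega^3_{27}$. Matching the $\Omega^3_{14}$-free structure forces the $S^2_0$-part of the constraint to vanish: the symmetric traceless part $T_{27}$ is \emph{not} constrained (it appears in $H$ via $-\tau_3 = -\tau_{27}\diamond\varphi$), but the equation that $T$ imposes on the skew part of $H$ is solvable precisely when the $\Omega^2_{14}$-component on the other side vanishes. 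Carrying out the contraction carefully using \eqref{eq: psi2 psi2}, \eqref{eq: varphi1 psi1}, and the $P$-eigenvalue facts ($P = 4$ on $\Omega^2_7$, $P=-2$ on $\Omega^2_{14}$) one finds that the obstruction to solving for $H$ is exactly $\tau_2 = 0$: the $\Omega^2_{14}$-part $\tau_2$ of $PT$ (see \eqref{eq:trT}) cannot be produced by any skew-torsion ansatz, because $\fg_2$ acts trivially on $\varphi$. This gives the "only if" direction and pins down the "if" direction simultaneously, since once $\tau_2 = 0$ the solution $H$ is forced to be $\tfrac16\tau_0\varphi - \tau_1\lrcorner\psi - \tau_3$ by reading off the $\Omega^3_1$, $\Omega^3_7$, $\Omega^3_{27}$ components from $T = \tfrac{\tau_0}{4}g - \tau_{27} - \tau_1\lrcorner\varphi$ (the $\tau_2/2$ term in \eqref{eq: full torsion tensor} having dropped out). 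Uniqueness is automatic from this reading-off, since the component projections are injective on the relevant subspaces.

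The alternative, more streamlined route — which I would probably present instead — is to verify directly that the stated $\nabla^+ = \nabla + \tfrac12 g^{-1}H_\varphi$ with $H_\varphi = \tfrac16\tau_0\varphi - \tau_1\lrcorner\psi - \tau_3$ satisfies $\nabla^+\varphi = 0$ whenever $\tau_2 = 0$, by plugging into $\nabla^+_a\varphi_{ijk} = T_{ab}\psi_{bijk} - (\tfrac12 H_\varphi \text{-contraction})$ and checking the identity componentwise using \eqref{eq: varphi1 varphi1}--\eqref{eq: varphi1 psi1}; then handle the converse by the dimension/representation-theoretic count above, and uniqueness by noting two such connections differ by a $3$-form annihilating $\varphi$, hence lying in $\Omega^3$ but pairing trivially under the $\diamond$-action, which forces it into a $\fg_2$-type piece that must vanish once one also demands metric-compatibility is already built in. \textbf{The main obstacle} I anticipate is the bookkeeping in the contraction $\psi_{bijk}$ against the $\varphi$-contracted $H$: one must correctly expand $(\tau_1\lrcorner\psi)$ and $\tau_3 = \tau_{27}\diamond\varphi$ terms, keep track of the factors of $\tfrac12$, $\tfrac16$, and the combinatorial constants $4, 6, 24, 42, 168$ from \eqref{eq: varphi2 varphi2}--\eqref{eq: psi3 psi3}, and confirm that exactly the $\tau_2$-term — and nothing else — fails to cancel. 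This is the computation of \cite{friedrich2003parallel_string_theory}, so I would cite it for the full contraction and only sketch the representation-theoretic reason the obstruction is $\tau_2$.
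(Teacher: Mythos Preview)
The paper does not give its own proof of this proposition: it is stated with the citation \cite{friedrich2003parallel_string_theory} and immediately followed by the next result, with no argument supplied. Your sketch is a reasonable outline of the standard Friedrich--Ivanov argument --- writing an arbitrary metric connection with skew torsion as $\nabla + \tfrac12 g^{-1}H$, imposing $\nabla^+\varphi=0$, and decomposing into $\rG_2$-irreducible pieces to isolate $\tau_2$ as the obstruction and read off $H_\varphi$ --- so in that sense you have supplied more than the paper does. There is nothing to compare against here; if anything, you could shorten your write-up to a one-line reference to \cite{friedrich2003parallel_string_theory} as the paper does, or keep your sketch as a self-contained justification.
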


In the next result we recall a remarkable formula due to Ivanov-Stanchev, which relates the four-form $dH_\varphi$ with the components of the Ricci tensor of the characteristic connection $\nabla^+$.

\begin{prop}[\cite{ivanov2025}]
\label{prop:ISG2}
For $\varphi$ integrable, the Ricci tensor $\Ric^{+} = \Ric^{\nabla^+}$ of the characteristic connection $\nabla^+=\nabla+\frac{1}{2}g^{-1}H_\varphi$ satisfies 
\begin{equation}\label{eq: Ivanov formula}
\Ric_{ij}^+ + 4\nabla^+_i(\tau_1)_j=\frac{1}{12}(dH_\varphi)_{iabc}\psi_{jabc}.
\end{equation}
Furthermore, the symmetric and skew-symmetric part of \eqref{eq: Ivanov formula}
are
\begin{align}\label{eq: Ivanov formula_sym}
    \Ric_{ij}-\frac{1}{4}(H_\varphi^2)_{ij}+2\cL_{\tau_1}g_{ij}=\frac{1}{24}\left((dH_\varphi)_{iabc}\psi_{jabc}+(dH_\varphi)_{jabc}\psi_{iabc}\right),\\ \label{eq: Ivanov formula_skewsym}
    -\frac12 (d^{\star}H_\varphi)_{ij}+2(d\tau_1)_{ij}-2(\tau_1\lrcorner H_\varphi)_{ij}=\frac{1}{24}\left((dH_\varphi)_{iabc}\psi_{jabc}-(dH_\varphi)_{jabc}\psi_{iabc}\right).
\end{align}
\end{prop}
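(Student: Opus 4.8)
The statement to prove is Ivanov--Stanchev's formula \eqref{eq: Ivanov formula} for integrable $\rG_2$-structures, together with its symmetric and skew-symmetric parts \eqref{eq: Ivanov formula_sym}--\eqref{eq: Ivanov formula_skewsym}. The natural strategy is to work entirely with the characteristic connection $\nabla^+ = \nabla + \tfrac12 g^{-1}H_\varphi$ and exploit that $\nabla^+\varphi = 0$, hence $\nabla^+\psi = 0$ as well. The plan is as follows. First I would record the general Bianchi-type identities for a metric connection with skew torsion: writing $H = H_\varphi$, the curvature $R^+$ of $\nabla^+$ satisfies the first Bianchi identity with torsion, namely $\mathfrak{S}_{ijk} R^+_{ijkl} = \mathfrak{S}_{ijk}\big((\nabla^+_i H)_{jkl} + (H\wedge_g H)\text{-terms}\big)$ where $\mathfrak{S}$ denotes the cyclic sum, and the differential Bianchi identity likewise acquires torsion corrections involving $dH$ and $\nabla^+ H$. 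These are standard (e.g.\ Friedrich--Ivanov) and I would cite or quickly re-derive them.

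Second, and this is the technical heart, I would contract the parallel four-form $\psi$ into the curvature of $\nabla^+$. Since $\nabla^+\psi = 0$, differentiating the algebraic $\rG_2$-identities (such as \eqref{eq: psi2 psi2}, \eqref{eq: varphi1 psi1}) is unnecessary; instead one uses the Ricci identity $(\nabla^+_i\nabla^+_j - \nabla^+_j\nabla^+_i)\psi = R^+ \cdot \psi = 0$, which forces $R^+_{ij}$, acting as an endomorphism in the last slot of $\psi$, to lie in $\mathfrak{g}_2 \cong \Omega^2_{14}$ in that slot. Tracing appropriately and using $\psi_{abcd}\psi_{ebcd} = 24 g_{ae}$ etc., the quantity $(dH)_{iabc}\psi_{jabc}$ gets related to a specific trace of $R^+$. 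The key input linking $dH$ to the curvature is precisely the first Bianchi identity with torsion, $\mathfrak{S}_{ijk}R^+_{ijkl} = (dH)_{ijkl} + \text{(quadratic in } H\text{)}$; contracting this against $\psi$ and using that the quadratic-$H$ terms, when paired with $\psi$, reorganize via the $\rG_2$ contraction identities, one extracts $\tfrac{1}{12}(dH)_{iabc}\psi_{jabc}$ on the right. On the left, the relevant trace of $R^+$ is, by definition, $\Ric^+_{ij}$ plus a term coming from the non-symmetric part of the Ricci tensor of a skew-torsion connection — and that antisymmetric correction is exactly $4\nabla^+_i(\tau_1)_j$, using that the $\Omega^1_7$-component of $H$ (equivalently of the torsion forms) is governed by $\tau_1$ via \eqref{eq: unique conn and tor}. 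Pinning down the precise constant $\tfrac{1}{12}$ and the coefficient $4$ in front of $\nabla^+\tau_1$ is the main obstacle: it requires careful bookkeeping of the $\rG_2$-representation-theoretic projections and of the normalization of $|\gamma|^2$ for $k$-forms in \eqref{eq:normkform}, and it is easy to drop factors of $2$ or $6$.

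Third, once \eqref{eq: Ivanov formula} is established, splitting into symmetric and skew-symmetric parts in $i,j$ is essentially formal. For the symmetric part: the symmetric part of $\Ric^+_{ij}$ is the classical identity $\Ric^+_{(ij)} = \Ric_{ij} - \tfrac14 (H^2)_{ij}$ for a skew-torsion connection (here $(H^2)_{ij} = H_{iab}H_{jab}$), and the symmetric part of $4\nabla^+_i(\tau_1)_j$ is $4(\nabla\tau_1)_{(ij)} + (\text{$H$-correction})$; collecting these and the $H$-corrections into $2\mathcal{L}_{\tau_1}g_{ij}$ (recall $\mathcal{L}_{\tau_1}g_{ij} = \nabla_i(\tau_1)_j + \nabla_j(\tau_1)_i$ under the musical identification) gives \eqref{eq: Ivanov formula_sym}, where the $H$-terms must be checked to cancel or combine correctly. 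For the skew-symmetric part: the antisymmetric part of $\Ric^+$ is $-\tfrac12 d^\star H$ (a standard identity for closed-... actually for general skew torsion, $\Ric^+_{[ij]} = -\tfrac12(d^\star H)_{ij}$ up to a curl-of-torsion term that vanishes here), and the antisymmetric part of $4\nabla^+_i(\tau_1)_j$ produces $2(d\tau_1)_{ij} - 2(\tau_1\lrcorner H)_{ij}$ after using $\nabla^+ = \nabla + \tfrac12 g^{-1}H$. I would assemble these, double-checking signs against the convention $H_\varphi = \tfrac16\tau_0\varphi - \tau_1\lrcorner\psi - \tau_3$. Throughout, I expect the only genuinely delicate point to be the constant-tracking in Step two; Steps one and three are bookkeeping with well-known skew-torsion identities.
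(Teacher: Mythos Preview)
The paper does not supply a proof of this proposition: it is attributed to Ivanov--Stanchev \cite{ivanov2025} and quoted without argument, so there is no in-paper proof to compare your attempt against. Your outline is a reasonable strategy for the main identity \eqref{eq: Ivanov formula} and is broadly in the spirit of the original source (exploit $\nabla^+\psi=0$ so that $R^+_{ij}$, viewed as a two-form in its last two indices, lies in $\mathfrak{g}_2\cong\Omega^2_{14}$, then contract the first Bianchi identity with torsion against $\psi$). Your Step~3 --- the only part one can verify directly from the statement --- is correct: using the standard skew-torsion identities $\Ric^+_{(ij)} = \Ric_{ij} - \tfrac14(H^2)_{ij}$ and $\Ric^+_{[ij]} = -\tfrac12(d^\star H)_{ij}$, together with $4\nabla^+_i(\tau_1)_j = 4\nabla_i(\tau_1)_j - 2(\tau_1)^k H_{kij}$, the symmetric and antisymmetric parts of \eqref{eq: Ivanov formula} assemble exactly into \eqref{eq: Ivanov formula_sym} and \eqref{eq: Ivanov formula_skewsym}. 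One small imprecision in Step~2: you ascribe the $4\nabla^+\tau_1$ term to ``the non-symmetric part of the Ricci tensor of a skew-torsion connection'', but that antisymmetric part is $-\tfrac12 d^\star H$, not a $\tau_1$-contribution; the $\tau_1$-term instead emerges from the $\rG_2$-contractions of $\psi$ with the torsion components of $H_\varphi$ in \eqref{eq: unique conn and tor}, which is indeed where the delicate constant-tracking lives.
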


Using the previous result, we obtain the following decomposition of the four-form $dH_\varphi$ into $7$ and $1 + 27$ components.

\begin{prop}\label{prop:dH}
    For $\varphi$ integrable, the $4$-form $dH_\varphi$ has the decomposition
    \begin{align}\label{eq: components dH}
        dH_\varphi=&-\frac13 d(\tr T)\wedge\varphi+\left(\Ric-\frac{1}{4}(H_\varphi^2)+2\cL_{\tau_1}g-\frac{1}{8}(R_g-4d^{\star}\tau_1-\frac{3}{2}|H_\varphi|^2)g\right)\diamond\psi.
    \end{align}
    In particular,
    \begin{align}\label{eq: pi1_dH}
        \langle dH_\varphi,\psi\rangle = \ 4d^{\star}\tau_1+\frac{49}{36}\tau_0^2+16|\tau_1|^2-|H_\varphi|^2.
    \end{align}
\end{prop}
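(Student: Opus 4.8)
The plan is to compute $dH_\varphi$ directly from the Ivanov--Stanchev formula \eqref{eq: Ivanov formula} by extracting the $\rG_2$-irreducible pieces of the $4$-form. Since $\varphi$ is integrable, $dH_\varphi \in \Omega^4 = \Omega^4_1 \oplus \Omega^4_7 \oplus \Omega^4_{27}$, and the content of \eqref{eq: Ivanov formula} is exactly that $dH_\varphi$ is determined by the endomorphism $\Ric^+ + 4\nabla^+\tau_1$ via the map $\cdot \diamond \psi$ (up to a normalization to be tracked). Concretely, first I would recall that the linear map $A \mapsto A \diamond \psi$ sends $\bR\cdot g \oplus S^2_0(TM) \oplus \Omega^2_7$ isomorphically onto $\Omega^4_1 \oplus \Omega^4_{27} \oplus \Omega^4_7$, with kernel $\Omega^2_{14}$, and that the identity \eqref{eq: Ivanov formula} can be inverted to read $dH_\varphi = c \, (\Ric^+ + 4\nabla^+\tau_1)' \diamond \psi$ for an explicit constant $c$, where $(\cdot)'$ denotes the projection onto $\bR\cdot g \oplus S^2_0 \oplus \Omega^2_7$. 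The factor $\tfrac{1}{12}$ on the right of \eqref{eq: Ivanov formula} together with the normalization constants from \eqref{eq: diamond operator} and the $\psi$-contraction identities \eqref{eq: psi2 psi2}, \eqref{eq: psi3 psi3} will pin down $c$; I expect the bookkeeping to produce the displayed coefficients.

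Next I would split $\Ric^+ + 4\nabla^+\tau_1$ into symmetric and skew-symmetric parts, which are handled by \eqref{eq: Ivanov formula_sym} and \eqref{eq: Ivanov formula_skewsym} respectively. The symmetric part is $\Ric - \tfrac14 H_\varphi^2 + 2\cL_{\tau_1}g$, and its trace-free and pure-trace pieces feed into the $\Omega^4_{27}$ and $\Omega^4_1$ components of $dH_\varphi$; subtracting the correct multiple of $g$ to isolate the traceless part is precisely the role of the term $-\tfrac18\big(R_g - 4d^\star\tau_1 - \tfrac32|H_\varphi|^2\big)g$ appearing in \eqref{eq: components dH}, and I would verify the coefficient $\tfrac18$ by comparing traces: $\tr\big(\Ric - \tfrac14 H_\varphi^2 + 2\cL_{\tau_1}g\big)$ should be computed using $\tr \Ric = R_g$, $\tr H_\varphi^2 = \tfrac32|H_\varphi|^2$ (from $|H_\varphi|^2 = \tfrac16 (H_\varphi)_{abc}(H_\varphi)_{abc}$ and $(H_\varphi^2)_{ij} = (H_\varphi)_{iab}(H_\varphi)_{jab}$), and $\tr \cL_{\tau_1}g = 2\,\divg \tau_1^\sharp = -2d^\star\tau_1$. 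For the skew-symmetric part, I would use that $\nabla^+\tau_1$ contributes a $\Omega^2_7$-type term; the map $A_7 \diamond \psi$ lands in $\Omega^4_7$, and after dualizing this is what produces the $-\tfrac13 d(\tr T)\wedge\varphi$ term, using $\tr T = \tfrac74\tau_0$ from \eqref{eq:trT} so that $d(\tr T)$ carries the $\tau_0$-dependence; the precise constant $-\tfrac13$ would be checked against $(X\lrcorner\varphi)\diamond\psi = -3X\wedge\varphi$ from the lemma after (2.11) and the relation between $\tau_1$ and the curl/divergence data.

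Finally, for \eqref{eq: pi1_dH} I would simply pair \eqref{eq: components dH} with $\psi$: the $\Omega^4_7$ and $\Omega^4_{27}$ pieces are orthogonal to $\psi$, so only the $\Omega^4_1$ piece survives, and $\langle f\psi, \psi\rangle = 7f$ while $\langle A_1 \diamond \psi, \psi\rangle$ picks out $\tfrac47(\tr A)\cdot 7 \cdot (\text{norm factor})$ via item (3) of the lemma. Plugging in the trace computation above, together with $|H_\varphi|^2$ expanded in torsion forms via $H_\varphi = \tfrac16\tau_0\varphi - \tau_1\lrcorner\psi - \tau_3$ — using $|\tau_0\varphi/6|^2 = \tfrac{42}{36}\tau_0^2$, $|\tau_1\lrcorner\psi|^2 = $ (a multiple of $|\tau_1|^2$, computable from \eqref{eq: psi3 psi3}), and $|\tau_3|^2$ with all cross terms vanishing by $\rG_2$-orthogonality of the components — should assemble into $4d^\star\tau_1 + \tfrac{49}{36}\tau_0^2 + 16|\tau_1|^2 - |H_\varphi|^2$. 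The main obstacle I anticipate is not conceptual but the careful tracking of the several competing normalization conventions: the $\tfrac{1}{k!}$ in the norm \eqref{eq:normkform}, the coefficients in \eqref{eq: diamond operator}, and the contraction constants $24$, $168$ in \eqref{eq: psi3 psi3} all interact, and getting the coefficient $\tfrac{49}{36}$ and the $\tfrac18$ exactly right is where an arithmetic slip is most likely.
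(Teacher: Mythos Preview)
Your approach is essentially the same as the paper's: decompose $dH_\varphi$ into its $\Omega^4_1 \oplus \Omega^4_7 \oplus \Omega^4_{27}$ components via the Ivanov--Stanchev identities \eqref{eq: Ivanov formula_sym}--\eqref{eq: Ivanov formula_skewsym}, read off the symmetric part as $\Ric - \tfrac14 H_\varphi^2 + 2\cL_{\tau_1}g$, and extract the $\Omega^4_7$ piece from the skew part. Two remarks.

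First, a slip: from $|H_\varphi|^2 = \tfrac{1}{6}(H_\varphi)_{abc}(H_\varphi)_{abc}$ and $(H_\varphi^2)_{ij} = (H_\varphi)_{iab}(H_\varphi)_{jab}$ you get $\tr H_\varphi^2 = 6|H_\varphi|^2$, not $\tfrac32|H_\varphi|^2$. The coefficient $\tfrac32$ in the final formula arises from $-\tfrac14 \cdot 6$, not from the trace itself.

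Second, your treatment of the $\Omega^4_7$ piece skips over a genuine step. Contracting the skew part \eqref{eq: Ivanov formula_skewsym} with $\varphi_{ijk}$, the paper finds
\[
X_k \;=\; -\tfrac13\, d(\tr T)_k \;-\; \tfrac{5}{3}\,(d\tau_1\lrcorner\varphi)_k,
\]
and the second term does not vanish by bookkeeping alone: one needs the identity $d\tau_1\lrcorner\varphi = \star(d\tau_1\wedge\psi) = 0$, which holds because $d\psi = 4\tau_1\wedge\psi$ gives $d\tau_1\wedge\psi = \tfrac14 d^2\psi = 0$. Without this observation the $\Omega^4_7$ piece would carry an extra $d\tau_1$ term and the stated formula would be wrong. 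You should make this explicit.
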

\begin{proof}
Consider each irreducible component of the $4$-form $dH_\varphi=a\psi+X\wedge\varphi+*(S\diamond\varphi)$, where
\begin{equation*}
    a=\frac17\langle dH_\varphi,\psi\rangle, \quad X^i=\frac14\langle dH_\varphi,e^i\wedge \varphi\rangle, \quad S_{ij}=2ag_{ij}-\frac{1}{24}\left((dH_\varphi)_{iabc}\psi_{jabc}+(dH_\varphi)_{jabc}\psi_{iabc}\right)
\end{equation*}
Taking the trace of \eqref{eq: Ivanov formula_sym}, we get \begin{align}\label{eq: auxiliar pi1dH}
    R_g-4d^{\star}\tau_1-\frac{3}{2}|H_\varphi|^2 =&  \frac{1}{12}(dH_\varphi)_{iabc}\psi_{iabc}
     =  2\langle dH_\varphi,\psi\rangle=  14a.
\end{align}
Now, using $*(S\diamond\varphi)=-(S\diamond\psi)$ we obtain $a\psi+*(S\diamond\varphi)=(\frac{a}{4}g-S)\diamond\psi$
$$
  \frac{a}{4}g-S=-\frac74 a g+\Ric-\frac{1}{4}(H_\varphi^2)+2\cL_{\tau_1}g.
$$
On the other hand, using \eqref{eq: varphi1 psi1} we have
\begin{align*}
    \frac{1}{24}\left((dH_\varphi)_{iabc}\psi_{jabc}-(dH_\varphi)_{jabc}\psi_{iabc}\right)\varphi_{ijk}=&-\frac14(dH_\varphi)_{kibc}\varphi_{ibc}\\
    =&-\frac32\langle dH_\varphi,e_k\wedge\varphi\rangle=-6 X_k.
\end{align*}
Thus, by \eqref{eq: Ivanov formula_skewsym}
$$
  X_k=\frac16\left(\frac12 (d^{\star}H_\varphi)_{ij}-2(d\tau_1)_{ij}+2(\tau_1\lrcorner H_\varphi)_{ij}\right)\varphi_{ijk}.
$$
Since
\begin{align*}
\frac12d^{\star}H_\varphi-2d\tau_1+2\tau_1\lrcorner H_\varphi=&\ -\frac{7}{12} \star(d\tau_0\wedge\psi)-2d\tau_1-2\star(d\tau_1\wedge\varphi)\\
=&\ -\frac{1}{3}(d\tr T)\lrcorner\varphi-2d\tau_1-2d\tau_1\lrcorner\psi,
\end{align*}
then, by \eqref{eq: varphi1 varphi1}
\begin{equation}\label{eq: 7 piece of dH}
  X_k=-\frac13 d(\tr T)_k-\frac53 d\tau_1\lrcorner\varphi.
\end{equation}
Notice that $d\tau_1\lrcorner\varphi=\star(d\tau_1\wedge\psi)=0$, since $d\psi=4\tau_1\wedge\psi$. Finally, replacing in \eqref{eq: auxiliar pi1dH} the expression for the scalar curvature given Lemma \ref{lm: Ricci and scalar curv}, we obtain \eqref{eq: pi1_dH}.
\end{proof}

\subsection{Variation of \texorpdfstring{$\rG_2$}{G2}-structures}

Any deformation of a $\rG_2$-structure $\varphi$ on $M$ has the form \cite{KarigiannisFlowsI}
\begin{equation}\label{eq: general_G2_flow}
    \frac{\partial}{\partial t}\varphi=S\diamond\varphi+X\lrcorner\psi,
\end{equation}
where $S\diamond \varphi$ is the $3$-form \eqref{eq: diamond operator}, $S\in S^2(TM)$ is a symmetric $2$-tensor and $X\in \sX(M)$ is a vector field on $M$. In particular, the evolution of $g$, $\vol$, $\nabla$ and $\psi$ are  
\begin{align}\label{eq: ddt g}
   \frac{\partial}{\partial t}g_{ij}=&\ 2S_{ij}, \quad \frac{\partial}{\partial t}g^{ij}=-2S^{ij}, \quad \frac{\partial}{\partial t}\vol=\tr(S)\vol,\\ \label{eq: ddt Gamma}
   \frac{\partial}{\partial t}\Gamma_{ij}^k=&\ g^{kl}(\nabla_iS_{jl}+\nabla_jS_{il}-\nabla_lS_{ij}),\\ \label{eq: ddt psi}
   \frac{\partial}{\partial t}\psi=&\ S\diamond\psi-X\wedge\varphi,
\end{align}
where $\Gamma_{ij}^k$ denotes the Christoffel symbols of $\nabla$. 

\begin{lemma}
    If $g$ and $\vol$ evolve by \eqref{eq: ddt g}, then for any time-dependent $\alpha\in \Omega^k$ we have
    \begin{align}\label{eq: ddt *}
        \star\frac{\partial }{\partial t}\alpha=\frac{\partial }{\partial t}\star\alpha+2\star(S\diamond\alpha)-\tr(S)\star\alpha.
    \end{align}
\end{lemma}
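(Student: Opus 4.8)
The statement to prove is the pointwise identity
\[
\star\frac{\partial}{\partial t}\alpha=\frac{\partial}{\partial t}\star\alpha+2\star(S\diamond\alpha)-\tr(S)\star\alpha
\]
for a time-dependent $k$-form $\alpha$, given that $g,\vol$ evolve by \eqref{eq: ddt g}. The cleanest route is to differentiate the defining relation of the Hodge star under the time parameter and keep track of which pieces come from varying $\alpha$ and which come from varying the metric/volume. The plan is the following.

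First I would recall that for a fixed metric the Hodge star is linear, so $\frac{\partial}{\partial t}(\star\alpha)$ splits as the star applied to $\frac{\partial}{\partial t}\alpha$ plus a term measuring the infinitesimal variation of the operator $\star$ itself. Concretely, write $\star_t$ for the star of $g_t$ and set $\beta_t := \star_t \alpha_t$; then
\[
\frac{\partial}{\partial t}\beta = \star\!\left(\frac{\partial}{\partial t}\alpha\right) + \left(\frac{\partial}{\partial t}\star\right)\alpha,
\]
where $\left(\frac{\partial}{\partial t}\star\right)$ is the derivative of the star operator with $\alpha$ held frozen. Rearranging, it suffices to prove the purely algebraic identity
\[
\star\!\left(\frac{\partial}{\partial t}\star\right)\alpha = -2\star(S\diamond\alpha)+\tr(S)\star\alpha,
\]
i.e.\ equivalently, applying $\star$ once more, $\left(\frac{\partial}{\partial t}\star\right)\alpha = \pm\!\big(2 S\diamond\alpha - \tr(S)\,\alpha\big)$ up to the sign conventions from $\star\star = \pm\Id$ on $\Omega^k$ in dimension $7$ — since $7$ is odd, $\star\star=\Id$ on all degrees, which simplifies the bookkeeping considerably.

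Next I would compute $\frac{\partial}{\partial t}\star$ directly in an orthonormal coframe (or in coordinates using $(\star\alpha)_{j_1\cdots j_{7-k}} = \frac{1}{k!}\sqrt{\det g}\,g^{i_1 l_1}\cdots g^{i_k l_k}\,\alpha_{l_1\cdots l_k}\,\varepsilon_{i_1\cdots i_k j_1\cdots j_{7-k}}$). Differentiating, the variation of $\sqrt{\det g}$ contributes $\frac12 g^{ij}\partial_t g_{ij} = \tr(S)$ times $\star\alpha$, which is exactly the $\tr(S)\star\alpha$ term; and each of the $k$ inverse-metric factors contributes a term $g^{i_m l_m} \mapsto -2 S^{i_m l_m}$, i.e.\ a factor that, when assembled, is precisely the action of $-2S$ on the $k$ raised indices of $\alpha$ — and this assembled action is what \eqref{eq: diamond operator} calls $(-2S)\diamond$ acting through the metric, i.e.\ it produces $-2(S\diamond\alpha)$ after transporting through $\star$. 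Here one uses that $\diamond$ is defined via $A\diamond\alpha = \frac{d}{dt}\big|_0 (e^{tA})^*\alpha$ and that pulling back by $e^{tA}$ with $A$ symmetric acts on the lowered indices of $\alpha$ in a way dual to acting on the raised indices appearing in the formula for $\star$; so the metric-variation terms coming out of $\star$ reorganize into $\star(S\diamond\alpha)$ with the factor $2$. Feeding this into the display from the previous paragraph, and using $\star\star=\Id$, yields the claimed formula.

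The main obstacle is purely one of conventions and signs: one must check that the two raising operations — the ones buried in the formula for $\star$ and the ones defining $S\diamond\alpha$ in \eqref{eq: diamond operator} — match up with the correct overall factor $2$ and sign, rather than, say, $-2$ or $\tfrac12$, and that the $\tr(S)$ term from $\partial_t\sqrt{\det g}$ has the sign that survives after the final application of $\star$. A safe way to pin this down without a long tensor computation is to test the identity on a single well-chosen example, e.g.\ take $\alpha = \vol$ (so $k=7$, $\star\alpha = 1$, $S\diamond\vol = \tr(S)\vol$ by part (3) of the Lemma with $A_1$-type reasoning, and both sides reduce to $0 = \tr(S) + 2\tr(S) - \text{(something)}$, fixing the coefficients), or $\alpha$ a parallel $1$-form scaled appropriately; once the constants are fixed on one degree they are fixed on all by naturality of the construction. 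With the conventions verified, the proof is a two-line differentiation of the coordinate formula for $\star$.
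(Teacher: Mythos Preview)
Your approach is essentially correct but genuinely different from the paper's. The paper does not differentiate the coordinate formula for $\star$; instead it differentiates the defining identity $\alpha\wedge\star\alpha = g(\alpha,\alpha)\vol$ on both sides, uses $\partial_t\bigl(g(\alpha,\alpha)\bigr)=2g(\partial_t\alpha,\alpha)-2g(S\diamond\alpha,\alpha)$ and $\partial_t\vol=\tr(S)\vol$, and equates to obtain $\alpha\wedge\bigl(-\star\partial_t\alpha+\partial_t\star\alpha+2\star(S\diamond\alpha)-\tr(S)\star\alpha\bigr)=0$ for all $\alpha$, then concludes. That argument is slicker and coordinate-free, though its last step tacitly needs the polarized version $\alpha\wedge\star\beta=g(\alpha,\beta)\vol$ to be fully rigorous. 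Your route---split $\partial_t(\star\alpha)=\star(\partial_t\alpha)+(\partial_t\star)\alpha$ and compute $(\partial_t\star)$ from $(\star\alpha)_J=\tfrac{1}{k!}\sqrt{g}\,\alpha^I\varepsilon_{IJ}$---is more mechanical but entirely self-contained: the $\sqrt{g}$ variation gives $\tr(S)\star\alpha$ and the $k$ inverse-metric variations give exactly $-2\star(S\diamond\alpha)$, since $\partial_t\alpha^{i_1\cdots i_k}=-2(S\diamond\alpha)^{i_1\cdots i_k}$ for symmetric $S$.

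Two small slips to clean up. First, your displayed ``it suffices to prove'' line has an extraneous outer $\star$: the correct reduction is $(\partial_t\star)\alpha=-2\star(S\diamond\alpha)+\tr(S)\star\alpha$, a $(7-k)$-form identity, not the equation you wrote (which lives in the wrong degree). Your subsequent coordinate description is already computing the right thing, so this is only a bookkeeping error. Second, your proposed sanity check $\alpha=\vol$ yields only $c-d=1$ for the two unknown coefficients (since $S\diamond\vol=\tr(S)\vol$ makes the two correction terms proportional), so it does not by itself ``fix the coefficients''; you would need e.g.\ a $1$-form test to separate them. That said, since you correctly note the actual proof is the two-line coordinate differentiation, the test case is superfluous anyway.
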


\begin{proof}
    Taking the evolution of $\alpha\wedge\star\alpha=g(\alpha,\alpha)\vol$, on the left-hand side we have
    \begin{align*}
        \partial_t(\alpha\wedge \star\alpha)=&\partial_t\alpha\wedge\star\alpha+\alpha\wedge\partial_t\star\alpha\\
        =&\alpha\wedge(\star\partial_t\alpha+\partial_t\star\alpha).
    \end{align*}
    On the other hand
    \begin{align*}
        \partial_t(g(\alpha,\alpha)\vol)=&\left(2g(\partial_t\alpha,\alpha)-2g(S\diamond \alpha,\alpha)+\tr(S)g(\alpha,\alpha)\right)\vol\\
        =&\alpha\wedge\left(2\star\partial_t\alpha-2\star(S\diamond \alpha)+\tr(S)\star\alpha\right).
    \end{align*}
    Thus, for any $\alpha\in\Omega^k$ 
    $$
      \alpha\wedge\left(-\star\partial_t\alpha+\partial_t\star\alpha+2\star(S\diamond \alpha)-\tr(S)\star\alpha\right)=0,
    $$
    hence $\star\partial_t\alpha=\partial_t\star\alpha+2\star(S\diamond \alpha)-\tr(S)\star\alpha$.
\end{proof}

Notice that \eqref{eq: ddt *} provides an alternative proof of \eqref{eq: ddt psi} using \eqref{eq: general_G2_flow}.

\begin{lemma}\cite[Lemma 3.6, Theorem 3.7]{KarigiannisFlowsI}\label{lemma evolution T general}
    The evolution of $\nabla\varphi$ and $T$ under the flow \eqref{eq: general_G2_flow} are
    \begin{align}\label{eq: ddt_nabla_varphi}
        \partial_t\nabla_i\varphi_{jkl}=&\ (S\diamond\nabla_i\varphi)_{jkl}+(X\lrcorner\nabla_i\psi)_{jkl}+(\nabla_iX\lrcorner\psi)_{jkl}-(\nabla_jS_{mi}-\nabla_mS_{ji})\varphi_{mkl},\\ \nonumber
        &\ -(\nabla_kS_{mi}-\nabla_mS_{ki})\varphi_{jml}-(\nabla_lS_{mi}-\nabla_mS_{li})\varphi_{jkm}\\\label{eq: ddt T}
        \partial_tT_{ij}=&\ T_{ik}S_{jk}+T_{ik}(X\lrcorner\varphi)_{jk}+\nabla_iX_j-\nabla_kS_{il}\varphi_{jkl}.
    \end{align}
\end{lemma}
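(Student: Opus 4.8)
The plan is to differentiate the defining relations in time, substituting the variation formulas \eqref{eq: ddt g}, \eqref{eq: ddt Gamma}, \eqref{eq: ddt psi} together with the hypothesis $\partial_t\varphi = S\diamond\varphi + X\lrcorner\psi$, and then collect terms using the pointwise identities \eqref{eq: varphi1 varphi1}--\eqref{eq: varphi1 psi1} and \eqref{eq: T=nabla_varphi}--\eqref{eq: nabla psi}.

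\textbf{Evolution of $\nabla\varphi$.} Start from the coordinate expression $\nabla_i\varphi_{jkl} = \partial_i\varphi_{jkl} - \Gamma_{ij}^m\varphi_{mkl} - \Gamma_{ik}^m\varphi_{jml} - \Gamma_{il}^m\varphi_{jkm}$. Since the coordinate derivative $\partial_i$ commutes with $\partial_t$, applying $\partial_t$ and regrouping the terms in which $\partial_t$ falls on $\varphi$ back into a covariant derivative gives
\[
\partial_t\nabla_i\varphi_{jkl} = \nabla_i(\partial_t\varphi_{jkl}) - (\partial_t\Gamma_{ij}^m)\varphi_{mkl} - (\partial_t\Gamma_{ik}^m)\varphi_{jml} - (\partial_t\Gamma_{il}^m)\varphi_{jkm}.
\]
Now substitute $\partial_t\varphi = S\diamond\varphi + X\lrcorner\psi$ and expand $\nabla_i(S\diamond\varphi)_{jkl}$ by the Leibniz rule using \eqref{eq: diamond operator}: the terms in which $\nabla_i$ hits $\varphi$ assemble into $(S\diamond\nabla_i\varphi)_{jkl}$, while those in which $\nabla_i$ hits $S$ produce $(\nabla_iS_{jm})\varphi_{mkl} + (\nabla_iS_{km})\varphi_{jml} + (\nabla_iS_{lm})\varphi_{jkm}$ (indices raised with $g$, which is parallel). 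Likewise $\nabla_i(X\lrcorner\psi)_{jkl} = (\nabla_iX\lrcorner\psi)_{jkl} + (X\lrcorner\nabla_i\psi)_{jkl}$. Finally plug in $\partial_t\Gamma_{ij}^m = g^{ml}(\nabla_iS_{jl} + \nabla_jS_{il} - \nabla_lS_{ij})$ from \eqref{eq: ddt Gamma}; the $\nabla_iS$ piece of the Christoffel variation cancels precisely the ``$\nabla_i$ hits $S$'' terms above (here symmetry of $S$ is used), and the remaining $\nabla_jS_{il} - \nabla_lS_{ij}$ pieces, contracted with the three slots of $\varphi$ in turn, give exactly $-(\nabla_jS_{mi} - \nabla_mS_{ji})\varphi_{mkl}$ and its two cyclic partners. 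This is the first formula.

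\textbf{Evolution of $T$.} Contract the identity $\nabla_a\varphi_{ijk} = T_{ab}\psi_{bijk}$ with $\psi_{cijk}$ and use $\psi_{abcd}\psi_{mbcd} = 24g_{am}$ from \eqref{eq: psi3 psi3} to get $24\,T_{ac} = \nabla_a\varphi_{ijk}\psi_{cijk}$. Differentiating in time,
\[
24\,\partial_tT_{ac} = (\partial_t\nabla_a\varphi_{ijk})\,\psi_{cijk} + \nabla_a\varphi_{ijk}\,(\partial_t\psi_{cijk}).
\]
Into the first term substitute the formula just proved; into $\nabla_a\varphi_{ijk}$ substitute $T_{ab}\psi_{bijk}$; and into $\partial_t\psi$ substitute $S\diamond\psi - X\wedge\varphi$ from \eqref{eq: ddt psi}. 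Every resulting term is a contraction of $T$, $\nabla S$, $\nabla X$, $S$ or $X$ against products of $\varphi$'s and $\psi$'s, which collapse via \eqref{eq: varphi1 varphi1}--\eqref{eq: varphi1 psi1} (for instance $T_{ab}\psi_{bijk}\psi_{cijk} = 24\,T_{ac}$ reproduces the left side, and the $X$-contractions reduce to $\varphi$-contractions). After simplification and relabeling $a\to i$, $c\to j$, the surviving terms are $T_{ik}S_{jk} + T_{ik}(X\lrcorner\varphi)_{jk} + \nabla_iX_j - \nabla_kS_{il}\varphi_{jkl}$.

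\textbf{Main obstacle.} The first step is a routine application of the Leibniz rule and the Christoffel variation; the real work is the bookkeeping in the second step. The raw right-hand side there contains a large number of $\varphi$--$\psi$ contractions in mixed index slots, and one must repeatedly apply the quadratic identities while tracking the (anti)symmetrizations built into the $\diamond$ operator and the antisymmetry of $\psi$, in order to see which blocks cancel and which combine. Organizing this so that the terms quadratic in $T$, the terms in $\nabla S$, and the terms in $X$ and $\nabla X$ are reduced separately is the key to keeping the computation manageable.
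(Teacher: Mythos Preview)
The paper does not prove this lemma itself; it simply quotes the result from \cite{KarigiannisFlowsI}. Your approach is exactly the one taken there: commute $\partial_t$ with $\nabla_i$ using the Christoffel variation \eqref{eq: ddt Gamma} to obtain \eqref{eq: ddt_nabla_varphi}, then contract against $\psi$ and reduce the resulting $\varphi$--$\psi$ products with the quadratic identities to reach \eqref{eq: ddt T}.

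One point needs care in the second step. The identity $24\,T_{ac}=(\nabla_a\varphi_{ijk})\psi_{cijk}$ hides three copies of $g^{-1}$ in the contraction over $i,j,k$, and since $\partial_tg^{ij}=-2S^{ij}$ these contribute extra $S$-terms when you differentiate in time. Your displayed formula
\[
24\,\partial_tT_{ac}=(\partial_t\nabla_a\varphi_{ijk})\,\psi_{cijk}+(\nabla_a\varphi_{ijk})(\partial_t\psi_{cijk})
\]
omits the contribution $-6\,S^{pq}(\nabla_a\varphi_{pjk})\psi_{cqjk}$ coming from $\partial_t g^{-1}$; substituting $\partial_t\psi = S\diamond\psi - X\wedge\varphi$ from \eqref{eq: ddt psi} gives only the variation of the fully covariant $\psi_{cijk}$, not of $\psi_c{}^{ijk}$. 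These missing terms combine with the $(S\diamond\nabla_a\varphi)$ and $(S\diamond\psi)$ contributions and are needed to land on the correct coefficient of $T_{ik}S_{jk}$ in the final formula. Once you include them, your bookkeeping plan and your identification of the main obstacle are accurate.
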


\section{The heterotic \texorpdfstring{$\rG_2$}{G2} flow}
\subsection{Generic heterotic flow equations}

Let $M$ be an oriented, spin manifold of dimension seven. In this section we introduce a coupled flow for one-parameter families of $\rG_2$-structures $\varphi_t \in \Omega^3$ and smooth functions $\phi_t \in C^\infty(M,\bR)$ on $M$. In the sequel, due to the relation to the geometry of string theory, we will refer to the function $\phi$ as the \emph{dilaton}.

\begin{defn} \label{d:flowdef}
Given an oriented, spin $7$-manifold $M$, a one-parameter family of $\rG_2$-structures $\varphi_t \in \Omega^3$ and dilaton functions $\phi_t \in C^\infty(M,\bR)$ satisfies the \emph{generic heterotic $\rG_2$ flow} if
\begin{gather}\label{eq: mod_anomaly_flowCbeta}
\begin{split}
    \frac{\partial}{\partial t}\left(e^{-4\phi}\psi\right)=&-dH_{\varphi}+\frac{7}{4}C d\left(\tau_0\varphi \right),\\
    \frac{\partial}{\partial t}\phi=&\ e^{4\phi} \left(\Delta_g \phi - \gamma |d\phi|^2 + \frac{1}{4}|\tau_3|^2 - \sigma \tau_0^2 \right),
\end{split}
\end{gather}
where $H_\varphi$ is as in \eqref{eq: unique conn and tor}, and $C, \gamma,\sigma \in \bR$ are real parameters.
\end{defn}

\noindent Obviously, the flow \eqref{eq:heteroticG2flowintro} corresponds to the generic heterotic $\rG_2$ flow \eqref{eq: mod_anomaly_flowCbeta} with $C = -4/3$. The second term in the evolution equation for $e^{-4\phi}\psi$, depending on the parameter $C \in \mathbb{R}$, is inspired by Grigorian's modification of the Laplacian co-flow for $\rG_2$-structures.

A key elementary feature of this flow is that it preserves the condition of being conformally coclosed.  That is, 
\begin{equation}\label{eq:confcoclosed}
d(e^{-4\phi}\psi) = 0
\end{equation}
is preserved along the flow.  Notice that in particular a flow line with conformally coclosed initial data will remain  integrable along the flow, i.e.\ $\tau_2 = 0$ will hold.  Furthermore  such flow lines will satisfy
$$
\tau_1 = d\phi_t.
$$
We next compute the induced evolution equations for the induced metric, and fundamental three-form and four-form:


\begin{prop}\label{p:evolutiongvarphiCbeta}
Let $(\varphi_t,\phi_t)$ be a solution to \eqref{eq: mod_anomaly_flowCbeta}, with conformally coclosed initial condition $(\varphi_0,\phi_0)$, that is, $d(e^{-4\phi_0}\psi_0) = 0$. Then, the evolution of $\psi_t$, $\varphi_t$, and $g_t$ are:
\begin{align*}
\frac{\partial}{\partial t} g =&\ e^{4\phi}\left(-2\Ric_{g}+\frac12 H_{\varphi}^2-4\cL_{d\phi}g + \frac{7}{2}C\tau_0 T_\sym+\left(\left(\frac{7}{12}-2\sigma\right)\tau_0^2+2\left(3-\gamma\right)|d\phi|^2 \right)g\right),\\
\frac{\partial}{\partial t}\varphi  
=&\ - \frac{7}{12}e^{4\phi}\left((1 + 3C)d\tau_0 + 9 C\tau_0 d\phi\right)\lrcorner\psi\\
    &\ +e^{4\phi}\left(-\Ric_{g}+\frac14H_{\varphi}^2-2\cL_{d\phi}g + \frac{7}{4}C\tau_0 T_\sym +\left(\left(\frac{7}{24}-\sigma\right)\tau_0^2+\left(3-\gamma\right)|d\phi|^2 \right)g\right)\diamond \varphi\\
\frac{\partial}{\partial t}\psi   =&\ \frac{7}{12}e^{4\phi}\left((1 + 3C)d\tau_0 + 9 C\tau_0 d\phi\right)\wedge\varphi\\
    &\ +e^{4\phi}\left(-\Ric_{g}+\frac14H_{\varphi}^2-2\cL_{d\phi}g + \frac{7}{4}C\tau_0 T_\sym +\left(\left(\frac{7}{24}-\sigma\right)\tau_0^2+\left(3-\gamma\right)|d\phi|^2 \right)g\right)\diamond \psi,
\end{align*}
where $T_\sym=\frac{\tau_0}{4} g -\tau_{27}$. 
\end{prop}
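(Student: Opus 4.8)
The plan is to compute the three evolution equations directly from the definition of the generic heterotic $\rG_2$ flow \eqref{eq: mod_anomaly_flowCbeta}, using the preservation of the conformally coclosed condition to simplify the right-hand side. The starting point is the first flow equation: expanding the left side, $\partial_t(e^{-4\phi}\psi) = e^{-4\phi}(-4\dot\phi\,\psi + \partial_t\psi)$, so that $\partial_t\psi = 4\dot\phi\,\psi + e^{4\phi}(-dH_\varphi + \tfrac{7}{4}C\,d(\tau_0\varphi))$, where $\dot\phi$ is given by the second flow equation. The key inputs are: (i) Proposition \ref{prop:dH}, which expresses $dH_\varphi$ in $\rG_2$-irreducible components (the $7$-part $-\tfrac13 d(\tr T)\wedge\varphi$ and the $1+27$ part acting on $\psi$ via $\diamond$), recalling that for conformally coclosed flow lines $\tau_1 = d\phi$ and $\tau_2 = 0$; (ii) the identity $d(\tau_0\varphi) = d\tau_0\wedge\varphi + \tau_0\,d\varphi$ together with \eqref{eq:d:varphi}, which for integrable $\varphi$ gives $d\varphi = \tau_0\psi + 3d\phi\wedge\varphi + \star\tau_3$, so that $d(\tau_0\varphi)$ can be written in terms of $d\tau_0\wedge\varphi$, $\tau_0 d\phi\wedge\varphi$, $\tau_0^2\psi$, and $\tau_0\star\tau_3$; and (iii) Lemma 2.4(1), which converts $(X\lrcorner\varphi)\diamond\psi = -3X\wedge\varphi$, needed to recognize that $\star\tau_3$-type and $\tau_0$-multiple terms combine into the stated $\diamond\psi$ expression.

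First I would collect all the $\Omega^4_7$ contributions — these come from the $7$-part of $dH_\varphi$, from $d\tau_0\wedge\varphi$ and $\tau_0 d\phi\wedge\varphi$ in $d(\tau_0\varphi)$ — and check that they assemble into $\tfrac{7}{12}e^{4\phi}((1+3C)d\tau_0 + 9C\tau_0 d\phi)\wedge\varphi$ (using $\tr T = \tfrac74\tau_0$ from \eqref{eq:trT}, so $\tfrac13 d(\tr T) = \tfrac{7}{12}d\tau_0$). Then I would collect the $\Omega^4_1 \oplus \Omega^4_{27}$ contributions: the $1+27$ part of $dH_\varphi$ from Proposition \ref{prop:dH} contributes $(\Ric - \tfrac14 H_\varphi^2 + 2\cL_{d\phi}g - \tfrac18(R_g - 4d^\star d\phi - \tfrac32|H_\varphi|^2)g)\diamond\psi$; the $4\dot\phi\,\psi$ term contributes a pure-trace (i.e.\ $\Omega^4_1$) piece; the $\tau_0^2\psi$ and $\tau_0\star\tau_3 = \tau_0\,\tau_{27}\diamond\varphi$ pieces from $d(\tau_0\varphi)$ — here I use that $\star(\tau_{27}\diamond\varphi) = (\tfrac34(\tau_{27})_1 - (\tau_{27})_{27} + (\tau_{27})_7)\diamond\psi$ via Lemma 2.4(4), and since $\tau_{27}$ is traceless symmetric it equals $-\tau_{27}\diamond\psi$, which then combines with $T_\sym = \tfrac{\tau_0}{4}g - \tau_{27}$ to produce the $\tfrac74 C\tau_0 T_\sym$ term. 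The scalar-curvature term $R_g$ from Proposition \ref{prop:dH} must be expanded via Lemma \ref{lm: Ricci and scalar curv} (with $\tau_2 = 0$, $\tau_1 = d\phi$) and $|H_\varphi|^2$ written in terms of $\tau_0^2, |d\phi|^2, |\tau_3|^2$ using \eqref{eq: unique conn and tor} and the orthogonality of the components, so that $|\tau_3|^2$ and $|d\phi|^2$ terms from $\dot\phi$ cancel against the trace terms, leaving exactly the coefficients $(\tfrac{7}{24} - \sigma)\tau_0^2 + (3-\gamma)|d\phi|^2$ in the $\diamond\psi$ bracket.

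Once $\partial_t\psi$ is established, the evolution of $g$ follows from \eqref{eq: ddt *} (or directly from $\partial_t\psi = S\diamond\psi - X\wedge\varphi$, matching against \eqref{eq: ddt psi}): reading off $S$ as the symmetric $2$-tensor whose $\diamond\psi$ image appears, we get $\partial_t g = 2S$, and since $\diamond\psi$ on the traceless and trace parts corresponds to the factor in front, multiplying by $2$ and noting $(\tfrac{7}{24}-\sigma)\cdot 2 = \tfrac{7}{12}-2\sigma$ etc.\ yields the stated formula for $\partial_t g$. The evolution of $\varphi$ then follows from the general deformation formula \eqref{eq: general_G2_flow}: $\partial_t\varphi = S\diamond\varphi + X\lrcorner\psi$ with the same $S$ and $X = -\tfrac{7}{12}e^{4\phi}((1+3C)d\tau_0 + 9C\tau_0 d\phi)^\sharp$ extracted from the $7$-part (using that $X\wedge\varphi \leftrightarrow X\lrcorner\psi$ under the identification, with the factor bookkeeping from Lemma 2.4(1) giving a relative sign and no extra constant since the normalization was absorbed).

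The main obstacle I expect is the careful bookkeeping of the $\Omega^4_1$ (trace) and $\Omega^4_{27}$ pieces: one must correctly expand $R_g$ and $|H_\varphi|^2$, verify that the $d^\star\tau_1 = \Delta\phi$ term from Proposition \ref{prop:dH} and the $\Delta_g\phi$ term in $\dot\phi$ interact correctly (in fact the $\Delta\phi$ contributions should cancel, which is the mechanism behind the flow being well-posed), and track the numerous rational coefficients — $\tfrac{7}{24}$, $\tfrac{7}{12}$, the $3-\gamma$, the $1+3C$ versus $9C$ split — without error. A secondary subtlety is correctly applying Lemma 2.4(4) to $\tau_{27}\diamond\varphi$ and confirming the sign in $\star(\tau_{27}\diamond\varphi) = -\tau_{27}\diamond\psi$, which is what lets the Grigorian-type modification term reorganize into the clean $T_\sym$ form; getting this sign wrong would flip the coefficient of $C$ throughout.
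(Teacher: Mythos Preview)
Your proposal is correct and follows essentially the same approach as the paper's proof: both expand $\partial_t\psi = 4\dot\phi\,\psi + e^{4\phi}(-dH_\varphi + \tfrac{7}{4}C\,d(\tau_0\varphi))$, invoke Proposition~\ref{prop:dH} for the irreducible decomposition of $dH_\varphi$, expand $d(\tau_0\varphi)$ via \eqref{eq:d:varphi}, and then read off $S$ and $X$ from $\partial_t\psi = S\diamond\psi - X\wedge\varphi$ to obtain $\partial_t g = 2S$ and $\partial_t\varphi = S\diamond\varphi + X\lrcorner\psi$. The only minor streamlining in the paper is that, for the trace piece, it quotes \eqref{eq: pi1_dH} directly (yielding $\langle dH_\varphi,\psi\rangle = -4\Delta_g\phi + \tfrac76\tau_0^2 + 12|d\phi|^2 - |\tau_3|^2$ after setting $\tau_1 = d\phi$), rather than re-expanding $R_g$ from Lemma~\ref{lm: Ricci and scalar curv} and $|H_\varphi|^2$ separately as you outline; this saves a few lines but is logically equivalent to your route.
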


\begin{proof}
Since $e^{-4\phi}\psi$ evolves by an exact $4$-form and its initial condition is closed, $\varphi$ is an integrable $\rG_2$-structure with $d\varphi=\tau_0\psi+3d\phi\wedge\varphi+\star\tau_3$. Thus, from Proposition \ref{prop:dH},
    \begin{align*}
        -dH_\varphi+\frac{7}{4}Cd\left(\tau_0\varphi\right)=&\ \frac{7}{12} d\tau_0\wedge\varphi+\left(-\Ric+\frac{1}{4}(H_\varphi^2)-2\cL_{d\phi}g+\frac{1}{4}\langle dH_\varphi,\psi \rangle g\right)\diamond\psi\\
        &+\frac74C\left( d\tau_0\wedge\varphi+\tau_0\left(3d\phi\wedge\varphi+\left(\frac{\tau_0}{4}g-\tau_{27}\right)\diamond\psi\right)\right),
    \end{align*}
    using \eqref{eq: pi1_dH} with $\tau_1=d\phi$, we get
    \begin{align}\label{eq:dH1explicit}
        \langle dH_\varphi,\psi\rangle=& -4\Delta_g\phi+\frac76\tau_0^2+12|d\phi|^2-|\tau_3|^2.
    \end{align}
    Hence, the evolution for $\psi$ is
    \begin{align*}
        \frac{\partial}{\partial t}\psi =&-e^{4\phi}dH_\varphi+\frac74 C e^{4\phi}d(\tau_0\varphi)+4\left(\frac{\partial}{\partial t}\phi\right) \psi\\
        =&\ \frac74e^{4\phi}\left(\left(\frac13+C\right)d\tau_0 +3\tau_0 d\phi\right)\wedge\varphi\\
    &+e^{4\phi}\left(-\Ric_{g}+\frac14H_{\varphi}^2-2\cL_{d\phi}g-\frac74 C\tau_0 \tau_{27}+\left(\left(\frac{7}{24}+\frac{7}{16}C-\sigma\right)\tau_0^2+\left(3-\gamma\right)|d\phi|^2 \right)g \right)\diamond \psi.
    \end{align*}
The result follows from \eqref{eq: general_G2_flow} and \eqref{eq: ddt g}, taking $T_\sym=\frac{\tau_0}{4} g -\tau_{27}$.
\end{proof}

\subsection{Relation to the large volume heterotic $\rG_2$ system}

In this section we characterize fixed points for the heterotic $\rG_2$ flow \eqref{eq: mod_anomaly_flowCbeta} with conformally coclosed initial condition. We will make specific choices of the parameters $\gamma$ and $\sigma$, while keeping $C \neq 0$ generic, in order to make contact with the \emph{large volume heterotic $\rG_2$ system}.

\begin{defn}
\label{def:modheteroticG2}
We say that a pair $(\varphi,\phi)$, given by a $\rG_2$-structure $\varphi$ on $M$ and a smooth function $\phi \in C^\infty(M,\bR)$ (the \emph{dilaton}), is a solution of the \emph{modified heterotic $\rG_2$-system} with constant $C \in \bR$ if the following equations are satisfied 
\begin{equation}
\label{eq:modheteroticG2}
\begin{split}
    d (e^{-4\phi}\psi) & = 0,\\
    d\left(H - \frac{7}{4}C\left(\tau_0\varphi \right)\right)
    & = 0,\\
    \langle dH,\psi\rangle & = 0,
\end{split}
\end{equation}
where
\begin{equation}
\label{eq:Killinghetcompatiblephi}
H = H_\varphi = \frac{1}{6}\tau_0\varphi - \tau_1\lrcorner\psi - \tau_3.
\end{equation}
\end{defn}

\noindent Note that conformally by \eqref{eq: pi1_dH} coclosed fixed points of the flow \eqref{eq: mod_anomaly_flowCbeta} correspond to solutions of \eqref{eq:modheteroticG2} with
\begin{equation}\label{eq:sigmagammahet}
\gamma = 3, \qquad \sigma 
= \frac{42}{144}.
\end{equation}
Our goal is to prove that, for $C \neq 0$, solutions of \eqref{eq:modheteroticG2} on a compact manifold are torsion-free $\rG_2$-structures. For the proof we will need the following technical lemma:

\begin{lemma}\label{lem:Lich1G2}
Let $\varphi$ be an integrable $\rG_2$-structure. Taking $H = H_\varphi$, one has
\begin{gather}\label{eq:magicformula}
\begin{split}
\frac{49}{36}\tau_0^2 = \mathcal{S}^+_{\varphi} - \langle dH,\psi\rangle,
\end{split}
\end{gather}
where $\mathcal{S}^+_{\varphi}$ is the \emph{generalized scalar curvature} of $\varphi$, defined by
\begin{equation}\label{eq:Genscalar}
\mathcal{S}^+_{\varphi} = R_{g}-\frac{1}{2}|H|^2 - 8 d^{\star} \tau_1 - 16\left|\tau_1\right|^2 \in C^\infty(M,\bR).
\end{equation}
\end{lemma}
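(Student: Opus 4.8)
\textbf{Proof proposal for Lemma \ref{lem:Lich1G2}.}

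The plan is to combine the two expressions for the scalar curvature already at our disposal. First I would start from the formula for $\langle dH_\varphi,\psi\rangle$ in \eqref{eq: pi1_dH} of Proposition \ref{prop:dH}, namely
\[
\langle dH_\varphi,\psi\rangle = 4d^{\star}\tau_1+\frac{49}{36}\tau_0^2+16|\tau_1|^2-|H_\varphi|^2.
\]
Here $\varphi$ is integrable so $\tau_2=0$, and this identity applies directly. Solving for $\frac{49}{36}\tau_0^2$ gives
\[
\frac{49}{36}\tau_0^2 = \langle dH_\varphi,\psi\rangle - 4d^{\star}\tau_1 - 16|\tau_1|^2 + |H_\varphi|^2.
\]
At this point the right-hand side is $\langle dH_\varphi,\psi\rangle$ plus something that I want to recognize as $-\mathcal{S}^+_\varphi$ expressed in terms of curvature. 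Recalling the definition \eqref{eq:Genscalar},
\[
\mathcal{S}^+_\varphi = R_g - \tfrac12|H|^2 - 8d^\star\tau_1 - 16|\tau_1|^2,
\]
I see that the naive substitution does not immediately match: I have $+|H_\varphi|^2$ and $-4d^\star\tau_1$ on the right, whereas $-\mathcal{S}^+_\varphi$ would contribute $+\tfrac12|H|^2$ and $+8d^\star\tau_1$ (together with $-R_g$). So the key step is to eliminate $R_g$ using the intrinsic torsion formula for the scalar curvature.

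The resolution is to invoke Lemma \ref{lm: Ricci and scalar curv}, which in the integrable case ($\tau_2 = 0$) and with $\tau_1 = d\phi$-type conventions reads
\[
R_g = 12 d^{\star}\tau_1 + \frac{21}{8}\tau_0^2 + 30|\tau_1|^2 - \frac12|\tau_3|^2.
\]
One also needs the relation between $|H_\varphi|^2$ and the torsion forms: from \eqref{eq:Killinghetcompatiblephi}, using that the three summands $\tfrac16\tau_0\varphi$, $\tau_1\lrcorner\psi$, $\tau_3$ lie in mutually orthogonal $\rG_2$-irreducible pieces $\Omega^3_1$, $\Omega^3_7$, $\Omega^3_{27}$, together with $|\varphi|^2 = 7$, $|\tau_1\lrcorner\psi|^2$ proportional to $|\tau_1|^2$, and $|\tau_3|^2 = 2|\tau_{27}|^2$, one obtains a clean expression $|H_\varphi|^2 = c_0\tau_0^2 + c_1|\tau_1|^2 + |\tau_3|^2$ for explicit constants $c_0,c_1$ (this is a standard normalization computation via \eqref{eq:normkform} and \eqref{eq: norm torsion forms}). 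Substituting both of these into the rearranged \eqref{eq: pi1_dH} should, after collecting the $\tau_0^2$, $|\tau_1|^2$, $d^\star\tau_1$, and $|\tau_3|^2$ coefficients, produce exactly $\mathcal{S}^+_\varphi - \langle dH,\psi\rangle$ on the right-hand side, up to the asserted factor. Equivalently — and this is perhaps the cleanest packaging — one verifies the purely algebraic-plus-one-divergence identity
\[
R_g - \tfrac12|H_\varphi|^2 - 8d^\star\tau_1 - 16|\tau_1|^2 = \frac{49}{36}\tau_0^2 + \big(4d^\star\tau_1 + \frac{49}{36}\tau_0^2 + 16|\tau_1|^2 - |H_\varphi|^2\big)
\]
wait — more precisely, one checks that $\mathcal{S}^+_\varphi - \langle dH,\psi\rangle$ equals $\frac{49}{36}\tau_0^2$ by substituting $\langle dH,\psi\rangle$ from \eqref{eq: pi1_dH} and $R_g$ from Lemma \ref{lm: Ricci and scalar curv}, and then checking that all terms involving $d^\star\tau_1$, $|\tau_1|^2$, and $|\tau_3|^2$ cancel identically, leaving only the $\tau_0^2$ term with the stated coefficient.

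The main obstacle is purely bookkeeping: getting the numerical coefficients right. One must be careful that the norm $|H_\varphi|^2$ is computed with the $k$-form normalization \eqref{eq:normkform} (so $|H_\varphi|^2 = \tfrac{1}{3!}(H_\varphi)_{ijk}(H_\varphi)_{ijk}$), that $|\tau_1\lrcorner\psi|^2$ is converted correctly (one has the relations $|\tau_1\lrcorner\varphi|^2 = 6|\tau_1|^2$ and the analogous identity for $\tau_1\lrcorner\psi$ from the contraction identities \eqref{eq: varphi1 psi1}, \eqref{eq: psi3 psi3}), and that $|\tau_3|^2 = 2|\tau_{27}|^2$. The genuinely nontrivial input — the appearance of curvature, not just torsion — is supplied entirely by Lemma \ref{lm: Ricci and scalar curv}; once $R_g$ is traded for torsion, the identity \eqref{eq:magicformula} is a finite-dimensional linear-algebra check. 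I expect the divergence term $d^\star\tau_1$ to cancel because it appears with coefficient $+12$ in $R_g$, and $\mathcal{S}^+_\varphi$ subtracts $8d^\star\tau_1$ while \eqref{eq: pi1_dH} contributes the remaining $4d^\star\tau_1$; this is the structural reason the formula is "magic."
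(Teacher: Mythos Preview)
Your proposal is correct and follows essentially the same approach as the paper: both combine the formula \eqref{eq: pi1_dH} for $\langle dH_\varphi,\psi\rangle$ with the scalar curvature expression from Lemma \ref{lm: Ricci and scalar curv}, using the orthogonal decomposition of $H_\varphi$ to convert between $|H_\varphi|^2$ and $|\tau_3|^2$. The paper merely packages this more compactly by first rewriting $R_g = 12d^{\star}\tau_1 + \frac{49}{18}\tau_0^2 + 32|\tau_1|^2 - \frac{1}{2}|H|^2$ (absorbing the conversion $|\tau_3|^2 = |H_\varphi|^2 - \frac{7}{36}\tau_0^2 - 4|\tau_1|^2$ into the scalar curvature formula), after which $\mathcal{S}^+_\varphi - \langle dH,\psi\rangle = \frac{49}{36}\tau_0^2$ follows by direct subtraction.
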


\begin{proof}

From Lemma \ref{lm: Ricci and scalar curv}, the scalar curvature can be written as
$$
  R_g= 12d^{\star}\tau_1+\frac{49}{18}\tau_0^2+32|\tau_1|^2-\frac12|H|^2.
$$
Hence, the relation \eqref{eq:magicformula} follows from \eqref{eq: pi1_dH} (cf. \eqref{eq:dH1explicit}) and \eqref{eq:Genscalar}.    
\end{proof}

\begin{rmk}
The generalized scalar curvature plays an important role in the theory of generalized Ricci flow, being closely related to the volume density of the generalized Perelman energy functional \cite{GRFbook}.  In \eqref{eq:Genscalar} we use the Hodge norm on differential forms, given by \eqref{eq:normkform}.
\end{rmk}

\begin{rmk}
There exists a spinorial proof of the previous result, by adaptation of the methods in
\cite[Proposition 3.6]{SGFLS}.
\end{rmk}

With these preliminaries in place we can prove the main result of this section, which characterizes solutions of the modified heterotic $\rG_2$-system \ref{eq:modheteroticG2}.  In particular we show that the torsion is closed, hence by the recent result of Ivanov-Stanchev \cite{ivanov2025} defines a generalized Ricci soliton.  Due to further constraints on the generalized scalar curvature, if the manifold is compact then in fact it defines a torsion-free $\rG_2$-structure.

\begin{prop}\label{prop:weakG2}
Let $(\varphi,\phi)$ be a solution of the equations \eqref{eq:modheteroticG2} with $0 \neq C \in \mathbb{R}$. Then,
\begin{equation}\label{eq:vanishing}
\mathcal{S}^+_{\varphi} = 0, \qquad \tau_0 = 0, \qquad d H = 0,
\end{equation}
where $\mathcal{S}^+_{\varphi}$ is the generalized scalar curvature
$$
\mathcal{S}^+_{\varphi} = R_g-\frac{1}{2}|H_\varphi|^2 - 8 d^{\star}d\phi  - 16\left|d\phi\right|^2.
$$
Consequently, assuming that $M$ is compact, one has that $\varphi$ is a torsion-free $\rG_2$-structure, $d \phi = 0$ and $H_\varphi = 0$. 
\end{prop}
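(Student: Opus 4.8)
The plan is to combine the three equations of \eqref{eq:modheteroticG2} with the algebraic identity of Lemma~\ref{lem:Lich1G2} and then use compactness to extract the vanishing statements. First, since $(\varphi,\phi)$ solves the first equation $d(e^{-4\phi}\psi)=0$, the $\rG_2$-structure $\varphi$ is integrable with $\tau_2 = 0$ and $\tau_1 = d\phi$, so Lemma~\ref{lem:Lich1G2} applies and gives $\tfrac{49}{36}\tau_0^2 = \mathcal{S}^+_\varphi - \langle dH,\psi\rangle$. The third equation in \eqref{eq:modheteroticG2} says $\langle dH,\psi\rangle = 0$, so immediately $\tfrac{49}{36}\tau_0^2 = \mathcal{S}^+_\varphi$. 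This reduces everything to controlling $\mathcal{S}^+_\varphi$, which — as the remark after the lemma hints — is exactly the generalized scalar curvature governing the generalized Perelman energy; on a compact manifold it should integrate against $e^{-4\phi}\Vol$ to something with a sign, or be forced to vanish pointwise by a maximum-principle / integration-by-parts argument once we know $dH$ lies in a restricted component.

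Second, I would exploit the middle equation $d\big(H - \tfrac{7}{4}C\tau_0\varphi\big) = 0$, i.e. $dH = \tfrac{7}{4}C\, d(\tau_0\varphi)$. Combined with the third equation $\langle dH,\psi\rangle = 0$ and Proposition~\ref{prop:dH}'s decomposition of $dH_\varphi$ into its $\Omega^4_7$ and $\Omega^4_{1\oplus 27}$ pieces, the vanishing of the $\Omega^4_1$-component of $dH$ plus the explicit form $dH = \tfrac74 C\,d(\tau_0\varphi)$ should pin down $d\tau_0$ and then $\tau_0$ itself. Concretely, expanding $d(\tau_0\varphi) = d\tau_0\wedge\varphi + \tau_0\, d\varphi$ and using $d\varphi = \tau_0\psi + 3d\phi\wedge\varphi + \star\tau_3$ gives $dH = \tfrac74 C\big(d\tau_0\wedge\varphi + 3\tau_0\, d\phi\wedge\varphi + \tau_0^2\psi + \tau_0\star\tau_3\big)$. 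Taking the inner product with $\psi$ and using $\langle\psi,\psi\rangle = 7$ together with orthogonality of the $X\wedge\varphi$ and $\star\tau_3$ pieces to $\Omega^4_1$, the condition $\langle dH,\psi\rangle = 0$ forces $\tfrac{49}{4}C\,\tau_0^2 = 0$, hence (since $C\neq 0$) $\tau_0 \equiv 0$ pointwise. This is the crucial use of the hypothesis $C\neq 0$.

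Third, with $\tau_0 = 0$ in hand, the magic formula gives $\mathcal{S}^+_\varphi = \tfrac{49}{36}\tau_0^2 = 0$, which is the first claim in \eqref{eq:vanishing}; and $d(\tau_0\varphi) = 0$, so the middle equation collapses to $dH = 0$, the third claim. That establishes \eqref{eq:vanishing} with no compactness needed. For the final sentence, now assume $M$ compact. Since $\tau_0 = 0$ and $\tau_2 = 0$, the torsion of $\varphi$ reduces to $\tau_1 = d\phi$ and $\tau_3$; by $dH = 0$ and Ivanov–Stanchev (Proposition~\ref{prop:ISG2}), $(g_\varphi, H_\varphi, \phi)$ is a gradient generalized Ricci soliton, and the additional constraint $\mathcal{S}^+_\varphi = 0$ is precisely the hypothesis of the Nuñez–Maldacena / string-theory rigidity result cited in the introduction. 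Invoking that rigidity statement, $H_\varphi = 0$ and $\phi$ is constant on compact $M$; then $\tau_1 = d\phi = 0$ and $H_\varphi = \tfrac16\tau_0\varphi - \tau_1\lrcorner\psi - \tau_3 = -\tau_3 = 0$ forces $\tau_3 = 0$ as well, so all torsion forms vanish and $\varphi$ is torsion-free.

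The main obstacle I anticipate is the clean extraction of $\tau_0 \equiv 0$ from the $\Omega^4_1$-projection: one must be careful that $\langle dH,\psi\rangle$ genuinely isolates only the $\tau_0^2$ term and that the $d\tau_0\wedge\varphi$, $d\phi\wedge\varphi$, and $\star\tau_3$ contributions are orthogonal to $\psi$ (they are, being in $\Omega^4_7$ and $\Omega^4_{27}$ respectively). An alternative, perhaps cleaner, route avoiding this bookkeeping is to pair the middle equation with $\varphi\wedge(\cdot)$ or integrate $\tau_0\cdot$ against the middle equation: writing $\int_M \langle dH,\psi\rangle\,\Vol = 0$ directly via Lemma~\ref{lem:Lich1G2} shows $\int_M \mathcal{S}^+_\varphi \Vol = \tfrac{49}{36}\int_M \tau_0^2\,\Vol \geq 0$, and a Stokes-type identity for $\int_M \mathcal{S}^+_\varphi e^{-4\phi}\Vol$ (the generalized Perelman density is a total divergence up to curvature-free terms on a soliton) would give the reverse inequality, forcing $\tau_0 = 0$. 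I would present the pointwise argument as primary and remark on the variational one.
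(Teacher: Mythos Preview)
Your proposal is correct and follows essentially the same route as the paper: use Lemma~\ref{lem:Lich1G2} together with the third equation to get $\tfrac{49}{36}\tau_0^2=\mathcal{S}^+_\varphi$, then pair the middle equation $dH=\tfrac{7}{4}C\,d(\tau_0\varphi)$ with $\psi$ and use the orthogonality of the $\Omega^4_7$ and $\Omega^4_{27}$ pieces to extract $\tfrac{49}{4}C\,\tau_0^2=0$, whence $\tau_0=0$, $\mathcal{S}^+_\varphi=0$, $dH=0$, and the compact rigidity follows from Ivanov--Stanchev plus the cited soliton rigidity. The paper's write-up is terser (it records the pairing as $-\tfrac{7}{4}C\,\tau_0\langle d\varphi,\psi\rangle$, silently using $\langle d\tau_0\wedge\varphi,\psi\rangle=0$), while you spell out the type decomposition explicitly; the final speculative paragraph about an integral alternative is unnecessary but harmless.
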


\begin{proof}
By the first and third equations in \eqref{eq:modheteroticG2} and Lemma \ref{lem:Lich1G2}, we have that
$$
\frac{49}{36}\tau_0^2 = \mathcal{S}^+_{\varphi}.
$$
Taking the $1$-dimensional component of the second equation in \eqref{eq:modheteroticG2}, we also obtain that
$$
-\frac{49}{4}C\tau_0^2 = -\frac{7}{4} C\tau_0 \langle d\varphi, \psi\rangle = - \langle dH, \psi\rangle = 0,
$$
and consequently \eqref{eq:vanishing} holds (since $C \neq 0$). From this, $dH = 0$ and by Proposition \ref{prop:ISG2} it follows that $(g_\varphi)$ is a gradient generalized Ricci soliton with vanishing generalized scalar curvature, and the statement follows assuming $M$ compact (see \cite[Proposition 5.8]{GF19} or \cite[Proposition 3.1]{Clarke2022} for a specific proof in the seven dimensional case).
\end{proof}

\subsection{Monotonicity of the \texorpdfstring{$\rG_2$}{G2}-dilaton functional}

In this section we give an alternative characterization of the fixed points for the generic heterotic $\rG_2$ flow, with conformally coclosed initial condition, for different values of the parameters $\gamma$ and $\sigma$ as those taken in Proposition \ref{prop:weakG2}. For this, we study monotonicity of a natural functional inspired by the variational approach to the six-dimensional heterotic system proposed in \cite{garciafern2018canonical}, better known as the Hull-Strominger system. Our analysis will illustrate the utility of relaxing our flow equations \eqref{eq:heteroticG2flowintro} in particular the evolution equation for the dilaton $\phi$. 
Using this, we will then prove a rigidity result, which shows that solutions of the system on a compact manifold are generically torsion-free $\rG_2$-structures.

\begin{defn} \label{d:dilatonfunctional}
Given an oriented, compact spin $7$-manifold $M$ and a pair $(\varphi,\phi)$, given by a $\rG_2$-structure $\varphi$ and smooth function $\phi \in C^\infty(M,\bR)$ on $M$, respectively, the \emph{$\rG_2$-dilaton functional} is defined by
\begin{gather}\label{eq:dilatonf}
\mathcal{M}(\varphi,\phi) = \int_M e^{-4\phi} \operatorname{Vol}_\varphi
\end{gather}
where $\operatorname{Vol}_\varphi$ is the volume form determined by the $\rG_2$-structure.
\end{defn}

\begin{rmk}
Up to scaling, the functional \eqref{eq:dilatonf} coincides with the \emph{volume scale} of a three-dimensional heterotic string compactification, previously considered in the string theory literature \cite{deIaOssa:2019cci}.
\end{rmk}

We fix a degree-four cohomology class, which we shall refer to as the \emph{balanced class}
$$
\tau \in H^4(M,\mathbb{R}).
$$
We are interested in studying monotonicity of the $\rG_2$-dilaton functional along the set of conformally coclosed pairs $(\varphi,\phi)$, with fixed balanced class $\tau$, that is,
$$
\mathcal{C}_\tau = \Big{\{}(\varphi,\phi) \; | \; d\left(e^{-4\phi}\psi\right) = 0, \; [e^{-4\phi}\psi] = \tau \Big{\}} \subset \Omega^3_+ \times C^\infty(M,\bR),
$$
where $\Omega^3_+ \subset \Omega^3$ denotes the open subspace of $\rG_2$-structures on $M$.

\begin{lemma}\label{lem:variation}
The variation of the $\rG_2$-dilaton functional along $\mathcal{C}_\tau$ is given by
$$
\delta \mathcal{M}_{|(\varphi,\phi)}(\dot \varphi,\dot \phi) = 3 \int \dot \phi e^{-4\phi} \operatorname{Vol}_\varphi + \frac{1}{4}\int_M \langle h,\star_\varphi d \varphi\rangle\operatorname{Vol}_\varphi,
$$
where
$$
\delta(e^{-4\phi}\psi) = dh, \quad h \in \Omega^3
$$
is the variation of $e^{-4\phi} \psi$.
\end{lemma}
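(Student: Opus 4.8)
The plan is to vary $\mathcal{M}(\varphi,\phi) = \int_M e^{-4\phi}\operatorname{Vol}_\varphi$ directly and then use the exactness constraint defining $\mathcal{C}_\tau$ to trade the variation of the volume form for $\dot\phi$ and the potential $h$. Writing the deformation of the $\rG_2$-structure in the standard form $\dot\varphi = S\diamond\varphi + X\lrcorner\psi$ with $S\in S^2(TM)$ and $X\in\sX(M)$ as in \eqref{eq: general_G2_flow}, the identity $\delta\operatorname{Vol}_\varphi = \tr(S)\operatorname{Vol}_\varphi$ from \eqref{eq: ddt g} gives at once
\[
\delta\mathcal{M}_{|(\varphi,\phi)}(\dot\varphi,\dot\phi) = \int_M\big(\tr(S) - 4\dot\phi\big)\,e^{-4\phi}\operatorname{Vol}_\varphi,
\]
so that everything reduces to computing $\int_M e^{-4\phi}\tr(S)\operatorname{Vol}_\varphi$ in terms of $\dot\phi$ and $h$.

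The key step is to wedge the relation $\delta(e^{-4\phi}\psi) = dh$ with $\varphi$ and integrate over $M$, evaluating the result in two ways. On the one hand, integration by parts (Stokes' theorem on the closed manifold $M$) together with $\star^2 = \operatorname{id}$ in dimension seven gives
\[
\int_M \varphi\wedge dh = \int_M d\varphi\wedge h = \int_M \langle h,\star_\varphi d\varphi\rangle\operatorname{Vol}_\varphi,
\]
which is moreover independent of the choice of $h$, since replacing $h$ by $h + \eta$ with $d\eta = 0$ changes the integral by $-\int_M d(\eta\wedge\varphi) = 0$. On the other hand, expanding $\delta(e^{-4\phi}\psi) = -4\dot\phi\,e^{-4\phi}\psi + e^{-4\phi}\,\delta\psi$, inserting $\delta\psi = S\diamond\psi - X\wedge\varphi$ from \eqref{eq: ddt psi}, and using the pointwise algebraic identities $\psi\wedge\varphi = 7\operatorname{Vol}_\varphi$, $\varphi\wedge\varphi = 0$, $\psi\wedge(X\lrcorner\psi) = 0$, and $(S\diamond\psi)\wedge\varphi = 4\tr(S)\operatorname{Vol}_\varphi$, one finds
\[
\delta(e^{-4\phi}\psi)\wedge\varphi = e^{-4\phi}\big(4\tr(S) - 28\dot\phi\big)\operatorname{Vol}_\varphi.
\]
The last of these identities follows by differentiating $\psi\wedge\varphi = 7\operatorname{Vol}_\varphi$ and using $(S\diamond\varphi)\wedge\psi = 3\tr(S)\operatorname{Vol}_\varphi$ (a form of Hitchin's volume variation formula $\delta\operatorname{Vol}_\varphi = \tfrac13\dot\varphi\wedge\psi$), which in turn comes from the decomposition $S = S_1 + S_{27}$ with $S_1\diamond\varphi = \tfrac37\tr(S)\varphi$ together with the vanishing $\gamma\wedge\psi = 0$ for $\gamma\in\Omega^3_{27}$.

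Comparing the two evaluations yields
\[
\int_M e^{-4\phi}\tr(S)\operatorname{Vol}_\varphi = 7\int_M e^{-4\phi}\dot\phi\,\operatorname{Vol}_\varphi + \tfrac14\int_M \langle h,\star_\varphi d\varphi\rangle\operatorname{Vol}_\varphi,
\]
and substituting this into the first display gives the asserted formula. There is essentially no analytic obstacle here; the proof is a short computation, and the one genuine point to observe is that pairing the exactness constraint with $\varphi$ is precisely what extracts the needed relation between $\tr(S)$ and $\dot\phi$. The part that requires real care is the bookkeeping of the numerical coefficients in the two auxiliary identities $(S\diamond\varphi)\wedge\psi = 3\tr(S)\operatorname{Vol}_\varphi$ and $(S\diamond\psi)\wedge\varphi = 4\tr(S)\operatorname{Vol}_\varphi$ --- jointly they encode how $\delta(\psi\wedge\varphi) = \delta(7\operatorname{Vol}_\varphi)$ distributes as $3 + 4$ among the two factors --- since these are exactly what produce the final coefficients $3$ and $\tfrac14$; one must also check that the $\Omega^3_7$-part $X\lrcorner\psi$ of $\dot\varphi$ drops out of every term, so that only the symmetric part $S$ contributes.
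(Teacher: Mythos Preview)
Your proof is correct and arrives at the same formula as the paper, but the route is organized differently. The paper varies $\operatorname{Vol}_\varphi = \tfrac{1}{7}\varphi\wedge\psi$ directly, obtaining two terms $\dot\varphi\wedge\psi$ and $\varphi\wedge\dot\psi$, and then invokes Hitchin's operator $J_7$ to write $\dot\varphi = J_7^{-1}\star_\varphi\dot\psi$; the key algebraic identity that emerges is $\dot\varphi\wedge\psi = \tfrac{3}{4}\,\dot\psi\wedge\varphi$, after which substituting $\dot\psi = e^{4\phi}dh + 4\dot\phi\,\psi$ and integrating by parts gives the result. You instead use the $(S,X)$ decomposition of $\dot\varphi$ from \eqref{eq: general_G2_flow}, read off $\delta\operatorname{Vol}_\varphi = \tr(S)\operatorname{Vol}_\varphi$ at once, and then extract $\int e^{-4\phi}\tr(S)\operatorname{Vol}_\varphi$ by wedging the constraint $\delta(e^{-4\phi}\psi) = dh$ with $\varphi$ and using the pointwise identity $(S\diamond\psi)\wedge\varphi = 4\tr(S)\operatorname{Vol}_\varphi$. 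Your $3+4$ split of $\delta(\psi\wedge\varphi) = 7\tr(S)\operatorname{Vol}_\varphi$ is exactly the content of the paper's $J_7$ step in disguise, so the two arguments are equivalent at the algebraic core; your version has the advantage of avoiding the auxiliary operator $J_7$ and staying entirely within the $\diamond$-formalism already set up in the preliminaries, while the paper's version is a bit more structural and makes the role of the Hitchin duality between $\varphi$ and $\psi$ explicit. One small remark: the identity $\psi\wedge(X\lrcorner\psi) = 0$ that you list is not actually needed --- the vanishing of the $X$-contribution in $\delta\psi\wedge\varphi$ follows immediately from $\varphi\wedge\varphi = 0$, which you also list.
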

\begin{proof}
The condition $(\varphi,\phi) \in \mathcal{C}_\tau$ implies $\delta(e^{-4\phi}\psi) = dh$, for some $h \in \Omega^3$. More explicitly,
$$
\dot \psi -4\dot \phi \psi = e^{4\phi}dh.
$$
We note that
$$
\operatorname{Vol}_\varphi = \frac{1}{7}\varphi \wedge \psi.
$$
Then, taking variations in \eqref{eq:dilatonf} we have
\begin{align*}
\delta \mathcal{M}_{|(\varphi,\phi)}(\dot \varphi,\dot \phi) & = -4 \int \dot \phi e^{-4\phi} \operatorname{Vol}_\varphi + \frac{1}{7}\int_M e^{-4\phi} (\dot \varphi \wedge \psi + \varphi \wedge \dot \psi)\\
& = \frac{1}{7}\int_M e^{-4\phi} \dot \varphi \wedge \psi + \frac{1}{7}\int_M \varphi \wedge dh.
\end{align*}
Using the operator
$$
J_7 \colon \Omega^3 \to \Omega^3,
$$
defined by
$$
J_7 \beta = \frac{4}{3}\beta_1 + \beta_7 - \beta_{27},
$$
we can write (see \cite[Lemma 20]{hitchin2000geometry})
$$
\dot \varphi = J_7^{-1}\star_\varphi \dot \psi = J_7^{-1}\star_\varphi \left(e^{4\phi}dh + 4\dot \phi \psi\right).
$$
From this, we conclude
\begin{align*}
\delta \mathcal{M}_{|(\varphi,\phi)}(\dot \varphi,\dot \phi) & = \frac{3}{28}\int_M e^{-4\phi} \left(e^{4\phi}dh + 4\dot \phi \psi\right) \wedge \varphi + \frac{1}{7}\int_M \varphi \wedge dh\\
& = 3\int_M \dot \phi e^{-4\phi}\operatorname{Vol}_\varphi + \frac{1}{4}\int_M  h \wedge d\varphi.
\end{align*}

\end{proof}

We need the following technical lemma, relating $\star_\varphi d \varphi$ with the characteristic torsion (cf.  \eqref{eq: unique conn and tor})
along $\mathcal{C}_\tau$.

\begin{lemma}\label{lem:H*dvarphi}
For $(\varphi,\phi) \in \mathcal{C}_\tau$, we have
$$
\star_\varphi d \varphi = -H_\varphi + \frac{7}{6}\tau_0 \varphi - 4 \nabla \phi \lrcorner \psi.
$$
\end{lemma}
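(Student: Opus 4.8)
The plan is to unwind the definitions and use the torsion decomposition formulas already recorded in the preliminaries. First I would recall that for $(\varphi,\phi) \in \mathcal{C}_\tau$ the $\rG_2$-structure is integrable with $\tau_2 = 0$ and $\tau_1 = d\phi$, so by \eqref{eq:d:varphi} we have
\begin{equation*}
d\varphi = \tau_0 \psi + 3 d\phi \wedge \varphi + \star_\varphi \tau_3.
\end{equation*}
Applying the Hodge star and using that $\star_\varphi^2 = \Id$ on this seven-manifold, this gives
\begin{equation*}
\star_\varphi d\varphi = \tau_0 \star_\varphi \psi + 3 \star_\varphi(d\phi \wedge \varphi) + \tau_3 = \tau_0 \varphi + 3 \star_\varphi(d\phi \wedge \varphi) + \tau_3,
\end{equation*}
since $\star_\varphi \psi = \varphi$. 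The term $\star_\varphi(d\phi \wedge \varphi)$ should be rewritten using the identity $\star(\alpha \wedge \gamma) = (-1)^k \alpha \lrcorner \star \gamma$ recorded before \eqref{eq: map P}, which for $\alpha = d\phi \in \Omega^1$ and $\gamma = \varphi \in \Omega^3$ yields $\star_\varphi(d\phi \wedge \varphi) = - d\phi \lrcorner \psi$. Hence $\star_\varphi d\varphi = \tau_0 \varphi - 3\, d\phi \lrcorner \psi + \tau_3$.

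The remaining step is to compare this with the characteristic torsion $H_\varphi = \frac{1}{6}\tau_0 \varphi - \tau_1 \lrcorner \psi - \tau_3$ from \eqref{eq: unique conn and tor}. Substituting $\tau_1 = d\phi = \nabla\phi$, we have $H_\varphi = \frac{1}{6}\tau_0 \varphi - d\phi \lrcorner \psi - \tau_3$, so that $\tau_3 = \frac{1}{6}\tau_0\varphi - d\phi\lrcorner\psi - H_\varphi$. Plugging this into the expression for $\star_\varphi d\varphi$ gives
\begin{equation*}
\star_\varphi d\varphi = \tau_0 \varphi - 3\, d\phi \lrcorner \psi + \frac{1}{6}\tau_0\varphi - d\phi \lrcorner \psi - H_\varphi = -H_\varphi + \frac{7}{6}\tau_0 \varphi - 4\, d\phi \lrcorner \psi,
\end{equation*}
which is exactly the claimed formula once we write $\nabla\phi$ for $d\phi$. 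This is a short computation with no genuine obstacle; the only point requiring a little care is getting the sign and the interior-product form of $\star_\varphi(d\phi \wedge \varphi)$ right, and bookkeeping the coefficient $1 + \frac{1}{6} = \frac{7}{6}$ and $-3 - 1 = -4$ correctly when combining the two contributions proportional to $\varphi$ and to $d\phi\lrcorner\psi$.
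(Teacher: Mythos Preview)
Your proof is correct and follows essentially the same route as the paper's own argument: both start from the torsion decomposition \eqref{eq:d:varphi} with $\tau_1 = d\phi$, apply $\star_\varphi$, and then compare with the expression \eqref{eq: unique conn and tor} for $H_\varphi$. The paper compresses all of this into a single displayed line, whereas you spell out the intermediate identity $\star_\varphi(d\phi\wedge\varphi) = -d\phi\lrcorner\psi$ and the bookkeeping $1+\tfrac16 = \tfrac76$, $-3-1=-4$ explicitly; there is no substantive difference.
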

\begin{proof}
Using that $\tau_1 = d\phi$ along $\mathcal{C}_\tau$, we obtain (see \eqref{eq:d:varphi})
\begin{align*}
\star_\varphi d \varphi = \tau_0\varphi+3\star_\varphi d\phi\wedge \varphi + \tau_3 = -H_\varphi + \frac{7}{6}\tau_0 \varphi - 4 \nabla \phi \lrcorner \psi.
\end{align*}
\end{proof}

Our next result proves the desired monotonicity for the $\rG_2$-dilaton functional \eqref{eq:dilatonf} along the generic heterotic $\rG_2$ flow \ref{eq: mod_anomaly_flowCbeta}, for a suitable open range of the real parameters $C, \gamma,\sigma \in \bR$.

\begin{thm}\label{thm: monotonicity funct}
Given an oriented, compact spin $7$-manifold $M$, consider a one-parameter family of $\rG_2$-structures $\varphi_t \in \Omega^3$ and smooth dilaton functions $\phi_t \in C^\infty(M,\bR)$ satisfying the
generic heterotic $\rG_2$ flow \eqref{eq: mod_anomaly_flowCbeta}, for parameters $C, \gamma,\sigma \in \bR$. Assuming the initial condition $d\left(e^{-4\phi_0}\psi_0\right) = 0$, one has
$$
\frac{\partial}{\partial t}\mathcal{M}(\varphi_t,\phi_t) =\int_M \left(-(3\gamma + 3) |d\phi|^2 + |\tau_3|^2 + 3\left(\frac{7(21 C - 2)}{144} - \sigma\right)\tau_0^2 \right)\operatorname{Vol}_\varphi 
$$
for all $t$. Consequently, provided that 
$$
\gamma < -1, \qquad \sigma < 
 \frac{7(21 C - 2)}{144},
$$
one has $\frac{\partial}{\partial t}\mathcal{M}(\varphi_t,\phi_t) \geqslant 0$, with equality only if $\varphi_t$ is torsion-free and $\phi_t$ is constant.
\end{thm}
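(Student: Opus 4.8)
The plan is to differentiate $\mathcal{M}(\varphi_t,\phi_t)$ using Lemma~\ref{lem:variation}, specialized to the tangent vector $(\dot\varphi,\dot\phi)$ coming from the generic heterotic $\rG_2$ flow \eqref{eq: mod_anomaly_flowCbeta}. Since the flow preserves conformal coclosedness, the curve $(\varphi_t,\phi_t)$ lies in $\mathcal{C}_\tau$ (the cohomology class $[e^{-4\phi_t}\psi_t]$ is constant because the time derivative is exact), so Lemma~\ref{lem:variation} applies at every time $t$. The two ingredients to identify are: the function $\dot\phi$, which is prescribed directly as the second equation of \eqref{eq: mod_anomaly_flowCbeta}; and the primitive $h$, for which we may simply take the primitive defined by the first flow equation, i.e.\ $e^{4\phi}\,\partial_t(e^{-4\phi}\psi) = dh$ with $h = e^{4\phi}(-H_\varphi + \tfrac{7}{4}C\tau_0\varphi)$ (any choice of $h$ differing by a closed form contributes nothing, since $d\varphi$ is exact would not be needed — rather $\langle h, \star_\varphi d\varphi\rangle$ pairs against a coclosed form, so only the relevant component of $h$ matters). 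Actually more carefully: $\delta(e^{-4\phi}\psi) = dh$ determines $h$ up to closed forms, and $\int \langle h, \star_\varphi d\varphi\rangle \operatorname{Vol}_\varphi = \int h\wedge d\varphi$, which is unchanged under $h \mapsto h + (\text{closed})$ precisely when $d\varphi$ is... hmm, $\int (\text{closed})\wedge d\varphi$ need not vanish. So we must be slightly careful and fix $h$ to be the natural primitive dictated by the flow.

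Concretely, I would proceed as follows. First, plug into Lemma~\ref{lem:variation}:
\[
\frac{\partial}{\partial t}\mathcal{M} = 3\int_M \dot\phi\, e^{-4\phi}\operatorname{Vol}_\varphi + \frac14\int_M h\wedge d\varphi,
\]
where $h = e^{4\phi}\bigl(-H_\varphi + \tfrac{7}{4}C\tau_0\varphi\bigr)$ so that $e^{-4\phi}h\wedge d\varphi$-type expressions simplify. Then I would use Lemma~\ref{lem:H*dvarphi} to rewrite $\star_\varphi d\varphi = -H_\varphi + \tfrac{7}{6}\tau_0\varphi - 4\nabla\phi\lrcorner\psi$, so that
\[
\int_M h\wedge d\varphi = \int_M \langle h, \star_\varphi d\varphi\rangle\operatorname{Vol}_\varphi = \int_M e^{4\phi}\langle -H_\varphi + \tfrac{7}{4}C\tau_0\varphi, -H_\varphi + \tfrac{7}{6}\tau_0\varphi - 4\nabla\phi\lrcorner\psi\rangle\operatorname{Vol}_\varphi.
\]
Now this is a pointwise algebraic computation: expand the inner product into the six pairings of $\{H_\varphi, \tfrac{7}{4}C\tau_0\varphi\}$ against $\{H_\varphi, \tfrac{7}{6}\tau_0\varphi, -4\nabla\phi\lrcorner\psi\}$, using the $\rG_2$-orthogonal decomposition $H_\varphi = \tfrac16\tau_0\varphi - \tau_1\lrcorner\psi - \tau_3$ with $\tau_1 = d\phi$ along $\mathcal{C}_\tau$, together with the norm identities $|\varphi|^2 = 7$, $|\tau_1\lrcorner\psi|^2 = $ (a multiple of $|d\phi|^2$), $\langle \varphi, \tau_1\lrcorner\psi\rangle = 0$, $\langle\varphi,\tau_3\rangle = 0$, etc., and $\langle \nabla\phi\lrcorner\psi, \tau_1\lrcorner\psi\rangle$ proportional to $|d\phi|^2$. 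This produces $\tfrac14\int h\wedge d\varphi$ as a linear combination of $\int e^{4\phi}\cdot e^{-4\phi}(\cdots)\operatorname{Vol}_\varphi$ wait — the $e^{4\phi}$ from $h$ should be matched against an $e^{-4\phi}$; actually $h\wedge d\varphi = \langle h,\star d\varphi\rangle\operatorname{Vol}$ and $h$ carries $e^{4\phi}$, $\operatorname{Vol}$ carries no weight, so I expect the final integrand to be weighted by $e^{4\phi}$ — this is fine and in fact matches the expected answer once one realizes $\operatorname{Vol}_\varphi$ in the theorem statement is just $\operatorname{Vol}_\varphi$, not $e^{-4\phi}\operatorname{Vol}_\varphi$. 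Hmm, wait: $\mathcal{M} = \int e^{-4\phi}\operatorname{Vol}$, and $\partial_t\mathcal{M}$ has integrand without any exponential weight in the final formula, so the $e^{4\phi}$ in $h$ and the $e^{4\phi}$ hidden in $\dot\phi = e^{4\phi}(\cdots)$ must combine with $e^{-4\phi}$ factors to leave a clean integrand. Indeed the first term $3\int\dot\phi e^{-4\phi}\operatorname{Vol} = 3\int(\Delta\phi - \gamma|d\phi|^2 + \tfrac14|\tau_3|^2 - \sigma\tau_0^2)\operatorname{Vol}$, and integrating $\Delta\phi$ away by parts gives $3\int(-\gamma|d\phi|^2 + \tfrac14|\tau_3|^2 - \sigma\tau_0^2)\operatorname{Vol}$. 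Hmm but $\int\Delta\phi\operatorname{Vol}_\varphi \ne 0$ in general since $\operatorname{Vol}_\varphi$ is not the metric volume form — wait, $\operatorname{Vol}_\varphi = \operatorname{Vol}_{g_\varphi}$ \emph{is} the metric volume form, so $\int\Delta\phi\operatorname{Vol}_\varphi = 0$. Good.

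The main obstacle will be bookkeeping in the algebraic expansion: correctly computing all six inner-product terms, handling the $e^{4\phi}$ weights (noting that $h$ should really be taken \emph{without} the $e^{4\phi}$ or \emph{with} it in a way consistent with $\delta(e^{-4\phi}\psi) = dh$, which forces $h = -e^{4\phi}H_\varphi + \tfrac74 C e^{4\phi}\tau_0\varphi$ plus possibly a closed correction — and I must check the closed correction does not contribute by arguing $\int(\text{closed})\wedge d\varphi$ vanishes or is absorbed; in fact the cleanest route is to observe that $h$ is determined modulo closed forms and that $\int h\wedge d\varphi$ is well-defined modulo $\int(\text{closed})\wedge d\varphi = \int(\text{closed})\wedge d\varphi$; since $d\varphi$ is a genuine $4$-form that need not be exact, I instead pin down $h$ canonically as the primitive arising from the flow equation itself, which is legitimate because Lemma~\ref{lem:variation} as applied to a \emph{flow} uses $h$ defined by $\dot\psi - 4\dot\phi\psi = e^{4\phi}dh$ and the flow gives this primitive explicitly). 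Once the expansion is organized — collecting coefficients of $\tau_0^2$, $|d\phi|^2$, and $|\tau_3|^2$ separately, and using $|\tau_3|^2 = 2|\tau_{27}|^2$ and $|\tau_1\lrcorner\psi|^2$, $|\tau_1\lrcorner\varphi|^2 = 6|\tau_1|^2$ type conversions — one reads off the stated formula, with the coefficient $\tfrac{7(21C-2)}{144}$ emerging from combining the $\tfrac{7}{24} + \tfrac{7}{16}C$-type contributions of the $\psi$-evolution with the $-\sigma$ from $\dot\phi$ and the pairing of $\tfrac74 C\tau_0\varphi$ with $\tfrac16\tau_0\varphi$ and $\tfrac76\tau_0\varphi$. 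Finally, for the rigidity conclusion, $\gamma < -1$ makes $-(3\gamma+3) > 0$ and $\sigma < \tfrac{7(21C-2)}{144}$ makes the $\tau_0^2$-coefficient positive, while $|\tau_3|^2 \geq 0$ always, so $\partial_t\mathcal{M}\geq 0$ with equality forcing $d\phi = 0$, $\tau_0 = 0$, $\tau_3 = 0$ pointwise; together with $\tau_2 = 0$ (preserved by the flow) and $\tau_1 = d\phi = 0$, all torsion forms vanish, i.e.\ $\varphi_t$ is torsion-free and $\phi_t$ constant.
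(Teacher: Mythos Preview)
Your overall strategy is sound, but there is a concrete slip that derails the computation: the primitive $h$ should \emph{not} carry the factor $e^{4\phi}$. The flow equation \eqref{eq: mod_anomaly_flowCbeta} reads literally
\[
\partial_t\bigl(e^{-4\phi}\psi\bigr)=d\Bigl(-H_\varphi+\tfrac{7}{4}C\,\tau_0\varphi\Bigr),
\]
so in the notation of Lemma~\ref{lem:variation} one has $\delta(e^{-4\phi}\psi)=dh$ with $h=-H_\varphi+\tfrac{7}{4}C\,\tau_0\varphi$. Your version, $h=e^{4\phi}(-H_\varphi+\tfrac{7}{4}C\tau_0\varphi)$, does not satisfy $\partial_t(e^{-4\phi}\psi)=dh$ because $e^{4\phi}d(\cdots)\neq d(e^{4\phi}\cdots)$; this is exactly the source of the weight confusion you wrestle with later. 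With the correct $h$, the second term in Lemma~\ref{lem:variation} carries no exponential weight, and since $\dot\phi=e^{4\phi}(\cdots)$ the first term $3\int\dot\phi\,e^{-4\phi}\operatorname{Vol}_\varphi$ also loses its weight. From there your plan goes through cleanly: decompose $h$ and $\star d\varphi$ into their $\Omega^3_1\oplus\Omega^3_7\oplus\Omega^3_{27}$ components using $H_\varphi=\tfrac{1}{6}\tau_0\varphi-d\phi\lrcorner\psi-\tau_3$ and Lemma~\ref{lem:H*dvarphi}, use $|\varphi|^2=7$, $|d\phi\lrcorner\psi|^2=4|d\phi|^2$, and orthogonality, and integrate $\Delta\phi$ away. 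Your worry about the closed-form ambiguity in $h$ is unfounded: for any closed $3$-form $c$ on compact $M$, $\int_M c\wedge d\varphi=-\int_M d(c\wedge\varphi)=0$.

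For comparison, the paper takes a slightly slicker route: it first pulls the flow back by the diffeomorphisms generated by $\nabla e^{4\phi}$, which adds the Lie derivative term $-\cL_{\nabla e^{4\phi}}(e^{-4\phi}\psi)=-4d(\nabla\phi\lrcorner\psi)$ to the evolution and shifts $\dot\phi$ by $-4e^{4\phi}|d\phi|^2$. After this gauge fixing the primitive becomes $h=-H_\varphi+\tfrac{7}{4}C\tau_0\varphi-4\nabla\phi\lrcorner\psi=\star_\varphi d\varphi+\tfrac{7}{4}(C-\tfrac{2}{3})\tau_0\varphi$ by Lemma~\ref{lem:H*dvarphi}, whence $\tfrac{1}{4}\int h\wedge d\varphi=\tfrac{1}{4}\int|d\varphi|^2\operatorname{Vol}_\varphi+\tfrac{49}{16}(C-\tfrac{2}{3})\int\tau_0^2\operatorname{Vol}_\varphi$ without any component-by-component bookkeeping; diffeomorphism invariance of $\mathcal{M}$ then transfers the result to the ungauged flow. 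Your direct computation and the paper's gauge-fixed one are equivalent and yield the same coefficients; the paper's trick just packages the algebra more compactly.
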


\begin{proof}
Consider $(\tilde \varphi_t,\tilde \phi_t)$ solving \ref{eq: mod_anomaly_flowCbeta}, for parameters $C, \gamma,\sigma \in \bR$, and denote 
$$
(\varphi_t,\phi_t) = (f_t^*\tilde \varphi_t,f_t^*\tilde \phi_t),
$$ 
where $f_t$ is a one parameter family of diffeomorphisms generated by $\nabla e^{ 4\tilde \phi_t}$. Then, assuming that the initial condition $(\tilde \varphi_0,\tilde \phi_0)$ satisfies $d\left(e^{-4\tilde \phi_0}\tilde \psi_0\right) = 0$, we have that $e^{-4\phi_t}\psi_t$ is closed for all $t$ and
$$
(\varphi_t,\phi_t) \in \mathcal{C}_\tau, \qquad \tau = [e^{-4\phi_0}\psi_0] \in H^4(M,\mathbb{R}).
$$
Furthermore, it satisfies 
\begin{align}\label{eq:flowgaugefixed}
    \frac{\partial}{\partial t}\left(e^{-4\phi}\psi\right)=&-dH_{\varphi} + \frac{7}{4}Cd\left(\tau_0\varphi \right) - \mathcal{L}_{\nabla e^{4\phi}}(e^{-4\phi}\psi)\\\nonumber
    =&\ d\left(-H_{\varphi} + \frac{7}{4}C\tau_0\varphi - 4 \nabla \phi \lrcorner  \psi\right)\\
    \nonumber
    \frac{\partial}{\partial t}\phi =&\  e^{4\phi} \left(\Delta \phi - (\gamma + 4) |d\phi|^2 + \frac{1}{4}|\tau_3|^2 - \sigma \tau_0^2 \right).
\end{align}
Following the notation in Lemma \ref{lem:variation}, we have $\partial_t(e^{-4\phi_t}\psi_t) = dh$,
with
$$
h = - H_{\varphi} + \frac{7}{4}C\left(\tau_0\varphi \right) - 4\nabla \phi \lrcorner \psi = \star_\varphi d \varphi + \frac{7}{4}\left(C - \frac{2}{3}\right)\tau_0 \varphi,
$$
where we omit the $t$ subscript for simplicity in the notation. Consequently, along the flow \eqref{eq: mod_anomaly_flowCbeta}, one has
\begin{align*}  
\frac{\partial}{\partial t}\mathcal{M}(\varphi_t,\phi_t) & = 3\int_M \left(- (\gamma + 4) |d\phi|^2 + \frac{1}{4}|\tau_3|^2 - \sigma \tau_0^2 \right)\operatorname{Vol}_\varphi\\\
&\quad  + \frac{1}{4}\int_M  \left( \star_\varphi d \varphi + \frac{7}{4}\left(C - \frac{2}{3}\right)\tau_0 \varphi \right) \wedge d\varphi\\
& = \int_M \left(-3(\gamma + 4) |d\phi|^2 + \frac{3}{4}|\tau_3|^2 + \left(\frac{49}{16}\left(C - \frac{2}{3}\right) - 3\sigma \right)\tau_0^2 \right)\operatorname{Vol}_\varphi + \frac{1}{4}\int_M |d \varphi|^2\operatorname{Vol}_\varphi\\
& =\int_M \left(-(3\gamma + 3) |d\phi|^2 + |\tau_3|^2 + \left(\frac{49}{16}\left(C - \frac{2}{3}\right) - 3\sigma + \frac{7}{4}\right)\tau_0^2 \right)\operatorname{Vol}_\varphi,
\end{align*}
where we have used that $d\varphi=\tau_0\psi+3\tau_1\wedge \varphi+\star\tau_3$ and
$$
|d\varphi|^2 = 7 \tau_0^2 + 36|d\phi|^2 + |\tau_3|^2, \qquad |\psi|^2 \operatorname{Vol}_\varphi = \psi \wedge \varphi = 7 \operatorname{Vol}_\varphi.
$$
The proof follows from the fact the $\mathcal{M}$ is invariant under diffeomorphisms.
\end{proof}

With these preliminaries in place we can prove the second main result of this section, which characterizes fixed points of the generic heterotic $\rG_2$ flow \ref{eq: mod_anomaly_flowCbeta}, for an open range of the real parameters $C, \gamma,\sigma \in \bR$.


\begin{prop}\label{prop:weakG2functional}
Let $(\varphi,\phi)$ be a fixed point of the flow \eqref{eq: mod_anomaly_flowCbeta} on a compact manifold. That is, $(\varphi, \phi)$ solve the equations 
\begin{gather}\label{eq: mod_anomaly_flowCbetafixed}
\begin{split}
-dH_{\varphi}+\frac{7}{4}C d\left(\tau_0\varphi \right) & = 0,\\
\Delta \phi - \gamma |d\phi|^2 + \frac{1}{4}|\tau_3|^2 - \sigma \tau_0^2 &= 0.
\end{split}
\end{gather}
Assuming that
$$
\gamma < -1, \qquad \sigma < 
 \frac{7(21 C - 2)}{144},
$$
it follows that $\varphi$ is torsion-free and $\phi$ is constant.
\end{prop}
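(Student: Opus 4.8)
The plan is to deduce this as an immediate corollary of the monotonicity formula, Theorem~\ref{thm: monotonicity funct}, applied to the stationary solution determined by $(\varphi,\phi)$. As throughout this section, I take the fixed point to be conformally coclosed, $d(e^{-4\phi}\psi) = 0$, so that $\varphi$ is integrable with $\tau_1 = d\phi$. First I would observe that the constant-in-time family $(\varphi_t,\phi_t) \equiv (\varphi,\phi)$ is a genuine solution of the generic heterotic $\rG_2$ flow~\eqref{eq: mod_anomaly_flowCbeta}: both left-hand sides vanish identically, the right-hand side of the first evolution equation vanishes by the first equation in~\eqref{eq: mod_anomaly_flowCbetafixed}, and the right-hand side of the second evolution equation is $e^{4\phi}$ times the left-hand side of the second equation in~\eqref{eq: mod_anomaly_flowCbetafixed}, hence also vanishes. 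Its initial data is conformally coclosed, so Theorem~\ref{thm: monotonicity funct} applies to it.

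Along this stationary solution $\mathcal{M}(\varphi_t,\phi_t) = \mathcal{M}(\varphi,\phi)$ is constant, so $\frac{\partial}{\partial t}\mathcal{M} \equiv 0$, and Theorem~\ref{thm: monotonicity funct} yields
\[
0 = \int_M \left(-(3\gamma + 3) |d\phi|^2 + |\tau_3|^2 + 3\left(\frac{7(21 C - 2)}{144} - \sigma\right)\tau_0^2 \right)\operatorname{Vol}_\varphi.
\]
Under the hypotheses $\gamma < -1$ and $\sigma < \frac{7(21 C - 2)}{144}$ one has $-(3\gamma+3) = -3(\gamma+1) > 0$ and $3\left(\frac{7(21 C - 2)}{144} - \sigma\right) > 0$, so the integrand is a sum of pointwise nonnegative terms; hence each term vanishes identically, giving $d\phi \equiv 0$ (so $\phi$ is constant), $\tau_3 \equiv 0$, and $\tau_0 \equiv 0$. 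Combining with integrability ($\tau_2 = 0$) and $\tau_1 = d\phi = 0$, all four torsion forms vanish, so $\nabla \varphi = 0$ and $\varphi$ is torsion-free. Equivalently, one may simply invoke the equality clause already contained in the statement of Theorem~\ref{thm: monotonicity funct}.

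Since the argument is essentially a restatement of the monotonicity formula, there is no substantial analytic obstacle. The only points that need a little care are: verifying, as above, that a fixed point really does produce a static solution of~\eqref{eq: mod_anomaly_flowCbeta} to which Theorem~\ref{thm: monotonicity funct} applies (in particular that the second fixed-point equation is exactly $\partial_t\phi = 0$ after multiplying by $e^{4\phi} > 0$); noting that the diffeomorphism gauge-fixing performed inside the proof of Theorem~\ref{thm: monotonicity funct} leaves $\mathcal{M}$ unchanged, so that the displayed integral identity holds verbatim for the stationary solution; and the torsion bookkeeping needed to upgrade the pointwise vanishing of $\tau_0$, $\tau_1$, $\tau_3$, together with $\tau_2 = 0$, to $\nabla\varphi = 0$.
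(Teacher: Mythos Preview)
Your proposal is correct and is exactly the argument the paper intends: Proposition~\ref{prop:weakG2functional} is stated without proof precisely because it is an immediate corollary of the monotonicity formula in Theorem~\ref{thm: monotonicity funct} applied to the stationary solution, just as you have written. The care you take in checking that the static pair really solves~\eqref{eq: mod_anomaly_flowCbeta}, that the conformally coclosed hypothesis of Theorem~\ref{thm: monotonicity funct} is in force, and that the vanishing of $\tau_0,\tau_1,\tau_3$ together with $\tau_2=0$ yields $\nabla\varphi=0$, fills in the details the paper leaves implicit.
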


\section{Short-time existence}
\subsection{Conformal rescalings}

Let $M$ be an oriented, spin manifold of dimension seven.  In this section we collect some formulae for conformally rescaled versions of the flow \eqref{eq: mod_anomaly_flowCbeta} which we will use for the proof of well-posedness. We start recalling some formulae from conformal Riemannian/$\rG_2$-geometry.

\begin{lemma}\cite{KarigiannisFlowsI}\label{lm: conformal identities}
Let $\tvarphi$ be a $\rG_2$-structure and $\phi\in C^\infty(M,\bR)$. The $\rG_2$-structure $\varphi=e^{3\phi}\tvarphi$ has the following Riemannian and $\rG_2$ invariants:
\begin{align*}
\psi & =e^{4\phi}\tpsi,\\
g & =e^{2\phi}\tg,\\
R_{g} & =e^{-2\phi}(R_{\tg}-12\Delta_{\tg}\phi-30|d\phi|_{\tg}^2), \\
d^{\star}\alpha & =e^{-2\phi}(d^{\tilde{\star}}\alpha-(7-2k)\tilde{\nabla}\phi\lrcorner\alpha) \qforq \alpha\in \Omega^k.
    \end{align*}
Furthermore, one has the following formulae for the torsion classes
\begin{align*}
\tau_0=e^{-\phi}\tilde{\tau}_0, \qquad \tau_1=\tilde{\tau}_1+d\phi, \qquad \tau_2=e^{\phi}\tilde{\tau}_2, \qquad \tau_3=e^{2\phi}\tilde{\tau}_3,
\end{align*} 
and the characteristic torsion
\begin{align*}
H_{\varphi}=e^{2\phi}(H_{\tvarphi} - d\phi\lrcorner\tpsi).
\end{align*} 
\end{lemma}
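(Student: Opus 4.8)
The plan is to establish the seven identities in turn, bootstrapping everything from the conformal scaling of the metric. First I would record that the assignment $\varphi \mapsto (g_\varphi, \vol_\varphi)$ is equivariant under scalings: pointwise $\varphi_x = u^*\varphi_0$ for a linear isomorphism $u\colon T_xM \to \bR^7$, and $e^{3\phi(x)}\varphi_x = (e^{\phi(x)}u)^*\varphi_0$, so $g_{e^{3\phi}\tvarphi} = e^{2\phi}\tg$ and $\vol_{e^{3\phi}\tvarphi} = e^{7\phi}\vol_{\tg}$ (equivalently, substitute $\varphi = e^{3\phi}\tvarphi$ into $(X\lrcorner\varphi)\wedge(Y\lrcorner\varphi)\wedge\varphi = 6g(X,Y)\vol_g$ and match homogeneity in $e^\phi$, the left-hand side being cubic in $\varphi$). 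Combining $g = e^{2\phi}\tg$ with the standard conformal scaling of the Hodge star, $\star_g\alpha = e^{(7-2k)\phi}\star_{\tg}\alpha$ for $\alpha \in \Omega^k$, then yields $\psi = \star_\varphi\varphi = \star_g(e^{3\phi}\tvarphi) = e^{3\phi}\cdot e^{\phi}\star_{\tg}\tvarphi = e^{4\phi}\tpsi$.

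Next, the scalar curvature identity is simply the classical conformal transformation law in dimension $n = 7$: for $g = e^{2\phi}\tg$ one has $R_g = e^{-2\phi}\bigl(R_{\tg} - 2(n-1)\Delta_{\tg}\phi - (n-1)(n-2)|d\phi|_{\tg}^2\bigr)$, which is exactly the stated formula. For the codifferential, I would write $d^\star_g = \pm\,\star_g d\star_g$ on $\Omega^k$, expand $\star_g d\star_g\alpha$ using the conformal scaling of $\star$ recorded above together with the Leibniz rule (the exterior derivative producing a term $d\phi\wedge\star_{\tg}\alpha$), and then rewrite $\star_{\tg}(d\phi\wedge\star_{\tg}\alpha)$ via the contraction--wedge duality as $\pm\,\tnabla\phi\lrcorner\alpha$; collecting the exponential factors produces the prefactor $e^{-2\phi}$ and the numerical coefficient $(7-2k)$, and a sign check gives the asserted identity. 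Alternatively, one differentiates the pointwise formula for $d^\star$ in terms of $\nabla^g$ using the conformal change of the Levi-Civita connection $\Gamma^g_{ij}{}^k - \Gamma^{\tg}_{ij}{}^k = \delta^k_i\partial_j\phi + \delta^k_j\partial_i\phi - \tg_{ij}\tg^{kl}\partial_l\phi$.

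For the torsion forms I would substitute $\varphi = e^{3\phi}\tvarphi$ and $\psi = e^{4\phi}\tpsi$ into $d\varphi$ and $d\psi$, apply the structure equations \eqref{eq:d:varphi}--\eqref{eq:d:varpsi} for $\tvarphi$, and convert $\star_{\tg}$ to $\star_\varphi$ on $3$-forms via $\star_{\tg} = e^{-\phi}\star_\varphi$. This gives $d\varphi = 3\,d\phi\wedge\varphi + e^{-\phi}\tilde\tau_0\,\psi + 3\,\tilde\tau_1\wedge\varphi + \star_\varphi(e^{2\phi}\tilde\tau_3)$ and $d\psi = 4(\tilde\tau_1 + d\phi)\wedge\psi + e^{\phi}\,\tilde\tau_2\wedge\varphi$. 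Since a positive conformal rescaling of $\varphi$ leaves the pointwise $\rG_2$-action unchanged, the decompositions $\Omega^k = \Omega^k_1\oplus\Omega^k_7\oplus\Omega^k_{27}$ are the same for $\varphi$ and $\tvarphi$; moreover the wedge-with-$\varphi$ maps (on $\Omega^1$ and on $\Omega^2_{14}$) and the Hodge star (on $\Omega^3_{27}$) are injective onto their images in the corresponding decompositions of $\Omega^4$ and $\Omega^5$. Matching each $\rG_2$-irreducible summand against $d\varphi = \tau_0\psi + 3\tau_1\wedge\varphi + \star_\varphi\tau_3$ and $d\psi = 4\tau_1\wedge\psi + \tau_2\wedge\varphi$ then forces $\tau_0 = e^{-\phi}\tilde\tau_0$, $\tau_1 = \tilde\tau_1 + d\phi$, $\tau_2 = e^{\phi}\tilde\tau_2$, $\tau_3 = e^{2\phi}\tilde\tau_3$. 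Finally, I would insert these together with $g = e^{2\phi}\tg$ into $H_\varphi = \tfrac16\tau_0\varphi - \tau_1\lrcorner\psi - \tau_3$, noting that raising the index of $\tau_1$ with $g$ rather than $\tg$ contributes a factor $e^{-2\phi}$, so $\tau_1\lrcorner_g\psi = e^{2\phi}(\tilde\tau_1 + d\phi)\lrcorner_{\tg}\tpsi$; this collapses to $H_\varphi = e^{2\phi}\bigl(H_{\tvarphi} - d\phi\lrcorner\tpsi\bigr)$. The only genuinely delicate step is the sign bookkeeping in the codifferential identity (and in the contraction--wedge duality used there); every $\rG_2$-specific identity then follows by direct substitution once $g = e^{2\phi}\tg$ and $\psi = e^{4\phi}\tpsi$ are in hand.
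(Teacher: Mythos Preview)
The paper does not actually prove this lemma: it is stated with the citation \cite{KarigiannisFlowsI} and no proof is given, since the result is taken as known from the literature. Your argument is correct and supplies a complete, self-contained derivation---the homogeneity/equivariance observation pins down $g=e^{2\phi}\tg$ and $\vol=e^{7\phi}\widetilde{\vol}$, the Hodge-star scaling gives $\psi=e^{4\phi}\tpsi$, the scalar curvature and codifferential identities are the standard conformal formulae in dimension $7$, and your direct substitution into the structure equations \eqref{eq:d:varphi}--\eqref{eq:d:varpsi} together with irreducibility of the $\rG_2$-type decomposition correctly identifies $\tau_0,\tau_1,\tau_2,\tau_3$; the final computation of $H_\varphi$ (tracking the $e^{-2\phi}$ from raising the index of $\tau_1$ with $g$ rather than $\tg$) is also right. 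In short, there is nothing to compare against in the paper, and your proof stands on its own.
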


\noindent Using this, we show a technical lemma necessary for the short-time existence proof.

\begin{lemma}\label{lm: modified anomaly flow conformal change}
Let $\tvarphi$ be a coclosed $\rG_2$-structure and $\phi$ be a smooth $\bR$-valued function on $M$. Then, for any constant $K\in \mathbb{R}$ the integrable $\rG_2$-structure $\varphi=e^{3\phi}\tvarphi$ satisfies
    \begin{align*}
    d\left(H_{\varphi}-K(\tr T)\varphi\right) & = d\left(e^{2\phi}\left(H_{\tvarphi}-d\phi\lrcorner\tpsi-K(\tr \tilde{T})\tvarphi\right)\right).
\end{align*}
\end{lemma}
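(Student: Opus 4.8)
The plan is to compute both sides using the conformal transformation formulae from Lemma \ref{lm: conformal identities}, reducing everything to quantities intrinsic to $\tvarphi$ and $\phi$. The key observation is that $\tr T$ is (up to a constant) the scalar torsion $\tau_0$, so by $\tau_0 = e^{-\phi}\tilde\tau_0$ together with $\tr \tilde T = \tfrac{7}{4}\tilde\tau_0$ (from \eqref{eq:trT}) we have $(\tr T)\varphi = e^{-\phi}(\tr\tilde T \cdot \tfrac{4}{7}\cdot\tfrac74)\, e^{3\phi}\tvarphi$; more simply, since $\varphi = e^{3\phi}\tvarphi$ and $\tau_0 \varphi = e^{-\phi}\tilde\tau_0 \cdot e^{3\phi}\tvarphi = e^{2\phi}\tilde\tau_0\tvarphi$, one gets $(\tr T)\varphi = e^{2\phi}(\tr \tilde T)\tvarphi$ after matching the normalization constant relating $\tr T$ to $\tau_0$. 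So the term $-K(\tr T)\varphi = -K e^{2\phi}(\tr\tilde T)\tvarphi$ directly.

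Next I would handle the characteristic torsion term. By the last line of Lemma \ref{lm: conformal identities}, $H_\varphi = e^{2\phi}(H_{\tvarphi} - d\phi\lrcorner\tpsi)$. Adding the two pieces gives
$$
H_\varphi - K(\tr T)\varphi = e^{2\phi}\left(H_{\tvarphi} - d\phi\lrcorner\tpsi - K(\tr\tilde T)\tvarphi\right),
$$
and applying $d$ to both sides yields the claim immediately. The only subtlety is confirming that the scaling weights on all three terms inside the parenthesis genuinely agree (all carry the factor $e^{2\phi}$): $H_{\tvarphi}$ and $\tvarphi$ are tensors on the fixed background $\tvarphi$, and $d\phi\lrcorner\tpsi$ likewise, so the factorization of $e^{2\phi}$ is legitimate and $d$ of the product is what appears on the right-hand side.

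The main obstacle, such as it is, is bookkeeping: one must be careful that $K$ here multiplies $\tr T$ (not $\tau_0$), and that the $\rG_2$-structure normalization $\varphi = e^{3\phi}\tvarphi$ (rather than some other power) is exactly what makes $(\tr T)\varphi$ scale by $e^{2\phi}$ with the \emph{same} coefficient $K$ on both sides — this is why the statement quantifies over all $K\in\mathbb R$ with no rescaling of $K$. Since $\tr T$ transforms as $\tr T = e^{-\phi}\,\tr\tilde T$ (this follows from $T_{ab}$ scaling and the trace being taken with $g^{ab} = e^{-2\phi}\tg^{ab}$ against $T$ carrying the appropriate weight, equivalently from $\tau_0 = e^{-\phi}\tilde\tau_0$ and $\tr T = \tfrac74\tau_0$), we get $(\tr T)\varphi = e^{-\phi}(\tr\tilde T)\cdot e^{3\phi}\tvarphi = e^{2\phi}(\tr\tilde T)\tvarphi$, matching the weight of $H_\varphi$. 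Once these weights are verified, the identity is a one-line consequence and no genuine difficulty remains; I would present it as a short direct computation citing Lemma \ref{lm: conformal identities} and \eqref{eq:trT}.
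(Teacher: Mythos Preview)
Your proposal is correct and follows exactly the same approach as the paper: you verify via Lemma~\ref{lm: conformal identities} and \eqref{eq:trT} that the $3$-forms $H_{\varphi}-K(\tr T)\varphi$ and $e^{2\phi}\left(H_{\tvarphi}-d\phi\lrcorner\tpsi-K(\tr \tilde{T})\tvarphi\right)$ are equal, and then apply $d$. The paper's proof is precisely this one-line computation, stated more tersely.
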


\begin{proof}
From Lemma \ref{lm: conformal identities}, we have $H_{\varphi}-K(\tr T)\varphi =e^{2\phi}\left(H_{\tvarphi}-d\phi\lrcorner\tpsi-K(\tr \tilde{T})\tvarphi\right)$. 
\end{proof}

In the next result, we obtain the desired characterization of flow lines for \eqref{eq: mod_anomaly_flowCbeta} in terms of the coclosed $\rG_2$-structure $\tvarphi = e^{-3\phi} \varphi$.

\begin{lemma}\label{lem:confCbeta}
Given an oriented, spin $7$-manifold $M$, let $(\varphi_t,\phi_t)$ be a solution of the generic heterotic $\rG_2$ flow \eqref{eq: mod_anomaly_flowCbeta} with conformally coclosed initial condition $\varphi_0$. Then, setting $\tvarphi_t = e^{-3\phi_t} \varphi_t$ we have $\tpsi_t=e^{-4\phi_t}\psi_t$, and the family $(\tvarphi_t,\phi_t)$ is a solution of the coupled flow
\begin{align}\label{eq: modified anomaly flow_coclosed}
    \frac{\partial}{\partial t}\tpsi=&\ -d\left(e^{2\phi}\left(H_{\tvarphi}-d\phi\lrcorner\tpsi - \frac{7}{4}C\tilde{\tau}_0\tvarphi\right)\right),\\ \nonumber
    \frac{\partial}{\partial t}\phi =&\  e^{2\phi}\left(\Delta_{\tg}\phi + (5-\gamma)|d\phi|_{\tg}^2 + \frac{1}{4}|\ttau_3|^2_{\tg} -\sigma\tilde{\tau}_0^2\right),
\end{align}
such that $d\tpsi = 0$ for all $t$. Conversely, given $(\tvarphi_t,\phi_t)$ a solution of \eqref{eq: modified anomaly flow_coclosed}, where $\tvarphi_t$ is a one-parameter family of coclosed $\rG_2$-structures, the family $(\varphi_t,\phi_t) = (e^{3\phi_t}\tvarphi_t,\phi_t)$ satisfies the generic heterotic $\rG_2$ flow \eqref{eq: mod_anomaly_flowCbeta}.
\end{lemma}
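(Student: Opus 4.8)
The plan is to verify directly that the conformal substitution $\tvarphi_t = e^{-3\phi_t}\varphi_t$ transforms the system \eqref{eq: mod_anomaly_flowCbeta} into \eqref{eq: modified anomaly flow_coclosed}, and conversely. First I would record the basic identity $\tpsi_t = e^{-4\phi_t}\psi_t$ from Lemma \ref{lm: conformal identities} (with $\phi$ there replaced by $-\phi_t$, or equivalently by reading the scaling $\psi = e^{4\phi}\tpsi$ in reverse). Since the initial data is conformally coclosed, i.e.\ $d(e^{-4\phi_0}\psi_0) = d\tpsi_0 = 0$, and the first equation in \eqref{eq: mod_anomaly_flowCbeta} says $\partial_t(e^{-4\phi}\psi)$ is exact, we get that $\tpsi_t$ stays closed and hence $\tvarphi_t$ is a coclosed $\rG_2$-structure for all $t$; in particular it is integrable with $\tilde\tau_2 = 0$, so Proposition \ref{prop:FIG2} applies to $\tvarphi_t$.

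The first evolution equation is then essentially immediate: $\partial_t\tpsi_t = \partial_t(e^{-4\phi_t}\psi_t) = -dH_\varphi + \tfrac{7}{4}Cd(\tau_0\varphi)$ by \eqref{eq: mod_anomaly_flowCbeta}, and Lemma \ref{lm: modified anomaly flow conformal change} (applied with $K = \tfrac{7}{4}C$, noting $\tau_0 = \tfrac47\tr T$ so that $\tfrac74 C\tau_0\varphi = \tfrac{21}{16}C\cdot\tfrac{4}{7}(\tr T)\varphi$ — I would simply carry the constant through consistently, or restate Lemma \ref{lm: modified anomaly flow conformal change} with the torsion form $\tau_0$ in place of $\tr T$) rewrites $H_\varphi - \tfrac74 C\tau_0\varphi = e^{2\phi}(H_{\tvarphi} - d\phi\lrcorner\tpsi - \tfrac74 C\tilde\tau_0\tvarphi)$, using the conformal transformation laws $H_\varphi = e^{2\phi}(H_{\tvarphi} - d\phi\lrcorner\tpsi)$, $\tau_0 = e^{-\phi}\tilde\tau_0$, and $\varphi = e^{3\phi}\tvarphi$ from Lemma \ref{lm: conformal identities}. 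This gives the first line of \eqref{eq: modified anomaly flow_coclosed}.

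For the dilaton equation, the work is to convert the Levi-Civita quantities $\Delta_g\phi$, $|d\phi|_g^2$, $|\tau_3|^2$, $\tau_0^2$ appearing in the second line of \eqref{eq: mod_anomaly_flowCbeta} into their $\tg$-counterparts. Using $g = e^{2\phi}\tg$: the conformal change of the Laplacian on functions in dimension $7$ gives $\Delta_g\phi = e^{-2\phi}(\Delta_{\tg}\phi + 5|d\phi|_{\tg}^2)$ (this is the $d^\star$ formula of Lemma \ref{lm: conformal identities} with $k=1$ applied to $d\phi$, i.e.\ $\Delta_g \phi = -d^\star_g d\phi = -e^{-2\phi}(d^{\tilde\star}d\phi - 5\tilde\nabla\phi\lrcorner d\phi)$); then $|d\phi|_g^2 = e^{-2\phi}|d\phi|_{\tg}^2$, $\tau_0^2 = e^{-2\phi}\tilde\tau_0^2$, and $|\tau_3|_g^2 = e^{-2\phi}|\tilde\tau_3|_{\tg}^2$ (from $\tau_3 = e^{2\phi}\tilde\tau_3$ together with the $k=3$ metric rescaling on $\Omega^3$, which contributes $e^{-6\phi}$). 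Multiplying the bracket $e^{4\phi}(\Delta_g\phi - \gamma|d\phi|_g^2 + \tfrac14|\tau_3|^2 - \sigma\tau_0^2)$ by the common factor $e^{-2\phi}$ pulled out of each term yields $e^{2\phi}(\Delta_{\tg}\phi + 5|d\phi|_{\tg}^2 - \gamma|d\phi|_{\tg}^2 + \tfrac14|\tilde\tau_3|_{\tg}^2 - \sigma\tilde\tau_0^2)$, which is exactly the second line of \eqref{eq: modified anomaly flow_coclosed}. I also need $\partial_t\phi_t$ to be unchanged by the substitution, which is clear since $\phi_t$ is literally the same function in both systems. The converse direction runs the same computations backwards: starting from $(\tvarphi_t,\phi_t)$ solving \eqref{eq: modified anomaly flow_coclosed} with $\tvarphi_t$ coclosed, set $\varphi_t = e^{3\phi_t}\tvarphi_t$, so $\psi_t = e^{4\phi_t}\tpsi_t$; then $\partial_t(e^{-4\phi_t}\psi_t) = \partial_t\tpsi_t$ equals $-d(e^{2\phi}(H_{\tvarphi} - d\phi\lrcorner\tpsi - \tfrac74 C\tilde\tau_0\tvarphi)) = -dH_\varphi + \tfrac74 Cd(\tau_0\varphi)$ by Lemma \ref{lm: modified anomaly flow conformal change} again, and the dilaton equation converts back by the same conformal identities. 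The main obstacle is purely bookkeeping: keeping the conformal weights $e^{-\phi}, e^{2\phi}, e^{3\phi}, e^{4\phi}$ and the dimension-dependent constants (the $5 = 7 - 2\cdot 1$ in the Laplacian, the $\tfrac74 C$ versus the normalization of $\tr T$) straight throughout — there is no conceptual difficulty, only the risk of an arithmetic slip, so I would present the computation in a few clearly labeled lines and double-check the constant $5-\gamma$ against the $d^\star$ formula of Lemma \ref{lm: conformal identities}.
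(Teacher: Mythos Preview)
Your proposal is correct and follows exactly the paper's own approach: the paper's proof simply cites Lemma \ref{lm: modified anomaly flow conformal change} for the first evolution equation and Lemma \ref{lm: conformal identities} for the second, and you have spelled out those citations in detail. One small bookkeeping remark: in invoking Lemma \ref{lm: modified anomaly flow conformal change} the constant should be $K=C$ (since $\tfrac{7}{4}C\tau_0 = C\,\tr T$ by \eqref{eq:trT}), not $K=\tfrac{7}{4}C$, but you already flagged this normalization issue and your final identity $H_\varphi-\tfrac{7}{4}C\tau_0\varphi = e^{2\phi}(H_{\tvarphi}-d\phi\lrcorner\tpsi-\tfrac{7}{4}C\tilde\tau_0\tvarphi)$ is correct.
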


\begin{proof}
The proof for the first and second evolution equation follow from Lemma \ref{lm: modified anomaly flow conformal change} and Lemma \ref{lm: conformal identities}, respectively.
\end{proof}

\subsection{Well-posedness via Nash-Moser}

In this section we prove the well-posedness of the generic heterotic $\rG_2$ flow \eqref{eq: mod_anomaly_flowCbeta}, for $C < - \tfrac{1}{3}$, which includes the value $C = -4/3$, of our main interest. For this, we will follow Hamilton's argument for the well-posedness of Ricci flow \cite{Hamilton3folds}, via the Nash–Moser Inverse Function Theorem \cite{Hamilton3folds}. We observe that the a similar argument has been used to prove short-time existence of the Laplacian flow of closed $\rG_2$-structures \cite{bryant2011laplacian}.

We recall first the fundamental result by Hamilton which we will apply. Let $M$ be an oriented compact manifold, $F$ a vector bundle over $M$, $U$ an open subbundle of $F$ and  
$$
E\colon \Gamma(M,U)\to  \Gamma(M,F)
$$
a second order differential operator. For $s\in \Gamma(M,U)$, we denote by 
$$
D_sE\colon \Gamma(M,F)\to \Gamma(M,F)
$$ 
the linearization of $E$ at $s$ and by 
$\sigma(D_sE)$ the principal symbol of
$D_sE$.

\begin{defn}
An \emph{integrability condition} for $E$ is a first order linear differential operator 
$$ 
L\colon \Gamma(M,F)\to \Gamma(M,G)\,,
$$
where $G$ is another vector bundle over $M$, such that $L \circ E = 0$. 
\end{defn}

Hamilton's abstract existence result can be stated as follows.

\begin{thm}{\cite[Theorem 5.1]{Hamilton3folds}}\label{thm:Ham}
Assume that $E$ admits an integrability condition such that all the eigenvalues of $\sigma(D_sE)$ restricted to $\ker \sigma(L)$ have strictly positive real part.  Then for every $s_0\in C^{\infty}(M,U)$ the flow 
\begin{equation}\label{flow_Ham}
\frac{\partial s}{\partial t}=E(s),\quad s(0) = s_0,
\end{equation}
is well-posed. 
\end{thm}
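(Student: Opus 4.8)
The plan is to deduce the statement from Hamilton's Nash--Moser inverse function theorem, exactly as in Hamilton's treatment of Ricci flow. First I would fix a short time interval $[0,T]$ and set up the relevant tame Fr\'echet spaces: let $X$ be the space of smooth sections of $U$ over $M\times[0,T]$ and $Y=\Gamma(M\times[0,T],F)\times\Gamma(M,U)$, each equipped with the standard tame structure given by the H\"older (or Sobolev) scale on the compact spacetime. Define the smooth tame nonlinear operator
$$
\Psi\colon X\to Y,\qquad \Psi(s)=\big(\partial_t s-E(s),\ s|_{t=0}\big),
$$
so that a solution of \eqref{flow_Ham} on $[0,T]$ with initial datum $s_0$ is precisely a preimage of $(0,s_0)$ under $\Psi$. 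Taking the base point $s_*(t)\equiv s_0$, with $\Psi(s_*)=(-E(s_0),s_0)$, the existence statement reduces (for $T$ small, so that $(0,s_0)$ lies in the region reached) to local surjectivity of $\Psi$ near $\Psi(s_*)$. By Hamilton's Nash--Moser theorem \cite{Hamilton3folds}, this follows once we produce, for every $s$ in a neighborhood of $s_*$, a tame linear right inverse of the linearization
$$
D\Psi_s(h)=\big(\partial_t h-(D_sE)h,\ h|_{t=0}\big),
$$
with loss of derivatives and estimates depending tamely on $s$.

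The heart of the matter is the construction of this right inverse, i.e.\ solving, with tame estimates, the weakly parabolic linear initial value problem $\partial_t h-(D_sE)h=f$, $h|_{t=0}=h_0$. Differentiating the identity $L\circ E=0$ (which holds because $L$ is a fixed linear operator) gives $L\circ D_sE=0$, hence the principal symbol $\sigma_\xi(D_sE)$ has image contained in $\ker\sigma_\xi(L)$ for every $\xi\neq0$; combined with the hypothesis that its eigenvalues on $\ker\sigma_\xi(L)$ have positive real part, this says exactly that $\partial_t-D_sE$ is parabolic transverse to the ``gauge directions'' cut out by $\sigma(L)$ and degenerate along them. To remove the degeneracy I would introduce a gauge-fixing modification: fixing a formal adjoint $L^{*}$, form the second order operator $\widehat E_s:=D_sE+\mu\,L^{*}L$ with a constant $\mu>0$. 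A block-symbol computation with respect to the (pointwise, $\xi$-dependent) splitting $F=\ker\sigma_\xi(L)\oplus(\ker\sigma_\xi(L))^{\perp}$ shows $\sigma_\xi(\widehat E_s)$ is block upper-triangular, with the ``good'' block of $\sigma_\xi(D_sE)$ on $\ker\sigma_\xi(L)$ and the positive-definite block $\mu\,\sigma_\xi(L)^{*}\sigma_\xi(L)$ on the complement; hence all its eigenvalues have positive real part and $\partial_t-\widehat E_s$ is genuinely parabolic, uniformly for $s$ near $s_*$. Standard linear parabolic theory on $M\times[0,T]$ then solves $\partial_t\bar h-\widehat E_s\bar h=f$, $\bar h|_{t=0}=h_0$ uniquely, with Schauder/energy estimates that are tame in $s$ and in $(f,h_0)$. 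It remains to correct $\bar h$ back to a genuine solution of the unmodified problem: writing $w:=L\bar h$ and using $L\circ D_sE=0$ one obtains a linear evolution equation for $w$ with prescribed initial value, and one uses this, together with a first-order (ODE-in-$t$) correction of $\bar h$ along the directions $\operatorname{im}L^{*}$ --- the infinitesimal ``gauge flow'' attached to the integrability condition --- to absorb the extraneous term $\mu L^{*}L\bar h$. Tameness is preserved at every step, yielding the desired tame right inverse.

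I expect the main obstacle to be precisely this reconciliation between the modified (parabolic) problem and the original (weakly parabolic, non-surjective) one: making rigorous the ``elliptic modulo $L$'' heuristic, i.e.\ choosing the modification $\mu L^{*}L$ and the subsequent gauge correction so that the integrability condition genuinely compensates for both the kernel and the cokernel of $D\Psi_s$, and verifying that the solution one lands on does satisfy $\partial_t h-(D_sE)h=f$ with $h|_{t=0}=h_0$ on the nose, with constants uniform in $s$ near $s_*$ and in $\xi$. (Note that the familiar DeTurck-type shortcut is unavailable here, since the abstract hypotheses do not provide a symmetry group whose orbits realize the gauge directions, forcing this more hands-on parametrix construction.) The remaining ingredients --- smoothness and tameness of $\Psi$, tameness of the linear parabolic solution operators on the H\"older scale, the reduction of \eqref{flow_Ham} to local surjectivity of $\Psi$ for small $T$, and the invocation of Hamilton's Nash--Moser inverse function theorem --- are by now standard once the tame right inverse is available.
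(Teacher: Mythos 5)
This statement is quoted verbatim from Hamilton (\cite[Theorem 5.1]{Hamilton3folds}) and the paper supplies no proof of it, so there is no in-paper argument to compare against; the relevant comparison is with Hamilton's original proof. Your sketch follows that proof's strategy faithfully: reformulate the initial value problem as a map $\Psi(s)=(\partial_t s-E(s),\,s|_{t=0})$ between tame Fr\'echet spaces, reduce well-posedness to producing a tame family of right inverses for $D\Psi_s$ via the Nash--Moser inverse function theorem, observe that $L\circ E=0$ forces $L\circ D_sE=0$ exactly (not just at symbol level) so that $\operatorname{im}\sigma_\xi(D_sE)\subseteq\ker\sigma_\xi(L)$, and restore genuine parabolicity by adding a gauge-fixing term built from $L^{*}L$. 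This is Hamilton's argument in all essentials.

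The one place where your sketch is vaguer than it needs to be is exactly the step you flag as the main obstacle. You propose to solve the modified parabolic problem for $\bar h$ and then apply an a posteriori ``ODE-in-$t$ correction along $\operatorname{im}L^{*}$'' to remove the extraneous term $\mu L^{*}L\bar h$. The cleaner (and standard) device is to avoid any correction: since $L\circ D_sE=0$, the constraint determines $w(t):=Lh(0)+\int_0^t Lf\,ds$ in advance, so one solves the modified equation $\partial_t h=(D_sE)h+\mu L^{*}(Lh-w)+f$ with $h|_{t=0}=h_0$; applying $L$ shows $v:=Lh-w$ satisfies a linear homogeneous evolution equation with $v(0)=0$, hence $v\equiv 0$ by uniqueness, and the added term vanishes identically, so $h$ solves the original equation on the nose. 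With that adjustment your outline matches Hamilton's proof; the remaining ingredients (tameness of the parabolic solution operators, uniform ellipticity of the modified symbol for $s$ near $s_*$, and the invocation of the Nash--Moser theorem) are as you describe.
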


We will apply the previous theorem to the flow of coclosed $\rG_2$-structures \eqref{eq: modified anomaly flow_coclosed}, and then apply Lemma \ref{lem:confCbeta} to prove short-time existence for \eqref{eq: mod_anomaly_flowCbeta}. Upon fixing an orientation on our seven-dimensional manifold $M$, the map
$$
\Omega^3_+ \to \Omega^4: \widetilde{\varphi} \mapsto \widetilde{\psi}:= \star_{\widetilde{\varphi}} \widetilde{\varphi},
$$
where $\Omega^3_+ \subset \Omega^3$ denotes the open subset of $\rG_2$-structures, is open and invertible over its image, which we denote by $\Omega^4_+$. Consider the operator 
$$
P:C^\infty(M,\bR)\times\Omega^4_+\rightarrow C^\infty(M,\bR)\times\Omega^4
$$ 
given by $P = (P_1,P_2)$, where
\begin{gather}\label{eq: operator P}
\begin{split}
P_1(\phi,\widetilde{\psi}) & = e^{2\phi}\left(\Delta_{\tg}\phi + (5-\gamma)|d\phi|_{\tg}^2 + \frac{1}{4}|\ttau_3|^2_{\tg} -\sigma\tilde{\tau}_0^2\right),\\
P_2(\phi,\widetilde{\psi}) & = -d\left(e^{2\phi}\left(H_{\tvarphi}-d\phi\lrcorner\tpsi - \frac{7}{4}C\tilde{\tau}_0\tvarphi\right)\right).
\end{split}
\end{gather}

\begin{prop}
Let $(\phi,\widetilde{\psi}) \in C^\infty(M,\bR)\times\Omega^4_+$ such that $d\widetilde{\psi} = 0$. Then, the linearization of \eqref{eq: operator P} at $(\phi,\tpsi)$ is
$$
D_{(\phi,\tpsi)}P(f,\chi)=e^{2\phi}\left( a,\star_{\tvarphi}(Y\lrcorner\tpsi+S\widetilde{\diamond} \tvarphi)\right)
$$ 
where
\begin{align}\label{eq: lin_function}
a=&\ \Delta_{\tg}f+\mathrm{L.O.T.},\\ \label{eq: lin_vector} 
         Y=&\ -(C+\frac13)d(\Div_{\tg} X)+ \mathrm{L.O.T.},\\ \label{eq: lin_symmetric}
S=&\ \Delta_{\tg}h-\frac12\cL_{\curl_{\tvarphi} X+\Div_{\tg} h-\frac14\widetilde{\nabla}(\tr h)-\widetilde{\nabla} f}\tg\\ \nonumber
& \ -\frac13\left(\frac14\Delta_{\tg}(\tr h)-\Div(\Div h)-\Delta_{\tg}f\right)\tg+ \mathrm{L.O.T.},
\end{align}
for arbitrary $f \in C^\infty(M,\bR)$ and $\chi=*(X\lrcorner\tpsi+h\widetilde{\diamond} \tvarphi) \in \Omega^4$.
\end{prop}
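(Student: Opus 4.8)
The plan is to compute the linearization of the operator $P=(P_1,P_2)$ at a point $(\phi,\tpsi)$ with $d\tpsi=0$ by differentiating each of the two components in the directions $(f,\chi)$, where $\chi = \star(X\lrcorner\tpsi + h\,\widetilde{\diamond}\,\tvarphi)$ encodes the $7$- and $(1\oplus 27)$-parts of the variation of $\tpsi$, and then to keep track only of the second-order terms (everything lower order is absorbed into $\mathrm{L.O.T.}$). Since the prefactor $e^{2\phi}$ in both $P_1$ and $P_2$ contributes only a multiplicative $e^{2\phi}$ to the top-order part plus lower-order terms from differentiating $e^{2\phi}$ itself, I would factor it out at the start and work with the bracketed expressions.

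First I would handle $P_1$. Its leading term is $e^{2\phi}\Delta_{\tg}\phi$, so differentiating in $f$ gives $e^{2\phi}\Delta_{\tg}f$ at top order; the variation of the metric $\tg$ induced by $\chi$ enters $\Delta_{\tg}\phi$ only through first derivatives of $\chi$ acting on $\phi$, hence is lower order, and the remaining terms $|d\phi|^2$, $|\ttau_3|^2$, $\tilde\tau_0^2$ are algebraic or first-order in the torsion and so contribute only $\mathrm{L.O.T.}$ This yields \eqref{eq: lin_function}. Next, for $P_2$, I would use Lemma~\ref{lm: conformal identities} and the torsion formulae to identify that the top-order part of $H_{\tvarphi} - d\phi\lrcorner\tpsi - \tfrac74 C\tilde\tau_0\tvarphi$ comes from $H_{\tvarphi}$ and $\tilde\tau_0\tvarphi$, both of which are first order in $\tvarphi$ (equivalently in $\chi$), so that $P_2$ is genuinely a second-order operator in $\tpsi$; the $d\phi\lrcorner\tpsi$ term and the $e^{2\phi}$ factor produce at most first-order contributions in $\chi$ and at most first-order in $f$, hence only affect the $7$-part through the $-(C+\tfrac13)d(\Div_{\tg}X)$ structure plus $\mathrm{L.O.T.}$ The key computation is to express $d\bigl(H_{\tvarphi} - \tfrac74 C\tilde\tau_0\tvarphi\bigr)$ in terms of $\widetilde\nabla\widetilde{T}$ (i.e.\ via Proposition~\ref{prop:dH} and Lemma~\ref{lm: Ricci and scalar curv}), linearize the Ricci tensor and the scalar $\tilde\tau_0 = \tfrac47\tr\widetilde{T}$ in the direction $\chi$, and then project onto the $7$ and $(1\oplus 27)$ components using $\star(S\widetilde\diamond\tvarphi) = -(S\widetilde\diamond\tpsi)$ and the identification of $d\psi$-type terms. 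This is where the parameter $C$ enters: the $7$-component of $d(H_{\tvarphi})$ contributes (from Proposition~\ref{prop:dH}, the term $-\tfrac13 d(\tr T)\wedge\varphi$) a term $\tfrac13 d(\Div X)$ at top order, while the Grigorian-type term $-\tfrac74 C d(\tilde\tau_0\tvarphi)$ contributes $C\,d(\Div X)$ at top order, combining to $(C+\tfrac13)d(\Div_{\tg}X)$; hence the restriction $C<-\tfrac13$ later guarantees the right sign.

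For the symmetric part $S$, I would linearize the $(1\oplus 27)$-component of $-d\bigl(H_{\tvarphi}-\tfrac74 C\tilde\tau_0\tvarphi\bigr)$, which by Proposition~\ref{prop:dH} is governed by $\Ric - \tfrac14 H^2 + 2\cL_{\tau_1}g - \tfrac18(\dots)g$ feeding into $\cdot\diamond\psi$; linearizing $\Ric_{\tg}$ in the direction of the metric variation $2h$ (coming from the symmetric part of $\chi$) gives the standard $\Delta_{\tg}h - \tfrac12\cL_{(\dots)}\tg$ structure with the Bianchi-type vector $\curl_{\tvarphi}X + \Div_{\tg}h - \tfrac14\widetilde\nabla(\tr h)$, plus the $-\widetilde\nabla f$ contribution from the scalar and the $-\tfrac13(\dots)\tg$ trace-adjustment term; all curvature-times-torsion and lower-order pieces go into $\mathrm{L.O.T.}$, yielding \eqref{eq: lin_symmetric}. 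I expect the main obstacle to be the bookkeeping of the $\rG_2$-representation-theoretic projections — correctly separating which variation of $\tpsi$ (through $X$ vs.\ $h$) lands in which irreducible piece after applying $d$ and $\star$, and making sure the coefficients $-\tfrac12$, $-\tfrac13$, $-\tfrac14$ come out right — rather than any conceptual difficulty; the actual second-order terms are all standard once one commits to writing $d(H_{\tvarphi} - \tfrac74 C\tilde\tau_0\tvarphi)$ via the $\rG_2$-Bianchi identity and Lemma~\ref{lm: Ricci and scalar curv}, using throughout that $d\tpsi = 0$ so that $\tilde\tau_1 = 0$ and the conformal factor $\tvarphi$ is coclosed.
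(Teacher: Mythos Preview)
Your approach is essentially the same as the paper's: both rewrite $P_2$ using $d\tpsi=0$ as $-e^{2\phi}d(H_{\tvarphi}-\tfrac74 C\ttau_0\tvarphi)+e^{2\phi}\cL_{\tnabla\phi}\tpsi$, invoke the decomposition of $dH_{\tvarphi}$ from Proposition~\ref{prop:dH}, express $\Ric$ via the torsion using Lemma~\ref{lm: Ricci and scalar curv}, and then linearize the torsion with the formula from Lemma~\ref{lemma evolution T general}.

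One caution, however, about a point where your description is misleading and could cause an error. When you write ``linearizing $\Ric_{\tg}$ in the direction of the metric variation $2h$'': the parametrization $\chi=\star(X\lrcorner\tpsi+h\,\widetilde\diamond\,\tvarphi)$ does \emph{not} correspond to a metric variation $\partial_t\tg=2h$. Comparing with $\partial_t\tpsi=S'\widetilde\diamond\,\tpsi-X\wedge\tvarphi$ and $\partial_t\tg=2S'$, one finds $S'=-h+\tfrac14(\tr h)\tg$. The paper avoids confronting this identification by feeding $S'$ directly into the formula for $\dot T$ from Lemma~\ref{lemma evolution T general} (this is exactly why their expression for $\curl_{\tvarphi}\dot T_{ab}$ carries the extra $-\tfrac14\tnabla_k(\tr h)\tvarphi_{kaj}$ term). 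Relatedly, the $\curl_{\tvarphi}X$ piece of the Bianchi-type vector cannot arise from linearizing $\Ric$ as a functional of the metric alone, since $\partial_t\tg$ does not see $X$; it appears because $\dot T_{aj}$ contains $\tnabla_aX_j$. If you instead try the standard metric-variation formula for $\delta\Ric$, you will miss the $\curl_{\tvarphi}X$ term and would have to recover it separately from the top-order linearization of $2\cL_{\tau_1}\tg$ in Proposition~\ref{prop:dH} (since $\delta\tau_1$ involves $\curl_{\tvarphi}X$ through the $\Omega^5_7$-component of $d\chi$). The torsion route the paper takes packages all of this automatically.
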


\begin{proof}
By definition of $P_1$, its linearization at $(\phi,\tpsi)$ is 
$$
D_{(\phi,\tpsi)}P_1(f,\chi)=e^{2\phi}\Delta_{\tg}f+\mathrm{L.O.T.}.
$$
Now, using that $d \tpsi = 0$, we have
   \begin{align*}
       P_2(\phi,\tvarphi)=-e^{2\phi}d\left(H_{\tvarphi}-\frac74C\ttau_0\tvarphi\right)+e^{2\phi}\cL_{\tnabla\phi}\tpsi.
   \end{align*}
   From the decomposition \eqref{eq: components dH} of $dH_{\tvarphi}$, for $D_{\tpsi}(dH_{\tvarphi})(\chi)=W\wedge\tvarphi+R\diamond\tpsi$ we have 
    \begin{align*}
        W_a=-\frac13 \tnabla_a(\tr \Dot{\tT})+\lot \qandq R_{ab}=-\frac12\curl_{\tvarphi}\Dot{T}_{ab}-\frac12\curl_{\tvarphi}\Dot{T}_{ba}+\lot
    \end{align*}
By \eqref{eq: ddt T}, we get
    \begin{align}\label{eq: dot nabla_trT}
        \tnabla_a(\tr \Dot{\tT})=&\tnabla_a(\Div_{\tg} X)+\lot
        \end{align}
        and
        \begin{align*}
    \curl_{\tvarphi}\Dot{\tT}_{ab}=&\left(\tnabla_i\tnabla_k h_{al}\tvarphi_{klj}+\tnabla_i\tnabla_aX_j-\frac14\tnabla_i\tnabla_k(\tr h)\tvarphi_{kaj}\right)\tvarphi_{ijb}+\lot\\ \nonumber
        =&\ \tnabla_i\tnabla_bh_{ai}-\tnabla_i\tnabla_ih_{ab}+\tnabla_a(\curl_{\tvarphi} X)_b-\frac{1}{4}\tnabla_a\tnabla_b(\tr h)+\frac{1}{4}\tnabla_i\tnabla_i(\tr h)\tg_{ab}+\lot\\
         =&\ \tnabla_b(\Div_{\tg} h)_a-\Delta_{\tg}h_{ab}+\tnabla_a(\curl_{\tvarphi} X)_b-\frac{1}{4}\tnabla_a\tnabla_b(\tr h)+\frac{1}{4}\Delta_{\tg}(\tr h)\tg_{ab}+\lot,
    \end{align*}
where we used the contraction \eqref{eq: varphi1 varphi1} and the Ricci identity on $h$ and $X$.  By combining these identities with equation (\ref{eq: components dH}) for $d H$ and the expression of $\Ric$ in terms of the torsion in Lemma \ref{lm: Ricci and scalar curv}, we obtain
\begin{align*}
    D_{(\phi,\tpsi)}&\left(dH_{\tvarphi}\right)(f,\chi)\\
    =&\ \left(\Delta_{\tg}h-\frac14\Delta_{\tg}(\tr h)\tg-\frac12\cL_{\Div_{\tg}h+\curl_{\tvarphi}X-\frac14\tnabla(\tr h)}\tg\right)\diamond\psi -\frac13d(\Div_{\tg}X)\wedge\tvarphi+\lot \\
    =& \ \star\left(-\left(\Delta_{\tg}h-\frac12\cL_{\Div_{\tg}h+\curl_{\tvarphi}X-\frac14\tnabla(\tr h)}\tg-\frac{1}{3}\left(\Delta_{\tg}(\tr h)-\Div_{\tg}(\Div_{\tg}h)\right)\tg\right)\diamond\tvarphi\right.\\
    & \ \left. +\frac13d(\Div_{\tg}X)\lrcorner\tpsi\right)+\lot.
\end{align*}
Furthermore, it follows by \eqref{eq: dot nabla_trT} that
\begin{align*}
D_{(\phi,\tpsi)}\left(\frac74 d(\tau_0\tvarphi)\right)(f,\chi)=&-d(\Div_{\tg} X)\wedge\tvarphi+\mathrm{L.O.T.}.
\end{align*}
We recall that for an arbitrary vector field $Z$ in $M$, we have
\cite[(4.40)]{Grigorian2013}
\begin{align*}
\cL_Z\tpsi=\star\left(\frac{1}{2}(\curl_{\tvarphi} Z)\lrcorner\tpsi+\left(\frac13(\Div_{\tg} Z)\tg-\cL_Z \tg\right)\widetilde{\diamond} \tvarphi\right)+ \mathrm{L.O.T.}
\end{align*}
Then, it follows that
\begin{align*}
D_{(\phi,\tpsi)}\left(\cL_{\tnabla \phi}\tpsi\right)(f,\chi)=&\star\left(\left(\frac13(\Div_{\tg} {df})\tg-\cL_{\tnabla f} \tg\right)\widetilde{\diamond} \tvarphi\right)+\mathrm{L.O.T.}.
\end{align*}
Combining these, the proof follows.
\end{proof}

In order to apply Theorem \ref{thm:Ham}, we fix $(\phi_0,\tpsi_0) \in C^\infty(M,\bR)\times\Omega^4_+$ such that $d \tpsi_0 = 0$. Given $(\phi,\tpsi) \in C^\infty(M,\bR)\times\Omega^4_+$, we denote
$$
(\phi,\tpsi) = (\phi_0 + f,\tpsi_0 + \chi)
$$
for $(f,\chi) \in C^\infty(M,\bR)\times\Omega^4$, and define a vector field 
$$
Z_{(\phi,\tpsi)}=e^{2\phi}\left(-2\curl_{\tvarphi} X+\frac34\widetilde{\nabla}(\tr_{\tg} h)-\widetilde{\nabla} f\right),
$$
where $\chi=\star_{\tvarphi}(h\tilde{\diamond}\tvarphi+X\lrcorner\tpsi)$. Consider the operator 
$$
E:C^\infty(M,\bR)\times\Omega^4_+\rightarrow C^\infty(M,\bR)\times\Omega^4
$$ 
given by $E = (E_1,E_2)$, where
$$
E(\phi,\tpsi) =\left(P_1(\phi,\tpsi)+\cL_{Z_{(\phi,\tpsi)}}\phi,P_2(\phi,\tpsi) + \cL_{Z_{(\phi,\tpsi)}}\tpsi\right).
$$
Notice that 
\begin{equation*}
    E_1(\phi,\tpsi)=P_1(\phi,\tpsi)+\cL_{Z_{(\phi,\tpsi)}}\phi=P_1(\phi,\tpsi)+\tg(\tnabla\phi,Z_{(\phi,\tpsi)}),
\end{equation*}
then
\begin{equation*}
D_{(\phi,\tpsi)}E_1(f,\chi)=D_{(\phi,\tpsi)}P_1(f,\chi)+\lot.
\end{equation*}
If the $4$-form $\chi$ is closed, the vanishing condition on $\pi_7(d\chi)\in\Omega^5_7\simeq\Omega^1$ is equivalent to \cite[(2.24b)]{Grigorian2013}
\begin{equation}\label{eq: pi_7 d_chi}
    \curl_{\tvarphi} X-\Div_{\tg} h-\frac12\tnabla(\tr h)+\lot=0.
\end{equation}
Thus, the symmetric part of the linearization of $P_2$ above (see \eqref{eq: lin_symmetric}) becomes
\begin{align*}
     S
        =&\ \cL_{\curl_{\tvarphi}X}\tg-\frac38\cL_{\tnabla(\tr h)}\tg-\Delta_{\tg}h+\frac14\left(\Delta_{\tg}(\tr h)-\Div_{\tg}(\curl_{\tvarphi} X)\right)\tg+ \lot.
\end{align*}
Since 
 $$
 \curl_{\tvarphi}(\curl_{\tvarphi} X)=d(\Div_{\tg} X)-\Delta_{\tg}X+\lot,
 $$
 using \eqref{eq: lin_vector} and \eqref{eq: lin_symmetric}, we have 
$$
D_{(\phi,\tpsi)}E_2(f,\chi)=e^{2\phi}\star(\tilde{Y}\lrcorner\tpsi+\tilde{S}\widetilde{\diamond} \tvarphi),
$$ 
where
\begin{align}\label{eq: tilde_Y and tilde_S}
\begin{split}
   \tilde{Y}=&\ -\left(C+\frac43\right)d(\Div_{\tg} X)+\Delta_{\tg}X+  \mathrm{L.O.T.}\\
    \tilde{S}=&\ \Delta_{\tg}h+ \mathrm{L.O.T.}
    \end{split}
\end{align}

\begin{lemma}\label{lem: principal symbol}
Assume that $(\phi,\tpsi) \in C^\infty(M,\bR)\times\Omega^4_+$ satisfies $d\tpsi = 0$. Then, for every $x\in M$ and $\xi\in T_xM\backslash \{0\}$, the principal symbol of $D_{(\phi,\tpsi)}E$ is
    \begin{align}\label{eq: principal_symbol}
\sigma_\xi\left(D_{(\phi,\tpsi)}E\right)(f,\chi)=e^{2\phi}\left(|\xi|^2_{\tg}f,\star_{\tvarphi}\left(-\left(C+\frac43\right)\langle X,\xi\rangle_{\tg} (\xi\lrcorner\tpsi)+|\xi|^ 2_{\tg}\star_{\tvarphi}\chi\right)\right).
    \end{align}
    Moreover, 
\begin{equation*}
    \Big{\langle}  \sigma_\xi\left(D_{(\phi,\tpsi)}E\right)(f,\chi), (f,\chi)\Big{\rangle} >0 \quad \text{if and only if} \quad C<-\frac13.
\end{equation*}
\end{lemma}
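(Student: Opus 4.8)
\textbf{Proof plan for Lemma \ref{lem: principal symbol}.}

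The plan is to compute the principal symbol by the standard rule: for a second order differential operator the principal symbol at $(x,\xi)$ is obtained by replacing each $\widetilde\nabla$ by multiplication by $\i\,\xi$ (equivalently, keeping only the top-order terms and substituting $\widetilde\nabla_a \rightsquigarrow \xi_a$ after discarding the irrelevant factors of $\i$). First I would apply this to the scalar component: from \eqref{eq: lin_function} and the definition of $E_1$, the only second-order term is $\Delta_{\tg} f$, so $\sigma_\xi(D_{(\phi,\tpsi)}E_1)(f,\chi) = e^{2\phi}|\xi|_{\tg}^2 f$, giving the first slot of \eqref{eq: principal_symbol}. Next I would treat the four-form component. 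Writing $\chi = \star_{\tvarphi}(X\lrcorner\tpsi + h\,\widetilde\diamond\,\tvarphi)$, the second-order part of $D_{(\phi,\tpsi)}E_2$ was computed in \eqref{eq: tilde_Y and tilde_S} to be $e^{2\phi}\star_{\tvarphi}(\tilde Y\lrcorner\tpsi + \tilde S\,\widetilde\diamond\,\tvarphi)$ with $\tilde Y = -(C+\tfrac43)d(\Div_{\tg}X) + \Delta_{\tg}X + \mathrm{L.O.T.}$ and $\tilde S = \Delta_{\tg}h + \mathrm{L.O.T.}$. Passing to symbols: $\Delta_{\tg}X \rightsquigarrow -|\xi|_{\tg}^2 X$, $\Delta_{\tg}h \rightsquigarrow -|\xi|_{\tg}^2 h$, and $d(\Div_{\tg}X) \rightsquigarrow -\langle X,\xi\rangle_{\tg}\,\xi$. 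Hence the vector symbol is $-(C+\tfrac43)\langle X,\xi\rangle_{\tg}\,\xi + |\xi|_{\tg}^2 X$ (up to an overall sign that cancels) and the symmetric-tensor symbol is $|\xi|_{\tg}^2 h$. Reassembling $|\xi|_{\tg}^2(X\lrcorner\tpsi + h\,\widetilde\diamond\,\tvarphi) = |\xi|_{\tg}^2\star_{\tvarphi}\chi$ and isolating the extra term $-(C+\tfrac43)\langle X,\xi\rangle_{\tg}(\xi\lrcorner\tpsi)$ yields exactly \eqref{eq: principal_symbol}. I should be careful here that all the L.O.T.\ genuinely contribute nothing at top order and that the sign conventions for $\star_{\tvarphi}$ and $\widetilde\diamond$ are tracked consistently.

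For the positivity claim, I would compute the pairing $\langle \sigma_\xi(D_{(\phi,\tpsi)}E)(f,\chi),(f,\chi)\rangle$ using the inner product on $C^\infty(M,\bR)\times\Omega^4$ and the fact that $\star_{\tvarphi}$ is an isometry (so pairing in $\Omega^4$ against $\chi$ is the same as pairing the pre-image in $\Omega^3$ against $\star_{\tvarphi}\chi = X\lrcorner\tpsi + h\,\widetilde\diamond\,\tvarphi$). The scalar part contributes $e^{2\phi}|\xi|_{\tg}^2 f^2 \geq 0$. For the form part, I would use the orthogonality of the $\rG_2$-decomposition: $X\lrcorner\tpsi \in \Omega^4_7$ and $h\,\widetilde\diamond\,\tvarphi$ decomposes into $\Omega^4_1 \oplus \Omega^4_{27}$, which is orthogonal to $\Omega^4_7$. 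The term $|\xi|_{\tg}^2\star_{\tvarphi}\chi$ paired against $\star_{\tvarphi}\chi$ gives $e^{2\phi}|\xi|_{\tg}^2|\chi|^2$, which is non-negative, while the cross term $-e^{2\phi}(C+\tfrac43)\langle X,\xi\rangle_{\tg}(\xi\lrcorner\tpsi)$ pairs only against the $\Omega^4_7$-part, i.e.\ against $X\lrcorner\tpsi$, producing a multiple of $\langle X,\xi\rangle_{\tg}^2 |\xi\lrcorner\tpsi|^2$ (using $\langle \xi\lrcorner\tpsi, X\lrcorner\tpsi\rangle$ proportional to $\langle\xi,X\rangle_{\tg}$, from \eqref{eq: psi3 psi3}). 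This term is non-negative precisely when $-(C+\tfrac43) \geq 0$, i.e.\ $C \leq -\tfrac43$; however the full quadratic form is positive for a wider range, because one only needs the sum $|\xi|_{\tg}^2|X|^2 - (C+\tfrac43)\langle X,\xi\rangle_{\tg}^2 > 0$ for all $X \neq 0$, and Cauchy–Schwarz $\langle X,\xi\rangle_{\tg}^2 \leq |X|^2|\xi|_{\tg}^2$ shows this holds iff $1 - (C+\tfrac43) > 0$, i.e.\ $C < -\tfrac13$. Conversely, choosing $X$ parallel to $\xi^\sharp$ (and $f=0$, $h=0$) shows the pairing becomes non-positive when $C \geq -\tfrac13$, giving the "only if". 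The main subtlety I anticipate is bookkeeping the normalization constants relating $|\xi\lrcorner\tpsi|^2$ to $|\xi|_{\tg}^2$ and ensuring the Cauchy–Schwarz step is applied with the correct coefficient, so that the threshold comes out to be exactly $C = -\tfrac13$ and not some rescaled value; this reduces to a one-line check using $\psi_{abcd}\psi_{mbcd} = 24 g_{am}$ from \eqref{eq: psi3 psi3}.
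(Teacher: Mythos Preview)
Your proposal is correct and follows essentially the same route as the paper: derive the symbol by reading off the top-order terms from \eqref{eq: tilde_Y and tilde_S}, then compute the pairing using the orthogonality of $X\lrcorner\tpsi\in\Omega^3_7$ against $h\,\widetilde\diamond\,\tvarphi\in\Omega^3_1\oplus\Omega^3_{27}$, the identity $\langle\xi\lrcorner\tpsi,X\lrcorner\tpsi\rangle=4\langle\xi,X\rangle_{\tg}$ from \eqref{eq: psi3 psi3}, and Cauchy--Schwarz. You are in fact slightly more thorough than the paper, which only writes out the lower bound $4(1-C-\tfrac43)|\xi|^2|X|^2+|\xi|^2|h\,\widetilde\diamond\,\tvarphi|^2$ and leaves the ``only if'' implicit; your choice $X\parallel\xi^\sharp$, $f=h=0$ makes that direction explicit (and note a small slip: the cross term is a multiple of $\langle X,\xi\rangle_{\tg}^2$, not $\langle X,\xi\rangle_{\tg}^2|\xi\lrcorner\tpsi|^2$, though you use the correct quadratic form immediately after).
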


\begin{proof}
The expression \eqref{eq: principal_symbol} follows from \eqref{eq: tilde_Y and tilde_S}. And taking the inner product of \eqref{eq: principal_symbol} with $(f,\chi)$, we have:
    \begin{align*}
  e^{-2\phi} \Big{\langle}  \sigma_\xi\left(D_{(\phi,\tpsi)}E\right)(f,\chi), (f,\chi)\rangle & =|\xi|^2f^2-4\left(C+\frac43\right)\Big{\langle} X,\xi\rangle^2+4|\xi|^2|X|^2+|\xi|^2|h\widetilde{\diamond}\tvarphi|^2 \\
   & \geq 4 \left(1 - C - \frac43 \right)|\xi|^2|X|^2+|\xi|^2|h\diamond\varphi|^2,
\end{align*}
where all the products are calculated with respect to $\tg$.
\end{proof}

With the preliminary lemmas in place we are ready to establish the short-time existence. We sketch the proof as it follows standard lines.
\begin{thm}\label{thm:STE}
Let $(\varphi_0,\phi_0) \in C^\infty(M,\bR)\times\Omega^3_+$, such that $d(e^{-4\phi_0} \psi_0) = 0$. Then, for any $C<-\frac13$ and $\gamma, \sigma\in \bR$, there exists $\epsilon>0$, depending on $(\varphi_0,\phi_0)$, and a unique solution $(\varphi_t,\phi_t)$ of the generic heterotic $\rG_2$ flow \eqref{eq: mod_anomaly_flowCbeta} defined for $t \in [0,\epsilon)$, with initial condition $(\varphi_0, \phi_0)$, such that
$$
d(e^{-4\phi_t} \psi_t) = 0
$$ 
for all $t\in [0,\epsilon)$.
\begin{proof}
    Following the notation in Theorem \ref{thm:Ham}, we consider the linear operator
$$
L \colon C^\infty(M,\bR)\times\Omega^4 \to \Omega^5 \colon (\phi,\chi) \to d\chi. 
$$
By Lemma \ref{lem: principal symbol}, $L$ is an integrability condition for the operator $E$ and, by direct application of Theorem \ref{thm:Ham}, the flow
\begin{align*}
     \frac{\partial}{\partial t}\tpsi & = -d\left(e^{2\phi}\left(H_{\tvarphi}-d\phi\lrcorner\tpsi - \frac{7}{4}C\tilde{\tau}_0\tvarphi\right)\right)  + \cL_{Z_{(\phi,\tpsi)}}\tpsi \\ \nonumber
    \frac{\partial}{\partial t}\phi& =  e^{2\phi}\left(\Delta_{\tg}\phi + (5-\gamma)|d\phi|_{\tg}^2 + \frac{1}{4}|\ttau_3|^2_{\tg} -\sigma\tilde{\tau}_0^2\right),
\end{align*}
is well-posed for coclosed initial data $d \tpsi_0 = 0$, provided that $C < -1/3$. Pulling back the flow line by the diffeomorphism generated by $Z_{(\phi,\tpsi)}$ and applying Lemma \ref{lem:confCbeta}, we obtain the desired short-time existence for the generic heterotic $\rG_2$ flow \eqref{eq: mod_anomaly_flowCbeta} for conformally coclosed initial data.
\end{proof}
\end{thm}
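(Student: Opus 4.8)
The plan is to deduce the result from Hamilton's Nash--Moser existence theorem (Theorem \ref{thm:Ham}) applied to a DeTurck-type modification of the conformally rescaled flow. By Lemma \ref{lem:confCbeta}, a solution $(\varphi_t,\phi_t)$ of the generic heterotic $\rG_2$ flow \eqref{eq: mod_anomaly_flowCbeta} with conformally coclosed initial data corresponds, under $\tvarphi_t = e^{-3\phi_t}\varphi_t$ (so that $\tpsi_t = e^{-4\phi_t}\psi_t$), precisely to a solution of the coclosed flow \eqref{eq: modified anomaly flow_coclosed} with $d\tpsi_0 = 0$. It therefore suffices to prove short-time existence and uniqueness for \eqref{eq: modified anomaly flow_coclosed} from coclosed initial data. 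Since that flow is only weakly parabolic (its linearization degenerates along the infinite-dimensional orbits of the diffeomorphism group), I would pass to the modified operator $E = (E_1,E_2)$ obtained by adding the DeTurck-type term $\cL_{Z_{(\phi,\tpsi)}}$, as set up before the theorem, and first establish well-posedness of $\partial_t(\phi,\tpsi) = E(\phi,\tpsi)$.

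The two inputs needed for Theorem \ref{thm:Ham} are essentially already in place. First, $L(\phi,\chi) = d\chi$ is an integrability condition for $E$: on the coclosed locus $\{d\tpsi = 0\}$ the $4$-form $E_2(\phi,\tpsi) = P_2(\phi,\tpsi) + \cL_{Z}\tpsi$ is exact, since $P_2$ is by definition $-d$ of a $3$-form and $\cL_{Z}\tpsi = d(Z\lrcorner\tpsi)$ when $d\tpsi=0$; hence $L\circ E$ vanishes there, and in particular the condition $d\tpsi_t = 0$ is preserved by the modified flow. Second, by Lemma \ref{lem: principal symbol}, when $C < -\tfrac{1}{3}$ all eigenvalues of $\sigma_\xi(D_{(\phi,\tpsi)}E)$ restricted to $\ker \sigma_\xi(L)$ have strictly positive real part. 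Theorem \ref{thm:Ham} then produces, for each coclosed initial datum $(\phi_0,\tpsi_0)$, a unique solution $(\phi_t,\tpsi_t)$ of the modified flow on some interval $[0,\epsilon)$ with $\epsilon$ depending on the data; coclosedness of $\tpsi_t$ holds throughout by the remark above.

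To return to \eqref{eq: modified anomaly flow_coclosed}, I would let $\{\Phi_t\}$ be the one-parameter family of diffeomorphisms solving $\partial_t\Phi_t = -Z_{(\phi_t,\tpsi_t)}\circ\Phi_t$, $\Phi_0 = \mathrm{id}$ (defined for small $t$), and set $(\hat\varphi_t,\hat\phi_t) = (\Phi_t^*\tvarphi_t,\Phi_t^*\phi_t)$; the $\cL_Z$ terms cancel by naturality, so $(\hat\varphi_t,\hat\phi_t)$ solves \eqref{eq: modified anomaly flow_coclosed}, is coclosed, and has initial data $(\tvarphi_0,\phi_0)$. Applying Lemma \ref{lem:confCbeta} in the reverse direction, $\varphi_t := e^{3\hat\phi_t}\hat\varphi_t$ together with $\phi_t := \hat\phi_t$ solves \eqref{eq: mod_anomaly_flowCbeta} with the prescribed initial condition, and $d(e^{-4\phi_t}\psi_t) = d\hat\psi_t = 0$ for all $t$. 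For uniqueness, suppose $(\varphi_t,\phi_t)$ and $(\varphi'_t,\phi'_t)$ are two such solutions; their conformal rescalings both solve \eqref{eq: modified anomaly flow_coclosed}, and putting each into DeTurck gauge — by solving the associated ODE for the gauge diffeomorphisms — yields two solutions of the modified flow with identical initial data, which coincide by the uniqueness in Theorem \ref{thm:Ham}; reversing the (invertible) gauge transformations then forces the original solutions to agree.

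The step I expect to require the most care is this last one: promoting uniqueness of the gauge-fixed flow to uniqueness of the original flow, which requires exhibiting, for an arbitrary solution of \eqref{eq: modified anomaly flow_coclosed}, a family of diffeomorphisms bringing it into DeTurck gauge and defined on a uniform time interval, together with the reversibility of this correspondence. Everything else — the principal symbol computation, the verification that $L$ is an integrability condition, and the fact that coclosedness (hence conformal coclosedness) is preserved — is either supplied by Lemma \ref{lem: principal symbol} and Lemma \ref{lem:confCbeta} or is a routine check.
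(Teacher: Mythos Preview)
Your proposal is correct and follows essentially the same approach as the paper: reduce to the coclosed flow \eqref{eq: modified anomaly flow_coclosed} via Lemma \ref{lem:confCbeta}, apply Hamilton's Theorem \ref{thm:Ham} to the DeTurck-modified operator $E$ using $L(\phi,\chi)=d\chi$ as the integrability condition together with the symbol positivity from Lemma \ref{lem: principal symbol}, then undo the gauge and the conformal rescaling. Your write-up is in fact more detailed than the paper's sketch, particularly in spelling out the uniqueness argument via gauge-fixing; the paper leaves this to ``standard lines'' following Bryant--Xu.
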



\section{Evolution Equations and Shi-type Estimates}

\subsection{Preliminary estimates}\label{section: preliminary estimates}

In this section, we will give evolution equations and Shi-type estimates for the heterotic $\rG_2$ flow. For this, we first prove some preliminary estimates. We will consider a conformally coclosed $\rG_2$-structures with dilaton $(\varphi_t, \phi_t)$, so $d(e^{-4\phi_t} \psi_t) = 0$. Setting $\varphi_t = e^{3 \phi_t} \tilde{\varphi}_t$, where $\tilde{\varphi}_t = e^{-3\phi_t} \varphi_t$ is a coclosed $\rG_2$-structure, we have that  $\tau_1 = d \phi$ by~\eqref{eq:d:varpsi} and Lemma~\ref{lm: conformal identities}.  From~\cite[(2.11)]{DwivediGianniotisKarigiannis}, there is a constant $c$ such that $\nabla_k \phi = c(d \phi \lrcorner \varphi)_{ab} \varphi_{abk}$. Since $T$ includes the term $\tau_1\lrcorner \varphi = d\phi \lrcorner \varphi$, we have that
\begin{align}\label{eqn:nablaDil1}
    \nabla \phi = T \ast \varphi.
\end{align}
Our star notation includes any metric contraction of the indicated tensors. Since $\nabla \varphi = T \ast \psi$, we have that
\begin{align}\label{eqn:nablaDil2}
    \nabla^2 \phi = \nabla T \ast \varphi + T^2 \ast \psi,
\end{align}
where $T^2$ denotes $T \ast T$. 

We now introduce notation to represent polynomials in tensors comprised of derivatives of $\Rm$ and $T$, together with $\varphi$ and $\psi$.  For $k \geq 1$, we use the notation $P(k)$ to denote a tensor which is polynomial in $\nabla^i T$, $T^l$, $\nabla^j \Rm$, and $\Rm^m$, specifically a tensor formed by a sum of terms of the form 
\begin{equation}\label{eqn:P(Rm, T, k) star}\nabla^{j_1}\Rm \ast \cdots \ast \nabla^{j_q}\Rm \ast \Rm^m \ast \nabla^{i_1} T \ast \cdots \nabla^{i_p} T \ast T^{l},\end{equation}
where
\begin{align}\label{eqn:ksumPRmTk}
\left(i_1 + \cdots + i_p + p + l\right) + \left(j_1 + \cdots + j_q + 2q + 2m\right) = k.\end{align}
Our star notation in ~\eqref{eqn:P(Rm, T, k) star} includes any metric contraction of the indicated tensors as well as terms of the form $\ast \varphi^a \ast \psi^b$. We suppress terms of the form $\ast \varphi^a \ast \psi^b$ from our star notation, but their only function, for the purposes of our estimates, is to produce additional torsion terms when differentiating due to~\eqref{eqn:nablaDil1} and~\eqref{eqn:nablaDil2}. 

Using ~\eqref{eqn:nablaDil1} and ~\eqref{eqn:nablaDil2}, together with $\nabla \psi = T\ast \varphi$,
\begin{align}\label{eqn:DerivativesDilaton}
    \nabla^k \phi &= P(k),\\
    \nabla^k \varphi &= P(k),\\
    \nabla^k \psi &= P(k).\label{eqn:Derivativesphipsi}
\end{align}
It follows that
\begin{align}\label{eqn:DerivativePNotation}
    \nabla^{k_1} P(k_2) &= P(k_1+k_2),\\\label{eqn:MultiplicativePNotation}
    P(k_1) \ast P(k_2) &= P(k_1 + k_2).
\end{align}
Combining~\eqref{eqn:DerivativesDilaton} with ~\eqref{eqn:DerivativePNotation} and~\eqref{eqn:MultiplicativePNotation}, 
\begin{align}\label{eqn:ePhiPNotation}
\nabla^{k_1}\left(e^{4\phi}P(k_2)\right) = e^{4\phi} P(k_1 + k_2).
\end{align}
We now give bounds on $P(k)$ which are used in our Shi-type estimates. 

\begin{lemma}\label{lemma norm PRmTk}
For each $k \geq 3$,
\begin{align}\label{eqn:normPRmTk}
    |P(k)| \leq C\left(|T|^k + \left|\Rm\right|^{\frac{k}{2}} + \sum_{m=1}^{k-1} |\nabla^{m}T|^{\frac{k}{m+1}} + \sum_{m=1}^{k-2} \left|\nabla^m \Rm\right|^{\frac{k}{m+2}}\right).
\end{align}
If $P(k)$ does not include any terms with $\nabla^{j_1}\Rm$ or $\nabla^{i_1}T$, then the corresponding term can be omitted from the sum on the right-hand side of~\eqref{eqn:normPRmTk}.
\end{lemma}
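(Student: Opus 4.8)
The plan is to prove Lemma \ref{lemma norm PRmTk} by reducing to a single monomial of the form \eqref{eqn:P(Rm, T, k) star} and then applying the interpolation-type inequality that controls a product of norms of derivatives by the extremal derivatives appearing, together with the pointwise Cauchy–Schwarz/Young inequality. Since $P(k)$ is by definition a finite sum of such monomials, and both sides of \eqref{eqn:normPRmTk} are norms (hence subadditive under sums, up to the combinatorial constant $C$), it suffices to bound a single monomial
\begin{equation*}
\nabla^{j_1}\Rm \ast \cdots \ast \nabla^{j_q}\Rm \ast \Rm^m \ast \nabla^{i_1} T \ast \cdots \ast \nabla^{i_p} T \ast T^{l}
\end{equation*}
subject to the weight constraint \eqref{eqn:ksumPRmTk}. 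Absorbing the $\Rm^m$ factor into the family $\nabla^{j}\Rm$ with $j=0$ (noting that $\Rm$ carries weight $2$) and the $T^l$ factor into $\nabla^i T$ with $i=0$ (weight $1$), we reduce to bounding $|\nabla^{a_1}T| \cdots |\nabla^{a_s}T| \cdot |\nabla^{b_1}\Rm| \cdots |\nabla^{b_r}\Rm|$ where $\sum (a_\mu + 1) + \sum (b_\nu + 2) = k$.

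Next I would apply Young's inequality to the product: for nonnegative reals $x_\mu, y_\nu$ and positive exponents $p_\mu, q_\nu$ with $\sum 1/p_\mu + \sum 1/q_\nu = 1$, one has $\prod x_\mu \prod y_\nu \leq \sum x_\mu^{p_\mu}/p_\mu + \sum y_\nu^{q_\nu}/q_\nu$. The natural choice, dictated by the weights, is to set $x_\mu = |\nabla^{a_\mu}T|$ with $p_\mu = k/(a_\mu+1)$ and $y_\nu = |\nabla^{b_\nu}\Rm|$ with $q_\nu = k/(b_\nu+2)$; the exponent condition $\sum (a_\mu+1)/k + \sum (b_\nu+2)/k = 1$ is exactly the weight constraint. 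This immediately yields a bound by a constant times $\sum_\mu |\nabla^{a_\mu}T|^{k/(a_\mu+1)} + \sum_\nu |\nabla^{b_\nu}\Rm|^{k/(b_\nu+2)}$, and since each $a_\mu$ ranges in $\{0,1,\dots,k-1\}$ (with $a_\mu=0$ giving the $|T|^k$ term) and each $b_\nu$ in $\{0,1,\dots,k-2\}$ (with $b_\nu=0$ giving the $|\Rm|^{k/2}$ term), this is precisely the right-hand side of \eqref{eqn:normPRmTk}. The final assertion — that if no $\nabla^{j_1}\Rm$ (resp.\ no $\nabla^{i_1}T$) factor appears, the corresponding sum may be dropped — is immediate from this case analysis, since in that situation no $\Rm$-type (resp.\ $T$-type) factors enter the product at all and the Young exponents are distributed only among the remaining factors.

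The only genuine subtlety, and the step I would be most careful about, is the degenerate case where a monomial consists of a \emph{single} factor: if $P(k)$ reduces to $\nabla^{k-1}T$ alone (one $T$-derivative factor of weight $k$) or $\nabla^{k-2}\Rm$ alone, then there is no product to which Young's inequality applies, and one simply reads off $|\nabla^{k-1}T| = |\nabla^{k-1}T|^{k/k}$ or $|\nabla^{k-2}\Rm| = |\nabla^{k-2}\Rm|^{k/k}$, which already appear in the respective sums. More generally, whenever a single factor carries \emph{all} the weight $k$ (so all other exponents would be infinite), Young's inequality is replaced by the trivial identity $|Z| = |Z|^{k/k}$; this is precisely why the hypothesis $k \geq 3$ is harmless and why the terms $|T|^k$ and $|\Rm|^{k/2}$ (from the $a_\mu = 0$, $b_\nu = 0$ cases, i.e.\ the $T^l$ and $\Rm^m$ factors) must be retained. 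One also checks that the suppressed $\varphi^a \ast \psi^b$ factors contribute nothing to the weight and have pointwise-bounded norm, so they only affect the combinatorial constant $C$. Assembling the bound over the finitely many monomials comprising $P(k)$ completes the proof.
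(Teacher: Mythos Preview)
Your proposal is correct and follows essentially the same approach as the paper's own proof: bound a single monomial by applying the generalized Young inequality with exponents $k/(a_\mu+1)$ on the $\nabla^{a_\mu}T$ factors and $k/(b_\nu+2)$ on the $\nabla^{b_\nu}\Rm$ factors, using the weight constraint \eqref{eqn:ksumPRmTk} to verify the reciprocal-exponent condition. Your additional remarks on the single-factor degenerate case and on the suppressed $\varphi^a \ast \psi^b$ factors are correct and slightly more careful than the paper, which leaves these implicit.
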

\begin{proof}
The generalized Young's inequality says $|x_1|\cdots |x_m| \leq |x_1|^{a_1}/a_1 + \cdots + |x_m|^{a_m} / a_m$, where $1/a_1 +\cdots + 1/a_m = 1$. Applying this inequality,
\begin{gather} \label{eqn:generalized Young's}
\begin{split}
    |&\nabla^{j_1}\Rm| \cdots |\nabla^{j_q}\Rm| \left|\Rm\right|^m |\nabla^{i_1} T|  \cdots |\nabla^{i_p} T| |T|^{l}\\
    &\leq C\left(|\nabla^{j_1}\Rm|^{\frac{k}{j_1+2}} + \cdots + |\nabla^{j_q}\Rm|^{\frac{k}{j_q + 2}} \right.\\
    &\ \left. \qquad + \left|\Rm\right|^{\frac{k}{2}} + |\nabla^{i_1} T|^{\frac{k}{i_1 + 1}} + \cdots + |\nabla^{i_p} T|^{\frac{k}{i_p+1}} + |T|^{k} \right),
\end{split}
\end{gather}
which follows since, by~\eqref{eqn:ksumPRmTk},
\begin{align*}
     \frac{j_1+2}{k} + \cdots + \frac{j_q +2}{k} + \frac{2m}{k} + \frac{i_1 +1}{k} + \cdots + \frac{i_p +1}{k} + \frac{l}{k} = \frac{k}{k} = 1.
\end{align*}
Using~\eqref{eqn:generalized Young's} and the definition of $P(k)$, we find that~\eqref{eqn:normPRmTk} holds. If a term of the form $\nabla^{j_1}\Rm$ or $\nabla^{i_1}T$ does not appear in $P(k)$, then it will not appear in ~\eqref{eqn:generalized Young's}, so it can be omitted from the sum on the right-hand side of ~\eqref{eqn:normPRmTk}.
\end{proof}

\subsection{Evolution of curvature for heterotic \texorpdfstring{$\rG_2$}{G2} flow}

For each $k \geq 0$, we compute the evolution of $\nabla^k \Rm$ and estimate the evolution of $\left|\nabla^k \Rm\right|^2$ along generic heterotic $\rG_2$ flow.  For emphasis, recall that our star notation suppresses any terms of the form $\ast \varphi^a \ast \psi^b$. 

\begin{prop}\label{proposition evolution nabla^k Rm} Let $(\varphi_t, \phi_t)$ be a solution to the generic heterotic $\rG_2$ flow ~\eqref{eq: mod_anomaly_flowCbeta} with initial condition $(\varphi_0, \phi_0)$ satisfying $d(e^{-4\phi_0} \psi_0)=0$.  Then, for each $k \geq 0$, 
\begin{align}\label{eqn:DerivkRm}
     \frac{\partial}{\partial t} \nabla^k \Rm = e^{4\phi} \left(\Delta \nabla^k \Rm +  T \ast \nabla^{k+1} \Rm + T \ast \nabla^{k+2}T +  P(k+4)\right),
\end{align}
Furthermore, the $P(k+4)$ in~\eqref{eqn:DerivkRm} does not include any terms with $\nabla^{k+2}\Rm$, $\nabla^{k+1}\Rm$, $\nabla^{k+3}T$, or $\nabla^{k+2}T$.
\end{prop}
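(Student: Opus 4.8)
The plan is to treat the generic heterotic $\rG_2$ flow, at the level of the underlying metric $g = g_{\varphi_t}$, as a conformally rescaled, diffeomorphism-corrected Ricci-type flow, so that $\Rm$, and then each $\nabla^k\Rm$, evolves by a heat-type equation whose lower-order terms fit into the $P(\cdot)$-notation. Since $\Rm$ depends only on $g$, the vector-field part $X_{\mathrm{het}}\lrcorner\psi$ of $\partial_t\varphi$ in Proposition \ref{p:evolutiongvarphiCbeta} is irrelevant, and only the metric evolution matters.

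First I would rewrite the metric evolution from Proposition \ref{p:evolutiongvarphiCbeta}, using that $\tau_1 = d\phi$ along the flow and hence, by \eqref{eqn:nablaDil1}, that $\tau_0$, $T_\sym$, $H_\varphi$, $|d\phi|^2$ and $d\phi\lrcorner\psi$ are all \emph{algebraic} in $T$. One obtains
\[
\partial_t g = e^{4\phi}\bigl(-2\Ric_g + \mathcal{L}_X g + \mathcal{W}\bigr), \qquad X = -4\nabla\phi,
\]
where $\mathcal{W}$ is a symmetric $2$-tensor that is algebraic in $T$ (schematically $\mathcal{W} = T\ast T$); the whole $C$-dependence sits inside $\mathcal{W}$, which is why the curvature evolution is uniform in $C$. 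The key choice is to keep the term $-4\mathcal{L}_{d\phi}g$ in Lie-derivative form rather than expand it as $-8\,\mathrm{Hess}\,\phi$. Since $X = \nabla\phi = T\ast\varphi$, $\nabla e^{4\phi} = e^{4\phi}(T\ast\varphi)$ and $\nabla^2 e^{4\phi} = e^{4\phi}(\nabla T\ast\varphi + T^2\ast\psi)$, every derivative that lands on $e^{4\phi}$ (or, by \eqref{eqn:DerivativesDilaton}--\eqref{eqn:Derivativesphipsi}, on a hidden $\varphi,\psi$ factor) merely produces an extra $T\ast(\cdots)$ and lowers no derivative order.

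For $k = 0$ I would apply \eqref{eq: ddt g}--\eqref{eq: ddt Gamma} together with the standard formula $\partial_t R_{ijk}{}^l = \nabla_i(\partial_t\Gamma_{jk}^l) - \nabla_j(\partial_t\Gamma_{ik}^l)$, splitting $\partial_t g$ into its three summands by linearity. The summand $-2e^{4\phi}\Ric_g$ contributes $e^{4\phi}(\Delta\Rm + \Rm\ast\Rm)$ by the classical Ricci-flow curvature computation — the contracted second Bianchi identity turns $\nabla^2\Ric_g$ into $\Delta\Rm$ with no leftover first-order term — plus terms where a derivative hits $e^{4\phi}$, of the form $T\ast(\text{Ricci-flow }\partial_t\Gamma)$ and $T\ast\nabla\Ric_g$; unwinding $\Ric_g$ by Lemma \ref{lm: Ricci and scalar curv} and the $\rG_2$-Bianchi identity \eqref{eq: G2Bianchi_ident} rewrites these as $T\ast\nabla\Rm$, $T\ast\nabla^2 T$ and $P(4)$. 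The summand $e^{4\phi}\mathcal{L}_X g$ contributes, up to a harmless $e^{4\phi}$-rescaling of the vector field, exactly $\mathcal{L}_X\Rm = \nabla_X\Rm + (\nabla X)\ast\Rm = T\ast\nabla\Rm + P(4)$, since $\nabla X = -4\,\mathrm{Hess}\,\phi$ is $P(2)$; crucially no third-order torsion term appears here, which is precisely why $\mathcal{L}_{d\phi}g$ must not be expanded. The summand $e^{4\phi}\mathcal{W}$, being essentially $\nabla^2$ of something algebraic in $T$, contributes $e^{4\phi}(T\ast\nabla^2 T + P(4))$. Assembling these gives \eqref{eqn:DerivkRm} for $k = 0$, with the remainder carrying no $\nabla^2\Rm$, $\nabla\Rm$, $\nabla^3 T$ or $\nabla^2 T$ outside the displayed terms.

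For the general case I would induct on $k$, using $\partial_t\nabla^{k+1}\Rm = \nabla(\partial_t\nabla^k\Rm) + (\partial_t\Gamma)\ast\nabla^k\Rm$ together with $\partial_t\Gamma = e^{4\phi}P(3)$ (whose only top-order content is $\nabla\Rm$ and $\nabla^2 T$) and the commutator $[\nabla,\Delta](\cdot) = \Rm\ast\nabla(\cdot) + (\nabla\Rm)\ast(\cdot)$. This produces
\[
\partial_t\nabla^{k+1}\Rm = e^{4\phi}\bigl(\Delta\nabla^{k+1}\Rm + T\ast\nabla^{k+2}\Rm + T\ast\nabla^{k+3}T + P(k+5)\bigr),
\]
and the level count \eqref{eqn:ksumPRmTk} shows that every term coming from the commutators, from $\nabla^j(\partial_t\Gamma)\ast\nabla^{k-j}\Rm$, or from a derivative falling on $e^{4\phi}$ or a $\varphi,\psi$ factor, lands in $P(k+5)$ and carries at most $\nabla^{k+1}\Rm$ and at most $\nabla^{k+2}T$, hence avoids the excluded orders $\nabla^{k+3}\Rm$, $\nabla^{k+2}\Rm$, $\nabla^{k+4}T$, $\nabla^{k+3}T$. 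The main obstacle is entirely bookkeeping: one must keep the Lie-derivative and Ricci structures of $\partial_t g$ intact so that the naively third-order-in-$T$ terms from $\nabla^2(\mathcal{L}_{d\phi}g)$ and $\nabla^2\Ric_g$ genuinely collapse (via $\mathcal{L}_X\Rm$ and the classical identity, not by brute expansion), and then verify the level-counting exclusions at each inductive step. No new conceptual ingredient is needed beyond Proposition \ref{p:evolutiongvarphiCbeta}, the classical curvature-evolution formula, and the $\rG_2$-Bianchi identity.
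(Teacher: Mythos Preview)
Your proposal is correct and follows essentially the same route as the paper: both start from the metric evolution in Proposition \ref{p:evolutiongvarphiCbeta}, feed it into the standard $\partial_t\Rm$ formula, reduce the $-2e^{4\phi}\Ric_g$ contribution to $e^{4\phi}(\Delta\Rm + \Rm\ast\Rm)$ via the classical Ricci-flow computation, collect the remaining pieces using the $P(\cdot)$ bookkeeping, and then induct via commutators of $\partial_t$, $\nabla$, and $\Delta$. The only tactical difference is in the $\cL_{d\phi}g$ piece: the paper brute-force expands the resulting $\nabla^4\phi$ terms and cancels them with the identity $\nabla^4_{ipjk}\phi - \nabla^4_{ikjp}\phi = -\nabla_i(R_{pkj}{}^{q}\nabla_q\phi)$ together with the contracted second Bianchi identity, whereas your absorption of $e^{4\phi}\cL_{-4\nabla\phi}g$ into $\cL_{-4e^{4\phi}\nabla\phi}g$ modulo an algebraic $e^{4\phi}P(2)$ correction is a legitimate and somewhat cleaner shortcut to the same conclusion.
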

\begin{proof}
Since $(\varphi_t, \phi_t)$ is a given solution to the generic-heterotic $\rG_2$ flow with conformally coclosed initial condition, we have that $d(e^{-4\phi_t} \psi_t)=0$ for as long as the flow exists. So, the estimates of Section \ref{section: preliminary estimates} hold.

By Proposition~\ref{p:evolutiongvarphiCbeta}, we have that 
\begin{align}\label{eq:RmEvolutionGeneral}
\frac{\partial g}{\partial t}=e^{4\phi}\left(-2\Ric_{g}-4\cL_{d\phi}g+2E\right),\end{align}
where we define $$E:= 
\frac14 H_{\varphi}^2 + \frac{7}{4}C\tau_0 T_\sym+\left(\left(\frac{7}{24}-\sigma\right)\tau_0^2+\left(3-\gamma\right)|d\phi|^2 \right)g.$$
Note that we have hidden the dependence on the constants $C,\gamma, \sigma$ in $E$. 

Now, if $\frac{\partial}{\partial t} g_{ij} = h_{ij}$, then the evolution of $\Rm$ is given by~\cite[Lemma 6.5]{KnopfBook}.
\begin{align}\label{eqn:standardevolutionRm}
    \frac{\partial}{\partial t}R_{ijk}^{\quad l} = \frac{1}{2}g^{lp} \left( \nabla_i \nabla_k h_{jp} + \nabla_j \nabla_p h_{ik} - \nabla_i \nabla_p h_{jk} - \nabla_j \nabla_k h_{ip}  - R_{ijk}^{\quad q}h_{qp} - R_{ijp}^{\quad q} h_{kq}\right).
\end{align}
\noindent Applying ~\eqref{eq:RmEvolutionGeneral} to~\eqref{eqn:standardevolutionRm} and using the Bianchi identity as in~\cite[Lemma 6.13]{KnopfBook},
\begin{align}\label{eqn:fullEvolutionRm}
    \frac{\partial}{\partial t} R_{ijk}^{\quad l} &= e^{4\phi}\Delta R_{ijk}^{\quad l} + e^{4\phi} g^{pq}\left(R_{ijp}^{\quad r}R_{rqk}^{\quad l} -2R_{pik}^{\quad r}R_{jqr}^{\quad l} + 2R_{pir}^{\quad l}R_{jqk}^{\quad r} \right)\nonumber\\
    & + e^{4\phi} \left( R_{p}^l R_{ijk}^{\quad p} - R_i^{p}R_{pjk}^{\quad l} - R_j^p R_{ipk}^{\quad l} - R_{k}^p R_{ijp}^{\quad l}\right)\nonumber\\
    & +4 e^{4\phi} g^{lp} \left(-\nabla^4_{ikjp} \phi - \nabla^4_{jpik} \phi + \nabla^4_{ipjk} \phi +\nabla^4_{jkip} \phi + R_{ijk}^{\quad q} \nabla_q \nabla_p \phi + R_{ijp}^{\quad q} \nabla_k \nabla_q \phi \right)\nonumber\\
    & + e^{4\phi} g^{lp} \left( \nabla_i \nabla_k E_{jp} + \nabla_j \nabla_p E_{ik} - \nabla_i \nabla_p E_{jk} - \nabla_j \nabla_k E_{ip}  - R_{ijk}^{\quad q} E_{qp} - R_{ijp}^{\quad q} E_{kq}\right)\nonumber\\
    & - 4 e^{4 \phi} g^{lp}\left(\nabla_i \phi \nabla_k R_{jp}+ \nabla_k \phi \nabla_i R_{jp} +\nabla_j \phi \nabla_p R_{ik} + \nabla_p \phi \nabla_j R_{ik} \right)\nonumber\\
    & + 4 e^{4 \phi} g^{lp}\left(\nabla_i \phi \nabla_p R_{jk}+ \nabla_p \phi \nabla_i R_{jk} +\nabla_j \phi \nabla_k R_{ip} + \nabla_k \phi \nabla_j R_{ip}\right)\nonumber\\
    & + 16 e^{4\phi} g^{lp}\left(-R_{jp} \nabla_i \nabla_k \phi - R_{ik} \nabla_j \nabla_p  \phi + R_{jk} \nabla_i \nabla_p  \phi + R_{ip} \nabla_j \nabla_k \phi\right)\nonumber\\
    & - 16 e^{4 \phi} g^{lp}\left(\nabla_i \phi \,\nabla^3_{kjp}\phi + \nabla_k \phi \,\nabla^3_{ijp}\phi +\nabla_j \phi\, \nabla^3_{pik}\phi + \nabla_p \phi \,\nabla^3_{jik} \phi \right)\nonumber\\
    & + 16 e^{4 \phi} g^{lp}\left(\nabla_i \phi \, \nabla^3_{pjk} \phi + \nabla_p \phi \, \nabla^3_{ijk} \phi + \nabla_j \phi \, \nabla^3_{kip} \phi + \nabla_k \phi \, \nabla^3_{jip} \phi \right)\nonumber\\
    & + 64 e^{4\phi} g^{lp}\left(-\nabla_i \nabla_k \phi \,\nabla_j \nabla_p \phi - \nabla_j \nabla_p  \phi \, \nabla_i \nabla_k \phi + \nabla_i \nabla_p  \phi \,\nabla_j \nabla_k \phi + \nabla_j \nabla_k \phi \, \nabla_i \nabla_p \phi \right)\nonumber\\
    & + 4 e^{4 \phi} g^{lp}\left(\nabla_i \phi \nabla_k E_{jp}+ \nabla_k \phi \nabla_i E_{jp} +\nabla_j \phi \nabla_p E_{ik} + \nabla_p \phi \nabla_j E_{ik} \right)\nonumber\\
    & - 4e^{4 \phi} g^{lp}\left(\nabla_i \phi \nabla_p E_{jk}+ \nabla_p \phi \nabla_i E_{jk} +\nabla_j \phi \nabla_k E_{ip} + \nabla_k \phi \nabla_j E_{ip}\right)\nonumber\\
    & + 4e^{4\phi} g^{lp}\left(-E_{jp}\nabla_i \nabla_k \phi - E_{ik} \nabla_j \nabla_p  \phi + E_{jk} \nabla_i \nabla_p  \phi  + E_{ip} \nabla_j \nabla_k \phi\right),\nonumber\\
\intertext{Applying the contracted second Bianchi identity, $\nabla_q R_{ijk}^{\quad q} = \nabla_i R_{jk}- \nabla_{j} R_{ik}$, using that $\nabla^4_{ipjk} \phi - \nabla^4_{ikjp} \phi = \nabla_i (\nabla^3_{pkj} - \nabla^3_{kpj} \phi) = -\nabla_i(R_{pkj}^{\quad q} \nabla_q \phi)$, and commuting covariant derivatives on all $\nabla^3$ terms,}
    &= e^{4\phi}\Delta R_{ijk}^{\quad l} + e^{4\phi} g^{pq}\left(R_{ijp}^{\quad r}R_{rqk}^{\quad l} -2R_{pik}^{\quad r}R_{jqr}^{\quad l} + 2R_{pir}^{\quad l}R_{jqk}^{\quad r} \right)\nonumber\\
    & + e^{4\phi} \left( R_{p}^l R_{ijk}^{\quad p} - R_i^{p}R_{pjk}^{\quad l} - R_j^p R_{ipk}^{\quad l} - R_{k}^p R_{ijp}^{\quad l}\right)\nonumber\\
    & +4 g^{lp} \left(-\nabla_i(R_{pkj}^{\quad q} \nabla_q \phi) -\nabla_j(R_{kpi}^{\quad q} \nabla_q \phi) + R_{ijk}^{\quad q} \nabla_q \nabla_p \phi + R_{ijp}^{\quad q} \nabla_k \nabla_q \phi \right)\nonumber\\
    & + e^{4\phi} g^{lp} \left( \nabla_i \nabla_k E_{jp} + \nabla_j \nabla_p E_{ik} - \nabla_i \nabla_p E_{jk} - \nabla_j \nabla_k E_{ip}  - R_{ijk}^{\quad q} E_{qp} - R_{ijp}^{\quad q} E_{kq}\right)\nonumber\\
    & + 4 e^{4 \phi} g^{lp}\left(\nabla_i  \phi \nabla_q R_{pkj}^{\quad q} + \nabla_p \phi \nabla_q R_{ijk}^{\quad q} + \nabla_j \phi \nabla_q R_{kpi}^{\quad q} + \nabla_k \phi \nabla_q R_{jip}^{\quad q}\right)\nonumber\\
    & -16 e^{4\phi} g^{lp}\left(\nabla_i \nabla_k \phi\, (R_{jp} + 4\nabla_{j}\nabla_{p}\phi) +\nabla_j \nabla_p \phi\,  (R_{ik} + 4\nabla_{i}\nabla_{k} \phi)\right)\\
    & + 16 e^{4\phi} g^{lp} \left(\nabla_i \nabla_p  \phi \, (R_{jk} + 4\nabla_{j}\nabla_{k}\phi) + \nabla_j \nabla_k \phi \,(R_{ip} +  4\nabla_{i} \nabla_{p}\phi )\right)\nonumber\\
    & - 16 e^{4 \phi} g^{lp}\left(\nabla_i \phi \, R_{pkj}^{\quad q}  + \nabla_p \phi \, R_{ijk}^{\quad q}  + \nabla_j \phi \, R_{kpi}^{\quad q}  + \nabla_k \phi \, R_{jip}^{\quad q} \right)\nabla_q \phi\nonumber\\
    & + 4 e^{4 \phi} g^{lp}\left(\nabla_i \phi \nabla_k E_{jp}+ \nabla_k \phi \nabla_i E_{jp} +\nabla_j \phi \nabla_p E_{ik} + \nabla_p \phi \nabla_j E_{ik} \right)\nonumber\\
    & - 4e^{4 \phi} g^{lp}\left(\nabla_i \phi \nabla_p E_{jk}+ \nabla_p \phi \nabla_i E_{jk} +\nabla_j \phi \nabla_k E_{ip} + \nabla_k \phi \nabla_j E_{ip}\right)\nonumber\\
    & + 4e^{4\phi} g^{lp}\left(-E_{jp}\nabla_i \nabla_k \phi - E_{ik} \nabla_j \nabla_p  \phi + E_{jk} \nabla_i \nabla_p  \phi  + E_{ip} \nabla_j \nabla_k \phi\right).\nonumber
\end{align}
Since $H^2_{\varphi}=T^2+T^2\ast\varphi+T^2*\ast\psi$, we can schematically write $E$ as 
\begin{align}\label{eqn:Eschematically}
    E = T\ast T \ast (1+ \varphi + \psi) = P(2).
\end{align}
Applying ~\eqref{eqn:nablaDil1}, ~\eqref{eqn:nablaDil2}, and~\eqref{eqn:Eschematically} to the evolution of $R_{ijk}^{\quad l}$ ~\eqref{eqn:fullEvolutionRm},
\begin{align}\label{eqn:P4inDtRm}
     \frac{\partial}{\partial t} \Rm &= e^{4\phi} \Big(\Delta \Rm+ \nabla \Rm \ast T  + T \ast \nabla^2 T + P(4)\Big).
\end{align}
Analyzing ~\eqref{eqn:fullEvolutionRm} and $\nabla^i E$, for $i \leq 2$, we find that the $P(4)$ in \eqref{eqn:P4inDtRm} does not include any terms with $\nabla^2 \Rm$, $\nabla \Rm$, $\nabla^3 T$, or $\nabla^2 T$.  This proves Proposition \ref{proposition evolution nabla^k Rm} with $k=0$.

We now write the evolution of $\nabla^k \Rm$ schematically for each $k>0$.  Using~\eqref{eqn:nablaDil2} and~\eqref{eqn:Eschematically}, we have that
\begin{align}\label{eqn:dgdtschematically}
    \frac{\partial}{\partial t} g = e^{4\phi} P(2).
\end{align}    
Applying the commutator of the time derivative with $\nabla^k$ (see, for instance, ~\cite[4.18]{LotayWeiLaplacianFlow}) and then using~\eqref{eqn:P4inDtRm} and~\eqref{eqn:dgdtschematically},
\begin{align}
    \frac{\partial}{\partial t} \nabla^k \Rm &= \nabla^k \left(\frac{\partial}{\partial t} \Rm\right) + \sum_{i=1}^k \nabla^{k-i} \Rm \ast \nabla^i \frac{\partial g}{\partial t}\nonumber\\
    &= \nabla^k \left(e^{4\phi} \Big(\Delta \Rm+ \nabla \Rm \ast T + T\ast \nabla^2 T + P(4)\Big)\right) + \sum_{i=1}^k \nabla^{k-i} \Rm \ast \nabla^i \left(e^{4\phi} P(2)\right).\nonumber\\
\intertext{Applying~\eqref{eqn:DerivativePNotation}, ~\eqref{eqn:MultiplicativePNotation}, and ~\eqref{eqn:ePhiPNotation},}
    &= \nabla^k \left(e^{4\phi} \Big(\Delta \Rm+ \nabla \Rm \ast T + T\ast \nabla^2 T\Big)\right) + e^{4\phi} P(k+4),\nonumber\\
\intertext{where the $P(k+4)$ includes no terms with $\nabla^{k+2}\Rm$, $\nabla^{k+1}\Rm$, $\nabla^{k+3}T$, or $\nabla^{k+2}T$. Expanding the first term and then applying the commutator of $\nabla^k$ and $\Delta$ (see~\cite[Section 7.2]{KnopfBook}),}
    &= e^{4\phi} \sum_{i=0}^k \nabla^{k-i} \phi \ast \nabla^i \Delta \Rm + \nabla^k \left(e^{4\phi} \Big(\nabla \Rm \ast T + T\ast \nabla^2 T\Big)\right)+ e^{4\phi} P(k+4)\nonumber\\
    &= e^{4\phi} \Delta \nabla^k \Rm + e^{4\phi}\sum_{i=0}^k\sum_{j=0}^i \nabla^{k-i}\phi \ast \nabla^j \Rm \ast \nabla^{i-j}\Rm+ e^{4\phi} \sum_{i=0}^{k-1} \nabla^{k-i}\phi \ast \nabla^{i+2}\Rm \nonumber\\
    &\qquad + \nabla^k \left(e^{4\phi} \Big(\nabla \Rm \ast T + T\ast \nabla^2 T\Big)\right) + e^{4\phi}P(k+4).\nonumber\\
\intertext{Absorbing the second term and part of the third into $P(k+4)$,}
    &= e^{4\phi} \Delta \nabla^k \Rm + e^{4\phi} \nabla \phi \ast \nabla^{k+1} \Rm + \nabla^k \left(e^{4\phi} \Big(\nabla \Rm \ast T + T\ast \nabla^2 T\Big)\right)\nonumber\\
    &\qquad + e^{4\phi}P(k+4).\nonumber\\
\intertext{Applying~\eqref{eqn:ePhiPNotation}, ~\eqref{eqn:nablaDil1}, and absorbing into $P(k+4)$,}
    &= e^{4\phi} \left(\Delta \nabla^k \Rm +  T \ast \nabla^{k+1} \Rm + T \ast \nabla^{k+2}T +  P(k+4)\right).\nonumber
    \end{align} 
We note that the $P(k+4)$ term has no terms including $\nabla^{k+2}\Rm$, $\nabla^{k+1}\Rm$, $\nabla^{k+3}T$, or $\nabla^{k+2}T$. This proves Proposition ~\ref{proposition evolution nabla^k Rm} for $k\geq 0$. 
\end{proof}

\begin{prop}\label{proposition evolution |nabla^k Rm|^2} Let $(\varphi_t, \phi_t)$ be a solution to the generic heterotic $\rG_2$ flow~\eqref{eq: mod_anomaly_flowCbeta} with $C< -\frac{1}{3}$, and with initial condition $(\varphi_0, \phi_0)$ satisfying $d(e^{-4\phi_0} \psi_0)=0$.  Then, for each $k \geq 0$ and each $\epsilon>0$, there is $C(\epsilon, k)< \infty$ such that
\begin{gather*}
\begin{split}
   \frac{\partial}{\partial t} \left|\nabla^k \Rm\right|^2 &\leq e^{4\phi} \Bigg( \Delta \left|\nabla^k \Rm\right|^2 - 2 \left|\nabla^{k+1} \Rm\right|^2 + \epsilon \left(\left|\nabla^{k+1} \Rm\right|^2 + |\nabla^{k+2} T|^2 \right)\\
    &\qquad + C(\epsilon, k) \left(\left|\Rm\right|^{k+3} + |T|^{2k+6} + \sum_{m=1}^{k} \left|\nabla^m \Rm\right|^{\frac{2k+6}{m+2}} + \sum_{m=1}^{k+1} |\nabla^m T|^{\frac{2k+6}{m+1}}\right)\Bigg).
\end{split}
\end{gather*}
If $k=0$, the first sum is zero by definition.
\end{prop}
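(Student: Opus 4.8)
The plan is a standard Bochner-plus-Young argument applied to the evolution equation of Proposition~\ref{proposition evolution nabla^k Rm}. First I would differentiate the norm, keeping careful track of the metric. Writing $\partial_t g = e^{4\phi}P(2)$ as in~\eqref{eqn:dgdtschematically}, one has
\[
\frac{\partial}{\partial t}\left|\nabla^k\Rm\right|^2 = 2\left\langle \nabla^k\Rm,\ \frac{\partial}{\partial t}\nabla^k\Rm\right\rangle + e^{4\phi}\,\nabla^k\Rm\ast\nabla^k\Rm\ast P(2),
\]
where the last group collects the contributions of $\partial_t g^{-1}$ to the contractions defining the norm. Substituting~\eqref{eqn:DerivkRm}, the Laplacian term contributes, via $2\langle\nabla^k\Rm,\Delta\nabla^k\Rm\rangle = \Delta|\nabla^k\Rm|^2 - 2|\nabla^{k+1}\Rm|^2$, the expression $e^{4\phi}\left(\Delta|\nabla^k\Rm|^2 - 2|\nabla^{k+1}\Rm|^2\right)$, so it remains to bound the top-order cross terms $2e^{4\phi}\langle\nabla^k\Rm,\ T\ast\nabla^{k+1}\Rm\rangle$ and $2e^{4\phi}\langle\nabla^k\Rm,\ T\ast\nabla^{k+2}T\rangle$, the term $2e^{4\phi}\langle\nabla^k\Rm,\ P(k+4)\rangle$, and the metric term above.

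For the two cross terms I would apply Young's inequality pointwise, e.g.\ $2|T|\,|\nabla^k\Rm|\,|\nabla^{k+1}\Rm|\leq \epsilon|\nabla^{k+1}\Rm|^2 + C(\epsilon)|T|^2|\nabla^k\Rm|^2$ and similarly $2|T|\,|\nabla^k\Rm|\,|\nabla^{k+2}T|\leq \epsilon|\nabla^{k+2}T|^2 + C(\epsilon)|T|^2|\nabla^k\Rm|^2$. This produces exactly the allowed error $\epsilon\left(|\nabla^{k+1}\Rm|^2 + |\nabla^{k+2}T|^2\right)$ together with remainders of the form $|T|^2|\nabla^k\Rm|^2$, a weight-$(2k+6)$ monomial. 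Young's inequality with the conjugate exponents $k+3$ and $\tfrac{k+3}{k+2}$ gives $|T|^2|\nabla^k\Rm|^2 \leq C\left(|T|^{2k+6} + |\nabla^k\Rm|^{\frac{2k+6}{k+2}}\right)$; for $k=0$ the second summand is $|\Rm|^3 = |\Rm|^{k+3}$, and for $k\geq1$ it is the $m=k$ summand of $\sum_{m=1}^{k}|\nabla^m\Rm|^{\frac{2k+6}{m+2}}$, so both remainders are dominated by the right-hand side of the claim.

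For the $P(k+4)$ term, and the structurally identical metric term $e^{4\phi}\nabla^k\Rm\ast\nabla^k\Rm\ast P(2)$, the crucial input is the last sentence of Proposition~\ref{proposition evolution nabla^k Rm}: $P(k+4)$ involves no factor $\nabla^{k+2}\Rm$, $\nabla^{k+1}\Rm$, $\nabla^{k+3}T$ or $\nabla^{k+2}T$, hence only $\nabla^j\Rm$ with $j\leq k$ and $\nabla^iT$ with $i\leq k+1$. Consequently $2e^{4\phi}\langle\nabla^k\Rm, P(k+4)\rangle$ is $e^{4\phi}$ times a sum of monomials of total weight $(k+2)+(k+4)=2k+6$ in which no factor carries more derivatives than $\nabla^k\Rm$ or $\nabla^{k+1}T$. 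Applying the generalized Young inequality to each such monomial exactly as in the proof of Lemma~\ref{lemma norm PRmTk} — now with the extra factor $\nabla^k\Rm$ of weight $k+2$, so that the exponents sum to $\tfrac{(k+2)+(k+4)}{2k+6}=1$ — bounds it by a constant times
\[
\left|\Rm\right|^{k+3} + |T|^{2k+6} + \sum_{m=1}^{k}\left|\nabla^m\Rm\right|^{\frac{2k+6}{m+2}} + \sum_{m=1}^{k+1}|\nabla^m T|^{\frac{2k+6}{m+1}},
\]
where we use that $\tfrac{2k+6}{(k+1)+1}=\tfrac{2k+6}{k+2}$ so the $\nabla^{k+1}T$ powers land in the second sum; the metric term is treated identically with $P(2)$ in place of $P(k+4)$. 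Collecting all the pieces and choosing the Young parameters in the cross-term estimates so that the top-order errors total at most $\epsilon(|\nabla^{k+1}\Rm|^2 + |\nabla^{k+2}T|^2)$ yields the stated inequality.

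The main point to be careful about is the bookkeeping in the last step: one must verify that multiplying $P(k+4)$ (or $P(2)$) against $\nabla^k\Rm$ never produces a factor more differentiated than $\nabla^{k+1}\Rm$ or $\nabla^{k+1}T$, since any such term would escape both the good negative term $-2|\nabla^{k+1}\Rm|^2$ and the $\epsilon$-errors and would fail to fit into the $C(\epsilon,k)$-sum on the right. This is guaranteed precisely by the structural statement about $P(k+4)$ recorded in Proposition~\ref{proposition evolution nabla^k Rm}, which itself rests on the fact that $\nabla^iE$ for $i\leq 2$ contributes to the evolution only through $P(k+4)$-terms without top derivatives; no curvature identities beyond those already used to derive~\eqref{eqn:DerivkRm} are needed.
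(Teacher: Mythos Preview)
Your proposal is correct and follows essentially the same route as the paper's proof: differentiate the norm with the metric term $e^{4\phi}\nabla^k\Rm\ast\nabla^k\Rm\ast P(2)$, apply the Bochner identity to extract $\Delta|\nabla^k\Rm|^2 - 2|\nabla^{k+1}\Rm|^2$, use Peter--Paul on the two top-order cross terms, and then bound the weight-$(2k+6)$ remainder via Lemma~\ref{lemma norm PRmTk} using the structural restriction on $P(k+4)$ from Proposition~\ref{proposition evolution nabla^k Rm}. The paper packages the last step slightly more compactly by writing $\nabla^k\Rm\ast P(k+4)=P(2k+6)$ and invoking Lemma~\ref{lemma norm PRmTk} directly, but the content is identical to your description.
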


\begin{proof} 
Applying Proposition \ref{proposition evolution nabla^k Rm} and using that $\frac{\partial g}{\partial t} = e^{4\phi} P(2)$ by ~\eqref{eqn:dgdtschematically},
\begin{align}
       \frac{\partial}{\partial t} &\left|\nabla^k \Rm\right|^2 = \nabla^k \Rm \ast \nabla^k \Rm \ast \frac{\partial g}{\partial t} + 2 \Big\langle \nabla^k \Rm, \frac{\partial}{\partial t} \nabla^k \Rm\Big\rangle\nonumber\\
    &= \nabla^k \Rm \ast \nabla^k \Rm \ast e^{4\phi} P(2) \nonumber\\
    &\quad  + 2 \Big\langle \nabla^k \Rm,  e^{4\phi} \Big(\Delta \nabla^k \Rm+ \nabla^{k+1} \Rm \ast T + T\ast \nabla^{k+2} T + P(k+4)\Big)\Big\rangle\nonumber\\
    &= e^{4\phi}(\nabla^k \Rm)^2 \ast P(2) + e^{4\phi} \left(\Delta \left|\nabla^k \Rm\right| - 2 \left|\nabla^{k+1} \Rm\right|^2 \right) \nonumber\\
    &\qquad + e^{4\phi} \nabla^k \Rm\ast \left(\nabla^{k+1} \Rm \ast T + T \ast \nabla^{k+2} T + P(k+4)\right),\nonumber\\
\intertext{We may absorb the first term into the last using ~\eqref{eqn:MultiplicativePNotation}.}
    &= e^{4\phi} \left(\Delta \left|\nabla^k \Rm\right|^2 - 2 \left|\nabla^{k+1} \Rm\right|^2 + \nabla^k \Rm\ast \nabla^{k+1} \Rm \ast T + \nabla^k \Rm \ast T \ast \nabla^{k+2} T \right) \nonumber\\
    &\qquad \qquad + e^{4\phi} \nabla^k \Rm\ast P(k+4),\nonumber\\
\intertext{By Proposition \ref{proposition evolution nabla^k Rm}, the $P(k+4)$ term has no terms including $\nabla^{k+2}\Rm$, $\nabla^{k+1}\Rm$, $\nabla^{k+3}T$, or $\nabla^{k+2}T$. By ~\eqref{eqn:MultiplicativePNotation}, $\nabla^k \Rm\ast P(k+4) = P(2k + 6)$.}
    &= e^{4\phi} \left(\Delta \left|\nabla^k \Rm\right|^2 - 2 \left|\nabla^{k+1} \Rm\right|^2 + \nabla^k \Rm\ast \nabla^{k+1} \Rm \ast T + \nabla^k \Rm \ast T \ast \nabla^{k+2} T \right) \nonumber\\
    &\qquad \qquad + e^{4\phi} P(2k+6),\nonumber\\
\intertext{The term $P(2k+6)$ does not include any terms with $\nabla^i \Rm$ for $i > k$ or with $\nabla^j T$ for $j > k+1$. Applying Lemma \ref{lemma norm PRmTk} and omitting the appropriate terms from the sum on the right-hand side of~\eqref{eqn:normPRmTk},}
    &\leq e^{4\phi} \left(\Delta \left|\nabla^k \Rm\right|^2 - 2 \left|\nabla^{k+1} \Rm\right|^2 + \nabla^k \Rm \ast \nabla^{k+1} \Rm \ast T + \nabla^k \Rm \ast T \ast \nabla^{k+2} T\right)\nonumber\\
    &\qquad  \qquad + C e^{4\phi} \left(\left|\Rm\right|^{k+3} + |T|^{2k+6} + \sum_{m=1}^{k} \left|\nabla^m \Rm\right|^{\frac{2k+6}{m+2}} + \sum_{m=1}^{k+1} |\nabla^m T|^{\frac{2k+6}{m+1}}\right), \nonumber\\
\intertext{We now apply the Peter--Paul inequality to the terms involving $\nabla^{k+1} \Rm$ and $\nabla^{k+2}T$. So, for each $\epsilon>0$, there is $C(\epsilon, k)< \infty$ such that the following inequality holds:}
    &\leq e^{4\phi} \left(\Delta \left|\nabla^k \Rm\right|^2 - 2 \left|\nabla^{k+1} \Rm\right|^2 + \epsilon \left(\left|\nabla^{k+1} \Rm\right|^2 + |\nabla^{k+2} T|^2 \right) + C(\epsilon, k) \left| \nabla^k \Rm\right|^2 |T|^2\right)\nonumber\\
    &\qquad \qquad + C e^{4\phi} \left(\left|\Rm\right|^{k+3} + |T|^{2k+6} + \sum_{m=1}^{k} \left|\nabla^m \Rm\right|^{\frac{2k+6}{m+2}} + \sum_{m=1}^{k+1} |\nabla^m T|^{\frac{2k+6}{m+1}}\right), \nonumber
    \end{align}
where the first sum is considered to be omitted if $k=0$. We apply Young's inequality to the term with $C(\epsilon, k)$ and absorb it into the second line.
\begin{gather}
    \begin{split}
    &\leq e^{4\phi} \Bigg( \Delta \left|\nabla^k \Rm\right|^2 - 2 \left|\nabla^{k+1} \Rm\right|^2 + \epsilon \left(\left|\nabla^{k+1} \Rm\right|^2 + |\nabla^{k+2} T|^2 \right)\nonumber\\
    &\qquad \qquad + C(\epsilon, k) \left(\left|\Rm\right|^{k+3} + |T|^{2k+6} + \sum_{m=1}^{k} \left|\nabla^m \Rm\right|^{\frac{2k+6}{m+2}} + \sum_{m=1}^{k+1} |\nabla^m T|^{\frac{2k+6}{m+1}}\right)\Bigg).
\end{split}
\end{gather}
This completes the proof of Proposition \ref{proposition evolution |nabla^k Rm|^2}.
\end{proof}

\subsection{Evolution of torsion for the heterotic \texorpdfstring{$\rG_2$}{G2} flow}

For each $k \geq 0$, we compute the evolution of $\nabla^k T$ and estimate the evolution of $\left| \nabla^k T \right|^2$ along heterotic $\rG_2$ flow \eqref{eq:heteroticG2flowintro}, i.e. the generic flow with $C = -\frac{4}{3}$. The reason for this choice of $C$ is that it eliminates a problematic second-order term in the evolution of the torsion.  As in the previous section, our star notation will include implicit terms of the form $\ast \varphi^a \ast \psi^b$.

\begin{prop}\label{proposition evolution nabla^k T} 
Let $(\varphi_t, \phi_t)$ be a solution to the generic heterotic $\rG_2$ flow ~\eqref{eq: mod_anomaly_flowCbeta} with $C = -\frac{4}{3}$ and with initial condition $(\varphi_0, \phi_0)$ satisfying $d(e^{-4\phi_0} \psi_0)=0$. Then, for each $k \geq 0$,
\begin{align}\label{eqn:DerivkT}
     \frac{\partial}{\partial t} \nabla^k T = e^{4\phi} \left(\Delta \nabla^k T +  T \ast (\nabla^{k} \Rm+ \nabla^{k+1}T) +  P(k+3)\right),
\end{align} 
where the $P(k+3)$ in~\eqref{eqn:DerivkT} does not include any terms with $\nabla^{k+1}\Rm$, $\nabla^{k}\Rm$, $\nabla^{k+2}T$, or $\nabla^{k+1}T$.
\end{prop}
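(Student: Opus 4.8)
The plan is to follow the proof of Proposition~\ref{proposition evolution nabla^k Rm}: establish~\eqref{eqn:DerivkT} for $k=0$ directly from the torsion variation formula~\eqref{eq: ddt T} of Lemma~\ref{lemma evolution T general} together with Proposition~\ref{p:evolutiongvarphiCbeta}, and then obtain all $k\ge1$ by the same ``differentiate, then commute $\nabla^k$ past $\partial_t$ and $\Delta$'' bookkeeping used there. Throughout I work with the conformally coclosed family $\varphi_t$ itself, so $\tau_2=0$ and $\tau_1=d\phi$, and use the identities~\eqref{eqn:DerivativesDilaton}--\eqref{eqn:Derivativesphipsi} ($\nabla^m\phi$, $\nabla^m\varphi$, $\nabla^m\psi$ all of type $P(m)$) and the calculus~\eqref{eqn:DerivativePNotation}--\eqref{eqn:ePhiPNotation}. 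The value $C=-\tfrac43$ enters only in the base case; the inductive step is insensitive to it.

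For $k=0$, I would put the induced variation of $\varphi$ from Proposition~\ref{p:evolutiongvarphiCbeta} in the form $\partial_t\varphi=S\diamond\varphi+X\lrcorner\psi$, with
\[
S=e^{4\phi}\Big(-\Ric_g+\tfrac14 H_\varphi^2-2\cL_{d\phi}g+\tfrac74 C\tau_0 T_\sym+(\cdots)g\Big),\qquad
X=-\tfrac13 e^{4\phi}\big((1+3C)\,d(\tr T)+9C(\tr T)\,d\phi\big),
\]
and substitute into~\eqref{eq: ddt T}. By Lemma~\ref{lm: Ricci and scalar curv}, every summand of $S$ other than $-e^{4\phi}\Ric_g$ is at most first order in $\nabla T$ and involves only $\nabla^2\phi$ (reduced to $\nabla T\ast\varphi+T^2\ast\psi$ by~\eqref{eqn:nablaDil2}) and quadratic expressions in $T$; in $-\nabla_k S_{il}\varphi_{jkl}$ the would-be second-order $\nabla^2\phi$-piece drops out because $\varphi_{jkl}$ is antisymmetric in $k,l$ while $\nabla^2\phi$ is symmetric. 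Thus these summands, together with $T_{ik}S_{jk}$ and $T_{ik}(X\lrcorner\varphi)_{jk}$, contribute only $e^{4\phi}\big(T\ast\Rm+T\ast\nabla T+P(3)\big)$ once $\Ric_g$ is expanded via Lemma~\ref{lm: Ricci and scalar curv} and covariant derivatives are commuted (each commutator producing a factor $\Rm\ast T$). The only genuinely second-order contributions are therefore $-\nabla_k S_{il}\varphi_{jkl}$ acting through $\nabla_k\Ric_{il}$, and $\nabla_i X_j$.

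The heart of the base case is the reduction of $\nabla_k\Ric_{il}\varphi_{jkl}$. Expanding $\Ric$ by Lemma~\ref{lm: Ricci and scalar curv}, differentiating, contracting with the $\rG_2$ identities~\eqref{eq: varphi1 varphi1}--\eqref{eq: varphi1 psi1}, commuting derivatives, and invoking the $\rG_2$-Bianchi identity~\eqref{eq: G2Bianchi_ident} — in particular its trace $(\Div T)_i=\nabla_i(\tr T)+T\ast T\ast\varphi$, which follows from~\eqref{eq: G2Bianchi_ident} and the first Bianchi identity of $\Rm$ — the second-order terms should organize into
\[
-\nabla_k S_{il}\varphi_{jkl}=e^{4\phi}\big(\Delta T_{ij}-\nabla_i\nabla_j(\tr T)+T\ast\Rm+T\ast\nabla T+P(3)\big),
\]
while $\nabla_i X_j=-\tfrac13(1+3C)\,e^{4\phi}\,\nabla_i\nabla_j(\tr T)+e^{4\phi}\big(T\ast\nabla T+P(3)\big)$ (the pieces $9C(\tr T)\nabla^2\phi$ and $(1+3C)\nabla\phi\ast\nabla(\tr T)$ being of weight $3$ and first order in $\nabla T$). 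Adding, the symmetrized Hessian terms combine with total coefficient $-\big(1+\tfrac13(1+3C)\big)=-\tfrac13(4+3C)$, which vanishes precisely when $C=-\tfrac43$. This is the cancellation that distinguishes the heterotic $\rG_2$ flow, and it gives $\partial_t T=e^{4\phi}\big(\Delta T+T\ast(\Rm+\nabla T)+P(3)\big)$ with the $P(3)$ containing no $\nabla T$, $\nabla^2 T$, $\Rm$, or $\nabla\Rm$ — that is,~\eqref{eqn:DerivkT} for $k=0$ (the explicit form being~\eqref{eqn:evolution T anomaly 2}).

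For $k\ge1$ the argument is routine and parallels Proposition~\ref{proposition evolution nabla^k Rm}: apply $\nabla^k$ to the $k=0$ identity; handle $[\partial_t,\nabla^k]T=\sum_{i=1}^k\nabla^{k-i}T\ast\nabla^{i-1}(\partial_t\Gamma)$ using $\partial_t g=e^{4\phi}P(2)$, hence $\partial_t\Gamma=e^{4\phi}P(3)$; distribute $\nabla^k$ inside by~\eqref{eqn:DerivativePNotation}--\eqref{eqn:ePhiPNotation}; commute $\nabla^k$ past $\Delta$ (producing $\sum_{a+b=k}\nabla^a\Rm\ast\nabla^b T$, whose only high-order term $\nabla^k\Rm\ast T$ is exactly the asserted $T\ast\nabla^k\Rm$); expand $\nabla^k(e^{4\phi}(T\ast\Rm))$ and $\nabla^k(e^{4\phi}(T\ast\nabla T))$ by Leibniz; isolate the leading $T\ast\nabla^k\Rm$ and $T\ast\nabla^{k+1}T$; and absorb everything else into $e^{4\phi}P(k+3)$, checking that index orders never reach $\nabla^{k+1}\Rm$, $\nabla^k\Rm$, $\nabla^{k+2}T$, or $\nabla^{k+1}T$. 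The main obstacle is the base case: carrying out the $\rG_2$-Bianchi reduction of all the spurious $\nabla^2 T$ terms while tracking exact signs and constants, so as to pin down the coefficient of $\nabla_i\nabla_j(\tr T)$ as a nonzero multiple of $4+3C$. Once $C=-\tfrac43$ annihilates that term, the inductive step is pure bookkeeping of the kind already done for $\nabla^k\Rm$.
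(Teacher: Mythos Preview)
Your proposal is correct and follows essentially the same route as the paper: write $\partial_t\varphi=S\diamond\varphi+X\lrcorner\psi$ from Proposition~\ref{p:evolutiongvarphiCbeta}, plug into~\eqref{eq: ddt T}, expand $\nabla_k\Ric_{il}\varphi_{jkl}$ via Lemma~\ref{lm: Ricci and scalar curv} and the $\rG_2$-Bianchi identity (together with the first Bianchi identity to kill $R_{kiab}\varphi_{kab}$), identify the residual Hessian $-\tfrac13(4+3C)\nabla_i\nabla_j(\tr T)$ (equivalently $-\tfrac74(C+\tfrac43)\nabla_i\nabla_j\tau_0$ in the paper's normalization), and set $C=-\tfrac43$; the inductive step is identical to that in Proposition~\ref{proposition evolution nabla^k Rm}. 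One small imprecision: the term you describe as ``the would-be second-order $\nabla^2\phi$-piece'' in $-\nabla_kS_{il}\varphi_{jkl}$ is actually third order, $\nabla_k\nabla_i\nabla_l\phi\,\varphi_{jkl}$; it vanishes only after commuting $\nabla_k$ past $\nabla_i$ (producing $\Rm\ast\nabla\phi=P(3)$) so that the remaining $\nabla_i(\nabla_k\nabla_l\phi)\varphi_{jkl}$ dies by symmetry in $k,l$ --- exactly as the paper records in~\eqref{equation ev T 3}.
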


\begin{proof}
Since $(\varphi_t, \phi_t)$ is a given solution to the generic heterotic $\rG_2$ flow with conformally coclosed initial condition, we have that $d(e^{-4\phi_t} \psi_t)=0$ for as long as the flow exists. So, the estimates of Section \ref{section: preliminary estimates} hold.

We will begin by studying generic heterotic $\rG_2$ flow and set $C = -\frac{4}{3}$ later.  By Proposition~\ref{p:evolutiongvarphiCbeta}, we may write
\begin{align}
    \frac{\partial}{\partial t}\psi=S\diamond\psi- X\wedge\varphi,
\end{align}
where, with $E$ as above,
\begin{align*}
    S=&\ e^{4\phi}\left(-\Ric_{g} -2\cL_{d\phi}g + E \right), \\
    X=&-\frac{7}{12}e^{4\phi}\left(\left(1+3C\right)d\tau_0 + 9C\tau_0 d\phi\right).
\end{align*}
By Lemma~\ref{lemma evolution T general}, we have
\begin{align}\label{eqn:evolution T anomaly 1}
    \frac{\partial T_{ij}}{\partial t} &= T_{ik}S_{jk}+T_{ik}(X\lrcorner\varphi)_{jk}+\nabla_iX_j-\nabla_kS_{il}\varphi_{jkl}\nonumber\\
    &= T_{ik}S_{jk}+T_{ik}(X\lrcorner\varphi)_{jk} -\frac{7}{12}e^{4\phi}\left(\left(1+3C\right)\nabla_i \nabla_j \tau_0 + 9C\nabla_i(\tau_0 \nabla_j\phi)\right) - 4 \nabla_k \phi \,S_{il} \varphi_{jkl} \nonumber\\
    &\quad + e^{4\phi} \nabla_k\left(R_{il}+4\nabla_i \nabla_l \phi + E \right) \varphi_{jkl}.
\end{align}
By Lemma \ref{lm: Ricci and scalar curv}, 
\begin{align}\label{eqn:curlofRic}
    \nabla_k R_{il} \varphi_{jkl} &= \nabla_k\left(\left(\nabla_iT_{mn}-\nabla_mT_{in}\right)\varphi_{lmn}+\tr(T)T_{il}-T_{im}T_{ml}+T_{im}T_{np}\psi_{mnpl}\right) \varphi_{jkl}\nonumber\\
    &= \nabla_k \left(\nabla_i T_{mn} \varphi_{lmn}\right)\varphi_{jkl} - \nabla_k \left(\nabla_m T_{in}\varphi_{lmn}\right)\varphi_{jkl}\nonumber\\
    &\quad + \nabla_k\left(\tr(T)T_{il}-T_{im}T_{ml}+T_{im}T_{np}\psi_{mnpl}\right)\varphi_{jkl}.
\end{align}
For a conformally coclosed $\rG_2$-structure, $T = \tau_0 g + T_{\sym} - d \phi \lrcorner \varphi$. Using this and ~\eqref{eq: varphi1 varphi1}, we analyze the term
\begin{align}\label{equation ev T 1}
    \nabla_k\left(\nabla_iT_{mn}\varphi_{lmn}\right)\varphi_{jkl}=&\ \nabla_k\left(\nabla_i(T_{mn}\varphi_{mnl})-T_{mn}T_{ip}\psi_{pmnl}\right)\varphi_{jkl}\nonumber\\
    &\ -6\nabla_k\nabla_i\nabla_l\phi\varphi_{jkl}-\nabla_k\left(T_{mn}T_{ip}\psi_{pmnl}\right)\varphi_{jkl}\nonumber\\    =&\ -6\nabla_i\nabla_k\nabla_l\phi\varphi_{jkl}-6R_{kilp}\nabla_p\phi\varphi_{jkl}-\nabla_k\left(T_{mn}T_{ip}\psi_{pmnl}\right)\varphi_{jkl}\nonumber\\    
    =&\  -6R_{kilp}\nabla_p\phi\varphi_{jkl}-\nabla_k\left(T_{mn}T_{ip}\psi_{pmnl}\right)\varphi_{jkl}.
\end{align}
Next, we analyze the term
\begin{align}\label{equation ev T 2}
    -\nabla_k & \left(\nabla_m T_{in}\varphi_{lmn}\right)\varphi_{jkl} = -\nabla_k\nabla_m T_{in} \,\varphi_{lmn}\varphi_{jkl} - \nabla_m T_{in} \nabla_k \varphi_{lmn}\,\varphi_{jkl} \nonumber
\intertext{Applying ~\eqref{eq: varphi1 varphi1} and then commuting covariant derivatives,}
    =&\ - \nabla_k \nabla_j T_{ik} + \Delta T_{ij} - \nabla_k \nabla_m T_{in} \psi_{mnjk}- \nabla_m T_{in} \nabla_k \varphi_{lmn}\,\varphi_{jkl}\nonumber\\
    =&\ -\nabla_j \nabla_k T_{ik} - R_{kjip}T_{pn} - R_{kjkp} T_{ip}+ \Delta T_{ij}\nonumber\\
    &\ \quad - \frac{1}{2}(\nabla_k \nabla_m T_{in} - \nabla_m \nabla_k T_{in}) \psi_{mnjk} - \nabla_m T_{in} \nabla_k \varphi_{lmn}\,\varphi_{jkl}\nonumber\\
    =&\ -\nabla_j \nabla_k T_{ik} - R_{kjip}T_{pn} - R_{kjkp} T_{ip}+ \Delta T_{ij}\nonumber\\
    &\quad - \frac{1}{2}\left(R_{kmip}T_{pn} + R_{kmnp}T_{ip}\right) \psi_{mnjk} - \nabla_m T_{in} \nabla_k \varphi_{lmn}\,\varphi_{jkl},\nonumber\\
\intertext{Applying the $\rG_2$-Bianchi identity~\eqref{eq: G2Bianchi_ident},}
    =&\ -\nabla_j \nabla_i (\tr T)  - \nabla_j \left(\frac{1}{2}R_{kiab} \varphi_{kab} + T_{ka}T_{ib}\varphi_{kab}\right) - R_{kjip}T_{pn} - R_{kjkp} T_{ip}\nonumber\\
    &\quad + \Delta T_{ij} - \frac{1}{2}\left(R_{kmip}T_{pn} + R_{kmnp}T_{ip}\right) \psi_{mnjk} - \nabla_m T_{in} \nabla_k \varphi_{lmn}\,\varphi_{jkl},\nonumber\\
    =&\ - \frac{7}{4} \nabla_i \nabla_j \tau_0 - \nabla_j \left(T_{ka}T_{ib}\varphi_{kab}\right) - R_{kjip}T_{pn} - R_{kjkp} T_{ip}\nonumber\\
    &\quad + \Delta T_{ij} - \frac{1}{2}R_{kmip}T_{pn} \psi_{mnjk} - \nabla_m T_{in} \nabla_k \varphi_{lmn}\,\varphi_{jkl},
    \end{align}
where the last line follows from ~\eqref{eq:trT} and the fact that $R_{kiab}\varphi_{kab} = R_{kmnp}\psi_{mnjk} = 0$, which follows from the first Bianchi identity.

Applying~\eqref{equation ev T 1} and ~\eqref{equation ev T 2} to~\eqref{eqn:curlofRic},
\begin{align}\label{equation curl of Ric 2}
        \nabla_k R_{il} \varphi_{jkl} &= -6R_{kilp}\nabla_p\phi\varphi_{jkl}-\nabla_k\left(T_{mn}T_{ip}\psi_{pmnl}\right)\varphi_{jkl} \nonumber\\        &\quad - \frac{7}{4} \nabla_i \nabla_j \tau_0 - \nabla_j \left(T_{ka}T_{ib}\varphi_{kab}\right) - R_{kjip}T_{pn} - R_{kjkp} T_{ip}\nonumber\\
    &\quad + \Delta T_{ij} - \frac{1}{2}R_{kmip}T_{pn} \psi_{mnjk} - \nabla_m T_{in} \nabla_k \varphi_{lmn}\,\varphi_{jkl}\nonumber\\
    &\quad + \nabla_k\left(\tr(T)T_{il}-T_{im}T_{ml}+T_{im}T_{np}\psi_{mnpl}\right)\varphi_{jkl}.
\end{align}
Next, we analyze the term, from ~\eqref{eqn:evolution T anomaly 1},
\begin{align}\label{equation ev T 3}
    \nabla_k \nabla_i \nabla_l \phi \,\varphi_{jkl} &= \nabla_i \nabla_k \nabla_l \phi \,\varphi_{jkl} + R_{kipl}\nabla_p \phi\, \varphi_{jkl}
    = R_{kipl}\nabla_p \phi\, \varphi_{jkl}.
\end{align}
Applying~\eqref{equation curl of Ric 2} and ~\eqref{equation ev T 3} to ~\eqref{eqn:evolution T anomaly 1},
\begin{align}\label{eqn:evolution T anomaly 2}
    e^{-4\phi}\frac{\partial T_{ij}}{\partial t} &= \Delta T_{ij}  - \frac{7}{4} \left(C + \frac{4}{3}\right) \nabla_i \nabla_j \tau_0 + e^{-4\phi}T_{ik}S_{jk}+e^{-4\phi}T_{ik}(X\lrcorner\varphi)_{jk} -\frac{7}{12}\left(9C\nabla_i(\tau_0 \nabla_j\phi)\right) \nonumber\\
    &\quad - 4 e^{-4\phi}\nabla_k \phi \,S_{il} \varphi_{jkl} +  \nabla_k E_{il} \varphi_{jkl}\nonumber\\
    &\quad + 4R_{kipl}\nabla_p \phi\, \varphi_{jkl}  -6R_{kilp}\nabla_p\phi\varphi_{jkl}-\nabla_k\left(T_{mn}T_{ip}\psi_{pmnl}\right)\varphi_{jkl} - \nabla_j \left(T_{ka}T_{ib}\varphi_{kab}\right) \nonumber\\
    &\quad  - R_{kjip}T_{pn} - R_{kjkp} T_{ip} - \frac{1}{2}R_{kmip}T_{pn} \psi_{mnjk} - \nabla_m T_{in} \nabla_k \varphi_{lmn}\,\varphi_{jkl}\nonumber\\
    &\quad + \nabla_k\left(\tr(T)T_{il}-T_{im}T_{ml}+T_{im}T_{np}\psi_{mnpl}\right)\varphi_{jkl}.
\end{align}

Using ~\eqref{eqn:DerivativesDilaton} -- \eqref{eqn:Derivativesphipsi}, the expressions for $S$ and $X$, and the fact that $E = T^2 \ast (1+ \varphi + \psi)$, we have that
\begin{align}\label{eqn: flow of T with arbitrary C}
    \frac{\partial T_{ij}}{\partial t} = e^{4\phi} \left( \Delta T_{ij} - \frac{7}{4} \left( C+ \frac{4}{3}\right) \nabla_i \nabla_j \tau_0 + T \ast (\Rm + \nabla T + T^2)\right),
\end{align}
where the star notation as always dependence on $C, \gamma, \sigma$ as well as extra terms of the form $\ast \varphi^a \ast \psi^b$.  Setting $C = - \frac{4}{3}$, this proves Proposition \ref{proposition evolution nabla^k T} for $k=0$.

We now compute the evolution of $\nabla^k T$ schematically for $k>0$. 
Applying the commutator of the time derivative with $\nabla^k$, using ~\eqref{eqn: flow of T with arbitrary C} and that $\frac{\partial g}{\partial t} = e^{4\phi} P(2)$ as in ~\eqref{eqn:dgdtschematically},
\begin{align}
    \frac{\partial}{\partial t} &\nabla^k T = \nabla^k \left(\frac{\partial}{\partial t} T\right) + \sum_{i=1}^k \nabla^{k-i} T \ast \nabla^i \frac{\partial g}{\partial t}\nonumber\\
    &= \nabla^k \left(e^{4\phi}\left(\Delta T + T \ast (\nabla T +\Rm + T^2) \right)\right) + \sum_{i=1}^k \nabla^{k-i} T \ast \nabla^i \left(e^{4\phi} P(2)\right),\nonumber\\
\intertext{Applying~\eqref{eqn:DerivativePNotation}, ~\eqref{eqn:MultiplicativePNotation}, and ~\eqref{eqn:ePhiPNotation},}
    &= \nabla^k \left(e^{4\phi}\left(\Delta T + T \ast (\nabla T +\Rm + T^2) \right)\right) +T \ast \nabla^k \left(e^{4\phi}P(2) \right)\nonumber\\
    &\qquad + e^{4\phi} P(k+3),\nonumber\\
\intertext{where the $P(k+3)$ includes no terms with $\nabla^{k+1}\Rm$, $\nabla^{k}\Rm$, $\nabla^{k+2}T$, or $\nabla^{k+1}T$. Expanding the second term and absorbing part into the third,}
    &= \nabla^k \left(e^{4\phi}\left(\Delta T + T \ast (\nabla T +\Rm + T^2)\right)\right) + e^{4\phi}T \ast (\nabla^k \Rm + \nabla^{k+1}T)\nonumber\\
    &\qquad + e^{4\phi} P(k+3),\nonumber\\
\intertext{Expanding the first term and then applying the commutator of $\nabla^k$ and $\Delta$ (see~\cite[Section 7.2]{KnopfBook}),}
    &= e^{4\phi} \sum_{i=0}^k \nabla^{k-i} \phi \ast \nabla^i \Delta T + \nabla^k \left(e^{4\phi}T \ast (\nabla T +\Rm + T^2)\right)+ e^{4\phi}T \ast (\nabla^k \Rm + \nabla^{k+1}T) \nonumber\\
    &\qquad + e^{4\phi} P(k+3)\nonumber\\
    &= e^{4\phi} \Delta \nabla^k T + e^{4\phi}\sum_{i=0}^k\sum_{j=0}^i \nabla^{k-i}\phi \ast \nabla^j \Rm \ast \nabla^{i-j}T + e^{4\phi} \sum_{i=0}^{k-1} \nabla^{k-i}\phi \ast \nabla^{i+2}T \nonumber\\
    &\qquad + \nabla^k \left(e^{4\phi}T \ast (\nabla T +\Rm + T^2) \right)+ e^{4\phi}T \ast (\nabla^k \Rm + \nabla^{k+1}T) + e^{4\phi}P(k+3),\nonumber\\
\intertext{Absorbing the second and third terms into the last two terms and using~\eqref{eqn:nablaDil1}, which brings a term of the form $(1+\varphi^a + \psi^b)$ which can be absorbed into the star notation,}
    &= e^{4\phi} \Delta \nabla^k T + \nabla^k \left(e^{4\phi}T \ast (\nabla T +\Rm + T^2)\right)  + e^{4\phi} T \ast (\nabla^k \Rm + \nabla^{k+1}T) \nonumber\\
    &\qquad + e^{4\phi}P(k+3),\nonumber\\
\intertext{Expanding the second term, applying~\eqref{eqn:MultiplicativePNotation} and~\eqref{eqn:ePhiPNotation}, and absorbing into $P(k+3)$,}
    &= e^{4\phi} \left(\Delta \nabla^k T +  T \ast (\nabla^{k} \Rm+ \nabla^{k+1}T) +  P(k+3)\right).\nonumber
    \end{align} 
By analyzing the source of the $P(k+3)$ term in~\eqref{eqn:DerivkT}, we find that it contains no terms with $\nabla^{k+1}\Rm$, $\nabla^{k}\Rm$, $\nabla^{k+2}T$, or $\nabla^{k+1}T$. This proves Proposition \ref{proposition evolution nabla^k T} for $k\geq 0$. 
\end{proof}

\begin{prop}\label{proposition evolution |nabla^k T|^2} Let $(\varphi_t, \phi_t)$ be a solution to the generic heterotic $\rG_2$ flow ~\eqref{eq: mod_anomaly_flowCbeta} with $C = -\frac{4}{3}$ and with initial condition $(\varphi_0, \phi_0)$ satisfying $d(e^{-4\phi_0} \psi_0)=0$.  Then, for each $k \geq 0$ and each $\epsilon>0$, there is $C(\epsilon, k, \gamma, \sigma)< \infty$ such that
\begin{align}
   \frac{\partial}{\partial t} \left|\nabla^k T\right|^2 &\leq e^{4\phi} \Big( \Delta |\nabla^k T|^2 - 2 |\nabla^{k+1} T|^2 + \epsilon \left(|\nabla^{k} \Rm|^2 + |\nabla^{k+1} T|^2 \right)\nonumber\\
    &\qquad + C(\epsilon, k, \gamma,\sigma) \Big(|\Rm|^{k+2} + |T|^{2k+4} + \sum_{m=1}^{k-1} |\nabla^m \Rm|^{\frac{2k+4}{m+2}} + \sum_{m=1}^{k} |\nabla^m T|^{\frac{2k+4}{m+1}}\Big)\Big).
\end{align}
By definition, both sums are zero if $k=0$, and the first sum is zero if $k=1$.
\end{prop}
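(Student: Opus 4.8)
The proof follows the same scheme as that of Proposition~\ref{proposition evolution |nabla^k Rm|^2}, using the torsion evolution of Proposition~\ref{proposition evolution nabla^k T} in place of the curvature evolution. First I would write
\[
\frac{\partial}{\partial t}\left|\nabla^k T\right|^2 = \nabla^k T \ast \nabla^k T \ast \frac{\partial g}{\partial t} + 2\Big\langle \nabla^k T, \frac{\partial}{\partial t}\nabla^k T\Big\rangle,
\]
and substitute $\partial_t g = e^{4\phi}P(2)$ from~\eqref{eqn:dgdtschematically} together with Proposition~\ref{proposition evolution nabla^k T}. The Laplacian term produces $e^{4\phi}\big(\Delta|\nabla^k T|^2 - 2|\nabla^{k+1}T|^2\big)$ in the standard way; the term $\nabla^k T \ast \nabla^k T \ast e^{4\phi}P(2)$ is absorbed into $e^{4\phi}P(2k+4)$ via~\eqref{eqn:MultiplicativePNotation} and~\eqref{eqn:ePhiPNotation}; and $2\langle\nabla^k T, e^{4\phi}P(k+3)\rangle$ likewise becomes $e^{4\phi}P(2k+4)$. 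What remains are the explicit cross terms $e^{4\phi}\,\nabla^k T\ast T\ast(\nabla^k\Rm + \nabla^{k+1}T)$.

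Next I would use the structural part of Proposition~\ref{proposition evolution nabla^k T}, namely that the $P(k+3)$ occurring there contains no $\nabla^k\Rm$, no $\nabla^{k+1}\Rm$, no $\nabla^{k+1}T$, and no $\nabla^{k+2}T$; consequently the resulting $P(2k+4)$ involves $\nabla^m\Rm$ only for $m\le k-1$ and $\nabla^m T$ only for $m\le k$. Since $2k+4\ge 4\ge 3$, Lemma~\ref{lemma norm PRmTk} applies, and omitting the terms that cannot occur we obtain
\[
|P(2k+4)| \le C\Big(|\Rm|^{k+2} + |T|^{2k+4} + \sum_{m=1}^{k-1}|\nabla^m\Rm|^{\frac{2k+4}{m+2}} + \sum_{m=1}^{k}|\nabla^m T|^{\frac{2k+4}{m+1}}\Big),
\]
where the $\Rm$-sum is empty for $k\le 1$ and both sums are empty for $k=0$.

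Finally I would dispose of the cross terms with the Peter--Paul inequality: for each $\epsilon>0$ there is $C(\epsilon)$ with $|\nabla^k T\ast T\ast\nabla^k\Rm|\le \epsilon|\nabla^k\Rm|^2 + C(\epsilon)|\nabla^k T|^2|T|^2$ and $|\nabla^k T\ast T\ast\nabla^{k+1}T|\le \epsilon|\nabla^{k+1}T|^2 + C(\epsilon)|\nabla^k T|^2|T|^2$, and then absorb $|\nabla^k T|^2|T|^2$ by Young's inequality with exponents $p = \tfrac{k+2}{k+1}$ and $q = k+2$, giving $|\nabla^k T|^2|T|^2\le C\big(|\nabla^k T|^{\frac{2k+4}{k+1}} + |T|^{2k+4}\big)$; both terms already appear on the right-hand side. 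Assembling these estimates and keeping track of the dependence of the constants on $\epsilon,k$ and on $\gamma,\sigma$ (the latter entering only through the schematic coefficients hidden in the star notation and in $E$) yields the stated inequality. The computation is routine once Proposition~\ref{proposition evolution nabla^k T} is available; the one point requiring care is verifying that the curvature sum truncates at $m = k-1$ rather than $m = k$. This is exactly where the structural claim of Proposition~\ref{proposition evolution nabla^k T} (that the $P(k+3)$ there omits $\nabla^k\Rm$) is used: $\nabla^k\Rm$ enters the evolution of $\nabla^k T$ only through the single cross term $T\ast\nabla^k\Rm$, which is absorbed by Peter--Paul rather than contributing to $P(2k+4)$.
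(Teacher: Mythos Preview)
Your proposal is correct and follows essentially the same approach as the paper's own proof: expand $\partial_t|\nabla^k T|^2$, insert the schematic evolution from Proposition~\ref{proposition evolution nabla^k T} and $\partial_t g = e^{4\phi}P(2)$, absorb everything except the explicit $T\ast(\nabla^k\Rm+\nabla^{k+1}T)$ cross terms into $P(2k+4)$, bound that via Lemma~\ref{lemma norm PRmTk}, and finish with Peter--Paul. Your explicit Young's inequality step to absorb $|\nabla^k T|^2|T|^2$ and your careful explanation of why the curvature sum truncates at $m=k-1$ (namely, because $\nabla^k\Rm$ appears only through the isolated cross term and is handled by Peter--Paul rather than entering $P(2k+4)$) are exactly the points the paper leaves implicit.
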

\begin{proof}
We compute the evolution of $|\nabla^k T|^2$, using Proposition \ref{proposition evolution nabla^k T} and that $\frac{\partial g}{\partial t} = e^{4 \phi} P(2)$.
\begin{align}
       \frac{\partial}{\partial t} &|\nabla^k T|^2 = \nabla^k T \ast \nabla^k T \ast \frac{\partial g}{\partial t} + 2 \Big\langle \nabla^k T, \frac{\partial}{\partial t} \nabla^k T\Big\rangle\nonumber\\
    &= \nabla^k T \ast \nabla^k T \ast e^{4\phi} P(2) \nonumber\\
    &\quad  + 2 \Big\langle \nabla^k T,  e^{4\phi} \left(\Delta \nabla^k T +  T \ast (\nabla^{k} \Rm+ \nabla^{k+1}T) +  P(k+3)\right)\Big\rangle\nonumber\\
    &= e^{4\phi}(\nabla^k T)^2 \ast P(2) + e^{4\phi} \left(\Delta |\nabla^k T| - 2 |\nabla^{k+1} T|^2 \right) \nonumber\\
    &\qquad + e^{4\phi} \nabla^k T \ast \left(T \ast (\nabla^{k} \Rm+ \nabla^{k+1}T) +  P(k+3)\right),\nonumber\\
\intertext{We may absorb the first term into the last using ~\eqref{eqn:MultiplicativePNotation}.}
    &= e^{4\phi} \left(\Delta |\nabla^k T|^2 - 2 |\nabla^{k+1} T|^2 \right)  + e^{4\phi} \nabla^k T \ast \left(T \ast (\nabla^{k} \Rm+ \nabla^{k+1}T) +  P(k+3)\right),\nonumber\\
\intertext{By Proposition \ref{proposition evolution nabla^k T}, $P(k+3)$ has no terms including $\nabla^{k+1}\Rm$, $\nabla^{k}\Rm$, $\nabla^{k+2}T$, or $\nabla^{k+1}T$. 
By ~\eqref{eqn:MultiplicativePNotation}, $\nabla^k T \ast P(k+3) = P(2k + 4)$.}
    &= e^{4\phi} \left(\Delta |\nabla^k T|^2 - 2 |\nabla^{k+1} T|^2 \right)  + e^{4\phi} \nabla^k T \ast T \ast (\nabla^{k} \Rm+ \nabla^{k+1}T) + e^{4\phi} P(2k+4),\nonumber\\
\intertext{The term $P(2k+4)$ does not include any terms with $\nabla^i \Rm$ for $i > k-1$ or with $\nabla^j T$ for $j > k$. Applying Lemma \ref{lemma norm PRmTk} and omitting the appropriate terms from the sum on the right-hand side of~\eqref{eqn:normPRmTk},}    
&\leq e^{4\phi} \left(\Delta |\nabla^k T|^2 - 2 |\nabla^{k+1} T|^2 +\nabla^k T \ast T \ast (\nabla^{k} \Rm+ \nabla^{k+1}T) \right)\nonumber\\
    &\qquad + C e^{4\phi} \left(\left|\Rm\right|^{k+2} + |T|^{2k+4} + \sum_{m=1}^{k-1} \left|\nabla^m \Rm\right|^{\frac{2k+4}{m+2}} + \sum_{m=1}^{k} |\nabla^m T|^{\frac{2k+4}{m+1}}\right), \nonumber\\
\intertext{and the Peter--Paul inequality implies that for each $\epsilon>0$, there is $C(\epsilon,k, \gamma,\sigma)< \infty$ such that the following inequality holds:}
    &\leq e^{4\phi} \Big( \Delta |\nabla^k T|^2 - 2 |\nabla^{k+1} T|^2 + \epsilon \left(|\nabla^{k} \Rm|^2 + |\nabla^{k+1} T|^2 \right)\nonumber\\
    &\qquad \qquad + C(\epsilon, k, \gamma,\sigma) \Big(\left|\Rm\right|^{k+2} + |T|^{2k+4} + \sum_{m=1}^{k-1} \left|\nabla^m \Rm\right|^{\frac{2k+4}{m+2}} + \sum_{m=1}^{k} \left|\nabla^m T\right|^{\frac{2k+4}{m+1}}\Big)\Big).
\end{align}
\end{proof}

\subsection{Shi-type estimates for the heterotic \texorpdfstring{$\rG_2$}{G2} flow}

We prove Shi-type smoothing estimates for the heterotic $\rG_2$ flow closely following G.\ Chen's proof of \cite[Theorem 2.1]{GaoChenShi}.
\begin{thm} \label{t:maththm2bulk}
    Let $(\varphi_t, \phi_t)$ be a solution to the generic heterotic $\rG_2$ flow ~\eqref{eq: mod_anomaly_flowCbeta} with $C = -\frac{4}{3}$ and with initial condition $(\varphi_0, \phi_0)$ satisfying $d(e^{-4\phi_0} \psi_0)=0$. Let $B_r(p)$ be a ball of radius $r$ around $p \in M$ with respect to the metric $g_{\varphi_0}$. Suppose that there exist $t_0, \Lambda>0$ such that
\begin{align}\label{equation a priori bound shi estimates}
        \left|\Rm\right| + |T|^2 + |\nabla T| + |\phi| < \Lambda
\end{align}
on $B_r(p) \times [0, t_0]$.  Then, for each $k \geq 0$, there is a constant $C(k, t_0, \Lambda, \gamma,\sigma, r)$ such that
\begin{align}\label{equation theorem shi result}
    \left|\nabla^{k+2}\phi\right| + \left| \nabla^k \Rm\right| + \left|\nabla^{k+1} T\right| < C(k, t_0, \Lambda, \gamma,\sigma, r)
\end{align}
on $B_{r/2}(p) \times [t_0/2, t_0]$.
\end{thm}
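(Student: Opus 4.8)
The plan is to run the Bernstein--Bando--Shi maximum-principle argument, following \cite[Theorem~2.1]{GaoChenShi}, but applied to a \emph{coupled} quantity built from $\Rm$ and $T$ simultaneously, since the structure of the evolution equations of Section~\ref{section: preliminary estimates} forces the two tensors to be estimated together. Throughout one fixes $C=-\tfrac43$, so that Propositions~\ref{proposition evolution nabla^k Rm}--\ref{proposition evolution |nabla^k T|^2} are available, and uses that $\tau_1=d\phi$ and the schematic identities \eqref{eqn:nablaDil1}--\eqref{eqn:ePhiPNotation} hold along the flow because conformal coclosedness is preserved. Two preliminary facts are recorded first: (i) the flow metrics $g_t$ stay uniformly equivalent to $g_{\varphi_0}$ on $B_r(p)\times[0,t_0]$, since $\partial_t g=e^{4\phi}P(2)$ by \eqref{eqn:dgdtschematically} and $|\Ric|\le C(|\nabla T|+|T|^2)$ by the $\rG_2$--Bianchi identity (Lemma~\ref{lm: Ricci and scalar curv}) give $|\partial_t g|_g\le C(\Lambda)$; in particular $e^{4\phi}$ is two-sidedly bounded and $e^{4\phi}\Delta$ is uniformly parabolic; and (ii) one may choose spatial cutoffs $\eta$ with $|\nabla^{g_t}\eta|^2/\eta$ and $|\Delta_{g_t}\eta|$ bounded uniformly in $t$, exactly as in \cite{GaoChenShi}.

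The heart of the argument is an induction on $k\ge 0$: I would prove that $|\nabla^k\Rm|+|\nabla^{k+1}T|+|\nabla^{k+2}\phi|\le C_k$ on a parabolic cylinder $B_{\rho_k}(p)\times[\tau_k,t_0]$ with $\rho_k\searrow r/2$ and $\tau_k\nearrow t_0/2$, the nesting absorbing the unavoidable loss of ball radius and of the starting time at each step; restricting to $B_{r/2}(p)\times[t_0/2,t_0]$ then gives \eqref{equation theorem shi result}, with the $\phi$-derivatives obtained for free from $\nabla^{k+2}\phi=P(k+2)$ via Lemma~\ref{lemma norm PRmTk}, whose top-order contributions are precisely $|\nabla^k\Rm|$ and $|\nabla^{k+1}T|$. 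The base case $k=0$ is immediate from the hypotheses and \eqref{eqn:nablaDil2}. For the inductive step the crucial observation is that Proposition~\ref{proposition evolution |nabla^k Rm|^2} produces a term $\epsilon|\nabla^{k+2}T|^2$, one torsion derivative higher than the good term $-2|\nabla^{k+1}T|^2$ coming from the $T$-equation, so one works with $G_k=|\nabla^k\Rm|^2+A|\nabla^{k+1}T|^2$, combining Proposition~\ref{proposition evolution |nabla^k Rm|^2} at index $k$ with Proposition~\ref{proposition evolution |nabla^k T|^2} at index $k+1$ and choosing $\epsilon$ small relative to $A$; the inductive hypothesis bounds every lower-order term in Lemma~\ref{lemma norm PRmTk} except $|\nabla^k\Rm|^{2(k+3)/(k+2)}$ and $|\nabla^{k+1}T|^{2(k+3)/(k+2)}$, giving $\partial_t G_k\le e^{4\phi}(\Delta G_k-|\nabla^{k+1}\Rm|^2-|\nabla^{k+2}T|^2+cG_k^{(k+3)/(k+2)}+C_0)$.

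To absorb the superlinear term I would then use Shi's device of multiplying by a bounded lower-order quantity: set $H_{k-1}=|\nabla^{k-1}\Rm|^2+A'|\nabla^k T|^2$, which, by the same combination of evolution equations at indices $k-1$ and $k$, is bounded (its top-order terms are controlled by the inductive hypothesis) and satisfies $\partial_t H_{k-1}\le e^{4\phi}(\Delta H_{k-1}-c'(|\nabla^k\Rm|^2+|\nabla^{k+1}T|^2)+C_1)$. Taking $F_k=(\beta+H_{k-1})G_k$ with $\beta\gg1$, the cross term $-2\langle\nabla H_{k-1},\nabla G_k\rangle$ is controlled (using $|\nabla H_{k-1}|\le CG_k^{1/2}$ and $|\nabla G_k|\le CG_k^{1/2}(|\nabla^{k+1}\Rm|+|\nabla^{k+2}T|)$) by the negative fourth-order term and a negative multiple of $G_k^2$, and since $(k+3)/(k+2)<2$ the term $cG_k^{(k+3)/(k+2)}$ is absorbed into the same $-G_k^2$ via Young's inequality; one arrives, using $F_k\asymp G_k$, at $\partial_t F_k\le e^{4\phi}(\Delta F_k-c_1F_k^2+c_2)$. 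A standard localized maximum principle applied to $\Psi=(t-\tau_{k-1})\eta^2F_k$, which vanishes on the parabolic boundary of $B_{\rho_{k-1}}(p)\times[\tau_{k-1},t_0]$, then yields $\Psi\le C(k,t_0,\Lambda,\gamma,\sigma,r)$, so $F_k\lesssim 1/(t-\tau_{k-1})$ and hence $G_k$ is bounded on $B_{\rho_k}(p)\times[\tau_k,t_0]$; this closes the induction.

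I expect the main obstacle, the point where this departs from a routine Shi estimate, to be exactly the coupling between the two evolutions: the curvature equation irreducibly generates a $\nabla^{k+2}T$ term, which can only be controlled by the $T$-equation one order higher, forcing the shifted-index pairing of $\nabla^k\Rm$ with $\nabla^{k+1}T$ and requiring the evolution estimates of Propositions~\ref{proposition evolution |nabla^k Rm|^2} and~\ref{proposition evolution |nabla^k T|^2} to be combined at mismatched indices at two different levels. Relatedly, the restriction $C=-\tfrac43$ is essential here: it is the value for which the genuinely second-order term $\nabla_i\nabla_j\tau_0$ drops out of the torsion evolution in Proposition~\ref{proposition evolution nabla^k T}, so that $T$ satisfies a reaction--diffusion equation; for other $C$ the torsion equation is not parabolic in $T$ alone and this scheme breaks down. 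The remaining points, namely the uniform control of the cutoff's derivatives under the evolving but equivalent metrics and the bookkeeping of the nested cylinders, are routine and handled as in \cite{GaoChenShi}.
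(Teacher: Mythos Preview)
Your proposal is correct and follows essentially the same route as the paper: both run Chen's Shi-type argument on the coupled quantity $|\nabla^k\Rm|^2+|\nabla^{k+1}T|^2$, multiplied by its one-lower analogue to produce the quadratic differential inequality $(\partial_t-e^{4\phi}\Delta)F\le -c_1F^2+c_2$, with $|\phi|\le\Lambda$ used to strip the $e^{4\phi}$ factor and $\nabla^{k+2}\phi=P(k+2)$ recovering the dilaton bound. The only cosmetic differences are that the paper localizes via a comparison barrier $W=\nu/\chi^2+1/(C_1t)+\sqrt{C_2/C_1}$ rather than your test function $(t-\tau_{k-1})\eta^2F_k$, and at the base step the paper includes an extra $|T|^4$ term in the first factor; neither changes the argument.
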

\begin{proof}
First, using Proposition \ref{proposition evolution nabla^k T} with $k=0$, we may compute that
\begin{align}\label{equation evolution |T|^4}
    \frac{\partial}{\partial t}|T|^4 \leq e^{4\phi} \left( \Delta |T|^4 + C(|T|^6 + \left|\Rm\right|^3 + \left|\nabla T\right|^3)\right),
\end{align}
where $C< \infty$.

Combining Proposition~\ref{proposition evolution |nabla^k Rm|^2}, Proposition~\ref{proposition evolution |nabla^k T|^2}, and ~\eqref{equation evolution |T|^4}, we have that there is some $C < \infty$ such that
\begin{align}\label{equation shi 1}
    \left(\frac{\partial}{\partial t} - \Delta\right)\left(\left|\Rm\right|^2 + |T|^4 + \left|\nabla T\right|^2 + 1\right) &\leq   - e^{4\phi}(|\nabla \Rm|^2 + |\nabla^2 T|^2)\nonumber\\
    &\qquad + C e^{4\phi}(|\Rm|^3 + |\nabla T|^3 + |T|^6 + 1).
\end{align}
Likewise, for each $k \geq 1$, there is $C = C(k, \gamma,\sigma) < \infty$ such that
\begin{align}\label{equation shi 2}
    \left(\frac{\partial}{\partial t} - \Delta\right)&\left(\left|\nabla^k\Rm\right|^2 + \left|\nabla^{k+1} T\right|^2\right) \leq   - e^{4\phi}(|\nabla^{k+1} \Rm|^2 + |\nabla^{k+2} T|^2)\nonumber\\
    &\qquad \qquad \qquad \quad + C e^{4\phi}\left(\sum_{m=0}^k \left( \left|\nabla^m \Rm\right|^{\frac{2k+6}{m+2}} + |\nabla^{m+1} T|^{\frac{2k+6}{m+2}}\right)+ |T|^{2k+6} + 1\right).
\end{align}

These evolution equations are exactly of the type that allow one to find smoothing estimates via Shi's technique. G.\ Chen has provided a framework for Shi-type estimates for general flows of $\rG_2$-structures in ~\cite{GaoChenShi}. The flow we are considering essentially fits into Chen's framework, but the only distinction is that we have an additional flow of a dilaton. We give some details to explain how our dilaton flow fits with this framework.

Let $\mu \in \mathbb{R}$ be a constant to be determined later in terms of $\Lambda$. As in the proof of~\cite[Theorem 2.1]{GaoChenShi}, define
\begin{align}
    Q := \left(\left|\Rm\right|^2 + |T|^4 + \left|\nabla T\right|^2 + \mu\right) \left(\left|\nabla \Rm\right|^2 + \left|\nabla^{2} T\right|^2\right).
\end{align}
Then, using ~\eqref{equation shi 1} and ~\eqref{equation shi 2} as in ~\cite{GaoChenShi}, we find that for $\mu$ chosen depending on $\Lambda$,
\begin{align}
    \left(\frac{\partial}{\partial t}  - \Delta \right) Q &\leq e^{4\phi} \left( - C(\Lambda, \gamma,\sigma) Q^2 + C(\Lambda, \gamma,\sigma)\right) \nonumber\\\label{equation PDI for Q}
    &\leq - C_1(\Lambda, \gamma,\sigma) Q^2 + C_2(\Lambda, \gamma,\sigma),
\end{align}
where in the second line we used that $|\phi| \leq \Lambda$. 

Now, due to the bound ~\eqref{equation a priori bound shi estimates} and the evolution equations Proposition \ref{p:evolutiongvarphiCbeta} and \eqref{eq: ddt Gamma}, we have that 
\begin{align}\label{equation: dg/dt shi}
    \left| \frac{\partial g_{ij}}{\partial t}\right| &\leq C(\Lambda, \gamma,\sigma),\\\label{equation: dGamma/dt shi}
    \left| \frac{\partial }{\partial t}\Gamma_{ij}^k\right| &\leq C(\Lambda, \gamma,\sigma, t_0) \left(|\nabla \Rm| + |\nabla^2 T|^2 + 1\right),
\end{align}
where we used our expressions for $H^2_{\varphi}$ and our estimates on derivatives of the dilaton from Section \ref{section: preliminary estimates}.

For some $\nu>0$ and a cut-off function $\chi: M \to \mathbb{R}$ such that $\chi \equiv 1$ on $B_{r/2}(p)$ and $\chi \equiv 0$ on $M \setminus B_r(p)$, we define
\begin{align}
    W = \frac{\nu}{\chi^2} + \frac{1}{C_1(\Lambda)t} + \sqrt{\frac{C_2(\Lambda)}{C_1(\Lambda)}}.
\end{align}
Exactly as in the proof of~\cite[Theorem 2.1]{GaoChenShi}, we can use   ~\eqref{equation: dg/dt shi} and ~\eqref{equation: dGamma/dt shi} to find $\nu>0$ such that 
\begin{align}\label{equation PDI for W}
    \left(\frac{\partial}{\partial t}  - \Delta \right) W > - C_1(\Lambda, \gamma,\sigma) W^2 + C_2(\Lambda, \gamma,\sigma)
\end{align}
at the first time $t^*$ when $\sup_M (Q- W)=0$ and at any point $q \in M$ where this supremum is realized.

Combining ~\eqref{equation PDI for Q} with ~\eqref{equation PDI for W}, we have that at $(q,t^*)$,
\begin{align}
    \left(\frac{\partial}{\partial t}  - \Delta \right) (Q-W) < 0. 
\end{align}
However, since $Q<W$ for $t \in [0, t^*)$, we have that at $(q, t^*)$,
\begin{align}
    \left(\frac{\partial}{\partial t}  - \Delta \right) (Q-W) \geq 0.
\end{align}
This is a contradiction, so we find that $t^* = t_0$ and so $Q \leq W$ for $t \in [0, t_0]$. This proves that
\begin{align}\label{equation shi estimate k=1}
     \left|\nabla \Rm\right| + \left|\nabla^{2} T\right| < C(1, t_0, \Lambda, \gamma,\sigma, r).
\end{align}
on $B_r(p) \times [t_0/2, t_0]$. By ~\eqref{eqn:DerivativesDilaton}, $\nabla^3 \phi = P(3)$. So, ~\eqref{equation shi estimate k=1} combined with Lemma \ref{lemma norm PRmTk} implies that ~\eqref{equation theorem shi result} holds. This proves the theorem for $k=1$. 

For $k >1$, we follow a similar logic as in~\cite{GaoChenShi}, using the quantity
\begin{align}
    Q_k = \left(|\nabla^k \Rm|^2 + |\nabla^{k+1} T|^2 + \mu_k\right) \left(|\nabla^{k+1}\Rm|^2+ |\nabla^{k+2} T|^2\right),
\end{align}
where $\mu_k \in \mathbb{R}$ is chosen depending on $\Lambda$ in order for $Q_k$ to satisfy a partial differential inequality as in ~\eqref{equation PDI for Q}. We use a similar $W$ and induct on $k$, using the bounds for $1, \dots, k-1$. Then, we find that
\begin{align}\label{equation shi estimate k}
     \left|\nabla^k \Rm\right| + \left|\nabla^{k+1} T\right| < C(k, t_0, \Lambda, \gamma,\sigma, r).
\end{align}
By~\eqref{eqn:DerivativesDilaton} and Lemma~\ref{lemma norm PRmTk} again, we find that~\eqref{equation theorem shi result} holds, proving the theorem for $k > 1$.
\end{proof}

\subsection{Convergence of nonsingular solutions}

With the analytic theory developed so far, we obtain a natural corollary on the convergence of nonsingular solutions.  The proof employs many standard techniques in the theory of geometric flows so we only provide a brief sketch.

\begin{cor} \label{c:nonsingularbulk} 
    Let $(\varphi_t, \phi_t)$ be a solution to the heterotic $\rG_2$ flow ~\eqref{eq:heteroticG2flowintro} on a compact manifold $M$ with 
    $\gamma$ and $\sigma$ satisfying the hypotheses of Theorem \ref{thm: monotonicity funct}, and with initial condition $(\varphi_0, \phi_0)$ satisfying $d(e^{-4\phi_0} \psi_0)=0$.  Furthermore assume there exists a constant $\gL > 0$ such that for all $t > 0$ for which the flow exists,
    \begin{align*}
        \max \{ \inj^{-1}_{g}, \diam(g), \left|\Rm\right|, |T|^2, |\nabla T|, |\phi| \} < \Lambda.
    \end{align*}
    Then the flow exists on $[0,\infty)$, and for any sequence $\{t_k\} \to \infty$ there exists a subsequence such that $(\varphi_{t_{k_j}}, \phi_{t_{k_j}})$ converges to $(\varphi_{\infty}, \phi_{\infty})$ where $\varphi_{\infty}$ is a torsion-free $\rG_2$-structure and $\phi_{\infty}$ is constant.
    \begin{proof} By Theorem \ref{t:maththm2bulk} we obtain uniform a priori estimates on all derivatives of curvature and torsion for any time $t_0 > 0$.  It follows by standard compactness arguments developed in the theory of Ricci flow (cf. \cite{HamCompactness}), together with the short-time existence result of Theorem \ref{thm:STE}, that the flow exists on $[0,\infty)$.  Using the assumed geometric bounds it also follows that the $\rG_2$-dilaton functional $\mathcal M$ is bounded above along the flow.  Using the monotonicity of $\mathcal M$ from Theorem \ref{thm: monotonicity funct} together with the assumed geometric bounds and again employing results from compactness theory, the claim of subsequential convergence follows.
    \end{proof}
\end{cor}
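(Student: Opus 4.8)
The plan is to combine the a priori estimates of Theorem~\ref{t:maththm2bulk}, the monotonicity of Theorem~\ref{thm: monotonicity funct}, and Cheeger--Gromov--Hamilton compactness. \emph{Long-time existence.} The standing hypothesis bounds $|\Rm| + |T|^2 + |\nabla T| + |\phi|$ by $\Lambda$ uniformly in $t$, so for each nonnegative integer $n$ the time-shifted flow $(\varphi_{n+\cdot}, \phi_{n+\cdot})$ on $[0,1]$ satisfies the hypotheses of Theorem~\ref{t:maththm2bulk} with $t_0 = 1$ on all of the compact manifold $M$; this yields, for every $k$, bounds on $|\nabla^{k+2}\phi| + |\nabla^k \Rm| + |\nabla^{k+1} T|$ on $M \times [n+1/2, n+1]$ that are uniform in $n$, while on $[0,1]$ the same quantities are controlled by smoothness of the initial data. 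Combining this with the identities $\nabla^j \phi = P(j)$, $\nabla^j \varphi = P(j)$, $\nabla^j \psi = P(j)$ from Section~\ref{section: preliminary estimates}, Lemma~\ref{lemma norm PRmTk}, and the evolution equations of Proposition~\ref{p:evolutiongvarphiCbeta}, one obtains uniform $C^k$ bounds, for every $k$, on $\partial_t g$, $\partial_t \varphi$, $\partial_t \phi$ over any finite time interval on which the flow exists. Hence, were the maximal existence time $T_{\max}$ finite, $(g_t, \varphi_t, \phi_t)$ would converge in $C^\infty$ as $t \to T_{\max}$ to smooth data still satisfying $d(e^{-4\phi}\psi) = 0$, and Theorem~\ref{thm:STE} would let us continue the flow past $T_{\max}$, a contradiction; so the flow exists on $[0,\infty)$.

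\emph{Boundedness of $\mathcal{M}$ and decay of its time derivative.} Since $\inj^{-1}_g$, $\diam(g)$, and all $|\nabla^k \Rm|$ are uniformly bounded, the family $\{(M, g_t)\}_{t \geq 1}$ is precompact in the $C^\infty$ Cheeger--Gromov topology (cf.~\cite{HamCompactness}); in particular $V := \sup_t \Vol(M, g_t) < \infty$, so $\mathcal{M}(\varphi_t, \phi_t) = \int_M e^{-4\phi_t}\Vol_{\varphi_t} \leq e^{4\Lambda} V < \infty$. By Theorem~\ref{thm: monotonicity funct} and the assumptions on $\gamma, \sigma$, $\mathcal{M}(\varphi_t, \phi_t)$ is nondecreasing, hence converges to some $\mathcal{M}_\infty$ and $\int_0^\infty \frac{d}{dt}\mathcal{M}\, dt = \mathcal{M}_\infty - \mathcal{M}(\varphi_0, \phi_0) < \infty$. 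Along a conformally coclosed flow line $\tau_2 = 0$ and $\tau_1 = d\phi$, so by \eqref{eq: full torsion tensor} and \eqref{eq: norm torsion forms} one has $|T|^2 = \frac{7}{16}\tau_0^2 + \frac12 |\tau_3|^2 + 6|d\phi|^2$, whence the monotonicity formula gives $\frac{d}{dt}\mathcal{M} \geq c \int_M |T|^2 \Vol_\varphi \geq 0$ for a constant $c = c(\gamma, \sigma) > 0$. Differentiating the monotonicity formula once more in $t$ and invoking the uniform bounds on all derivatives of $\Rm$, $T$, $\phi$ (in particular $|\nabla^2 T| \leq C$, the $k=1$ case of Theorem~\ref{t:maththm2bulk}) shows $|\frac{d^2}{dt^2}\mathcal{M}| \leq C(\Lambda, \gamma, \sigma)$. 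Thus $t \mapsto \frac{d}{dt}\mathcal{M}(\varphi_t, \phi_t)$ is nonnegative, uniformly Lipschitz, and integrable on $[0,\infty)$, and therefore tends to $0$; in particular $\int_M |T_t|^2 \Vol_{\varphi_t} \to 0$ as $t \to \infty$.

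\emph{Extracting the limit.} Given any $\{t_k\} \to \infty$, the uniform bounds on $\inj^{-1}_g$, $\diam(g)$, and all $|\nabla^k \Rm|$ together with the derivative bounds from Step~1 permit an application of Cheeger--Gromov--Hamilton compactness: after passing to a subsequence $\{t_{k_j}\}$ there are diffeomorphisms $\Phi_j$ of $M$ with $\Phi_j^*(g_{t_{k_j}}, \varphi_{t_{k_j}}, \phi_{t_{k_j}}) \to (g_\infty, \varphi_\infty, \phi_\infty)$ in $C^\infty$, where $g_\infty$ is a Riemannian metric and, by continuity of the defining relation, $\varphi_\infty \in \Omega^3_+$ with $g_{\varphi_\infty} = g_\infty$. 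Since $\int_M |T|^2 \Vol_\varphi$ is diffeomorphism-invariant and continuous in the $C^1$ topology on $\rG_2$-structures, $\int_M |T_{\varphi_\infty}|^2 \Vol_{\varphi_\infty} = \lim_j \int_M |T_{\varphi_{t_{k_j}}}|^2 \Vol_{\varphi_{t_{k_j}}} = 0$, so $T_{\varphi_\infty} \equiv 0$, i.e.\ $\varphi_\infty$ is torsion-free. Finally $\tau_1(\varphi_{t_{k_j}}) = d\phi_{t_{k_j}}$ for all $j$ and $\tau_1(\varphi_\infty) = 0$ by torsion-freeness, hence $d\phi_\infty = 0$; as $M$ is connected, $\phi_\infty$ is constant.

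\emph{Main obstacle.} The crux is the second step: upgrading $\int_0^\infty \frac{d}{dt}\mathcal{M}\, dt < \infty$ to the pointwise statement $\frac{d}{dt}\mathcal{M}(\varphi_t, \phi_t) \to 0$, which rests on the uniform-in-time bound for $\frac{d^2}{dt^2}\mathcal{M}$ and hence on applying the Shi-type estimates of Theorem~\ref{t:maththm2bulk} uniformly on every unit time interval. A secondary point, needed both for the upper bound on $\mathcal{M}$ and for the compactness step, is that the uniform control of injectivity radius, diameter and curvature furnishes a uniform two-sided bound on $\Vol(M, g_t)$, for which Cheeger--Gromov precompactness is the cleanest tool.
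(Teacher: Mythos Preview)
Your proof is correct and follows the same overall strategy as the paper's (deliberately brief) sketch: Shi-type estimates for long-time existence, the diameter/injectivity/curvature bounds for an upper bound on $\mathcal M$, and monotonicity plus Cheeger--Gromov--Hamilton compactness for the limit. Where the paper simply writes ``the claim of subsequential convergence follows'', you supply a concrete mechanism: bounding $\tfrac{d^2}{dt^2}\mathcal M$ via the $k=1$ Shi estimate so that $\tfrac{d}{dt}\mathcal M$ is nonnegative, integrable and uniformly Lipschitz, hence tends to zero, and then using the explicit inequality $\tfrac{d}{dt}\mathcal M \geq c\int_M |T|^2\,\Vol_\varphi$ (which indeed follows from the monotonicity formula together with $|T|^2 = \tfrac{7}{16}\tau_0^2 + \tfrac12|\tau_3|^2 + 6|d\phi|^2$ in the conformally coclosed setting). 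This is a perfectly valid and standard way to execute the step the paper leaves implicit; an alternative the paper's phrasing also accommodates is to pass to a subsequential limit of the \emph{flow} on a unit time interval and observe that $\mathcal M$ is constant on the limit, but your single-time version with the Lipschitz argument is equally good.
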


\begin{rmk} 
One expects genuine convergence of flow lines, which would follow from a dynamic stability result for heterotic $\rG_2$ flow. Our method of proof shall be compared with that of \cite[Theorem 2]{PPZ3}, for the \emph{anomaly flow} of $\operatorname{SU}(3)$-structures.
\end{rmk}

\section{Dimensional reduction of the generic heterotic \texorpdfstring{$\rG_2$}{G2} flow}\label{sec:dimred}
\subsection{\texorpdfstring{$\mathbb{S}^1$}{S1}-invariant \texorpdfstring{$\rG_2$}{G2}-structures} 
Let $N$ be a $6$-manifold with $\SU(3)$-structure $(\omega,\rho_+)\in \Omega^2(N)\times\Omega^3(N)$, i.e.\
$$
  \frac{\omega^3}{3!}=\frac14\rho_+\wedge\rho_-=\vol_6,
$$
here, the metric $g_6$ and the almost complex structure $J$ on $N$, satisfy $\omega=g_6(J\cdot,\cdot)$ and $\rho_-=J\rho_+=\star_6\rho_+$. The space of differential $k$-forms on $N$ has the decomposition
\begin{align}\label{eq: su3 decomposition forms}
    \Omega^2(N)=\Omega^2_1\oplus\Omega^2_6\oplus\Omega^2_8, \qquad \Omega^3=\Omega^3_{1+}\oplus\Omega^3_{1-}\oplus\Omega^3_6\oplus\Omega^3_{12}
\end{align}
where
\begin{align} \nonumber
    \Omega^2_1=&\ \{f\omega : \quad f\in C^{\infty}(N,\bR)\}\\ \nonumber
    \Omega^2_6=&\ \{\beta\in\Omega^2(N) : \quad J\beta=-\beta\}=\{\beta\in\Omega^2(N) :\quad \star_6(\beta\wedge\omega)=\beta\}\\ \nonumber
    =&\ \{\star_6(\alpha\wedge\rho_-)  : \quad \alpha\in \Omega^1(N)\}\\ \label{eq: su3 components forms}
    \Omega^2_8=&\ \{\beta\in\Omega^2(N) : \quad J\beta=\beta, \quad \beta\wedge\omega^2=0\}=\{\beta\in\Omega^2(N)  :\quad \star_6(\beta\wedge\omega)=-\beta\}\\ \nonumber
 \Omega^3_{1\pm}=&\ \{f\rho_{\pm}: \quad f\in C^{\infty}(N,\bR)\}&\\ \nonumber
 \Omega^3_6=&\{\alpha\wedge\omega  : \quad \alpha\in \Omega^1(N)\}\\ \nonumber
 \Omega^3_{12}=&\ \{\beta\in \Omega^3_{12} : \quad \beta\wedge\rho_{\pm}=0, \quad \beta\wedge\omega=0\}.
\end{align}
The torsion forms of an $\SU(3)$-structure are \cite{chiossi2002, bedulli2007ricci}
\begin{align}\label{eq: torsion_SU3}\nonumber
    d\omega=&\ \sigma_0\rho_++\pi_0\rho_-+\nu_1\wedge\omega+\nu_3,\\
    d\rho_+=&\ \frac{2}{3}\pi_0\omega^2+\pi_1\wedge\rho_+-\pi_2\wedge\omega,\\ \nonumber
     d\rho_-=&-\frac{2}{3}\sigma_0\omega^2+\pi_1\wedge\rho_--\sigma_2\wedge\omega,
\end{align}
where $\pi_0,\sigma_0\in C^\infty(N)$, $\pi_1,\nu_1\in\Omega^1(N)$, $\pi_2,\sigma_2\in \Omega^2_8$ and $\nu_3\in\Omega^3_{12}$. Let $M^7$ be a principal $\bS^1$-bundle over $N^6$ and $\theta$ a $1$-form connection on $M$. Consider the $\rG_2$-structure on $M$ 
\begin{equation}\label{eq: S1-inv. G2-struct}\varphi=h^{-3/4}\omega\wedge\theta+\rho_+, \qwhereq h\in C^\infty(N,\bR_+).
\end{equation}
 Thus, the metric, the volume form, the Hodge star and the $4$-form $\psi$ are given by (see \cite[(3.1)]{foscolo2021complete})
\begin{align}\label{eq: metric+vol}
g_7=&\ g_6+h^{-3/2}\theta^2, \quad \vol_7=h^{-3/4}\vol_6\wedge \theta\\ \label{eq: star_7}
    \star_7\beta=&\ h^{-3/4}\star_6\beta\wedge \theta, \quad \star_7(\beta\wedge \theta)=(-1)^kh^{3/4}\star_6\beta, \qforq \beta\in \Omega^k(N)\\ \label{eq: psi_S1-inv}
     \psi=&\ \star_7\varphi= \frac{\omega^2}{2}+h^{-3/4}\rho_-\wedge \theta.
\end{align}
We recall the following $\SU(3)$-identities
\begin{align}\label{eq: rho^omega}
       \star_6(\alpha\wedge\rho_-)\wedge\omega=&\ J\alpha\wedge\rho_+=\alpha\wedge\rho_-, \\  \label{eq: rho^rho}\star_6(\alpha\wedge\rho_-)\wedge\rho_+=&\ \alpha\wedge\omega^2=2\star_6J\alpha,\\ \label{eq: rho_ and rho -}
       \star_6(\alpha\wedge\rho_-)\wedge\rho_-=&\ -J\alpha\wedge\omega^2=2\star_6\alpha.
   \end{align}
 The next proposition describes integrable $\rG_2$-structures, our result is equivalent to \cite[Theorem 5.5]{FinoFowdar} up to conformal change, also,  we give here an equivalent presentation in terms of the torsion forms \eqref{eq: torsion_SU3}:

\begin{prop}\label{prop: tau2=0 SU3}
    Let $(M^7,\theta)$ and $(N^6,\omega,\rho_+)$ as before. The $\rG_2$-structure \eqref{eq: S1-inv. G2-struct} is integrable if and only if,
    \begin{equation}\label{eq: integral_condition}
        -h^{3/4}d h^{-3/4}+2\nu_1-\pi_1=h^{-3/4}\star_6(F_\theta\wedge\rho_+) \qandq \sigma_2=0.
    \end{equation}
    In particular, the torsion forms $\tau_0$ and $\tau_1$ are:
    \begin{align}\label{eq: tau_0 SU3}
        \tau_0=\frac{2}{7}\left(\frac{1}{h^{3/4}}(\omega\lrcorner F_\theta)+4\pi_0\right), \quad \tau_1= \frac12\nu_1-\frac{1}{4h^{3/4}}\star_6(F_\theta\wedge\rho_+)-\frac{\sigma_0}{3h^{3/4}}\theta.
    \end{align}
    where $F_\theta=d\theta$ is the curvature $2$-form.
\end{prop}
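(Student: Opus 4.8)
The strategy is to compute $d\varphi$ and $d\psi$ directly from the $\SU(3)$ torsion equations \eqref{eq: torsion_SU3} and the formulae \eqref{eq: S1-inv. G2-struct}, \eqref{eq: psi_S1-inv} for the $\bS^1$-invariant $\rG_2$-structure, then read off the condition $\tau_2 = 0$ using the $\rG_2$ torsion decomposition \eqref{eq:d:varpsi}. First I would set $f := h^{-3/4}$ and expand
$$
d\psi = d\left(\tfrac{\omega^2}{2} + f\,\rho_-\wedge\theta\right) = d\left(\tfrac{\omega^2}{2}\right) + df\wedge\rho_-\wedge\theta + f\,d\rho_-\wedge\theta - f\,\rho_-\wedge d\theta,
$$
using that $d\theta = F_\theta$ is basic (pulled back from $N$) and $\theta$ satisfies the structure equation. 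Substituting $d\rho_- = -\tfrac23\sigma_0\omega^2 + \pi_1\wedge\rho_- - \sigma_2\wedge\omega$ and $d(\omega^2/2) = \omega\wedge d\omega = \omega\wedge(\sigma_0\rho_+ + \pi_0\rho_- + \nu_1\wedge\omega + \nu_3)$, and simplifying via the wedge identities $\omega\wedge\rho_\pm = 0$, $\omega\wedge\nu_3 = 0$ (since $\nu_3\in\Omega^3_{12}$), $\omega\wedge\sigma_2 = -\star_6\sigma_2$ (for $\sigma_2\in\Omega^2_8$), gives a $5$-form on $M$ of the shape $\alpha\wedge\theta + (\text{basic } 5\text{-form on }N)$. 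Matching against $d\psi = 4\tau_1\wedge\psi + \tau_2\wedge\varphi$, the $\theta$-component of the basic piece will isolate $\tau_1$ (and the integrability condition relating $F_\theta$, $\nu_1$, $\pi_1$, $h$), while the purely basic $5$-form on $N$, which only exists when it comes with no $\theta$, must vanish together with $\tau_2\wedge\rho_+$; since wedging with $\rho_+$ is injective on $\Omega^2_7 \oplus \Omega^2_{14}$ appropriately, this forces $\sigma_2 = 0$ and pins down the remaining relation in \eqref{eq: integral_condition}.

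The cleanest route is probably to exploit the conformal invariance noted in the statement: rescale to $\tvarphi = h^{3/4}\varphi$ where the metric becomes $g_6 + \theta^2$ (a ``round'' $\bS^1$ fiber), reducing \eqref{eq: psi_S1-inv} to $\tpsi = \tfrac{h^{3/4}}{2}\omega^2 + \rho_-\wedge\theta$ up to a cleaner conformal factor, carry out the computation there, and then transfer back using Lemma \ref{lm: conformal identities} (which records $\tau_0 = e^{-\phi}\tilde\tau_0$, $\tau_1 = \tilde\tau_1 + d\phi$, $\tau_2 = e^{\phi}\tilde\tau_2$ for $\varphi = e^{3\phi}\tvarphi$). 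Since $\tau_2 = e^{\phi}\tilde\tau_2$, the vanishing condition is conformally invariant, so it suffices to compute it for one representative. Then for the torsion forms $\tau_0, \tau_1$: from $d\varphi = \tau_0\psi + 3\tau_1\wedge\varphi + \star\tau_3$, take the inner product with $\psi$ (using $|\psi|^2 = 7$) to extract $\tau_0 = \tfrac17\langle d\varphi,\psi\rangle$, and contract appropriately to extract $\tau_1$. Computing $d\varphi = d(f\,\omega\wedge\theta) + d\rho_+ = df\wedge\omega\wedge\theta + f\,d\omega\wedge\theta - f\,\omega\wedge F_\theta + \tfrac23\pi_0\omega^2 + \pi_1\wedge\rho_+ - \pi_2\wedge\omega$, and pairing with $\psi = \tfrac{\omega^2}{2} + f\rho_-\wedge\theta$ using $\langle\omega\wedge F_\theta, \omega^2/2\rangle$-type identities and \eqref{eq: rho^omega}--\eqref{eq: rho_ and rho -}, should yield $\tau_0 = \tfrac27\big(f(\omega\lrcorner F_\theta) + 4\pi_0\big)$ after tracking constants; the $\tau_1$ formula comes out of the $\Omega^4_7$ component, which splits into a part on $N$ (giving $\tfrac12\nu_1 - \tfrac{f}{4}\star_6(F_\theta\wedge\rho_+)$) and a $\theta$-part (giving $-\tfrac{\sigma_0}{3}f\,\theta$, traceable to the $\sigma_0\omega^2$ term in $d\rho_-$ entering $\psi$).

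The main obstacle is bookkeeping: correctly normalizing the Hodge-theoretic projections onto the $\rG_2$-irreducible summands (the factors of $3$ in $\Omega^4_7$, the $\tfrac17$ and $\tfrac{1}{42}$ type constants from $|\varphi|^2 = 42$, $|\psi|^2 = 168$ under the convention \eqref{eq:normkform}) and keeping the six-dimensional Hodge star $\star_6$ versus $\star_7$ straight via \eqref{eq: star_7}, all while the $h$-dependence threads through. There is also a subtlety in identifying the $\Omega^2_8$ versus $\Omega^1\wedge\theta$ contributions to $\tau_2$: one must verify that $\tau_2 \in \Omega^2_{14}(M)$ decomposes, under the $\bS^1$-reduction $\Omega^2_{14}(M) \cong \Omega^2_8(N) \oplus \Omega^2_6(N)$, so that $\tau_2 = 0$ is genuinely equivalent to the \emph{two} scalar/tensor conditions in \eqref{eq: integral_condition} rather than one. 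I would handle this by writing the general element of $\Omega^2_{14}(M)$ in the reduced form $\beta + \alpha\wedge\theta$ with $\beta\in\Omega^2_8(N)$, $\alpha\in\Omega^1(N)$ constrained by the $\psi$-orthogonality $\beta\wedge\psi = 0$, matching $\tau_2\wedge\varphi$ against the computed $d\psi$ componentwise in the $\theta$-degree, and checking the resulting linear system is exactly \eqref{eq: integral_condition}. I expect the cross-check with \cite[Theorem 5.5]{FinoFowdar} (up to conformal change, as stated) will serve as a useful sanity test for the constants.
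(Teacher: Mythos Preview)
Your overall strategy is correct and closely parallels the paper's, but the paper streamlines the extraction of the integrability condition in one key way. Rather than writing a general $\tau_2 = \beta + \gamma\wedge\theta \in \Omega^2_{14}(M)$ and matching $d\psi = 4\tau_1\wedge\psi + \tau_2\wedge\varphi$ componentwise in the $\theta$-degree (your proposed route, which requires working out the $\SU(3)$-splitting of $\Omega^2_{14}(M)$), the paper uses the eigenvalue criterion: $\tau_2 = 0$ if and only if $\star_7 d\psi \in \Omega^2_7$, which is equivalent to the single algebraic identity $(\star_7 d\psi)\wedge\varphi = 2\,d\psi$. This converts the problem into comparing two explicitly computed $5$-forms on $M$, and the $\theta$-grading of that equation immediately yields \eqref{eq: integral_condition} without ever needing to parametrize $\tau_2$ or invoke the decomposition $\Omega^2_{14}(M) \cong \Omega^2_8(N)\oplus\Omega^2_6(N)$. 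Your approach would work, but involves more moving parts.

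For $\tau_0$ you match the paper exactly (via $\tau_0 = \tfrac{1}{7}\star_7(d\varphi\wedge\varphi)$). For $\tau_1$, the paper instead extracts it from $d\psi$ via $\tau_1 = \tfrac{1}{12}\star_7\big((\star_7 d\psi)\wedge\psi\big)$, which is efficient since $\star_7 d\psi$ is already in hand from the integrability computation; your proposal to extract $\tau_1$ from the $\Omega^4_7$ component of $d\varphi$ is equally valid but duplicates effort. The conformal rescaling route you sketch is not used in the paper and, while sound in principle, adds a layer of translation (and the intermediate structure $h^{3/4}\varphi$ is not obviously simpler, since $h$ still appears in the rescaled $\psi$).
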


\begin{proof}
Using \eqref{eq: torsion_SU3}, the exterior derivative of $\psi$ is
\begin{align*}
    d\psi=&\left(dh^{-3/4}\wedge\rho_-+\frac{2}{3h^{3/4}}\sigma_0\omega^2+h^{-3/4}\pi_1\wedge\rho_--\sigma_2\wedge\omega\right)\wedge\theta\\
    &+\nu_1\wedge\omega^2-h^{-3/4}\rho_-\wedge d\theta.
\end{align*}
    Applying \eqref{eq: star_7} and \eqref{eq: rho^rho}, we get
    \begin{align*}
        \star_7d\psi=&\ h^{3/4}\star_6(dh^{-3/4}\wedge\rho_-)-\frac43\sigma_0\omega+\star_6(\pi_1\wedge\rho_-)+\sigma_2\\
        &\ -\left(2h^{-3/4}J\nu_1+h^{-3/2}\star_6(d\theta\wedge\rho_-)\right)\wedge\theta,
    \end{align*}
    where we used $\sigma_2=-\star_6(\sigma_2\wedge\omega)$. Using \eqref{eq: rho^omega} and \eqref{eq: rho^rho}, we get 
    \begin{align*}
        (\star_7d\psi)\wedge\varphi=&\ \left(dh^{-3/4}\wedge\rho_--\frac{4}{3h^{3/4}}\sigma_0\omega^2+h^{-3/4}\pi_1\wedge\rho_-+h^{-3/4}\sigma_2\wedge\omega\right.\\
        &\ \left.+\frac{1}{h^{3/2}}\star_6(d\theta\wedge\rho_-)\wedge\rho_++\frac{2}{h^{3/4}}J\nu_1\wedge\rho_+\right)\wedge\theta\\
        &\ +2\star_6J\pi_1+2h^{3/4}\star_6Jdh^{-3/4}.
    \end{align*}
If $\star_7d\psi\in\Omega^2_7$, then $\star_7d\psi\wedge\varphi=2d\psi$ is equivalent with:
   \begin{align}\label{eq: integrable 1-form}
    2\nu_1-h^{-3/4}J\star_6(d\theta\wedge\rho_-)=\pi_1+h^{3/4}dh^{-3/4}
   \end{align}
   and
   \begin{multline*}
       \left(dh^{-3/4}+h^{-3/4}\pi_1\right)\wedge\rho_-+\left(2h^{-3/4}J\nu_1+h^{-3/2}\star_6(d\theta\wedge\rho_-)\right)\wedge\rho_+-\frac{4}{3}\sigma_0h^{-3/4}\omega^2\\+h^{-3/4}\sigma_2\wedge\omega=2(dh^{-3/4}+h^{-3/4}\pi_1)\wedge\rho_--\frac{4}{3}\sigma_0h^{-3/4}\omega^2-2h^{-3/4}\sigma_2\wedge\omega.
   \end{multline*}
   Using \eqref{eq: rho^omega}, the last equation can be rewritten as
     \begin{equation}\label{eq: integrable 5-form}
         \left(2h^{-3/4}\nu_1-h^{-3/2}J\star_6(d\theta\wedge\rho_-)\right)\wedge\rho_-=(dh^{-3/4}+h^{-3/4}\pi_1)\wedge\rho_--h^{-3/4}\sigma_2\wedge\omega.
     \end{equation}
   Replacing \eqref{eq: integrable 1-form} into \eqref{eq: integrable 5-form}, we get $\sigma_2=0$. Finally, writing $d\theta=d\theta_1+d\theta_6+d\theta_8\in\Omega^2$ according to the decomposition \eqref{eq: su3 decomposition forms} and using $Jd\theta_6=-d\theta_6$, we obtain
   $$
     J\star_6(d\theta\wedge\rho_-)=\star_6J(d\theta_6\wedge\rho_-)=\star_6(d\theta_6\wedge\rho_+)=\star_6(d\theta\wedge\rho_+).
   $$
   Hence, we obtain \eqref{eq: integral_condition}. Now, for the torsion forms $\tau_0$ and $\tau_1$, we have
   \begin{align*}
       \tau_0=&\ \frac17\star_7\left(d\varphi\wedge\varphi\right)\\
       =&\ \frac17\star_7\left(h^{-3/2}\omega^2\wedge d\theta\wedge \theta+\frac23\pi_0\omega^3\wedge\theta+\pi_0\rho_+\wedge\rho_-\wedge\theta\right)\\
       =&\ \frac{1}{7h^{3/4}}\langle\omega, d\theta\rangle+\frac{8\pi_0}{7}.
   \end{align*}
  Similarly,  using \eqref{eq: star_7}, \eqref{eq: rho^rho} and \eqref{eq: rho_ and rho -}, we have
  \begin{align*}
      \tau_1=&\ \frac{1}{12}\star_7\left((\star_7d\psi)\wedge\psi\right)\\
      =&\ \frac{1}{12}\star_7\left(-\frac{4}{3}\sigma_0\omega^3+h^{-3/4}\star_6\left(\left(2\nu_1-\frac{1}{h^{3/4}}\star_6(d\theta\wedge\rho_+)\right)\wedge\rho_-\right)\wedge\rho_-\wedge\theta\right.\\
      &\left.-\frac{1}{h^{3/4}}\left(J\nu_1+\frac{1}{2h^{3/4}}\star_6(d\theta\wedge\rho_-)\right)\wedge\omega^2\wedge\theta\right)\\
      =&\ \frac12\nu_1-\frac{1}{4h^{3/4}}\star_6(d\theta\wedge\rho_-)-\frac{\sigma_0}{3h^{3/4}}\theta.
  \end{align*}
\end{proof}

\begin{cor}\label{cor: torsion_confor_coclosed_varphi_SU3}
    Let $M^1$ be an $\mathbb{S}^1$-bundle over $N^6$ with $\rG_2$-structure $\varphi$ given by \eqref{eq: S1-inv. G2-struct} and $\theta$ a connection $1$-form on $M$. The $\rG_2$-structure $\varphi$ is conformally coclosed if and only if $(h\omega,\rho_+,\theta)$ satisfies \eqref{eq: integral_condition} and there is $\phi\in C^\infty(N,\bR)$ such that
    \begin{align}
        d\phi=\frac12\nu_1-\frac{1}{4h^{3/4}}\star_6(F_\theta\wedge\rho_+), \quad \pi_1=2\nu_1-d(\log h^{-3/4}), \quad \sigma_0=0.
    \end{align}
\end{cor}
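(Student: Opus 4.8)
The plan is to reduce the corollary to the standard characterization of conformally coclosed $\rG_2$-structures in terms of their torsion forms, and then substitute the explicit $\SU(3)$-reduction of Proposition \ref{prop: tau2=0 SU3}.

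\emph{Step 1: conformally coclosed in terms of torsion.} First I would observe that a $\rG_2$-structure $\varphi$ on a connected $7$-manifold is conformally coclosed, i.e. $d(e^{-4\phi}\psi)=0$ for some $\phi\in C^\infty(M,\bR)$, if and only if $\tau_2(\varphi)=0$ and $\tau_1(\varphi)$ is exact. Indeed, putting $\tvarphi=e^{-3\phi}\varphi$, Lemma \ref{lm: conformal identities} gives $e^{-4\phi}\psi=\tpsi$, $\tau_2(\varphi)=e^{\phi}\tilde\tau_2$ and $\tau_1(\varphi)=\tilde\tau_1+d\phi$, while $d\tpsi=4\tilde\tau_1\wedge\tpsi+\tilde\tau_2\wedge\tvarphi$ by \eqref{eq:d:varpsi}; since the two summands lie in the distinct irreducible pieces $\Omega^5_7$ and $\Omega^5_{14}$, the structure $\tvarphi$ is coclosed exactly when $\tilde\tau_1=\tilde\tau_2=0$. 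Hence $\varphi$ is conformally coclosed if and only if $\tau_2(\varphi)=0$ and $\tau_1(\varphi)=d\phi$ for some smooth $\phi$.

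\emph{Step 2: descent of the conformal factor and the vertical component.} Because the $\rG_2$-structure \eqref{eq: S1-inv. G2-struct} and the connection $\theta$ are $\mathbb{S}^1$-invariant, so is $\tau_1(\varphi)$. If $\tau_1(\varphi)=d\phi$ and $\xi$ denotes the vector field generating the $\mathbb{S}^1$-action, then $d(\xi(\phi))=\mathcal{L}_\xi d\phi=0$, so $\xi(\phi)$ is constant on $M$; integrating along a circle orbit forces $\xi(\phi)=0$, whence $\phi$ is $\mathbb{S}^1$-invariant and descends to $\phi\in C^\infty(N,\bR)$, with $d\phi$ a horizontal $1$-form (a pullback from $N$). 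By Proposition \ref{prop: tau2=0 SU3}, $\tau_2(\varphi)=0$ is equivalent to \eqref{eq: integral_condition}, and then $\tau_1(\varphi)=\big(\tfrac12\nu_1-\tfrac{1}{4h^{3/4}}\star_6(F_\theta\wedge\rho_+)\big)-\tfrac{\sigma_0}{3h^{3/4}}\theta$ by \eqref{eq: tau_0 SU3}. Matching the $\theta$-component of $\tau_1(\varphi)=d\phi$ yields $\sigma_0=0$, and matching the horizontal component yields $d\phi=\tfrac12\nu_1-\tfrac{1}{4h^{3/4}}\star_6(F_\theta\wedge\rho_+)$.

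\emph{Step 3: the $\pi_1$-equation and the converse.} Combining the last identity with the $\pi_1$-part of the integrability relation \eqref{eq: integral_condition} (that is, eliminating $\star_6(F_\theta\wedge\rho_+)$ between the two) gives, after a routine manipulation of the conformal weights of $h$ (note $h^{3/4}dh^{-3/4}=d(\log h^{-3/4})$), the stated equation $\pi_1=2\nu_1-d(\log h^{-3/4})$. Conversely, if \eqref{eq: integral_condition} holds, $\sigma_0=0$, and there is $\phi\in C^\infty(N,\bR)$ with $d\phi=\tfrac12\nu_1-\tfrac{1}{4h^{3/4}}\star_6(F_\theta\wedge\rho_+)$, then Proposition \ref{prop: tau2=0 SU3} gives $\tau_2(\varphi)=0$ and, via \eqref{eq: tau_0 SU3} together with $\sigma_0=0$, $\tau_1(\varphi)=d\phi$ (now regarding $\phi$ as a function on $M$); Step 1 then shows $\varphi$ is conformally coclosed.

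\emph{Main obstacle.} The points that require care are the descent argument in Step 2 --- making precise that $\tau_1=d\phi$ on the total space $M$ forces $\phi$ to be basic and hence $\sigma_0=0$, using compactness of the $\mathbb{S}^1$-fibre --- and the identification in Step 3, where one must track the $\SU(3)$-torsion components and the powers of $h$ consistently when passing between the relation $\tau_1=d\phi$ and the integrability condition \eqref{eq: integral_condition} in order to recover the precise form of the $\pi_1$-equation.
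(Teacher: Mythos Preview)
Your proposal is correct and follows essentially the same approach as the paper: both identify conformally coclosed with $\tau_2=0$ and $\tau_1=d\phi$, then use the explicit formula \eqref{eq: tau_0 SU3} to extract $\sigma_0=0$ and the horizontal $d\phi$-equation. Your descent argument for $\phi$ in Step~2 is more explicit than the paper's, which instead records the Cartan-formula identity $0=\mathcal{L}_\xi(e^{-4\phi}\psi)=-d(e^{-4\phi}h^{-3/4}\rho_-)$ but reaches the same conclusion via \eqref{eq: tau_0 SU3}.
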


\begin{proof}
    Consider $\xi$ a vector field on $M$ dual to $\theta$, i.e.\ $\theta(\cdot)=h^{3/2}g_7(\xi,\cdot)$. If $\varphi$ is conformally coclosed, then there is $\phi\in C^\infty(M,\bR)$ such that $e^{-4\phi}\psi$ is closed and $\mathbb{S}^1$-invariant, then
    $$
    0=\cL_{\xi}\left(e^{-4\phi}\psi\right)=-d(e^{-4\phi}h^{-3/4}\rho_-).
    $$
    Using \eqref{eq: tau_0 SU3} with $\tau_1=d\phi$ implies $\sigma_0=0$.
\end{proof}

Given an $\SU(3)$-structure $(\omega,\rho_+)$, we recall the $3$-form $H_\omega\in \Omega^3(N)$ given by
\begin{equation}\label{eq: H_omega}
  H_\omega=-d^c\omega-\hat{N}
\end{equation}
where $d^c\omega=-Jd\omega$ and $\hat{N}$ is the totally skew-symmetric part of Nijenhuis tensor. In terms of the torsion forms \eqref{eq: torsion_SU3}, we have
\begin{equation}\label{eq: dc omega}
     d^c\omega=-Jd\omega=-\sigma_0\rho_-+\pi_0\rho_++\star_6(\nu_1\wedge\omega)-\star_6\nu_3, \quad \hat{N}=\frac{4}{3}\sigma_0\rho_--\frac{4}{3}\pi_0\rho_+.
\end{equation}
The next results relates the $3$-forms $H_\varphi$ and $H_\omega$.

\begin{lemma}\label{lm: dim_reduction H}
    Let $S^1\hookrightarrow M^7\to N^6$ as before and $\varphi$ an integrable $\rG_2$-structure on $M$ given by \eqref{eq: S1-inv. G2-struct} then
   \begin{align}\label{eq: H_varphi_su3}
    H_\varphi=&\ H_\omega+\frac{1}{3h^{3/4}}(\omega\lrcorner F_\theta)\rho_++\left(h^{3/4}(dh^{-3/4})+\frac{1}{h^{3/2}}(\star_6(F_\theta\wedge\rho_+)^\sharp\right)\lrcorner\frac{\omega^2}{2}\\ \nonumber
    &\ -\frac{1}{h^{3/4}}\left(\frac{1}{h^{3/4}}\left(\frac13(\omega\lrcorner F_\theta)\omega-F_\theta\right)+\pi_2\right)\wedge\theta.
\end{align} 
\end{lemma}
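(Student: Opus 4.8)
\textbf{Proof strategy for Lemma \ref{lm: dim_reduction H}.} The plan is to compute $H_\varphi$ directly from its defining formula \eqref{eq: unique conn and tor}, namely $H_\varphi = \tfrac16\tau_0\varphi - \tau_1\lrcorner\psi - \tau_3$, by expressing each of the three torsion terms in the $\mathbb{S}^1$-reduced picture and then comparing with the $\SU(3)$-level expression for $H_\omega$ in \eqref{eq: dc omega}. From Proposition \ref{prop: tau2=0 SU3} we already have closed-form expressions for $\tau_0$ and $\tau_1$ in terms of $\omega$, $\rho_\pm$, $h$ and $F_\theta$, so the first task is to insert these into $\tfrac16\tau_0\varphi$ and $\tau_1\lrcorner\psi$, using the split forms $\varphi = h^{-3/4}\omega\wedge\theta + \rho_+$ and $\psi = \tfrac{\omega^2}{2} + h^{-3/4}\rho_-\wedge\theta$ from \eqref{eq: S1-inv. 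G2-struct} and \eqref{eq: psi_S1-inv}. The contractions $\tau_1\lrcorner\psi$ naturally decompose according to whether $\tau_1$ is contracted against the purely six-dimensional part $\tfrac{\omega^2}{2}$ or against the $\rho_-\wedge\theta$ piece; the $\theta$-component of $\tau_1$ (the $\tfrac{\sigma_0}{3h^{3/4}}\theta$ term) will vanish in the integrable case only if $\sigma_0$ survives, so I would keep $\sigma_0$ general here or invoke that for an integrable structure $\sigma_0$ need not vanish — in fact the statement as written does not assume conformally coclosed, so $\sigma_0$ stays.

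The second and more delicate task is to identify the remaining $\tau_3$ contribution. Rather than computing $\tau_3$ from scratch, I would use \eqref{eq:d:varphi}: $d\varphi = \tau_0\psi + 3\tau_1\wedge\varphi + \star_7\tau_3$, which lets me solve for $\star_7\tau_3$, hence $\tau_3$, once $d\varphi$ is computed explicitly. Computing $d\varphi = d(h^{-3/4}\omega\wedge\theta) + d\rho_+$ is straightforward using the $\SU(3)$ torsion equations \eqref{eq: torsion_SU3} and $d\theta = F_\theta$; the $\mathbb{S}^1$-invariance means all forms are pulled back from $N$, so $d$ of a $\theta$-free form stays $\theta$-free and $d$ of $(\cdot)\wedge\theta = d(\cdot)\wedge\theta + (-1)^{\deg}(\cdot)\wedge F_\theta$. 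Then subtract $\tau_0\psi + 3\tau_1\wedge\varphi$ (now fully known) and apply $\star_7$ via the rules \eqref{eq: star_7} to extract $\tau_3$ as a form on $N$ plus a $(\cdot)\wedge\theta$ piece. The $\pi_2\wedge\theta$ and $F_\theta\wedge\theta$ terms appearing in \eqref{eq: H_varphi_su3} should emerge precisely from this step, as should the $\pi_2$-free six-dimensional $3$-form part that recombines with $-d^c\omega - \hat N = H_\omega$.

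The final step is bookkeeping: collect all $\rho_+$-coefficients, all $\omega^2$-interior-product terms, and all $(\cdot)\wedge\theta$ terms, and check they assemble into the four groups displayed in \eqref{eq: H_varphi_su3}. The scalar $\tfrac{1}{3h^{3/4}}(\omega\lrcorner F_\theta)$ multiplying $\rho_+$ should come from the $\tfrac16\tau_0\varphi$ term (its $\rho_+$ component, since $\tau_0$ contains $\tfrac{2}{7h^{3/4}}(\omega\lrcorner F_\theta)$ up to the overall $\tfrac16\cdot 3$-type combinatorial factor) combined with the $\pi_0\rho_+$ inside $H_\omega$; one must be careful that $H_\omega$ already contains the $\pi_0$ part via $d^c\omega$ and $\hat N$, so the net $\rho_+$-coefficient is what's claimed. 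The $\SU(3)$ identities \eqref{eq: rho^omega}--\eqref{eq: rho_ and rho -} and the relation $\star_6(F_\theta\wedge\rho_+) = \star_6(F_{\theta,6}\wedge\rho_+)$ derived in the proof of Proposition \ref{prop: tau2=0 SU3} will be used repeatedly to rewrite $\star_7$-contractions of $F_\theta$ against $\rho_\pm$ into the interior-product form $\bigl(\star_6(F_\theta\wedge\rho_+)^\sharp\bigr)\lrcorner\tfrac{\omega^2}{2}$.

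\textbf{Main obstacle.} The hard part will be the $\tau_3$ extraction and the precise matching of the $(\cdot)\wedge\theta$ terms: both $\tfrac16\tau_0\varphi$ and $-\tau_1\lrcorner\psi$ and $-\tau_3$ each contribute pieces of the form $(\text{2-form on } N)\wedge\theta$, and verifying that the $F_\theta$, $(\omega\lrcorner F_\theta)\omega$, and $\pi_2$ contributions combine with the correct $h^{-3/4}$ and $h^{-3/2}$ weights — rather than leaving a residual $\sigma_2\wedge\omega\wedge\theta$ term — is where the integrability hypothesis \eqref{eq: integral_condition} (in particular $\sigma_2 = 0$) must be invoked, and where sign and conformal-weight errors are most likely to creep in. A useful cross-check is that setting $F_\theta = 0$ and $h$ constant should collapse \eqref{eq: H_varphi_su3} to $H_\varphi = H_\omega + (\text{terms}\wedge\theta)$ with the $\wedge\theta$ terms reducing to $-h^{-3/4}\pi_2\wedge\theta$, matching the known behavior of the characteristic torsion under a trivial circle bundle.
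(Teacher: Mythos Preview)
Your proposal is correct and follows essentially the same approach as the paper. The one minor organizational difference is that the paper bypasses the explicit extraction of $\tau_3$: rather than solving for $\tau_3$ from $d\varphi = \tau_0\psi + 3\tau_1\wedge\varphi + \star_7\tau_3$ and then assembling $H_\varphi = \tfrac16\tau_0\varphi - \tau_1\lrcorner\psi - \tau_3$, the paper combines these two formulas into the equivalent expression $H_\varphi = \tfrac{7}{6}\tau_0\varphi + 4\star_7(\tau_1\wedge\varphi) - \star_7 d\varphi$ and works directly from that, which streamlines the bookkeeping slightly but amounts to the same computation. Thereafter the paper does exactly what you outline: expand $d\varphi$ using \eqref{eq: torsion_SU3} and $d\theta = F_\theta$, apply the $\star_7$ rules \eqref{eq: star_7}, substitute the integrability condition \eqref{eq: integral_condition} to eliminate $\pi_1$, recognize $H_\omega$ via \eqref{eq: dc omega}, and finally decompose $F_\theta$ into its $\Omega^2_1\oplus\Omega^2_6\oplus\Omega^2_8$ components to simplify the $(\cdot)\wedge\theta$ terms.
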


\begin{proof}
    Using Proposition \ref{prop: tau2=0 SU3}, we can compute $H_\varphi$ for the $\rG_2$-structure \eqref{eq: S1-inv. G2-struct}:
\begin{align*}
    H_\varphi=&\ \frac76\tau_0\varphi+4\star_7(\tau_1\wedge\varphi)-\star_7d\varphi\\
    =&\ \frac{1}{3h^{3/2}}\langle d\theta,\omega\rangle\omega\wedge\theta+\frac{1}{3h^{3/4}}\langle d\theta,\omega\rangle\rho_++\frac{4\pi_0}{3h^{3/4}}\omega\wedge\theta+\frac{4\pi_0}{3}\rho_+-\frac{4\sigma_0}{3}\rho_-\\
    &-\frac{1}{h^{3/2}}\star_6\left(\star_6(d\theta\wedge\rho_+)\wedge\rho_+\right)\wedge\theta+\frac{1}{h^{3/2}}\star_6\left(\star_6(d\theta\wedge\rho_+)\wedge\omega\right)+\frac{2}{h^{3/4}}\star_6(\nu_1\wedge\rho_+)\wedge\theta\\
    &-2\star_6(\nu_1\wedge\omega)+h^{3/4}(dh^{-3/4})\lrcorner\frac{\omega^2}{2}+\sigma_0\rho_--\pi_0\rho_++\star_6(\nu_1\wedge\omega)+\star_6\nu_3\\
    &-\frac{1}{h^{3/2}}\star_6(d\theta\wedge\omega)\wedge\theta-\frac{1}{h^{3/4}}\left(\frac{4}{3}\pi_0\omega+\star_6(\pi_1\wedge\rho_+)+\pi_2\right)\wedge\theta.
\end{align*}
Replacing $\pi_1$ by \eqref{eq: integral_condition} and the definition of $H_\omega$ \eqref{eq: H_omega} according to the torsion forms \eqref{eq: dc omega}, we get
\begin{align*}
    H_\varphi=&\ H_\omega+\frac{1}{3h^{3/4}}\langle d\theta,\omega\rangle\rho_++\left(h^{3/4}(dh^{-3/4})+\frac{1}{h^{3/2}}(\star_6(d\theta\wedge\rho_+)\right)^\sharp\lrcorner\frac{\omega^2}{2}\\
    &\ +\frac{1}{h^{3/4}}\left(\frac{1}{3h^{3/4}}\langle d\theta,\omega\rangle\omega-\frac{1}{h^{3/4}}\star_6(d\theta\wedge\omega)-\frac{1}{h^{3/4}}\star_6(\star_6(d\theta\wedge\rho_+)\wedge\rho_+)-\pi_2\right)\wedge\theta.
\end{align*}
Using the $\SU(3)$-decomposition of $\Omega^2$ as in \eqref{eq: su3 components forms}, we have
\begin{equation*}
    \star_6(d\theta\wedge\omega)=2d\theta_1+d\theta_6-d\theta_8, \quad \star_6(\star_6(d\theta\wedge\rho_+)\wedge\rho_+)=-2d\theta_6.
\end{equation*}
Notice that $d\theta_1=\frac13\langle d\theta,\omega\rangle\omega$, then
\begin{align*}
    H_\varphi=&\ H_\omega+\frac{1}{3h^{3/4}}\langle d\theta,\omega\rangle\rho_++\left(h^{3/4}(dh^{-3/4})^{\sharp}+\frac{1}{h^{3/2}}(\star_6(d\theta\wedge\rho_+)^\sharp\right)\lrcorner\frac{\omega^2}{2}\\
    &-\frac{1}{h^{3/4}}\left(\frac{1}{h^{3/4}}\left(d\theta_1-d\theta_6-d\theta_8\right)+\pi_2\right)\wedge\theta.
\end{align*}
\end{proof}

As a consequence of Lemma \ref{lm: dim_reduction H}, we have the following formula for $dH_\varphi$, in particular, we specialize this to $\theta$ an $\SU(3)$-instanton, $h$ a constant map and $N^6$ with totally skew-symmetric Nijenhuis tensor on $N^6$.

\begin{cor} \label{cor: dim_reduction dH}
The exterior derivative of \eqref{eq: H_varphi_su3} is
    \begin{align}\nonumber
    dH_\varphi=&\ dH_\omega+d\left(\frac{(\omega\lrcorner F_\theta)}{3h^{3/4}}\rho_+\right)+d\left(\left(h^{3/4}(dh^{-3/4})+\frac{1}{h^{3/2}}(\star_6(F_\theta\wedge\rho_+)^\sharp\right)\lrcorner\frac{\omega^2}{2}\right)\\ \label{eq: dH_varphi_su3}
    &-\frac{1}{h^{3/4}}\left(\frac{1}{h^{3/4}}\left(\frac13(\omega\lrcorner F_\theta)\omega-F_\theta\right)+\pi_2\right)\wedge F_\theta\\ \nonumber
    &-d\left(\frac{1}{h^{3/2}}\left(\frac13(\omega\lrcorner F_\theta)\omega-F_\theta\right)+\frac{1}{h^{3/4}}\pi_2\right)\wedge\theta.
\end{align} 
    In particular, if $d\theta\in \Omega^2_8$ then
    \begin{align*}
        dH_{\varphi}=&\ dH_\omega-d\left(\left(h^{3/4}dh^{-3/4} \right)^\sharp\lrcorner\frac{\omega^2}{2}\right)+\frac{1}{h^{3/4}}\left(\frac{1}{h^{3/4}}d\theta-\pi_2\right)\wedge d\theta\\
        &\ +\left(dh^{-3/4}\wedge\left(2h^{-3/4}d\theta-\pi_2\right)-h^{-3/4}d\pi_2\right)\wedge\theta.
    \end{align*}
    Moreover, if $h=c$ is constant and $\pi_2=0$ (i.e.\ the Nijenhuis tensor is totally skew-symmetric), then $H_\varphi$ is closed if and only if
    \begin{align*}
        dH_\omega+\frac{1}{c^{3/2}}F_\theta\wedge F_\theta=0.
    \end{align*}
\end{cor}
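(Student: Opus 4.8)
The plan is to start from the general formula \eqref{eq: dH_varphi_su3} for $dH_\varphi$ derived in Corollary \ref{cor: dim_reduction dH} and specialize the hypotheses step by step. The first reduction is to set $d\theta = F_\theta \in \Omega^2_8$: since $\Omega^2_8$ consists of $J$-invariant primitive forms, we have $\omega \lrcorner F_\theta = 0$ (equivalently $\langle \omega, F_\theta\rangle = 0$) and $\star_6(F_\theta \wedge \rho_+) = 0$ because wedging an element of $\Omega^2_8$ with $\rho_+$ vanishes (this is the condition $\beta \wedge \rho_{\pm} = 0$ defining the $\Omega^2_8 \subset \Omega^3_{12}$-type pairing, or can be seen directly from the $\SU(3)$-representation theory: $\Omega^2_8 \wedge \rho_+ \subset \Omega^5$ has no invariant summand matching). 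Substituting these vanishings into \eqref{eq: dH_varphi_su3} kills the $\rho_+$-term, collapses the $\star_6(F_\theta\wedge\rho_+)^\sharp$ contribution, and simplifies $\frac13(\omega\lrcorner F_\theta)\omega - F_\theta$ to $-F_\theta$, which reproduces the displayed intermediate formula for $dH_\varphi$ when $d\theta \in \Omega^2_8$. One must be slightly careful with the signs and with the Leibniz rule on $d\big((h^{3/4}dh^{-3/4})^\sharp \lrcorner \tfrac{\omega^2}{2}\big)$ and on $d\big(h^{-3/4}(h^{-3/4}d\theta - \pi_2)\wedge\theta\big) = \big(d(h^{-3/4})\wedge(2h^{-3/4}d\theta - \pi_2) - h^{-3/4}d\pi_2\big)\wedge\theta + \cdots$, using $d\theta = F_\theta$ again to produce the wedge-with-$F_\theta$ term on $N$.

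Next I would impose $h = c$ constant and $\pi_2 = 0$. Then every term containing $dh^{-3/4}$ or $d h^{3/4}$ vanishes identically, the horizontal three-form $H_\varphi$ has no $\theta$-component left over (the $\theta$-terms in the intermediate formula are all multiples of $dh^{-3/4}$, $\pi_2$, or $d\pi_2$, hence vanish), and the remaining two-form-on-$N$ part of $dH_\varphi$ collapses to $dH_\omega + \frac{1}{c^{3/2}} F_\theta \wedge F_\theta$. Explicitly, with $h \equiv c$ and $\pi_2 = 0$, the intermediate formula gives $dH_\varphi = dH_\omega + \frac{1}{c^{3/2}} d\theta \wedge d\theta = dH_\omega + \frac{1}{c^{3/2}} F_\theta \wedge F_\theta$, a four-form pulled back from $N$. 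Since the pullback map $\Omega^4(N) \to \Omega^4(M)$ is injective (the bundle projection is a submersion), $dH_\varphi = 0$ on $M$ if and only if $dH_\omega + \frac{1}{c^{3/2}} F_\theta \wedge F_\theta = 0$ on $N$, which is the claimed equivalence.

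The main obstacle I anticipate is purely bookkeeping: verifying that all the $\theta$-wedge terms genuinely cancel once $h$ is constant and $\pi_2 = 0$, and that no hidden contribution to the horizontal part survives. In particular one should double-check that $dH_\omega$ itself, computed on $N$, does not secretly involve $\theta$ (it does not, since $H_\omega \in \Omega^3(N)$ is intrinsic to the $\SU(3)$-structure), and that the term $\frac{1}{h^{3/4}}\big(\frac{1}{h^{3/4}}d\theta - \pi_2\big)\wedge d\theta$ from the intermediate display is the only surviving "new" four-form on $N$. A secondary point to handle carefully is the claim $F_\theta \in \Omega^2_8 \Rightarrow \star_6(F_\theta \wedge \rho_+) = 0$ and $\omega \lrcorner F_\theta = 0$; both follow from the $\SU(3)$-decomposition \eqref{eq: su3 components forms} — $\Omega^2_8$ is exactly the space of primitive $(1,1)$-forms, which annihilate $\omega$ under contraction and wedge to zero against the $(3,0)+(0,3)$ form $\rho_+$ — but it is worth stating this explicitly so the specialization of \eqref{eq: dH_varphi_su3} is rigorous rather than merely formal.
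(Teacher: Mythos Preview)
Your proposal is correct and is exactly the (implicit) argument the paper has in mind: the corollary is stated without proof because the general formula \eqref{eq: dH_varphi_su3} is just $d$ applied to \eqref{eq: H_varphi_su3} using $dF_\theta=0$ and the Leibniz rule on the $\theta$-wedge term, and the two specializations follow by the substitutions you describe. Your justification of $\omega\lrcorner F_\theta=0$ and $\star_6(F_\theta\wedge\rho_+)=0$ for $F_\theta\in\Omega^2_8$ is right (the latter is visible in the proof of Lemma~\ref{lm: dim_reduction H}, where $\star_6(\star_6(d\theta\wedge\rho_+)\wedge\rho_+)=-2d\theta_6$, so only the $\Omega^2_6$-component contributes).

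One bookkeeping caution, since you flag signs as the main hazard: substituting $F_\theta\in\Omega^2_8$ into the general formula \eqref{eq: dH_varphi_su3} gives $+d\big((h^{3/4}dh^{-3/4})^\sharp\lrcorner\tfrac{\omega^2}{2}\big)$, whereas the displayed intermediate formula in the corollary carries a minus sign on that term; this appears to be a typo in the statement rather than an error in your reasoning, but be sure your own write-up is internally consistent. It does not affect the final conclusion, since that term vanishes once $h$ is constant.
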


\subsection{Dimensional reduction of the heterotic \texorpdfstring{$\rG_2$}{G2}-flow}

In this section we consider the dimensional reduction of the generic heterotic $\rG_2$ flow \eqref{eq: mod_anomaly_flowCbeta}, under the ansatz \eqref{eq: S1-inv. G2-struct}.
For the $\rG_2$-structure \eqref{eq: S1-inv. G2-struct}, the generic heterotic $\rG_2$ flow \eqref{eq: mod_anomaly_flowCbeta} induces the following flow of $\SU(3)$-structures.  We also give a characterization of the fixed points.  A closely related description of the dimension reduced geometry of a $\rG_2$-structure with closed torsion was given recently in \cite{kennon2025canonical}.

\begin{prop}\label{prop:S1reducedflow}
    Let $\varphi$ be the $\rG_2$-structure \eqref{eq: S1-inv. G2-struct} and $\phi\in C^\infty(N,\bR)$. Then, the pair $(\varphi,\phi)$ satisfy the generic heterotic $\rG_2$ flow \eqref{eq: mod_anomaly_flowCbeta} if and only if $(\omega,\rho_-,\phi,h,\theta)$ satisfies
    \begin{gather*}
    \begin{split}
        \partial_t \left(e^{-4\phi}\frac{\omega^2}{2}\right)=&\ \gamma+\left(\frac{h^{3/4}}{2}\omega\lrcorner\lambda+\frac{1}{2}\rho_-\lrcorner\gamma\right)\wedge\rho_-, \qquad   \partial_t\left(e^{-4\phi}h^{-3/4}\rho_- \right)=\ \lambda\\
      \partial_t\theta 
    =&\frac{e^{4\phi}h^{3/2}}{2}\omega\lrcorner\lambda+\frac{e^{4\phi}h^{-3/4}}{2}\rho_-\lrcorner\gamma 
    \end{split}
    \end{gather*}
    where the forms
    $\lambda\in \Omega^3(N)$ and $\gamma\in \Omega^4(N)$ are given by
    \begin{align}
        \lambda =& \ d\left(\frac{1}{h^{3/2}}\left(\frac13(\omega\lrcorner F_\theta)\omega-F_\theta\right)+\frac{1}{h^{3/4}}\pi_2\right)+\frac12 Cd\left(\left(\frac{(\omega\lrcorner F_\theta)}{h^{3/4}}+4\pi_0\right)\frac{\omega}{h^{3/4}}\right)\\ \nonumber
        \gamma=& \ -dH_\omega-d\left(\frac{(\omega\lrcorner F_\theta)}{3h^{3/4}}\rho_+\right)-d\left(\left(h^{3/4}(dh^{-3/4})+\frac{1}{h^{3/2}}(\star_6(F_\theta\wedge\rho_+)^\sharp\right)\lrcorner\frac{\omega^2}{2}\right)\\ 
    &+\frac{1}{h^{3/4}}\left(\frac{1}{h^{3/4}}\left(\frac13(\omega\lrcorner F_\theta)\omega-F_\theta\right)+\pi_2\right)\wedge F_\theta\\ \nonumber
    &+\frac12Cd\left(\left(\frac{(\omega\lrcorner F_\theta)}{h^{3/4}}+4\pi_0\right)\rho_+\right)+\frac12\frac{C}{h^{3/4}}\left(\frac{(\omega\lrcorner F_\theta)}{h^{3/4}}+4\pi_0\right)\omega\wedge F_\theta.
    \end{align}
\end{prop}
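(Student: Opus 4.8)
The plan is to substitute the $\bS^1$-invariant ansatz \eqref{eq: S1-inv. G2-struct} into the two evolution equations defining the generic heterotic $\rG_2$ flow \eqref{eq: mod_anomaly_flowCbeta} and decompose everything along the fibre. First I would observe that under \eqref{eq: S1-inv. G2-struct}, the four-form $e^{-4\phi}\psi$ has, by \eqref{eq: psi_S1-inv}, a ``horizontal'' piece $e^{-4\phi}\tfrac{\omega^2}{2}$ and a ``vertical'' piece $e^{-4\phi}h^{-3/4}\rho_-\wedge\theta$; hence a general $\bS^1$-invariant variation of $e^{-4\phi}\psi$ is determined by a pair $(\gamma,\lambda)\in\Omega^4(N)\times\Omega^3(N)$, together with the variation of the connection $\theta$. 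The key bookkeeping step is to express $\partial_t(e^{-4\phi}\psi)$ in terms of $\partial_t(e^{-4\phi}\tfrac{\omega^2}{2})$, $\partial_t(e^{-4\phi}h^{-3/4}\rho_-)$ and $\partial_t\theta$, and to project the right-hand side $-dH_\varphi+\tfrac74 C\,d(\tau_0\varphi)$ onto its horizontal and $\theta$-components. Since $\varphi$ is conformally coclosed (it remains so along the flow, by the basic property recorded after Definition \ref{d:flowdef}), we may apply Corollary \ref{cor: torsion_confor_coclosed_varphi_SU3}, Lemma \ref{lm: dim_reduction H} and Corollary \ref{cor: dim_reduction dH}: the four-form $dH_\varphi$ has the explicit form \eqref{eq: dH_varphi_su3}, and $\tfrac74 C\,d(\tau_0\varphi)$ is computed directly from the formula \eqref{eq: tau_0 SU3} for $\tau_0$ together with $\varphi = h^{-3/4}\omega\wedge\theta+\rho_+$. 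Collecting the terms wedged with $\theta$ and those not wedged with $\theta$ yields exactly the $\gamma$ and $\lambda$ in the statement.

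Next I would match the $\theta$-component. Writing a time-dependent connection as $\theta_t$, we have $d\theta_t = F_{\theta_t}$ and $\partial_t F_{\theta_t}=d(\partial_t\theta_t)$, and the $\theta$-wedged part of $\partial_t(e^{-4\phi}\psi)$ picks up a contribution $e^{-4\phi}\big(\partial_t(h^{-3/4}\rho_-)\big)\wedge\theta \;+\; e^{-4\phi}h^{-3/4}\rho_-\wedge\partial_t\theta$ plus the usual $-4\dot\phi$ terms; comparing with the $\theta$-free, $\theta$-wedged and ``$d\theta$-type'' contributions in \eqref{eq: dH_varphi_su3} and in $\tfrac74 C\,d(\tau_0\varphi)$ forces the stated formula for $\partial_t\theta$. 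Here one uses the $\SU(3)$-pairings \eqref{eq: rho^omega}--\eqref{eq: rho_ and rho -} and \eqref{eq: star_7} to invert the wedge-with-$\rho_-$ and wedge-with-$\omega^2$ maps: e.g. $\rho_-\lrcorner\gamma$ and $\omega\lrcorner\lambda$ appear precisely because $\star_6(\alpha\wedge\rho_-)\wedge\rho_-=2\star_6\alpha$ and the analogous identity for $\omega$ let us solve for the ``vector part'' of a $4$- or $3$-form on $N^6$. The first two displayed equations then follow by re-expressing $\partial_t\left(e^{-4\phi}\tfrac{\omega^2}{2}\right)$ and $\partial_t\left(e^{-4\phi}h^{-3/4}\rho_-\right)$ after the substitution $\partial_t\theta$ has been eliminated from the horizontal component: the $\left(\tfrac{h^{3/4}}{2}\omega\lrcorner\lambda+\tfrac12\rho_-\lrcorner\gamma\right)\wedge\rho_-$ correction is exactly the back-reaction of the moving connection on the horizontal four-form.

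Finally, for the converse one reverses these identifications: given $(\omega,\rho_-,\phi,h,\theta)$ solving the reduced system, reassemble $\varphi$ by \eqref{eq: S1-inv. G2-struct} and $\psi$ by \eqref{eq: psi_S1-inv}, and check that $\partial_t(e^{-4\phi}\psi)$ reproduces $-dH_\varphi+\tfrac74 C d(\tau_0\varphi)$; this is the same computation read backwards, using that the horizontal/vertical/$\theta$-components of a $4$-form on the total space are in bijection (via \eqref{eq: star_7} and \eqref{eq: rho^rho}) with the data $(\gamma,\lambda,\partial_t\theta)$. The dilaton equation is simply inherited: restricted to $\bS^1$-invariant data, $\Delta_g\phi$, $|\tau_3|^2$ and $\tau_0^2$ are all pulled back from $N$ via \eqref{eq: metric+vol} and \eqref{eq: tau_0 SU3}, so the second equation of \eqref{eq: mod_anomaly_flowCbeta} descends verbatim and needs only to be recorded (as in the introduction's \eqref{eq:6dflowintro}). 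The main obstacle is purely organizational: keeping straight the conformal weights of $h$ and of $e^{-4\phi}$ through the many wedge products, and correctly isolating the ``vector parts'' $\omega\lrcorner\lambda$, $\rho_-\lrcorner\gamma$ using the $\SU(3)$ identities — there is no conceptual difficulty, but the term-by-term matching of \eqref{eq: dH_varphi_su3} against $\partial_t(e^{-4\phi}\psi)$ is lengthy and error-prone, which is why I would organize it as a horizontal-vs-$\theta$ component split from the outset rather than expanding blindly.
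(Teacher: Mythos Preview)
Your proposal is correct and follows essentially the same route as the paper: decompose $\partial_t(e^{-4\phi}\psi)$ into its horizontal and $\theta$-wedged pieces, identify the right-hand side $-dH_\varphi+\tfrac74 C\,d(\tau_0\varphi)$ with $\gamma+\lambda\wedge\theta$ via Corollary \ref{cor: dim_reduction dH} and the formula \eqref{eq: tau_0 SU3} for $\tau_0$, and then solve for $\partial_t\theta$ using the $\SU(3)$ contraction identities. The paper carries out the inversion step for $\partial_t\theta$ slightly more concretely than you describe: it applies $\star_6$ to the horizontal equation, wedges with $h^{-3/4}\rho_-$, and uses the variation identity \eqref{eq: ddt *} to see that $\star_6\!\big(\partial_t\tfrac{\omega^2}{2}\big)-\partial_t\omega$ lies in $\Omega^2_1\oplus\Omega^2_8$ and hence is annihilated by $\wedge\,\rho_-$; this is precisely the mechanism behind your ``invert the wedge-with-$\rho_-$ map'' step, so no real difference.
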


\begin{proof}
Taking the time derivative of the conformal transformation of \eqref{eq: psi_S1-inv} 
\begin{align*}
    \frac{\partial}{\partial t}\left(e^{-4\phi}\psi\right)=&-4e^{-4\phi}\frac{\partial \phi}{\partial t}\left(\frac{\omega^2}{2}+h^{-3/4}\rho_-\wedge\theta\right)+e^{-4\phi}\frac{\partial}{\partial t}\left(\frac{\omega^2}{2}\right)+e^{-4\phi}\frac{\partial}{\partial t}\left(h^{-3/4}\rho_-\right)\wedge\theta\\
    &+e^{-4\phi}h^{-3/4}\rho_-\wedge\frac{\partial \theta}{\partial t}.
\end{align*}

Writing $-dH_\varphi+\frac74Cd\left(\tau_0\varphi\right)=\gamma+\lambda\wedge\theta$ for $\gamma\in\Omega^4(N)$ and $\lambda\in\Omega^3$,  the evolution of the $4$-form \eqref{eq: mod_anomaly_flowCbeta} becomes
\begin{align}\label{eq: 4-form su3 evolution}
    -4\frac{\partial \phi}{\partial t}\frac{\omega^2}{2}+\frac{\partial}{\partial t}\frac{\omega^2}{2}+h^{-3/4}\rho_-\wedge\frac{\partial \theta}{\partial t}=&\ e^{4\phi}\gamma\\ \label{eq: 3-form su3 evolution}
    \frac{\partial \phi}{\partial t}h^{-3/4}\rho_-+\frac{\partial}{\partial t}\left(h^{-3/4}\rho_-\right)=&\ e^{4\phi}\lambda
\end{align}

Taking $\star_6$ in \eqref{eq: 4-form su3 evolution} and wedging with $h^{-3/4}\rho_-$, by \eqref{eq: rho_ and rho -} we have
$$
  \star_6\left(\frac{\partial}{\partial t}\frac{\omega^2}{2}\right)\wedge(h^{-3/4}\rho_-)+h^{-3/2}\star_6\frac{\partial \theta}{\partial t}=e^{4\phi}\star_6(\gamma)\wedge(h^{-3/4}\rho_-)
$$
From \eqref{eq: ddt *}, we have
$$
  \star_6\left(\frac{\partial}{\partial t}\frac{\omega^2}{2}\right)=\frac{\partial}{\partial t}\omega+\star_6(S\diamond\omega^2)-\tr(S)\omega
$$
for a symmetric tensor on $N$. Since $\star_6(S\diamond\omega^2)-\tr(S)\omega\in\Omega^2_1\oplus\Omega^2_8$ and $\omega\wedge\rho_-=0$, then
\begin{align*}
    \frac{\partial\theta}{\partial t}=&\ \frac{h^{3/2}}{2}\star_6\left(\omega\wedge\frac{\partial}{\partial t}(h^{-3/4}\rho_-)\right)+\frac{e^{4\phi}h^{-3/4}}{2}\star_6\left(\star_6\gamma\wedge\rho_-\right)\\
    =&\ \frac{e^{4\phi}h^{3/2}}{2}\star_6\left(\omega\wedge\lambda\right)+\frac{e^{4\phi}h^{-3/4}}{2}\star_6\left(\star_6\gamma\wedge\rho_-\right)
\end{align*}
Finally, writing $\omega$ and $\rho_-$ in coordinates,  and using the identity $\star_6(\alpha\wedge\sigma)=(-1)^k\alpha^\sharp\lrcorner\star_6\sigma$, we obtain $\omega\lrcorner\lambda=\star_6\left(\omega\wedge\lambda\right)$ and $\rho_-\lrcorner\gamma=\star_6\left(\star_6\gamma\wedge\rho_-\right)$
\end{proof}

We specialize the previous proposition to the Riemannian product $M=N\times \mathbb{S}^1$ and flat connection, i.e.\ $h=1$ and $d\theta=0$.

\begin{cor}
    Let $\varphi$ be the $\rG_2$-structure \eqref{eq: S1-inv. G2-struct}, with $h=1$, $d\theta=0$ and $\phi\in C^\infty(N,\bR)$. Then, the pair $(\varphi,\phi)$ satisfy the generic heterotic $\rG_2$ flow \eqref{eq: mod_anomaly_flowCbeta} if and only if $(\omega,\rho_-,\phi)$ satisfies
    \begin{align}\nonumber
      \partial_t \left(e^{-4\phi}\frac{\omega^2}{2}\right)=& \ -dH_\omega+2Cd\left(\pi_0\rho_+\right),\\ \label{eq: 6-dim anomaly flow}
        \partial_t\left(e^{-4\phi}\rho_- \right)=& \ d\left(\omega\lrcorner d\rho_+\right)+2\left(C-\frac{2}{3}\right)d(\pi_0\omega)-8d(\nabla\phi\lrcorner\rho_-).\\ \nonumber
        \partial_t\phi=& \ e^{4\phi}\left(\Delta\phi+(4-\gamma)|d\phi|^2+\frac14|H_\omega|^2+\frac14|\pi_2|^2-\left(\frac{28}{63}+\sigma\right)\pi_0^2\right).
    \end{align}
    In particular, the critical points of \eqref{eq: 6-dim anomaly flow} are $\SU(3)$-structures $(\omega,\rho_+)$ with Lee form $\pi_1=\nu_1$ exact, $\sigma_2=0$, $\pi_0=\sigma_0=0$ and
    $$
      dH_\omega=0, \qquad d\pi_2=0.
    $$ 
\end{cor}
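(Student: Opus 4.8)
~~The plan is to specialize Proposition~\ref{prop:S1reducedflow} to the case $h \equiv 1$ and $d\theta = 0$, which forces $F_\theta = 0$ throughout, and then simplify the forms $\lambda$ and $\gamma$ under this assumption. First I would substitute $F_\theta = 0$, $h = 1$ into the expressions for $\lambda$ and $\gamma$; most terms involving $\omega \lrcorner F_\theta$, $\star_6(F_\theta \wedge \rho_+)$, or $dh^{-3/4}$ vanish, leaving $\lambda = d(h^{-3/4}\pi_2)|_{h=1} + \tfrac12 C\, d(4\pi_0\omega) = d\pi_2 + 2C\,d(\pi_0\omega)$ and $\gamma = -dH_\omega - d(\tfrac{\omega\lrcorner F_\theta}{3}\rho_+)|_{F_\theta=0} - \cdots + 2C\,d(\pi_0\rho_+)$, so that $\gamma = -dH_\omega + 2C\,d(\pi_0\rho_+)$. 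With $d\theta = 0$ the mixed term $\rho_- \lrcorner \gamma$ and $\omega \lrcorner \lambda$ contributions to $\partial_t\theta$ need to be reconciled with the stated flow; here the diffeomorphism-gauge term $-8d(\nabla\phi\lrcorner\rho_-)$ arises because the $\partial_t\theta$ equation in Proposition~\ref{prop:S1reducedflow} is coupled back into the evolution of $\rho_-$ via $e^{-4\phi}h^{-3/4}\rho_-\wedge\partial_t\theta$, exactly as in the derivation of \eqref{eq: 3-form su3 evolution}.

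Next I would identify $\omega\lrcorner d\rho_+$ with the relevant torsion expression: by \eqref{eq: torsion_SU3}, $d\rho_+ = \tfrac23\pi_0\omega^2 + \pi_1\wedge\rho_+ - \pi_2\wedge\omega$, and contracting with $\omega$ using standard $\SU(3)$ identities yields $\omega\lrcorner d\rho_+ = \tfrac43\pi_0\omega - \pi_2 + (\text{terms in }\pi_1)$, which after taking $d$ and combining with the $2C\,d(\pi_0\omega)$ from $\lambda$ and the conformal correction gives the coefficient structure $2(C - \tfrac23)d(\pi_0\omega)$ displayed in \eqref{eq: 6-dim anomaly flow}. The dilaton equation follows directly from the second line of \eqref{eq: mod_anomaly_flowCbeta} together with Lemma~\ref{lm: conformal identities} and the six-dimensional norm identities $|\tau_3|^2 = |\pi_2|^2 + |H_\omega|^2 + \cdots$ and $\tau_0 = \tfrac{8}{7}\pi_0$ (from \eqref{eq: tau_0 SU3} with $F_\theta = 0$), giving $\tau_0^2 = \tfrac{64}{49}\pi_0^2$ and hence the $-(\tfrac{28}{63}+\sigma)\pi_0^2$ coefficient after the conformal rescaling factor and using $\tfrac{64}{49}\cdot\tfrac{something}{}$; I would double-check this numerology against \eqref{eq: norm torsion forms} and the conformal change $\tau_0 = e^{-\phi}\tilde\tau_0$.

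For the characterization of fixed points, I would impose $\partial_t(e^{-4\phi}\tfrac{\omega^2}{2}) = 0$, $\partial_t(e^{-4\phi}\rho_-) = 0$, and $\partial_t\phi = 0$. The dilaton equation at a fixed point, integrated against $e^{-4\phi}\vol_6$ after the conformal unwinding (or more simply by the maximum principle combined with the $\gamma, \sigma$ sign hypotheses inherited from Theorem~\ref{thm: monotonicity funct}), forces $d\phi = 0$, $\pi_2 = 0$, $H_\omega = 0$, $\pi_0 = 0$; note $H_\omega = -d^c\omega - \hat N$ and $\hat N = \tfrac43\sigma_0\rho_- - \tfrac43\pi_0\rho_+$, so $H_\omega = 0$ together with $\pi_0 = 0$ and the conformally coclosed condition (Corollary~\ref{cor: torsion_confor_coclosed_varphi_SU3}, giving $\sigma_0 = 0$, $\pi_1 = 2\nu_1 - d\log h^{-3/4} = 2\nu_1$, $d\phi = \tfrac12\nu_1$) yields $\pi_1 = \nu_1$ exact; then from the first flow equation $dH_\omega = 0$ and from the $\rho_-$ equation $d\pi_2 = 0$ (trivially, since $\pi_2 = 0$, but stated as the residual closedness condition before imposing full vanishing). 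I would present the fixed-point conditions exactly as: $\pi_1 = \nu_1$ exact, $\sigma_2 = 0$, $\pi_0 = \sigma_0 = 0$, $dH_\omega = 0$, $d\pi_2 = 0$.

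The main obstacle I anticipate is the bookkeeping in reconciling the $\partial_t\theta$ equation with the $\rho_-$ evolution: in Proposition~\ref{prop:S1reducedflow} the variation of the connection $\theta$ feeds back, and when $d\theta = 0$ is imposed as a constraint one must verify this constraint is preserved (i.e.\ $\partial_t d\theta = d\partial_t\theta = 0$) or else absorb $\partial_t\theta$ into a diffeomorphism of $N$, which is precisely the source of the $-8d(\nabla\phi\lrcorner\rho_-)$ term via the Lie derivative $\mathcal{L}_{\nabla\phi}\rho_-$. Checking that this term has the stated coefficient $-8$ (matching the $-8\nabla\phi\lrcorner H_\varphi$-type contributions in the seven-dimensional setup, cf.\ the gauge-fixing in the proof of Theorem~\ref{thm: monotonicity funct}) is the delicate computational point; the rest is routine substitution and invocation of \eqref{eq: torsion_SU3}, \eqref{eq: dc omega}, and the $\SU(3)$ contraction identities \eqref{eq: rho^omega}--\eqref{eq: rho_ and rho -}.
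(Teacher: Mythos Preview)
Your specialization of $\lambda$ and $\gamma$ is correct: with $h=1$ and $F_\theta=0$ one obtains $\gamma=-dH_\omega+2Cd(\pi_0\rho_+)$ and $\lambda=d\pi_2+2Cd(\pi_0\omega)$, so that $\partial_t(e^{-4\phi}\rho_-)=\lambda$ directly from Proposition~\ref{prop:S1reducedflow}. But your explanation of the $-8d(\nabla\phi\lrcorner\rho_-)$ term is wrong. There is no gauge-fixing, no Lie derivative absorption, and no feedback from $\partial_t\theta$ involved here. The paper simply rewrites $d\pi_2$ using the algebraic identity
\[
\omega\lrcorner d\rho_+=\tfrac{4}{3}\pi_0\omega-8\nabla\phi\lrcorner\rho_-+\pi_2,
\]
which follows by contracting \eqref{eq: torsion_SU3} with $\omega$ and using $\pi_1=4d\phi$ (from Corollary~\ref{cor: torsion_confor_coclosed_varphi_SU3} with $h=1$, $F_\theta=0$). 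Substituting $d\pi_2=d(\omega\lrcorner d\rho_+)-\tfrac{4}{3}d(\pi_0\omega)+8d(\nabla\phi\lrcorner\rho_-)$ into $\lambda$ gives the stated form of the $\rho_-$-evolution immediately. Your sign $-\pi_2$ in the contraction is also off; the $\Omega^2_8$-piece comes in with $+\pi_2$. The ``delicate computational point'' you flag is therefore a red herring: once you compute $\omega\lrcorner(\pi_1\wedge\rho_+)$ correctly the coefficient $8$ falls out, with no appeal to the seven-dimensional gauge argument in Theorem~\ref{thm: monotonicity funct}.

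Your fixed-point analysis also over-claims. You assert that the dilaton equation forces $H_\omega=0$ and $\pi_2=0$, but the Corollary only claims $dH_\omega=0$ and $d\pi_2=0$. The paper does not argue via integration or the maximum principle on the six-dimensional dilaton equation; it invokes the seven-dimensional Proposition~\ref{prop:weakG2}, which (without compactness) yields $\tau_0=0$ and $dH_\varphi=0$. Since $H_\varphi=H_\omega-\pi_2\wedge\theta$ when $h=1$, $F_\theta=0$ (Lemma~\ref{lm: dim_reduction H}), closedness of $H_\varphi$ is equivalent to $dH_\omega=0$ and $d\pi_2=0$, while $\tau_0=0$ gives $\pi_0=0$ via \eqref{eq: tau_0 SU3}. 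The remaining conditions ($\sigma_0=0$, $\sigma_2=0$, $\pi_1=2\nu_1$ exact) come from Corollary~\ref{cor: torsion_confor_coclosed_varphi_SU3}. Finally, Lemma~\ref{lm: conformal identities} is about conformal rescalings in seven dimensions, not about the $7\to 6$ reduction; the dilaton equation is obtained by expressing $|\tau_3|^2_{g_7}$ and $\tau_0^2$ in six-dimensional torsion components, not by conformal change.
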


\begin{proof}
    The evolution equations  \eqref{eq: 6-dim anomaly flow} follow from Corollary \ref{cor: dim_reduction dH}. Finally, notice that 
\begin{equation*}
    \omega\lrcorner(d\rho_+)=\frac{4}{3}\pi_0\omega-8\nabla\phi\lrcorner\rho_-+\pi_2
\end{equation*}
and taking the exterior derivative, we have
\begin{equation*}
    \partial_t\left(e^{-4\phi}\rho_-\right)=d\Lambda_\omega(d\rho_+)+2\left(C-\frac{2}{3}\right)d(\pi_0\omega)-8d(\nabla\phi\lrcorner\rho_-).
\end{equation*}
Moreover, from Proposition \ref{prop:weakG2} and Corollary \ref{cor: torsion_confor_coclosed_varphi_SU3}, we obtain that the critical points of \eqref{eq: 6-dim anomaly flow} satisfy 
    $$
     \pi_1=2\nu_1=4d\phi_6, \quad \sigma_2=0, \quad \pi_0=0, \quad dH_\omega=0, \quad d\pi_2=0, \quad d\sigma_0=0.
    $$
\end{proof}

\begin{rmk}\label{rem:interpolation}
Formula \eqref{eq: 6-dim anomaly flow} shows an interesting link between the heterotic $\rG_2$ flow and natural flows for $\SU(3)$-structures, previously introduced in the literature. For instance, assuming that the initial condition satisfies $d \rho_+ = d \rho_- = 0$, the first line in \eqref{eq: 6-dim anomaly flow} recovers the \emph{anomaly flow} of Phong, Picard, Zhang \cite{PPZ1,PPZ2} for conformally coclosed integrable $\operatorname{SU}(3)$-structures, provided we take $e^{-4\phi}$ to be the norm of the corresponding holomorphic volume form. On the other hand, if we assume instead that $d\omega = 0$, the second line in \eqref{eq: 6-dim anomaly flow} recovers a gauge-fixed version of the \emph{Type IIA flow} \cite{IIAflow}. Thus, formally at least, the previous coupled system of evolution equations provides a natural \emph{interpolation} between the aforementioned flows. This suggests natural modifications of the anomaly and Type IIA flows, where the dilaton has an independent evolution equation, which may prevent finite-time singularities along the flow (cf. \cite[Section 5.4]{Picard2024}).  One may speculate that there is a link to the $T$-duality and non-K\"ahler mirror symmetry (cf.\ \cite{Fei:2023}).
\end{rmk}


\providecommand{\bysame}{\leavevmode\hbox to3em{\hrulefill}\thinspace}
\providecommand{\MR}{\relax\ifhmode\unskip\space\fi MR }
\providecommand{\MRhref}[2]{%
  \href{http://www.ams.org/mathscinet-getitem?mr=#1}{#2}
}
\providecommand{\href}[2]{#2}

\end{document}